\newtheorem{theorem}{Theorem}[section]
\newtheorem{lem}[theorem]{Lemma}
\newtheorem{kor}[theorem]{Corollary}
\newtheorem{prop}[theorem]{Proposition}
\newtheorem{rem}[theorem]{Remark}
\theoremstyle{definition}
\newtheorem{dfn}[theorem]{Definition}
\newtheorem{claim}{Claim}
\numberwithin{equation}{section}
\newcommand{\esssup}{\mathrm{ess~sup}}
\DeclareMathOperator*{\supp}{supp}
\DeclareMathOperator*{\loc}{loc}
\newcommand*{\divv}{\mathrm{div}}
\newcommand*{\Ascr}{\mathcal A}
\newcommand*{\Bscr}{\mathcal B}
\newcommand*{\Fscr}{\mathcal F}
\newcommand*{\Mscr}{\mathcal M}
\newcommand*{\Pscr}{\mathcal P}
\newcommand*{\N}{\mathbb{N}}
\newcommand*{\E}{\mathbb{E}}
\newcommand*{\R}{\mathbb{R}}
\newcommand*{\PP}{\mathbb{P}}
\newcommand*{\vrho}{\varrho}
\newcommand*{\x}{\times}
\newcommand{\law}[1]{{\mathcal L}_{#1}}
\newcommand{\scalarproduct}[3][]{\langle #2, #3\rangle_{#1}}
\DeclareMathOperator{\M}{\mathrm{M}}
\newcommand{\rd}{{\R^d}}
\newcommand{\dx}{\mathrm{d}x}
\definecolor{orange}{rgb}{1.0, 0.55, 0.0} 
\begin{document}
\title{The Leibenson process}
	\author{Viorel Barbu\footnote{Al.I. Cuza University and Octav  Mayer Institute of Mathematics of Romanian Academy, Ia\c si, Romania. E-mail: vb41@uaic.ro}
    \and Sebastian Grube\footnote{Faculty of Mathematics, Bielefeld University, 33615 Bielefeld, Germany. E-mail: sgrube@math.uni-bielefeld.de}
    \and
	Marco Rehmeier\footnote{Institute of Mathematics, TU Berlin, 10623 Berlin, Germany. E-mail: rehmeier@tu-berlin.de}
	\and Michael Röckner \footnote{Faculty of Mathematics, Bielefeld University, 33615 Bielefeld, Germany. E-mail: roeckner@math.uni-bielefeld.de} \footnote{Academy of Mathematics and System Sciences, CAS, Beijing} 
    }
	\date{\today}
	\maketitle
	\begin{abstract}
	Consider the Leibenson equation
	\begin{equation*}
	\partial_t u = \Delta_p u^q,
	\end{equation*}
	where $\Delta_p f = \divv(|\nabla f|^{p-2}\nabla f)$ for $p>1$ and $q>0$,
	which is a simultaneous generalization of the porous media and the $p$-Laplace equation. In this paper we identify the Leibenson equation as a nonlinear Fokker--Planck equation and prove that it has a nonlinear Markov process in the sense of McKean as its probabilistic counterpart. More precisely, we obtain a probabilistic representation of its Barenblatt solutions as the one-dimensional marginal density curve of the unique solutions to the associated McKean--Vlasov SDE. The latter is of novel type, since its coefficients depend pointwise both on its solution's time marginal densities and also on their first and second order derivatives. Moreover, we show that these solutions constitute the aforementioned nonlinear Markov process, which we call the \textit{Leibenson process}. A further main result of this work is to 
  prove that despite
  the strong degeneracy of the diffusion and the irregularity of the drift coefficient (which is merely of bounded variation) of the McKean--Vlasov SDE these solutions are probabilistically strong, i.e., measurable functionals of the driving Brownian motion and the initial condition.
	\end{abstract}
	\noindent	\textbf{Keywords:} Leibenson equation; Barenblatt solution; nonlinear Fokker--Planck equation; Markov process; McKean--Vlasov stochastic differential equation; probabilistically strong solution\\	
	\textbf{2020 MSC:} Primary: 35K55, 35Q84, 60J25. Secondary: 35C06, 35K65, 60H30.

\section{Introduction}
This paper should be seen as a substantial step forward in a general
program which, motivated by McKean's vision laid out in \cite{McKean1-classical}, we have started (see \cite{R.BR24-pLapl,BR18,BR18_2,NLFPK-DDSDE5,BR-IndianaJ,BR22,BR24-frac,BR-LN24,BRZ23,R.Romito24}), namely to develop a new theory whose
aim is to construct a probabilistic counterpart for a large class of nonlinear parabolic
PDEs in the same way as has been done in the last 60 years for linear parabolic PDEs
(see \cite{Markov1,Markov2,Markov3,Markov4-volume,Markov5,Markov6,Markov7,Markov8,Markov9,Markov10,Nualart-book,Markov11,Markov12,Markov13,SV-book}
and the references therein). The purpose is to then transform problems about the PDE to
the probabilistic counterpart and vice versa. This probabilistic counterpart is as in the
linear case a Markov process, but a nonlinear one in the sense of McKean and in the (so far)
most interesting cases they are given by the path laws of solutions to very singular McKean--Vlasov
SDEs. The mentioned transfer between analysis and probability was extremely successful in the
linear case with a huge literature (see above) until today. We are convinced that there are
very good chances that this also will happen in the nonlinear case.

In this paper, we realize the above program in the case where the nonlinear PDE is
the Leibenson equation (\cite{L45,L45-2})
\begin{equation}\label{eq:Leibenson}
	\partial_t u(t,x) = \Delta_p u(t,x)^q,\quad (t,x) \in (0,\infty)\times \R^d,
	\end{equation}
which is well known as the prototype of a degenerate doubly nonlinear parabolic partial differential equation, with its Barenblatt fundamental solutions (see \cite{B52_Barenblatt-original,Barenblatt-book}).

Here $q>0$ and $\Delta_p f = \divv(|\nabla f|^{p-2}\nabla f)$ denotes the $p$-Laplace operator, $p>1$.
The aforementioned probabilistic counterpart is a nonlinear Markov process in the sense of McKean (see \cite{McKean1-classical,R./Rckner_NL-Markov22}) consisting of the unique solutions to the corresponding McKean--Vlasov stochastic differential equation (McKean--Vlasov SDE)
\begin{align}\label{eq:Leibenson-MVSDE}\begin{cases}
	\mathrm{d}X(t) \quad\ \ =& q^{p-1}\nabla \big(|\nabla u(t,X(t))|^{p-2}   u(t,X(t))^{(q-1)(p-1)}\big)\mathrm{d}t\\
    &+ \sqrt{2q^{p-1}}\big( |\nabla u(t,X(t))|^{\frac{p-2}{2}}u(t,X(t))^{\frac{(q-1)(p-1)}{2}}\big) \mathrm{d}W(t),
	\\ \mathcal{L}_{X(t)}(\mathrm{d}x) =& u(t,x)\mathrm{d}x,
    \end{cases}
\end{align}
with one-dimensional time marginal densities $\frac{\mathrm{d}\mathcal{L}_{X(t)}}{\mathrm{d}x}$, $t \geq 0$, equal to the Barenblatt solutions to \eqref{eq:Leibenson} (see \eqref{def:Barenblatt-sol} below).
Here and throughout, $\mathrm{d}x$ denotes Lebesgue measure, $W$ is a standard $d$-dimensional Brownian motion and $\mathcal{L}_{X}$ denotes the distribution of a random variable $X$. We call this nonlinear Markov process the \emph{Leibenson process}, see Definition \ref{def:Leibenson-process} and Remark \ref{r5.4}. We significantly extend the techniques and results from the previous paper \cite{R.BR24-pLapl} of the first, third and last named author in which the probabilistic counterpart of the slow diffusion $p$-Laplace special case ($q=1, p>2$) was constructed and was called \emph{$p$-Brownian motion}. 

Before we give details on \eqref{eq:Leibenson}, \eqref{eq:Leibenson-MVSDE} and our results in Part 1 and 2 below, we summarize the main achievements of the present paper. First, in Part 1, we identify the Leibenson equation \eqref{eq:Leibenson} as a nonlinear Fokker--Planck equation (see \eqref{eq:Leibenson-FPE}). This leads to the corresponding singular McKean--Vlasov SDE \eqref{eq:Leibenson-MVSDE} ($p-2$ or $(p-1)(q-1)$ may be strictly negative), for which we then establish weak well-posedness for prescribed one-dimensional solution time marginals. Then we show that the path measures of the solution processes constitute a nonlinear Markov process.
This substantially generalizes the approach from \cite{R.BR24-pLapl} to the doubly nonlinear Leibenson equation \eqref{eq:Leibenson} for a large range of pairs $(p,q) \in (1,\infty) \times (0,\infty)$, including (unlike \cite{R.BR24-pLapl}) the fast diffusion case $p< 2$ (with $q>1$). Furthermore, the corresponding McKean--Vlasov SDE \eqref{eq:Leibenson-MVSDE} is the first of its kind in the literature, since its coefficients depend both on the solutions' time marginal densities as well as on their first and second order derivatives. Second, in Part 2, we finally solve the problem left open in \cite{R.BR24-pLapl} about $p$-Brownian motion even for the more general and doubly nonlinear Leibenson equation by proving that the Leibenson process is \emph{probabilistically strong}, i.e. the solutions to \eqref{eq:Leibenson-MVSDE} with Barenblatt one-dimensional time marginals are adapted measurable functionals of the driving Brownian motion. This is a quite surprising result in view of the strong degeneracy of the diffusion coefficient in \eqref{eq:Leibenson-MVSDE} (which, as shown below, is compactly supported in space) and the irregular drift coefficient, which is merely BV (and not Sobolev regular). Furthermore, in addition to the analytic proof for the crucial restricted uniqueness result for the linearized Leibenson equation (see Theorem \ref{restr-lin-u.theorem} below), we provide a genuinely new probabilistic proof for this result (see Subsection \ref{subsect:restr-lin-u-probabilistic}).
\\

Let us now describe our goals and results in more detail.
Probabilistic representations of solutions to (nonlinear) partial differential equations (PDEs) as time marginal densities of stochastic processes, more precisely of canonically associated (nonlinear) Markov processes, allow to transfer analytic questions on well-posedness, asymptotic behavior and quantitative estimates for PDEs to probabilistic ones in stochastic analysis and vice versa (see \cite{BR-IndianaJ} for an example in the nonlinear case). In the linear case the simplest, yet very instructive example is the relation between Brownian motion and the heat equation
\begin{equation*}
    \partial_t u(t,x) = \Delta u(t,x),\quad (t,x) \in (0,\infty) \times \R^d.
\end{equation*}
Its fundamental solution, the classical heat kernel $p(t,x,y)$, consists of the one-dimensional time marginal law densities of Brownian motion started at $y \in \R^d$ in the sense that
$$
\mathcal{L}_{W^y(t)}(\mathrm{d}x) = p(t,x,y)\mathrm{d}x,\quad \forall t >0, y \in \R^d,
$$
where we set $W^y(t) := \sqrt{2}W(t) + y$ (the factor $\sqrt{2}$ may be exchanged for a factor $\frac 1 2$ on the RHS of the heat equation). Of course, $W^y$ is the unique probabilistically strong solution to the SDE
$$\mathrm{d}X(t) = \sqrt{2} \mathrm{d}W(t),\quad X(0) = y,$$

and the family $\{P_y\}_{y \in \R^d}$ of path laws of $\{W^y\}_{y \in \R^d}$ constitutes a uniquely determined (linear) Markov process with transition kernels $y \mapsto p(t,x,y)\mathrm{d}x$, $t>0$.
\\

\textbf{The Leibenson equation.} In this paper we develop comparable relations for the Leibenson equation \eqref{eq:Leibenson} and the McKean--Vlasov SDE \eqref{eq:Leibenson-MVSDE}. \eqref{eq:Leibenson} is a simultaneous generalization of the porous medium ($p=2$) and the $p$-Laplace equation ($q=1$). In hydrodynamics, \eqref{eq:Leibenson} models filtration of a turbulent compressible fluid in a porous medium.
For more physical background, we refer, for instance, to the introduction of \cite{GS24}. In the linear case $(p,q) = (2,1)$, \eqref{eq:Leibenson} is the heat equation. To our knowledge, the Leibenson equation was first introduced in \cite{L45,L45-2}. In the past decades the theory of existence, regularity, and qualitative and quantitative properties of its solutions was developed, e.g., in \cite{I96,I97,S17,GS24,GS25+,S24} and in further references mentioned therein. In \cite{GS24} it is stated that particularly interesting physical cases arise for $q \geq 1$ and $\frac 3 2 \leq p \leq 2$. We give more details on the admissible values for $(p,q)$ in our results below, but mention already here that we cover large parts of this "physical range". 
We focus on the explicit \emph{Barenblatt solutions} with point source initial datum in the case $q(p-1)>1$ (see Definition \ref{def:Barenblatt-sol}), which were first obtained in \cite{B52_Barenblatt-original} and can be considered as the analgoue of the heat kernel for the heat equation (even though the Barenblatt solutions are not fundamental solutions in the usual sense since, due to the nonlinearity of \eqref{eq:Leibenson}, one cannot build from them solutions for non-Dirac initial data by simply taking convex combination). The other cases $q(p-1) =1$ and $q(p-1)<1$ give rise to completely different regimes: For $q(p-1)>1$ the Barenblatt solutions have finite speed of propagation, whereas in the other two regimes they have infinite speed of propagation, see Section \ref{sect:barenblatt}. We postpone the study of these other regimes and of more general solutions to future work.
\\

\textbf{Goals and results.}
We split our program into two parts. The first part is the construction of a uniquely determined nonlinear Markov process as the probabilistic counterpart of the Barenblatt solutions to \eqref{eq:Leibenson}, consisting of solutions to \eqref{eq:Leibenson-MVSDE}, see Sections \ref{sect:FP-form}-\ref{sect:Leibenson-Markov} and \ref{sect:restr-lin-u}. The second part consists in proving that these solutions are probabilistically strong, see Section \ref{sect:Strongsolution}.
\\

\noindent \textbf{Part 1: Construction of the Leibenson process.}
The derivation of a McKean--Vlasov SDE associated with a nonlinear PDE follows by identifying the latter as a nonlinear Fokker--Planck equation (FPE). These are parabolic equations for measures of type (here and throughout we use Einstein summation convention)
\begin{equation}\label{e1.3}
\partial_t\mu_t=\partial_{ij} (a_{ij}(t,x,\mu_t)\mu_t)-\partial_i(b_i(t,x,\mu_t)\mu_t),\ (t,x)\in(s,\infty)\times\R^d,
\end{equation}
where $s\geq 0$ is the initial time, $a_{ij},b_i: (0,\infty)\times \R^d \times \Mscr \to \R$ are measurable coefficients, $\Mscr$ is some subset of the set of signed locally finite Borel measures on $\R^d$, and solutions are curves $[s,\infty) \ni t\mapsto \mu_t \in \Mscr$ solving \eqref{e1.3} in the (Schwartz) distributional sense (see Definition \ref{dD.2} below). $a = (a_{ij})_{i,j \leq d}$ is assumed to take values in the space of nonnegative definite $d\times d$-matrices, thus there exists a (not necessarily unique) measurable square root $\sigma = (\sigma_{ij})_{i,j \leq d}: (0,\infty)\times \R^d \times \Mscr \to \R^{d \times d}$ with $a = \sigma \sigma^T$. 
If one takes $\mathcal{M}$ to be a set $\mathcal{P}_0$ of probability measures on $\rd$, it is well-known that for any weakly continuous probability measure-valued solution $t\mapsto \mu_t \in \mathcal{P}_0$ to \eqref{e1.3} satisfying an additional rather mild integrability condition (see Definition \ref{dD.2}), there exists a probabilistically weak solution $X = (X(t))_{t\geq s}$ to the McKean--Vlasov SDE,
\begin{eqnarray}\label{e1.5}
    \mathrm{d}X(t)&=&b(t,X(t),\mathcal{L}_{X(t)})\mathrm{d}t+\sqrt{2}\,\sigma(t,X(t),\mathcal{L}_{X(t)})\mathrm{d}W(t),\quad t>s,
\end{eqnarray}
such that $\mathcal{L}_{X(t)} = \mu_t$ for all $t \geq s$, see \cite{Ambrosio2008,Figalli09,Trevisan16,BR18,BR18_2,BRS19-SPpr,SV-book}.
This result, called \emph{superposition principle}, provides a probabilistic counterpart to solutions of \eqref{e1.3} by identifying the latter as the one-dimensional time marginals of the paths of stochastic processes solving \eqref{e1.5}. For this, \emph{no} regularity assumption on the coefficients in $x \in \R^d$ or $\mu \in \mathcal{P}_0$ is needed.

A particularly interesting type of \eqref{e1.3} is given by the \emph{Nemytskii-case}, where $\mathcal{P}_0$ is a set of absolutely continuous measures (w.r.t. Lebesgue measure $\mathrm{d}x$) and the dependence of $a_{ij}$ and $b_i$ on $\mu \in \mathcal{P}_0$ is pointwise via its density, i.e.
$$a_{ij}(t,x,\mu) = \bar{a}_{ij}(t,x,u(x)), \quad b_i(t,x,\mu) = \bar{b}(t,x,u(x)),$$
for Borel coefficients $\bar{a}_{ij},\bar{b}_i : (0,\infty) \times \R^d \times \R \to \R$ and where $\mu(\mathrm{d}x) = u(x)\mathrm{d}x$ (see \cite{BR-LN24} for a large number of concrete examples). Rewriting \eqref{e1.3} as an equation for the density $u(t)$ of $\mu_t$, one arrives at a nonlinear PDE
\begin{equation}\label{eq:Nem-PDE}
    \partial_t u(t,x) = \partial_{ij}\big(\bar{a}_{ij}(t,x,u(t,x))u(t,x)\big) - \partial_i \big(\bar{b}_i(t,x,u(t,x))u(t,x)\big).
\end{equation}
In this case the corresponding McKean--Vlasov SDE is
\begin{align} \label{e1.6}
	\begin{cases}
    \mathrm{d}X(t)&=\bar{b}(t,X(t),u(t,X(t)))\mathrm{d}t+\sqrt{2}\,\bar{\sigma}(t,X(t),u(t,X(t)))\mathrm{d}W(t), \\
    \mathcal{L}_{X(t)}(\mathrm{d}x) &= u(t,x)\mathrm{d}x,
    \end{cases}
\end{align}
where  $\bar{\sigma} = (\bar{\sigma}_{ij})_{i,j \leq d}: (0,\infty)\times \R^d \times \mathcal{P}_0 \to \R^{d \times d}$ is a measurable function with $\bar{a} = \bar{\sigma} \bar{\sigma}^T$.
Thus, for every weakly continuous probability measure-valued solution $u$ to \eqref{eq:Nem-PDE} there is a stochastic process solving \eqref{e1.6}. For instance, solutions to the generalized porous medium equation (PME) perturbed by a nonlinear drift
$$\partial_t u(t,x) = \Delta \beta(u(t,x)) - \divv\big( D(x)b(u(t,x))\big),$$
where $D: \R^d \to \R^d$, $b: \R \to \R_+$ and $\beta: \R \to \R$,
are of this type with $\bar{a}_{ij}(t,x,u(x)) = \delta_{ij} \beta(u(x))$ and $\bar{b}(t,x,u(x)) = D(x) b(u(x))$, see \cite{NLFPK-DDSDE5,BR22,BR-LN24} and the references therein. The classical PME is retrieved with $D = 0$, $\beta(r) = r^m$, $m >0$ (cf. \cite{vazquez2007pme}).

Since the Leibenson equation at first sight is not of type \eqref{eq:Nem-PDE}, the identification of its McKean--Vlasov SDE is not straightforward. Our approach is as follows. First, we show that \eqref{eq:Leibenson} can be rewritten as the nonlinear FPE 
\begin{equation}\label{eq:Leibenson-FPE}
	\partial_t u(t,x) = q^{p-1}\Delta\big(|\nabla u(t,x)|^{p-2} u(t,x)^{(p-1)(q-1)}u(t,x)\big) -q^{p-1} \divv\big( \nabla \big(  |\nabla u(t,x)|^{p-2} u(t,x)^{(p-1)(q-1)}   \big)  u(t)    \big).
\end{equation}
This is indeed of type \eqref{e1.3} for coefficients
\begin{align}\label{eq:Leibenson-FPE:coefficients}
    a_{ij}(t,x,\mu_t) = q^{p-1}\delta_{ij} |\nabla u(t,x)|^{p-2} u(t,x)^{(p-1)(q-1)}, \quad b_i(t,x,\mu_t)= q^{p-1} \partial_i \big(|\nabla u(t,x)|^{p-2} |u(t,x)|^{(p-1)(q-1)}  \big), 
\end{align}
where $\mu_t(\mathrm{d}x) = u(t,x)\mathrm{d}x$. Details, in particular a proof of the equivalence of \eqref{eq:Leibenson} and this FPE, are given in Section \ref{sect:FP-form}. The coefficients $a$ and $b$ from \eqref{eq:Leibenson-FPE:coefficients} are not of Nemytskii-type, which would require $a_{ij}(t,x,\cdot)$ to depend on $\mu_t(\mathrm{d}x) = u(t,x)\mathrm{d}x$ only via $u(t,x)$ (and similarly for $b_i$). Instead, due to the dependence on the gradient of $u(t)$, $a_{ij}(t,x,\cdot)$ depends on $\mu_t$ via the values of $u(t)$ in an arbitrary small open neighborhood of $x$ (similarly for $b_i$). Then the associated McKean--Vlasov SDE is \eqref{eq:Leibenson-MVSDE}.

Our first result is the following construction of the probabilistic counterpart of solutions to the Leibenson equation.
\\

\noindent\textbf{Theorem 1 (see Thm. \ref{thm:SP-princ-general}, Cor. \ref{kor:SP-princ-Leibenson}, Thm. \ref{prop:SP-for-Barenblatt} for details).} \,
Let $d \geq 1$, $p>1$, $q>0$.
For every solution $u$ to \eqref{eq:Leibenson} consisting of probability densities with suitable regularity and integrability assumptions there is a probabilistically weak solution to \eqref{eq:Leibenson-MVSDE}.
If additionally $q>\frac 1 {p-1}$ and $p> \frac{1+d}{d}$, this result applies to the Barenblatt solutions $u = w^y$, $y \in \R^d$, (see Definition \ref{def:Barenblatt-sol}) to \eqref{eq:Leibenson}.
\\

In the linear case, i.e. when $a_{ij}$ and $b_i$ in \eqref{e1.3} do not depend on $\mu$, it is well-known that if for every initial datum $(s,\delta_y)$ the FPE \eqref{e1.3} has a unique weakly continuous probability solution $(\mu^{s,y}_t)_{t\geq s}$, then the corresponding SDE \eqref{e1.5} has a unique probabilistically weak solution from any initial datum $(s,\delta_y)$ as well, and the family of these solutions forms a Markov process (for the definition of a Markov process see Section \ref{subsect:NL-MP}) with transition kernels $(y,A) \mapsto \mu^{s,y}_t(A)$. This is \emph{not} true for nonlinear FPEs and their associated McKean--Vlasov SDEs, since the proof relies on the stability of the associated martingale problem under convex combinations, which is lost in the nonlinear case. To overcome this issue, inspired by McKean \cite{McKean1-classical}, in \cite{R./Rckner_NL-Markov22} the third and last-named author introduced \emph{nonlinear Markov processes}, and showed that under general assumptions a solution family to a nonlinear FPE gives rise to a uniquely determined nonlinear Markov process, consisting of solution path laws to the associated McKean--Vlasov SDE (see Section \ref{subsect:NL-MP} for details). We stress that this result applies to ill-posed situations, i.e. uniqueness of the nonlinear FPE-solutions is not required, but only a "restricted linearized uniqueness" result, see Theorem \ref{t52} and Lemma \ref{lem:equiv}. In previous work (cf. e.g. \cite{R.BR24-pLapl,BR-IndianaJ,BRZ23,R./Rckner_NL-Markov22,R.Romito24} and also \cite{BR-LN24} and the references therein) this was utilized to construct nonlinear Markov processes with one-dimensional time marginals given by solutions to nonlinear PDEs, as e.g. Burgers' and (generalized) PMEs, as well as $2D$ vorticity Euler and Navier--Stokes equations. In each of these cases, it was first shown that the PDE under consideration can be written as a Neymtskii-type FPE.

In the present paper we construct a nonlinear Markov process associated with the Barenblatt solutions to \eqref{eq:Leibenson} via \cite[Theorem 3.8]{R./Rckner_NL-Markov22}. In particular, we have to prove a delicate restricted linearized uniqueness result, which boils down to a highly nontrivial uniqueness result for the degenerate second-order linear PDE \eqref{eq:Leibenson-FPE-lin}, see Theorem \ref{restr-lin-u.theorem}.
The conditions on $p,q$ in Theorem 2 below are essentially determined by the conditions in Theorem \ref{restr-lin-u.theorem}, which split into two different regimes, for which we prove the uniqueness result separately. One part, proven via analytic methods, is a close adaptation of the proof for the corresponding result in the $p$-Laplace special case. The second part is based on genuinely new purely probabilistic methods. Our result is:
\\

\noindent
\textbf{Theorem 2 (see Thm. \ref{t5.2} and Def. \ref{def:Leibenson-process} for details).} \,
Let $d \geq 2$, $p>\frac{d}{d-1}$, $q>\frac 1 {p-1}$. If $p<2$ assume additionally $q>\frac{2-p+d}{d(p-1)} (>\frac 1 {p-1})$. The family of McKean--Vlasov SDE-solutions from Theorem 1 with one-dimensional time marginals equal to the Barenblatt solutions to \eqref{eq:Leibenson}  constitutes a uniquely determined nonlinear Markov process, which we call \emph{Leibenson process}.
\\
\\
The construction of the Leibenson process is analogous to the construction of Brownian motion as a Markov process from the classical heat kernel. The slow diffusion $p$-Laplace special case of Theorems 1 and 2, i.e. $p>2$ and $q=1$, was proven in \cite{R.BR24-pLapl}, where the corresponding nonlinear Markov process was called \emph{$p$-Brownian motion} (for $p=2$, one recovers Brownian motion).
\\

\noindent \textbf{Part 2: The Leibenson process is probabilistically strong.}
By construction, the Leibenson process from Theorem 2 consists of probabilistically weak solutions to \eqref{eq:Leibenson-MVSDE}. It has been an imminent and challenging open question whether these solutions are actually probabilistically strong, i.e. measurable adapted functionals of the driving Brownian motion and the initial condition. For details on the definition of probabilistically strong solutions, see Definition \ref{def:DDSDE_strong}. In this paper, we answer this question affirmatively via the following result. Note that this includes as a new result the $p$-Brownian motion case ($p>2, q=1$), for which the question of being a strong solution to its corresponding McKean--Vlasov SDE was left open in \cite{R.BR24-pLapl}.
\\

\noindent
\textbf{Theorem 3 (see Thm. \ref{StrongSolution.theorem:existenceStrongSolution} for details).} \,
Let $d \geq 2, p>\frac{d-1}{d}, q>\frac{|p-2|+d}{d(p-1)}(>\frac 1{p-1})$.
Then, the probabilistically weak solution $(X,W)$ to \eqref{eq:Leibenson-MVSDE} with $u=w^y$ from Theorem 1 are probabilistically strong from any strictly positive time $\delta>0$ on (i.e. they are measurable adapted functionals of the driving Brownian motion $W$ and $X(\delta)$). 
\\

\noindent The proof follows from a \textit{restricted Yamada--Watanabe theorem} \cite[Theorem 1.3.1]{Grube-thesis},\cite{grube2021strong}, which in turn relies on the existence of a probabilistically weak solution with one-dimensional time marginal law densities $w^y(t+\delta), t\geq 0$, (as provided by Theorem 1) and a restricted pathwise uniqueness result for \eqref{eq:Leibenson-MVSDE}, see Theorem \ref{StrongSolution.theorem:pathwiseU}. Here, \emph{restricted} means uniqueness in the class of solutions with one-dimensional time marginals $w^y(t+\delta,x)\mathrm{d}x$, $t \geq 0$. Thus these marginals may be fixed beforehand, which allows to simplify the McKean--Vlasov SDE \eqref{eq:Leibenson-MVSDE} to a non-distribution dependent SDE
\begin{align*}
    \mathrm{d}X(t)=\tilde{b}(t,X(t))\mathrm{d}t+\tilde{\sigma}(t,X(t))\mathrm{d}W(t),
\end{align*}
with coefficients (using the definition of $b_i,a_{ij}$ from \eqref{eq:Leibenson-FPE:coefficients})
\begin{align*}
    \tilde{b}_i(t,x) := b_i(t,x,w^y(t+\delta,x)\mathrm{d}x),\quad \tilde{\sigma}_{ij}(t,x):= \sqrt{2a_{ij}(t,x,w^y(t+\delta,x)\mathrm{d}x}), \quad (t,x)\in \R_+\times\rd.
\end{align*}

\noindent The proof of this pathwise uniqueness result poses one of the main technical challenges of the present paper due to the degeneracy and irregularity of the diffusion coefficient $\tilde{\sigma}$, as well as the low differentiability of the drift coefficient.
Indeed, note that $\tilde{\sigma}(t,\cdot)$ is compactly supported and not Lipschitz-continuous. For $p<2$, $\tilde{\sigma}(t,\cdot)$ is even unbounded and discontinuous. Moreover, $\tilde{b}$ is only a function of bounded variation in its spatial coordinate.

Although the literature on pathwise uniqueness results for SDEs, also in degenerate cases, is vast (see, e.g., \cite{roeckner2010weakuniqueness, luo2011hoelder, kumar2013degenerate, luo2014uniq, wang2020existence, lee2023pathwise}, and in particular \cite{champagnat2018} for a result for bounded Sobolev-coefficients with certain drifts of (spatial) bounded variation, see also \cite{grube2024strongSDE} for a recent generalization), we could not apply any of these results for our desired result.

Our proof heavily relies on a careful analysis of the degeneracy and discontinuity of $\tilde{\sigma}$ and (spatial) jump sets of $\tilde{b}(t,\cdot),t>0,$ which in turn, of course, crucially relies on the explicit form of the Barenblatt solutions $w^y$. This analysis is based on the generalized Lipschitz-type estimate, sometimes called \textit{Crippa--De Lellis estimate} (\cite{crippa2008estimates,champagnat2018})
\begin{align}\label{introduction.lipschitz-type-esimate}
	|f(x)-f(y)|\lesssim_d (\M|\nabla f|(x)+\M|\nabla f|(y))|x-y|\ \text{ for $\mathrm{d}x$-a.e. } x,y \in \rd,
\end{align}
(cf. Appendix \ref{App-D}),
where $f\in BV(\rd)$, and $(\nabla f)_i$ denotes the (signed) measure-valued representative of the Schwartz distributional derivative in direction $x_i$ of $f$. Here, $\M$ denotes the usual (Hardy--Littlewood) maximal operator.
The above estimate applied to $\tilde{b}(t,\cdot)$ is essential for our pathwise uniqueness result. Here, the key is to control the maximal function, which involves the singular part (with respect to Lebesgue measure) of the drift's gradient, which turns out to be comparable to the $(d-1)$-dimensional Hausdorff (or surface) measure restricted to the boundary of the spatial support of the Barenblatt solution considered as a Borel measure on $\rd$, see Lemma \ref{lemma.drift.regularity}. 
In Lemma \ref{lemma.PU.b.uniformBound:1}, we control the maximal operator applied to this measure by the estimate \eqref{lemma.PU.b.uniformBound:1}.
This estimate is optimal, see Remark \ref{lemma.PU.b.uniformBound:remark-optimality}, which can be seen from explicit calculation using elementary geometric arguments in dimension $d=3$, see Lemma \ref{App-D.lemma.maximalFunction.surfaceMeasure} (ii) in Appendix \ref{App-D}.
We also employ \eqref{introduction.lipschitz-type-esimate} for differences of certain powers of $\tilde{\sigma}$.
In contrast to existing results where the diffusion coefficient is in $H^1$ with respect to its spatial coordinate (cf. e.g. \cite{bossy2019moderated,grube2021strong,grube2022strongTime}), which yields sufficient control over the maximal function on the right hand-side of the inequality when applied to $f=\tilde{\sigma}(t,\cdot)$ (with power $1$), such a control fails in the present case. To overcome this issue, we employ an inequality based on the mean-value theorem inspired by \cite{gess2023inventiones}, see \eqref{StrongSolutions.theorem.PU:MVTineq}. A similar idea was also used in \cite{grube2024strongDegenerate}. Furthermore, we use and extend the method from the latter work in order to deal with the degeneracy and low regularity of $\tilde{\sigma}$.

\paragraph{Structure of the paper.}
In Section \ref{sect:FP-form} we introduce the Leibenson equation and its reformulation as a nonlinear FPE.
Section \ref{sect:Leibenson-MVSDE} contains the associated McKean--Vlasov SDE and the \emph{superposition principle}, via which we lift solutions to the Leibenson equation to solutions to this SDE. 
In Section \ref{sect:barenblatt}, we consider the Barenblatt solutions and apply to them this lift, while in Section \ref{sect:Leibenson-Markov} we construct the \emph{Leibenson process}, a nonlinear Markov process constructed via the superposition principle from the Barenblatt solutions. A key ingredient for this is a delicate uniqueness result for a corresponding linear PDE, which is stated and proven in Section \ref{sect:restr-lin-u}. The proof consists of an analytic and a probabilistic part. Before, we prove in Section \ref{sect:Strongsolution} that the probabilistically weak solutions of the Leibenson process are probabilistically strong. The appendix contains some material on Fokker--Planck equations, Hardy--Littlewood maximal function and details on a few aspects of the proof of Theorems \ref{StrongSolution.theorem:pathwiseU} and \ref{restr-lin-u.theorem}.

\paragraph{Notation.}

We write $\R_+ := [0,\infty)$, $x \cdot y$ or $\langle x,y\rangle_\rd$ for the usual inner product on $\R^d$, $f_+=\max(f,0)$ for a real-valued function $f$, and $\mathrm{d}x$ for Lebesgue measure on $\R^d$. 

\emph{Classical function spaces.} For $U \subseteq \R^l$, $l \in \N$, we denote by $C^n(U,\R^k)$ the space of $n$-times continuously differentiable maps $f : U \to \R^k$, and write $C(U,\R^k) = C^0(U,\R^k)$.
We write $C(U,\R^k)_0$ to denote the set of all $f\in C(U,\R^k)$ with $f(0)=0$.
More generally, for a topological space $X$, we write $C(U,X)$ for the set of continuous maps from $U$ to $X$. 
Let $t\in U$. By $\pi_t: C(U,X) \to X$, we denote the canonical evaluation map at $t$, i.e. $\pi_t(w):=w(t)$.
$C_c^n(U,\R^k)$ and $C_b^n(U,\R^k)$ are the subsets of $C^n(U,\R^k)$ of compactly supported functions (in $U$) and those functions which together with its partial derivatives up to order $n$ are bounded, respectively. We set $C^\infty(U,\R^k) = \bigcap_{n\in \N}C^n(U,\R^k)$, and similarly for $C^\infty_c$ and $C^\infty_b$. Partial time and spatial derivatives of a sufficiently smooth function are denoted $\partial_t f = \frac{\partial f}{\partial_t}$, $\partial_if = \frac{\partial f}{\partial x_i}$ and $\partial_{ij}f = \frac{\partial }{\partial x_i} \frac{\partial f}{\partial x_j}$.

\emph{$L^p$- and Sobolev spaces.} For $p \in [1,\infty]$ and $U \subseteq \R^l$, the usual spaces of (equivalence classes of) Borel measurable $p$-integrable (w.r.t. $\mathrm{d}x$) functions $f: U \to \R^k$ are denoted $L^p(U,\R^k)$ with norm $|\cdot|_{L^p(U,\R^k)}$, shortly $|\cdot|_{L^p(U)}$ or simply $|\cdot|_{L^p}$ when no confusion can occur. $L^p_{\textup{loc}}(U,\R^k)$ are the corresponding spaces of locally integrable functions. When integrability is considered with respect to a measure $\mu$ instead of $\mathrm{d}x$, we write $L^p(U,\R^k; \mu)$. $W_{(\textup{loc})}^{m,p}(U,\R^k)$, $m \in \N, p \in [1,\infty]$, are the usual Sobolev spaces of (locally) $p$-integrable $m$-times weakly differentiable functions $f: U \to \R^k$ with (locally) $p$-integrable weak derivatives up to order $m$; we write $H_{(\textup{loc})}^m(U,\R^k) = W_{(\textup{loc})}^{m,2}(U,\R^k)$, and $H^{-1}(U,\R^k)$ for the corresponding topological dual space. All these spaces are equipped with their usual norms. If $U \subseteq I \times \R^d$, $I \subseteq \R_+$, we write $W^{m,n}_p(U)$ for the space of $p$-integrable maps with $n$ $p$-integrable weak derivatives in $t \in I$ and $m$ $p$-integrable weak derivatives in $x \in \R^d$. For a normed space $(X,||\cdot||)$, by $L_{(\textup{loc})}^p(I;X)$ we denote the space of measurable functions $\varphi: I \,(\subseteq \R_+) \to X$ such that $[t\mapsto ||\varphi (t)||] \in L^p_{(\textup{loc})}(I,\R)$. For all function spaces introduced here, we omit the state space $\R^k$ from the notation when $k =1$. 

\emph{Measures and spaces of measures.} $\mathcal{M}^+_b$ is the space of nonnegative finite Borel measures on $\R^d$. The vague and weak topology on it are defined as usual, i.e. as the initial topology of the maps $\mu \mapsto \int_{\R^d} h \,\mathrm{d}\mu$ for all $h \in C_c(\R^d)$ and $h \in C_b(\R^d)$, respectively. For a topological space $X$, we write $\mathcal{P}(X)$ for the space of probability measures on the Borel $\sigma$-algebra $\Bscr(X)$, and simply $\mathcal{P} = \mathcal{P}(\R^d)$. $\delta_x$ is the Dirac measure in $x$. For a random variable $X$ between a measure space $(X,\mathcal{X},\mu)$ and some measurable space, we write $\mathcal{L}_X = \mu \circ X^{-1}$ for its distribution, i.e. the image (or pushforward) measure of $X$ with respect to $\mu$. This includes the case where $X = (X(t))_{t\geq 0}$ is a stochastic process on a probability space $(\Omega,\mathcal{F},\mathbb{P})$, considered as a path-valued map $\omega \mapsto X(\omega)$. We then say $\mathcal{L}_X$ is the (path) law of $X$.
Let $M\subset \R^n$ be a submanifold. By $S$, we denote the standard \textit{surface measure} on $(M,\mathcal{B}(M))$. Here, $M$ is considered with respect to the subspace topology of $\R^n$. We only consider the case where $M$ is a $(d-1)$-dimensional sphere.

\emph{Functions of bounded variation.} 
$f\in L^1(\R^l,\R^k)$ is said to be \textit{of bounded variation}, in symbol $f\in BV(\R^l;\R^k)$, if all its components' first-order Schwartz distributional derivatives $\partial_{i}f^j$ are given by a finite signed Borel measure on $\R^l$, for all $1\leq i\leq l,\ 1\leq j \leq k$.

\emph{(Hardy--Littlewood) Maximal function/operator and Muckenhoupt weights.} For a brief introduction to both maximal operator (in symbol '$\M$') and Muckenhoupt weights (in symbol '$A_p$'), we refer to Appendix \ref{App-D}.

\emph{Probability theory.} We say that $(\Omega,\Fscr,(\Fscr_t),\mathbbm{P})$ is a \textit{stochastic basis} if $(\Omega,\Fscr, \mathbbm{P})$ is a complete probability space and $(\Fscr_t)$ a right-continuous filtration on $\Omega$ augmented by the $\mathbbm{P}$-zero sets.

\emph{Miscellaneous.} Let $a,b \in \R$. 
We write $a \lesssim_{(t)} b$ and $a\lesssim b$ if $a\leq C_{t} b$ or $a \leq C b$, where $C_t, C>0$ are constants dependent and independent from a parameter $t$, respectively.
Likewise, we write $a \cong_t b$ and $a \cong b$, if $a = Cb$ for a constant $C$ dependent or independent from $t$, respectively.

\section{The Leibenson equation and its Fokker--Planck reformulation}\label{sect:FP-form}
Let $d \in \N$, $q >0, \,p >1$. 
\begin{dfn}\label{def:Leibenson-eq}
$u: (0,\infty)\times \R^d \to \R_+$ is a \emph{weak solution} to the Leibenson equation \eqref{eq:Leibenson}, if
\begin{equation}\label{prop-def1}
u^q \in L^1_{\textup{loc}}((0,\infty);W^{1,1}_{\textup{loc}}(\R^d)),\quad u, |\nabla u^q|^{p-1} \in L^1_{\textup{loc}}((0,\infty)\times \R^d),
\end{equation}
$t\mapsto u(t,x)\mathrm{d}x$ is vaguely continuous, and for all $\psi \in C^1_c((0,\infty)\times \R^d)$
\begin{equation}\label{eq:def-Leibenson-eq}
\int_{(0,\infty)\times \R^d} -u\partial_t \psi + |\nabla u^q|^{p-2}\nabla u^q\cdot \nabla \psi\, \mathrm{d}x\mathrm{d}t = 0.
\end{equation}
	$u$ \emph{has initial condition} $\nu \in \Mscr_b^+$, if $u(t,x)\mathrm{d}x \xrightarrow{t \to 0} \nu$ vaguely.
$u$ is called \emph{probability solution}, if $\mathrm{d}t$-a.e. $u(t,x)\mathrm{d}x$ and its initial datum are probability measures.
\end{dfn}
The only reason why we restrict attention to nonnegative solutions is that, ultimatively, we will only be interested in probability solutions.

The following equivalent formulation via time-independent test functions will be useful. The equivalence can be proven as in the proof of \cite[Prop.6.1.2]{FPKE-book15}.
\begin{lem}\label{lem:equiv-Leibenson}
If $u$ satisfies \eqref{prop-def1}, $t\mapsto u(t,x)xd$ is vaguely continuous and
\begin{equation}\label{int-in-0}
	|\nabla u^q|^{p-1} \in L^1_{\textup{loc}}([0,\infty)\times \R^d),
\end{equation}
then $u$ is a weak solution with initial condition $\nu$ if and only if
\begin{equation}\label{e2.3}
	\int_{\R^d}\varphi \,u(t)\,\mathrm{d}x = \int_{\R^d} \varphi \,\mathrm{d}\nu  - \int_0^t \int_{\R^d} |\nabla u^q|^{p-2}\nabla u^q \cdot \nabla \varphi \,\mathrm{d}x \mathrm{d}t,\quad\forall t \geq 0,
\end{equation}
for every $\varphi \in C^1_c(\R^d)$. 
\end{lem}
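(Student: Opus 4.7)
The plan is to prove both implications by testing \eqref{eq:def-Leibenson-eq} against product test functions $\psi(t,x) = \eta(t)\varphi(x)$, integrating by parts in $t$, and exchanging integration orders via Fubini. Both directions reduce, after this reduction, to routine manipulations; the only delicate step is the passage to the initial time $t=0$, for which the hypothesis \eqref{int-in-0} is exactly tailored.

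For the forward direction, I would fix $\varphi \in C^1_c(\R^d)$, $0 < s < t$, and a mollified approximation $\eta_\epsilon \in C^1_c((0,\infty))$ of $\mathbbm{1}_{[s,t]}$, then plug $\psi = \eta_\epsilon \varphi$ into \eqref{eq:def-Leibenson-eq}. Letting $\epsilon \to 0$, the $\partial_t \psi$-term tends to $\int_{\R^d}\varphi\,u(s)\mathrm{d}x - \int_{\R^d}\varphi\,u(t)\mathrm{d}x$ by vague continuity of $\tau \mapsto u(\tau,x)\mathrm{d}x$, and the spatial term tends to $\int_s^t \int_{\R^d} |\nabla u^q|^{p-2}\nabla u^q \cdot \nabla \varphi\,\mathrm{d}x\,\mathrm{d}\tau$ by dominated convergence, using $|\nabla u^q|^{p-1} \in L^1_{\mathrm{loc}}$ and the boundedness of $\nabla \varphi$. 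Finally I would send $s \to 0^+$: the LHS converges since $u(s)\mathrm{d}x \to \nu$ vaguely, and the RHS converges to the integral on $(0,t)$ by dominated convergence, applicable precisely because \eqref{int-in-0} (strictly stronger than \eqref{prop-def1}) supplies a dominating function that is integrable up to $t=0$ on $[0,t]\times \supp \nabla \varphi$. This produces \eqref{e2.3}.

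For the converse, I would first observe that the initial condition is automatic: for fixed $\varphi$ the RHS of \eqref{e2.3} is continuous in $t$ and converges to $\int \varphi\,\mathrm{d}\nu$ as $t \to 0^+$ by \eqref{int-in-0}, so $u(t)\mathrm{d}x \to \nu$ vaguely. To recover \eqref{eq:def-Leibenson-eq}, I would pick $\eta \in C^1_c((0,\infty))$, multiply \eqref{e2.3} by $\eta'(t)$, and integrate over $(0,\infty)$. The $\nu$-term drops out since $\int_0^\infty \eta'\,\mathrm{d}t = 0$, and a single application of Fubini on the remaining double integral rewrites it as $-\int_0^\infty \eta(\tau) \int_{\R^d} |\nabla u^q|^{p-2}\nabla u^q \cdot \nabla \varphi\,\mathrm{d}x\,\mathrm{d}\tau$, which is exactly \eqref{eq:def-Leibenson-eq} for the product test function $\psi(t,x) = \eta(t)\varphi(x)$. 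A standard linearity/density argument (finite sums of such tensor products are $C^1$-dense in $C^1_c((0,\infty)\times \R^d)$ on any fixed compact set, e.g.\ by mollification) then extends the identity to all admissible $\psi$.

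There is no substantial obstacle, since the lemma is essentially a reformulation; the only non-mechanical point, and the sole reason for strengthening \eqref{prop-def1} to \eqref{int-in-0}, is the uniform control on the spatial term as $s \to 0^+$ in the forward direction. The argument is a direct adaptation of the classical proof in the linear Fokker--Planck setting, as in \cite[Prop.~6.1.2]{FPKE-book15} to which the authors refer.
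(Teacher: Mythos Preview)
Your proposal is correct and follows precisely the standard argument the paper points to (the proof of \cite[Prop.~6.1.2]{FPKE-book15}): test against tensor products $\eta(t)\varphi(x)$, pass to the limit in a mollified indicator, and use \eqref{int-in-0} for the limit $s\to 0^+$, with the converse via multiplication by $\eta'$ and Fubini followed by density of tensor products. There is nothing to add.
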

Even though the solutions we consider are vaguely continuous, the following remark may be useful for extensions of our results to (a priori) discontinuous solutions.
\begin{rem}\label{rem:Leibenson-eq}
 The vague continuity assumption is not restrictive: If $u$ satisfies \eqref{int-in-0} as well as all assumptions of Definition \ref{def:Leibenson-eq} except for the continuity assumption and, in addition, 
 $$\esssup_{t>0}|u(t)|_{L^1(\R^d)} <\infty,$$
 then $t \mapsto u(t,x)\mathrm{d}x$ has a unique vaguely continuous (measure-valued) $\mathrm{d}t$-version on $[0,\infty)$. This follows from \cite[Lemma 2.3]{Rehmeier-nonlinear-flow-JDE}.  
		Moreover, if $|\nabla u^q|^{p-1} \in L^{1}_{\textup{loc}}([0,\infty),L^{1}(\R^d))$, then $[0,\infty)\ni t \mapsto |u(t)|_{L^1(\R^d)}$ is constant. The latter follows by considering \eqref{e2.3} for an increasing sequence $(\varphi_n)_{n\in \N}\subseteq C^1_c(\R^d)$ such that $0\leq \varphi_n \leq 1$, $\varphi_n(x) = 1$ for $|x| < n$, $\sup_n|\nabla \varphi_n(x)|\leq C$ for all $x\in \R^d$ for some $C>0$ not depending on $n$ or $x$, and by letting $n \to \infty$.
\end{rem}

\paragraph{Associated nonlinear Fokker--Planck equation.}
Let $\Fscr = \Fscr(p,q,d)\subseteq W^{1,1}_{\textup{loc}}(\rd)$ be the set of those nonnegative functions $u\in W^{1,1}_{\textup{loc}}(\R^d)$ such that $|\nabla u|^{p-2} u^{(p-1)(q-1)} \in W^{1,1}_{\textup{loc}}(\R^d)$. Note that both $p-2$ and $(q-1)(p-1)$ could be negative and that hence some care is required in writing expressions as $|\nabla u|^{p-2} u^{(q-1)(p-1)}$, since $|\nabla u|$ and $u$ may vanish. For such terms to be well-defined in the sense of Sobolev functions, an implicit assumption is that either functions raised to a negative power are strictly positive $\mathrm{d}x$-a.s. or, for products, that zeros raised to a negative power are multiplied by zeros raised to a positive power. In the latter case we use the convention $0\times  \infty = 0$.

Then let $a_{ij},b_i: \Fscr \times \R^d \to \R$, $ 1 \leq i,j \leq d$, be defined as in \eqref{eq:Leibenson-FPE:coefficients}, i.e. 
\begin{equation*}
	a_{ij}(u,x) = q^{p-1}\delta_{ij} |\nabla u(x)|^{p-2} u(x)^{(p-1)(q-1)}, \quad b_i(u,x)= q^{p-1} \partial_i \big(|\nabla u(x)|^{p-2} |u(x)|^{(p-1)(q-1)}  \big)
\end{equation*}
and consider the nonlinear Fokker--Planck equation (here and throughout using Einstein summation convention)
\begin{equation*}
	\partial_t u(t,x) = \partial_{ij}\big(a_{ij}(u(t),x)u(t,x)\big) - \partial_i\big(b_i(u(t),x)u(t,x)\big),\quad (t,x)\in (0,\infty)\times \R^d,
\end{equation*}
i.e. \eqref{eq:Leibenson-FPE}.
We stress that these coefficients are not of the usual Nemytskii-type, since they depend pointwise on the gradient of the solution $u$, hence on the values of $u$ in a(n arbitrary small) neighborhood of $x$.

\begin{dfn}\label{def:Leibenson-FP}
	A nonnegative function $u \in L^1_{\textup{loc}}((0,\infty)\times \R^d)$ is a \emph{distributional solution} to \eqref{eq:Leibenson-FPE}, if $u(t) \in \Fscr$ for all $t >0$, $t \mapsto u(t,x)\mathrm{d}x$ is vaguely continuous,
\begin{equation}\label{int-assumpt}
	|\nabla u|^{p-2} u^{(q-1)(p-1)+1}   , \,\,\,  u\, \partial_i \big(  |\nabla u|^{p-2} u^{(q-1)(p-1)}   \big) \in L^1_{\textup{loc}}((0,\infty)\times \R^d), \,\, i \in \{1,\dots,d\},
\end{equation}
	and for all $\psi \in C^2_c((0,\infty)\times \R^d)$
\begin{equation}\label{eq:def-FP-eq}
	\int_{(0,\infty)\times \R^d} \bigg(\partial_t \varphi + q^{p-1}|\nabla u|^{p-2} u^{(p-1)(q-1)}\Delta \psi + q^{p-1} \nabla \big( |\nabla u|^{p-2} u^{(p-1)(q-1)} \big) \cdot \nabla \psi   \bigg)u\, \mathrm{d}x \mathrm{d}t = 0.
\end{equation}
$u$ \emph{has initial condition} $\nu \in \Mscr^+_b$, if $u(t,x)\mathrm{d}x \xrightarrow{t\to 0}\nu$ vaguely. $u$ is called \emph{probability solution}, if $\nu$ and $\mathrm{d}t$-a.e. $u(t,x)\mathrm{d}x$ are Borel probability measures.
\end{dfn}
The following lemma can be proven exactly as \cite[Prop. 6.1.2]{FPKE-book15}.
\begin{lem}\label{lem:FP-equiv}
	If $u$ is as in the previous definition and satisfies \eqref{int-assumpt} with $[0,\infty)$ replacing $(0,\infty)$, then $u$ is a distributional solution with initial condition $\nu$ if and only if for all $t \geq 0$ and $\varphi \in C^2_c(\R^d)$
	\begin{equation}\label{a1}
		\int_{\R^d} \varphi \, u(t)\, \mathrm{d}x = \int_{\R^d} \varphi \,\mathrm{d}\nu + \int_0^t \int_{\R^d}   q^{p-1}\bigg( |\nabla u|^{p-2} u^{(p-1)(q-1)}\Delta \varphi  +\nabla  \big( |\nabla u|^{p-2} u^{(p-1)(q-1)} \big)  \cdot \nabla \varphi \bigg)   \, u \, \mathrm{d}x \mathrm{d}t.
	\end{equation}
\end{lem}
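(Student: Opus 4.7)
The plan is to mimic the standard equivalence argument as in \cite[Prop.~6.1.2]{FPKE-book15}, adapted to the gradient-dependent coefficients of \eqref{eq:Leibenson-FPE}. To keep notation compact, for $\varphi\in C^2_c(\R^d)$ set
\[
L_u\varphi(t,x) := q^{p-1}\bigl(|\nabla u(t,x)|^{p-2} u(t,x)^{(p-1)(q-1)}\Delta\varphi(x) + \nabla\bigl(|\nabla u(t,x)|^{p-2} u(t,x)^{(p-1)(q-1)}\bigr)\cdot \nabla\varphi(x)\bigr)
\]
and $\mu_t(\varphi):=\int_{\R^d}\varphi\,u(t)\,\mathrm{d}x$. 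The strengthened integrability assumption \eqref{int-assumpt} on $[0,\infty)\times\R^d$ guarantees that for each such $\varphi$ the map $t\mapsto\int_{\R^d} L_u\varphi(t,\cdot)u(t)\,\mathrm{d}x$ lies in $L^1_{\textup{loc}}([0,\infty))$.

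For the direction \eqref{a1} $\Rightarrow$ \eqref{eq:def-FP-eq} I would first test \eqref{eq:def-FP-eq} with functions of product form $\psi(t,x)=\eta(t)\varphi(x)$, $\eta\in C^1_c((0,\infty))$, $\varphi\in C^2_c(\R^d)$. From \eqref{a1} the map $t\mapsto\mu_t(\varphi)$ is absolutely continuous on $[0,\infty)$ with $\frac{d}{dt}\mu_t(\varphi)=\int_{\R^d} L_u\varphi(t,\cdot)u(t)\,\mathrm{d}x$ for a.e.\ $t$. Multiplying by $\eta'(t)$, using Fubini, and integrating by parts in time (the boundary terms vanish since $\supp\eta$ is compact in $(0,\infty)$) produces \eqref{eq:def-FP-eq} for $\psi=\eta\varphi$. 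An arbitrary $\psi\in C^2_c((0,\infty)\times\R^d)$ is then approximated by finite linear combinations $\sum_k\eta_k(t)\varphi_k(x)$ converging together with $\partial_t$, $\nabla_x$, $\Delta_x$ uniformly on a common compact support (e.g.\ via Stone--Weierstrass on the support of $\psi$, or by tensor-product mollification); the $L^1_{\textup{loc}}$-integrability \eqref{int-assumpt} makes passage to the limit under the integrals routine.

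Conversely, assume \eqref{eq:def-FP-eq}. Testing with $\psi(t,x)=\eta(t)\varphi(x)$ as above yields
\[
\int_0^\infty \eta'(t)\mu_t(\varphi)\,\mathrm{d}t = -\int_0^\infty \eta(t)\int_{\R^d} L_u\varphi(t,\cdot)u(t)\,\mathrm{d}x\,\mathrm{d}t.
\]
Since this holds for every $\eta\in C^1_c((0,\infty))$ and the right-hand side is locally integrable in $t$, the distributional derivative on $(0,\infty)$ of $t\mapsto\mu_t(\varphi)$ coincides with the $L^1_{\textup{loc}}$ function $t\mapsto\int_{\R^d} L_u\varphi(t,\cdot)u(t)\,\mathrm{d}x$. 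Vague continuity forces $t\mapsto\mu_t(\varphi)$ to be its own continuous representative and hence absolutely continuous, so that integrating between $s>0$ and $t>s$ yields \eqref{a1} with $s$ replacing $0$. Letting $s\to 0^+$, the left-hand side tends to $\nu(\varphi)$ by vague continuity at $0$, and the time integral extends down to $0$ thanks to the upgraded integrability \eqref{int-assumpt} on $[0,\infty)\times\R^d$ together with the compactness of $\supp\varphi$.

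The only step I expect to require any care is this final passage $s\to 0^+$: it is precisely in order to legitimize the dominated-convergence argument for the time integral there that \eqref{int-assumpt} has to be upgraded from $(0,\infty)$ to $[0,\infty)$. Everything else is soft analysis that does not exploit any structure specific to the Leibenson coefficients beyond the integrability already assumed.
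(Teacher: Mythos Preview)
Your argument is correct and is exactly the standard proof referred to in the paper, which simply cites \cite[Prop.~6.1.2]{FPKE-book15} without giving details. You have in fact supplied more detail than the paper does, and your identification of the passage $s\to 0^+$ as the one place where the strengthened integrability on $[0,\infty)$ is genuinely needed is precisely the point of the lemma.
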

An analogue of Remark \ref{rem:Leibenson-eq} holds in the Fokker--Planck case. We compare Definitions \ref{def:Leibenson-eq} and \ref{def:Leibenson-FP}:
\begin{lem}\label{lem:equiv-sol-notion}
Assume $u$ satisfies \eqref{prop-def1} with 
\begin{equation}\label{eq:chain-rule}
\nabla u^q = q u^{q-1} \nabla u,
\end{equation}
 $u(t) \in \Fscr$ for all $t >0$, \eqref{int-assumpt}, and $t\mapsto u(t,x)\mathrm{d}x$ is vaguely continuous.
Then $u$ is a weak solution to \eqref{eq:Leibenson} in the sense of Definition \ref{def:Leibenson-eq} if and only if it is a distributional solution to \eqref{eq:Leibenson-FPE} in the sense of Definition \ref{def:Leibenson-FP}.
 
If in addition $u$ satisfies all integrability assumptions in time on $[0,\infty)$ instead of $(0,\infty)$ and has initial condition $\nu$, then $u$ satisfies \eqref{e2.3} if and only if it satisfies \eqref{a1} (for all $\varphi \in C^1_c(\R^d)$ and $ C^2_c(\R^d)$, respectively).
\end{lem}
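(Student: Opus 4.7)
The plan is to reduce both equivalences to a pointwise-in-$x$ identity relating the integrands of \eqref{eq:def-Leibenson-eq} and \eqref{eq:def-FP-eq}. Setting $A := |\nabla u|^{p-2} u^{(q-1)(p-1)}$ and using the chain-rule hypothesis \eqref{eq:chain-rule}, I would first compute
\begin{equation*}
|\nabla u^q|^{p-2}\nabla u^q \;=\; q^{p-1}\,u^{(q-1)(p-1)}|\nabla u|^{p-2}\nabla u \;=\; q^{p-1}\,A\,\nabla u,
\end{equation*}
so the flux in \eqref{eq:def-Leibenson-eq} becomes $q^{p-1}A\nabla u\cdot\nabla\psi$. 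Note that this quantity lies in $L^1_{\textup{loc}}$ since its modulus equals $q^{-(p-1)}|\nabla u^q|^{p-1}|\nabla\psi|$, to which \eqref{prop-def1} applies.

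Next I would establish that $Au\in W^{1,1}_{\textup{loc}}$ with weak gradient $\nabla(Au) = A\nabla u + u\nabla A$. The three ingredients $Au$, $u\nabla A$ and $A\nabla u$ all belong to $L^1_{\textup{loc}}$ by \eqref{int-assumpt} and the identity above. The product rule itself then follows from the classical smooth product rule applied to standard mollifications of $u$ and $A$ (both in $W^{1,1}_{\textup{loc}}$ since $u\in\Fscr$), followed by a limit in $L^1_{\textup{loc}}$ justified by the just-noted integrabilities and dominated convergence. Once this is in hand, for each fixed $t>0$ and every $\psi(t,\cdot)\in C^2_c(\R^d)$, integration by parts in space gives
\begin{equation*}
\int_{\R^d} A u\,\Delta\psi\,\mathrm{d}x \;=\; -\int_{\R^d}\bigl(A\nabla u + u\nabla A\bigr)\cdot\nabla\psi\,\mathrm{d}x,
\end{equation*}
which after rearrangement yields the pointwise identity
\begin{equation*}
q^{p-1}\,A\nabla u\cdot\nabla\psi \;=\; -q^{p-1}\bigl(Au\,\Delta\psi + u\,\nabla A\cdot\nabla\psi\bigr)\quad\text{after integration in }x.
\end{equation*}
Integrating in $t$ against $\psi\in C^2_c((0,\infty)\times\R^d)$ and substituting into \eqref{eq:def-Leibenson-eq} produces (up to an overall sign) precisely \eqref{eq:def-FP-eq}, and conversely (every $\psi \in C^1_c$ can be approximated by $C^2_c$ test functions, which is compatible with the weaker differentiability required by \eqref{eq:def-Leibenson-eq}). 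For the second assertion, the same spatial integration-by-parts identity, applied now to the time-independent test functions $\varphi\in C^1_c(\R^d)$ in \eqref{e2.3} versus $\varphi\in C^2_c(\R^d)$ in \eqref{a1}, together with Lemmas \ref{lem:equiv-Leibenson} and \ref{lem:FP-equiv}, yields the claimed equivalence of the initial-condition formulations.

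The main technical obstacle is the justification of the product rule $\nabla(Au) = A\nabla u + u\nabla A$: neither $u$ nor $A$ is assumed to be bounded or to lie in any higher Sobolev space, so the classical product rule for Sobolev functions does not apply directly. The resolution is the mollification argument sketched above, where \eqref{int-assumpt}, \eqref{prop-def1} and \eqref{eq:chain-rule} supply exactly the three $L^1_{\textup{loc}}$-bounds needed for the passage to the limit. A minor additional point is the density step allowing one to pass from $C^2_c$ test functions in the FPE formulation to $C^1_c$ test functions in the Leibenson formulation (and back), which is routine given the $L^1_{\textup{loc}}$-integrability of all occurring fluxes.
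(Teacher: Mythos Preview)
Your proposal is correct and follows essentially the same route as the paper: use the chain rule \eqref{eq:chain-rule} to rewrite $|\nabla u^q|^{p-2}\nabla u^q$ as $q^{p-1}A\nabla u$, apply the product rule $\nabla(Au)=A\nabla u+u\nabla A$, integrate by parts, and finish with a $C^2_c\to C^1_c$ density argument; the second assertion then follows from the first together with Lemmas \ref{lem:equiv-Leibenson} and \ref{lem:FP-equiv}. If anything, you are more explicit than the paper about justifying the product rule via mollification, whereas the paper simply invokes ``the product rule for Sobolev functions'' without further comment.
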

Note that \eqref{eq:chain-rule} does not follow from \eqref{prop-def1}, since the latter does not imply local integrability of $u^{q-1} \nabla u$ (in particular, note that $q-1$ need not be positive).
\begin{proof}
By assumption all regularity- and integrability properties of Definitions \ref{def:Leibenson-eq} and \ref{def:Leibenson-FP} are satisfied, so it remains to prove equivalence of \eqref{eq:def-Leibenson-eq} and \eqref{eq:def-FP-eq}. To this end, let $\psi \in C^2_c((0,\infty)\times \R^d)$. Then
\begin{align*}
	&\int_{(0,\infty)\times \R^d}|\nabla u^q|^{p-2}\nabla u^q\cdot \nabla \psi\, \mathrm{d}x\mathrm{d}t = q^{p-1} \int_{(0,\infty)\times \R^d} u^{(p-1)(q-1)}|\nabla u|^{p-2} \nabla u \cdot \nabla \psi \, \mathrm{d}x \mathrm{d}t
	\\& =q^{p-1}\int_{(0,\infty)\times \R^d} \nabla \bigg( u^{(p-1)(q-1)}|\nabla u|^{p-2} u\bigg)\cdot \nabla \psi - \nabla \bigg(u^{(p-1)(q-1)} |\nabla u|^{p-2}\bigg) u \cdot \nabla \psi  \, \mathrm{d}x \mathrm{d}t
	\\& = -q^{p-1} \int_{(0,\infty)\times \R^d}  \bigg(|\nabla u|^{p-2} u^{(p-1)(q-1)}\Delta \psi +  \nabla \big(u^{(p-1)(q-1)} |\nabla u|^{p-2}  \big)\cdot \nabla \psi  \bigg) \,u\, \mathrm{d}x \mathrm{d}t,
\end{align*}
where the first equality is due to \eqref{eq:chain-rule}, and the second follows from the product rule for Sobolev functions. To extend \eqref{eq:def-Leibenson-eq} to all $\psi \in C^1_c((0,\infty)\times \R^d)$, one uses a simple approximation argument. This concludes the first part of the proof. The second part follows immediately from the first part and Lemmas \ref{lem:equiv-Leibenson} and \ref{lem:FP-equiv}.
\end{proof}

\section{Associated McKean--Vlasov SDE and superposition principle}\label{sect:Leibenson-MVSDE}
For any nonlinear Fokker--Planck equation there is an associated McKean--Vlasov SDE with drift- and diffusion-coefficient given by the drift and the square root of twice the diffusion coefficient of the nonlinear Fokker--Planck equation. Thus, the McKean--Vlasov SDE associated with \eqref{eq:Leibenson-FPE} is
\begin{align}\label{eq:DDSDE}
\begin{cases}
	&\mathrm{d}X(t) = q^{p-1}\nabla \bigg(|\nabla u(t,X(t))|^{p-2}   u(t,X(t))^{(q-1)(p-1)}\bigg)\mathrm{d}t   + \sqrt{2q^{p-1}}\bigg( |\nabla u(t,X(t))|^{\frac{p-2}{2}}u(t,X(t))^{\frac{(q-1)(p-1)}{2}}\bigg) \mathrm{d}W(t)
	\\& \mathcal{L}_{X(t)}(\mathrm{d}x) = u(t,x)\mathrm{d}x,
\end{cases}
\end{align}
where $W = (W(t))_{t\geq 0}$ is a standard $d$-dimensional Brownian motion.
We stress that here $u$ is \emph{not} a priori given, but part of the solution.

\begin{dfn}\label{def:DDSDE}
A \emph{probabilistically weak solution} to \eqref{eq:DDSDE} is an adapted stochastic process $X = (X(t))_{t\geq 0}$ on a stochastic basis $(\Omega, \Fscr, (\Fscr_t)_{t\geq 0},\mathbb{P})$ with an $(\Fscr_t)$-standard Brownian motion $W$ such that $\mathcal{L}_{X(t)}(\mathrm{d}x) = u(t,x)\mathrm{d}x$ with $u(t,\cdot)\in \Fscr$ for all $t>0$,
\begin{equation}\label{def:DDSDE:integrabilityCondition}
	\mathbb{E}\bigg[\int_0^T \bigg( \big| \nabla \big(|\nabla u(t,X(t))|^{p-2}   u(t,X(t))^{(q-1)(p-1)}\big) \big|    +   |\nabla u(t,X(t))|^{p-2}   u(t,X(t))^{(q-1)(p-1)} \bigg) \mathrm{d}t    \bigg] \quad \forall T>0,
\end{equation}
and $\mathbb{P}$-a.s.
\begin{align*}
	X(t) = X(0) + q^{p-1} \int_0^t & \nabla \big(|\nabla u(t,X(t))|^{p-2}   u(t,X(t))^{(q-1)(p-1)}\big) \mathrm{d}t  
	\\&+ \sqrt{2q^{p-1}} \int_0^t  |\nabla u(t,X(t))|^{\frac{p-2}{2}}u(t,X(t))^{\frac{(q-1)(p-1)}{2}} d W(t), \quad \forall t \geq 0.
\end{align*}
The \emph{initial condition} of $X$ is the probability measure $\mathcal{L}_{X(0)}$. We call the probability measure $P$ on $\Bscr(C(\R_+,\R^d))$, $P := \mathbb{P}\circ X^{-1}$, where $X$ is considered as the path-valued map $X: \Omega \to C(\R_+,\R^d)$, $X(\omega) = [t\mapsto X(t)(\omega)]$, a \emph{solution (path) law} to \eqref{eq:DDSDE}.
\end{dfn}
Strictly speaking, a weak solution is a tuple $((\Omega, \Fscr, (\Fscr)_{t\geq 0}, \mathbb{P}), X, W)$. However, we usually shortly refer to $X$ as the weak solution or, when it is relevant in the context of strong solutions (see Definition \ref{def:DDSDE_strong}), to $(X,W)$.

\paragraph{Superposition principle.}
The crucial relation between a nonlinear Fokker--Planck equation and its associated Mckean--Vlasov SDE is the following: By Itô's formula, the curve of one-dimensional time marginals of any McKean--Vlasov probabilistically weak solution is a weakly continuous probability solution to the nonlinear Fokker--Planck equation (in the sense of Definition \ref{dD.2}). Conversely, by the \emph{superposition principle}, see \cite{Trevisan16,BR18,BR18_2}, for any such distributional solution with sufficient spatial integrability, there exists a probabilistically weak solution to the McKean--Vlasov SDE with one-dimensional time marginals equal to this distributional solution. Now we state these relations precisely for the nonlinear FPE \eqref{eq:Leibenson-FPE} and its associated McKean--Vlasov SDE \eqref{eq:DDSDE}.
\begin{prop}
Let $X = (X(t))_{t\geq 0}$ be a probabilistically weak solution to \eqref{eq:DDSDE}. Then $u$, given by $t \mapsto u(t,x)\mathrm{d}x := \mathcal{L}_{X(t)}(\mathrm{d}x)$, is a weakly continuous probability solution to \eqref{eq:Leibenson-FPE}. Thus, it is a weak solution to \eqref{eq:Leibenson}, provided $u$ satisfies \eqref{prop-def1} and \eqref{eq:chain-rule}.
\end{prop}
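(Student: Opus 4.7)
The natural plan is to apply It\^o's formula to test functions $\varphi\in C^2_c(\R^d)$ composed with $X$, take expectations to obtain the integrated Fokker--Planck identity \eqref{a1}, and then invoke Lemma \ref{lem:FP-equiv} and Lemma \ref{lem:equiv-sol-notion}.

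First I would fix $\varphi\in C^2_c(\R^d)$ and apply It\^o's formula to $\varphi(X(t))$, using the SDE satisfied by $X$. Since the quadratic covariation of $X$ equals $2q^{p-1}\delta_{ij}|\nabla u(s,X(s))|^{p-2}u(s,X(s))^{(q-1)(p-1)}\mathrm{d}s = 2a_{ij}(s,X(s))\mathrm{d}s$ (with the notation of \eqref{eq:Leibenson-FPE:coefficients}), the It\^o correction produces exactly $q^{p-1}|\nabla u|^{p-2}u^{(q-1)(p-1)}\Delta\varphi$. The integrability assumption \eqref{def:DDSDE:integrabilityCondition} guarantees that both the drift-term integrand and (via Cauchy--Schwarz together with boundedness of $\nabla\varphi$) the quadratic variation of the stochastic integral are in $L^1(\mathrm{d}\mathbb{P}\otimes \mathrm{d}s)$ on $[0,T]$ for every $T>0$. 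Hence, taking expectations annihilates the martingale term and, using $\mathcal{L}_{X(s)}(\mathrm{d}x)=u(s,x)\mathrm{d}x$, yields for every $t\geq 0$
\begin{equation*}
\int_{\R^d}\varphi\, u(t)\,\mathrm{d}x = \int_{\R^d}\varphi\,\mathrm{d}\mathcal{L}_{X(0)} + \int_0^t\!\!\int_{\R^d} q^{p-1}\Big(|\nabla u|^{p-2}u^{(p-1)(q-1)}\Delta\varphi+\nabla\big(|\nabla u|^{p-2}u^{(p-1)(q-1)}\big)\cdot\nabla\varphi\Big)u\,\mathrm{d}x\mathrm{d}s,
\end{equation*}
which is precisely \eqref{a1} with $\nu=\mathcal{L}_{X(0)}$.

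Next I would verify the remaining hypotheses of Definition \ref{def:Leibenson-FP}. The weak (hence vague) continuity of $t\mapsto u(t,x)\mathrm{d}x$ is immediate: $X$ has continuous sample paths, so for every $\varphi\in C_b(\R^d)$ the map $t\mapsto \mathbb{E}[\varphi(X(t))]=\int\varphi\,u(t)\,\mathrm{d}x$ is continuous by dominated convergence. The requirement $u(t)\in\Fscr$ is part of the definition of a probabilistically weak solution. Finally, the integrability condition \eqref{int-assumpt} follows by rewriting expectations as spatial integrals against $u$: for every compact $K\subset(0,\infty)\times\R^d$,
\begin{equation*}
\int_K |\nabla u|^{p-2}u^{(q-1)(p-1)+1}\,\mathrm{d}x\mathrm{d}t\leq \mathbb{E}\!\int_0^T |\nabla u(t,X(t))|^{p-2}u(t,X(t))^{(q-1)(p-1)}\mathrm{d}t,
\end{equation*}
and analogously for the term involving $\partial_i(|\nabla u|^{p-2}u^{(q-1)(p-1)})\,u$; both right-hand sides are finite by \eqref{def:DDSDE:integrabilityCondition}. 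The initial condition is matched by testing against $\varphi\in C_c^2(\R^d)$ and letting $t\to 0$, which gives vague convergence $u(t,x)\mathrm{d}x\to\mathcal{L}_{X(0)}$; in fact, by path continuity, we also have weak convergence, so $u$ is a weakly continuous probability solution of \eqref{eq:Leibenson-FPE}.

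For the final assertion, the additional assumptions \eqref{prop-def1} and \eqref{eq:chain-rule} are exactly those required to apply Lemma \ref{lem:equiv-sol-notion}, which transfers the distributional solution property of the Fokker--Planck formulation \eqref{eq:Leibenson-FPE} to the weak solution property of the Leibenson equation \eqref{eq:Leibenson} in the sense of Definition \ref{def:Leibenson-eq}. The only genuine point requiring care is justifying that the It\^o correction in the drift term, obtained formally as $\nabla(|\nabla u|^{p-2}u^{(q-1)(p-1)})$, is handled as a Sobolev-sense expression (recalling that $p-2$ and $(p-1)(q-1)$ may be negative); this is, however, already encoded in the definition of $\Fscr$ and the conventions fixed at the start of Section \ref{sect:FP-form}, so no extra argument is needed. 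I do not anticipate a substantive obstacle here; the result is essentially a direct It\^o computation combined with the two equivalence lemmas.
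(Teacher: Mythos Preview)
Your proposal is correct and follows essentially the same approach as the paper: the paper's proof consists of the single sentence that the first assertion follows from a straightforward application of It\^o's formula and the second from Lemma \ref{lem:equiv-sol-notion}, which is precisely what you have spelled out in detail.
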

\begin{proof}
The first assertion follows from a straightforward application of Itô's formula, while the second part follows from Lemma \ref{lem:equiv-sol-notion}.
\end{proof}
The following result yields probabilistic representations of weak solutions to the Leibenson equation as one-dimensional marginal density curves of solutions to the associated McKean--Vlasov SDE.
\begin{theorem}[Superposition principle]\label{thm:SP-princ-general}
Let $u$ be a weakly continuous probability solution to \eqref{eq:Leibenson-FPE} with initial condition $\nu$ such that
\begin{equation}\label{eq:SP-int-1}
	\int_0^T \int_{\R^d}  |\nabla u|^{p-2} u^{(q-1)(p-1)+1} \mathrm{d}x \mathrm{d}t < \infty,\quad  \forall T>0
\end{equation}
and
\begin{equation}\label{eq:SP-int-2}
	\int_0^T \int_{\R^d} |\nabla \big(  |\nabla u|^{p-2} u^{(q-1)(p-1)}   \big)|u\, \mathrm{d}x\mathrm{d}t < \infty,\quad \forall T>0.
\end{equation}
Then there exists a probabilistically weak solution $(X(t))_{t\geq 0}$ to \eqref{eq:DDSDE} with $\mathcal{L}_{X(t)}(\mathrm{d}x) = u(t,x)\mathrm{d}x$ for all $t > 0$ and $\mathcal{L}_{X(0)}(\mathrm{d}x) = \nu(\mathrm{d}x)$.
\end{theorem}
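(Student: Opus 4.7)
The strategy is to view \eqref{eq:Leibenson-FPE} as a \emph{linear} time-inhomogeneous Fokker--Planck equation whose coefficients are frozen along the given solution $u$, and then invoke the classical superposition principle of Trevisan/Figalli in the form used throughout the authors' program. Concretely, set
\begin{equation*}
\tilde a_{ij}(t,x) := q^{p-1}\delta_{ij}|\nabla u(t,x)|^{p-2}u(t,x)^{(p-1)(q-1)}, \quad \tilde b_i(t,x) := q^{p-1}\partial_i\big(|\nabla u(t,x)|^{p-2}u(t,x)^{(p-1)(q-1)}\big).
\end{equation*}
By Definition \ref{def:Leibenson-FP}, the curve $\mu_t(\mathrm{d}x) := u(t,x)\mathrm{d}x$ is a weakly continuous probability-measure-valued distributional solution of $\partial_t\mu_t = \partial_{ij}(\tilde a_{ij}\mu_t) - \partial_i(\tilde b_i\mu_t)$ with initial datum $\nu$, with $\tilde a,\tilde b$ now regarded as fixed Borel measurable functions on $(0,\infty)\times\R^d$.

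Next, I verify the global integrability hypotheses of the linear superposition principle. Pairing the coefficients with $u$, the required bound
\begin{equation*}
\int_0^T\!\!\int_{\R^d}\bigl(|\tilde b(t,x)| + \tilde a_{ii}(t,x)\bigr)\,u(t,x)\,\mathrm{d}x\,\mathrm{d}t < \infty \qquad \forall T>0
\end{equation*}
reduces exactly to the two assumptions \eqref{eq:SP-int-1} and \eqref{eq:SP-int-2}. I can therefore apply the superposition principle (e.g.\ Trevisan's theorem, or the version recorded in \cite{BR18,BR18_2,BRS19-SPpr}, which is also cited earlier in the introduction) to produce a probability measure $P$ on $C(\R_+,\R^d)$ such that the coordinate process $\bar X(t,\omega)=\omega(t)$ has one-dimensional marginals $P\circ\bar X(t)^{-1}(\mathrm{d}x)=u(t,x)\mathrm{d}x$ (matching $\nu$ at $t=0$ by weak continuity) and satisfies, for every $f\in C_c^2(\R^d)$, that
\begin{equation*}
M_t^f := f(\bar X(t)) - f(\bar X(0)) - \int_0^t \bigl(\tilde a_{ij}(s,\bar X(s))\partial_{ij}f(\bar X(s)) + \tilde b_i(s,\bar X(s))\partial_i f(\bar X(s))\bigr)\,\mathrm{d}s
\end{equation*}
is a local martingale under $P$.

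To pass from the martingale problem to a probabilistically weak SDE solution I exploit the diagonal structure of $\tilde a$: the Borel measurable square root $\tilde\sigma_{ij}(t,x):=\delta_{ij}\sqrt{2q^{p-1}}\,|\nabla u(t,x)|^{(p-2)/2}u(t,x)^{(p-1)(q-1)/2}$ satisfies $\tilde\sigma\tilde\sigma^\top = 2\tilde a$ pointwise, so the standard Stroock--Varadhan/Ikeda--Watanabe construction (possibly enlarging the canonical probability space by an independent Brownian motion to handle directions where $\tilde\sigma$ degenerates) yields a stochastic basis carrying $\bar X$ and a $d$-dimensional Brownian motion $W$ with
\begin{equation*}
\mathrm{d}\bar X(t) = \tilde b(t,\bar X(t))\,\mathrm{d}t + \tilde\sigma(t,\bar X(t))\,\mathrm{d}W(t).
\end{equation*}
Because $\mathcal L_{\bar X(t)}(\mathrm{d}x)=u(t,x)\mathrm{d}x$ along the trajectory, the frozen coefficients $\tilde b,\tilde\sigma$ coincide with the McKean--Vlasov coefficients of \eqref{eq:DDSDE} evaluated at $(t,\bar X(t),\mathcal L_{\bar X(t)})$, so $\bar X$ is the desired probabilistically weak solution. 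The integrability requirement \eqref{def:DDSDE:integrabilityCondition} follows by Fubini from \eqref{eq:SP-int-1}--\eqref{eq:SP-int-2}, and the initial law is $\nu$ by construction. The main (but mild) technical point is to ensure that the superposition principle applies verbatim in this non-Nemytskii setting: since the coefficients $\tilde a,\tilde b$ are, after freezing $u$, merely Borel functions of $(t,x)$, no regularity in $x$ is needed, and the only hypothesis is precisely the pair \eqref{eq:SP-int-1}--\eqref{eq:SP-int-2}.
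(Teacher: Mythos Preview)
Your proof is correct and follows essentially the same approach as the paper: freeze $u$ in the coefficients to obtain a linear FPE with Borel coefficients $\tilde a,\tilde b$, verify that the integrability hypothesis of Trevisan's superposition principle \cite[Thm.~2.5]{Trevisan16} reduces to \eqref{eq:SP-int-1}--\eqref{eq:SP-int-2}, and observe that the resulting weak solution of the linearized SDE automatically solves the McKean--Vlasov SDE \eqref{eq:DDSDE} since its marginals are $u(t,x)\mathrm{d}x$. The paper's proof is more terse (it invokes Trevisan's theorem directly to obtain the weak SDE solution without spelling out the martingale-problem intermediate step), but the argument is the same.
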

The following corollary follows immediately from the previous theorem and Lemma \ref{lem:equiv-sol-notion}.
\begin{kor}\label{kor:SP-princ-Leibenson}
	Let $u$ be a weakly continuous weak probability solution to \eqref{eq:Leibenson} with initial condition $\nu$ such that $u(t)\in \Fscr$ for all $t >0$, and \eqref{eq:chain-rule}, \eqref{eq:SP-int-1}+\eqref{eq:SP-int-2} hold. Then there exists a probabilistically weak solution $(X(t))_{t\geq 0}$ to \eqref{eq:DDSDE} with $\mathcal{L}_{X(t)}(\mathrm{d}x) = u(t,x)\mathrm{d}x$ for all $t > 0$ and $\mathcal{L}_{X(0)}(\mathrm{d}x) = \nu(\mathrm{d}x)$.
\end{kor}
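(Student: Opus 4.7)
The plan is to derive the corollary as a direct consequence of the superposition principle (Theorem \ref{thm:SP-princ-general}) once the Leibenson PDE formulation has been translated into the Fokker--Planck form \eqref{eq:Leibenson-FPE}. Since both the superposition principle for \eqref{eq:Leibenson-FPE} and the PDE/FPE equivalence (Lemma \ref{lem:equiv-sol-notion}) have already been established, no genuinely new analytic input is required; the corollary is essentially a bookkeeping step.

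First I would invoke Lemma \ref{lem:equiv-sol-notion}. Its hypotheses are precisely what the corollary supplies: the weak-solution regularity \eqref{prop-def1} is built into being a weak probability solution to \eqref{eq:Leibenson}, the chain-rule identity \eqref{eq:chain-rule} is assumed, the inclusion $u(t)\in\Fscr$ for every $t>0$ is imposed, and the density curve $t\mapsto u(t,x)\mathrm{d}x$ is (weakly, hence vaguely) continuous. Furthermore the global integrability assumptions \eqref{eq:SP-int-1}--\eqref{eq:SP-int-2} are stronger than the local integrability condition \eqref{int-assumpt} that Lemma \ref{lem:equiv-sol-notion} and Definition \ref{def:Leibenson-FP} require. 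The lemma therefore guarantees that $u$ is at the same time a distributional probability solution to the nonlinear FPE \eqref{eq:Leibenson-FPE} with initial datum $\nu$.

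Second, I would feed this FPE-solution into Theorem \ref{thm:SP-princ-general}. Its integrability prerequisites are exactly \eqref{eq:SP-int-1} and \eqref{eq:SP-int-2}, and weak continuity of the marginal curve was just inherited from the corollary's hypothesis. Hence the theorem produces a probabilistically weak solution $(X(t))_{t\geq 0}$ of the associated McKean--Vlasov SDE \eqref{eq:DDSDE} whose one-dimensional time marginals satisfy $\mathcal{L}_{X(t)}(\mathrm{d}x)=u(t,x)\mathrm{d}x$ for $t>0$ and $\mathcal{L}_{X(0)}=\nu$, which is precisely the asserted conclusion. No step is a real obstacle here; the substantive content lies upstream, on one hand in the integration-by-parts identity of Lemma \ref{lem:equiv-sol-notion} that splits the $p$-Laplace flux $|\nabla u^q|^{p-2}\nabla u^q$ into an elliptic piece and a Fokker--Planck drift piece, and on the other hand in the superposition principle underlying Theorem \ref{thm:SP-princ-general}.
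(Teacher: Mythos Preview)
Your proposal is correct and matches the paper's own argument essentially verbatim: the paper simply states that the corollary follows immediately from Theorem \ref{thm:SP-princ-general} and Lemma \ref{lem:equiv-sol-notion}, which is precisely the two-step reduction you describe.
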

\begin{proof}[Proof of Theorem \ref{thm:SP-princ-general}]
Consider $u$ as a solution (in the sense of Definition \ref{dD.1}) to the \emph{linear} FPE with coefficients $\tilde{a}$ and $\tilde{b}$ obtained by a priori fixing $u$ in $a$ and $b$ from \eqref{eq:Leibenson-FPE:coefficients}, i.e.
	\begin{equation}
		\tilde{a}_{ij}(t,x) := q^{p-1}\delta_{ij} |\nabla u(x)|^{p-2} |u(x)|^{(p-1)(q-1)},\quad \tilde{b}_i(t,x):= q^{p-1} \partial_i \big(|\nabla u(x)|^{p-2} |u(x)|^{(p-1)(q-1)}\big).
	\end{equation}
By \cite[Thm.2.5]{Trevisan16} (which applies due to \eqref{eq:SP-int-1}+\eqref{eq:SP-int-2}) one obtains a probabilistically weak solution $X = (X(t))_{t \geq 0}$ to the corresponding (non-distribution dependent) SDE with coefficients $\tilde{b}_i$ as above and $\tilde{\sigma} = (\tilde{\sigma}_{ij})$ given by $\frac 1 2 \tilde{\sigma } \tilde{\sigma}^T = \tilde{a}(t,x)$ such that $\mathcal{L}_{X(t)}(\mathrm{d}x) = u(t,x)\mathrm{d}x$. Clearly, $X$ then solves \eqref{eq:DDSDE}.
\end{proof}

\section{Barenblatt solutions}\label{sect:barenblatt}

The Leibenson equation admits explicit solutions with point source initial condition, known as \emph{Barenblatt solutions}. Three very different regimes arise, subject to $q(p-1)$ being strictly greater, equal, or strictly less than $1$. In this paper, we focus on the first case. This includes the $p$-Laplace case ($q=1$, $p>2$).
\begin{dfn}[Barenblatt solutions]\label{def:Barenblatt-sol}
Let $d \in \N$, $p>1$, $q >0$ such that $q(p-1)>1$. For $y \in \R^d$, the function
	\begin{equation}\label{def-w}
		w^y(t,x) := t^{-\frac d \beta} \bigg[C- \kappa \big( t^{-\frac 1 \beta} |x-y|\big)^{\frac{p}{p-1}}\bigg]_+^\gamma,\quad (t,x)\in (0,\infty)\times \R^d,
	\end{equation}
where 
$$\beta = p + d(q(p-1)-1), \quad \gamma = \frac{p-1}{q(p-1)-1},\quad \kappa = \frac{q(p-1)-1}{pq}\beta^{-\frac{1}{p-1}},$$
and $C>0$ is any constant, is called \emph{Barenblatt solution} to \eqref{eq:Leibenson}. 
\end{dfn}
A straightforward calculation shows that there is a unique choice $C = C(d,p,q)$ independent of $t$ such that $\int_{\R^d}w^y(t,x) \mathrm{d}x = 1$ for all $t>0$. We fix this choice, so that $t \mapsto w^y(t,x)\mathrm{d}x$ is a curve of probability measures.
We often abbreviate
\begin{align*}
    f(t,x) &:=  C- \kappa \big( t^{-\frac 1 \beta} |x|\big)^{\frac{p}{p-1}}.
\end{align*}
Furthermore, we set
\begin{align}\label{eq:def-R}
    R(t) &:= \left(\frac{C}{\kappa}\right)^\frac{p-1}{p} t^\frac{1}{\beta}.
\end{align}
\begin{rem}
	Note that $\beta > p$ and $\gamma,\kappa >0$.
\end{rem}
For the definition of Barenblatt solutions in the cases $q(p-1) = 1$ and $0< q(p-1) < 1$, we refer to \cite{B52_Barenblatt-original,GS24}. In these cases the solutions behave fundamentally different than those in \eqref{def-w}. In particular, while $w^y$ above has \emph{finite} speed of propagation, the Barenblatt solutions in the case $q(p-1) = 1$ and $q(p-1)<1$ have \emph{infinite} speed of propagation. We postpone the study of these case to future work.

From now on, we consider without loss of generality $y=0$ and write $w= w^0$. All subsequent statements hold for any $y \in \R^d$, with obvious modifications where necessary.

\begin{lem}\label{lem:prop-Barenblatt}
$w$ has the following properties.
	\begin{enumerate}[(i)]
		\item 	$w$ is a weakly continuous weak probability solution to \eqref{eq:Leibenson} and attains its initial datum $\delta_0$ weakly in the sense of measures, i.e. $w(t,x)\mathrm{d}x \to \delta_0$ weakly as $t \to 0$.
		\item\label{lem:prop-Barenblatt:ii} For all $t >0$, $w(t) \in C_c(\R^d)$ and $\supp w(t)=\overline{B_{R(t)}(y)}$, with $R(t)$ as in \eqref{eq:def-R}.
		\item $w \in C((0,\infty)\times \R^d) \cap \bigcap_{0 <r < R} C_c([r,R]\times \R^d) \cap \bigcap_{r>0}L^\infty((r,\infty)\times \R^d)$.
	\end{enumerate}
\end{lem}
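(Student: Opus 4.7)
Parts (ii) and (iii) follow directly from the explicit formula \eqref{def-w} (with $y=0$). Since $\gamma>0$, the map $r\mapsto r_+^\gamma$ is continuous on $\R$, so $w$ is a continuous composition of elementary continuous functions on $(0,\infty)\times\rd$. Moreover $w(t,x)>0$ if and only if $f(t,x)>0$, i.e.\ iff $|x|<R(t)$, which yields the support identification in (ii). The bound $w(t,x)\le t^{-d/\beta}C^\gamma$ is immediate and gives the $L^\infty$-estimate on $(r,\infty)\times\rd$ for any $r>0$; the compact support on $[r,R]\times\rd$ follows since $\supp w(t,\cdot)\subset\overline{B_{R(R)}(0)}$ uniformly in $t\in[r,R]$.

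For (i), the first (essentially bookkeeping) step is to fix $C=C(d,p,q)$ as the unique constant making $\int_\rd w(t,x)\,\dx=1$: under the self-similar rescaling $x\mapsto t^{1/\beta}\xi$ the mass integral is $t$-independent and reduces to a single condition on $C$. The core step is then the classical pointwise verification of the PDE on the open set $\{f>0\}$, where all quantities are smooth. A direct computation exploiting the algebraic identities $q\gamma-1=1/(q(p-1)-1)>0$, $(q\gamma-1)(p-1)=\gamma$, and the precise value of $\kappa$ prescribed in Definition~\ref{def:Barenblatt-sol} yields the closed-form flux identity
\begin{equation*}
    |\nabla w^q|^{p-2}\nabla w^q(t,x) \;=\; -\tfrac{1}{\beta t}\, w(t,x)\,x
\end{equation*}
on the interior of the support. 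Taking the divergence of this identity and matching with $\partial_t w$ computed directly from \eqref{def-w} produces the pointwise equality $\partial_t w=\Delta_p w^q$ on $\{f>0\}$.

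The main subtlety is upgrading this pointwise classical PDE to the weak formulation of Definition~\ref{def:Leibenson-eq} across the free boundary $\{f=0\}=\partial B_{R(t)}(0)$. Since $q\gamma>1$, the function $w^q=t^{-dq/\beta}f_+^{q\gamma}$ is $C^1$ in $x$ (and smooth in $t$ away from $t=0$), so the regularity and local integrability condition \eqref{prop-def1} is verified directly. The key observation is that the flux identity above shows $|\nabla w^q|^{p-2}\nabla w^q$ is continuous on $(0,\infty)\times\rd$ and vanishes on $\partial B_{R(t)}(0)$ (because $w$ does). Consequently, for any $\psi\in C^1_c((0,\infty)\times\rd)$, integration by parts on $\{f>0\}$ produces no boundary contribution, and \eqref{eq:def-Leibenson-eq} follows by combining the classical PDE on $\{f>0\}$ with the vanishing of $w$ and its flux on $\{f\le 0\}$. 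The hardest ingredient here is this sharp regularity accounting at the free boundary; it is precisely where the standing assumption $q(p-1)>1$ (equivalently $\gamma>0$ together with $q\gamma>1$) enters in an essential way.

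Finally, for the initial condition and vague continuity: since the total mass of $w(t,\cdot)$ equals one and $R(t)\to 0$ as $t\to 0$, for any $\varphi\in C_b(\rd)$ one has $\bigl|\int\varphi\, w(t)\,\dx - \varphi(0)\bigr|\le\sup_{|x|\le R(t)}|\varphi(x)-\varphi(0)|\to 0$, so $w(t,x)\,\dx\to\delta_0$ vaguely (indeed weakly). Vague continuity on $(0,\infty)$ follows from joint continuity of $w$ combined with the uniform-in-$t$ compact support on any strip $[r,R]$ via dominated convergence.
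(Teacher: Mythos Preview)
Your proposal is correct and follows essentially the same route as the paper, which merely says that (ii)--(iii) are immediate from the explicit formula, that the PDE is verified by a ``straightforward calculation'', and that the initial condition follows via the self-similar change of variables $x\mapsto t^{1/\beta}x$. You supply the details the paper omits: in particular, the closed-form flux identity $|\nabla w^q|^{p-2}\nabla w^q=-\tfrac{1}{\beta t}\,w\,x$ (which is exactly the calculation the paper has in mind), and the observation that this flux is continuous and vanishes on the free boundary, so no boundary terms arise when passing to the weak formulation. Your argument for the initial datum, using that $w(t,\cdot)$ is a probability measure supported in $\overline{B_{R(t)}(0)}$ with $R(t)\to 0$, is a slight repackaging of the paper's transformation argument and is arguably cleaner.
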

\begin{proof}
The nonnegativity as well as (ii) and (iii) follow immediately from the definition of $w$. The weak continuity of $(0,\infty)\ni t \mapsto w(t,x)\mathrm{d}x$ follows from the continuity of $w$ in $(t,x)\in (0,\infty)\times \R^d$, and the weak continuity in $t=0$ follows by a straightforward calculation via the transformation $\Phi(x) = t^{\frac 1 \beta}x$ (similarly to the proof that the classical heat kernel converges weakly to $\delta_0$ when $t \to 0$). That $w$ solves \eqref{eq:Leibenson} in the sense that both sides of the equation are well-defined and equal in the sense of Sobolev spaces is a straightforward calculation. In particular, $w$ is a weak solution to \eqref{eq:Leibenson} in the sense of Definition \ref{def:Leibenson-eq}.
\end{proof}

\subsection{Probabilistic representation for Barenblatt solutions}

We apply Corollary \ref{kor:SP-princ-Leibenson} to the Barenblatt solution $w$. More precisely, we have the following result.
\begin{theorem}\label{prop:SP-for-Barenblatt}
	Let $d\in \N$, $p>1$, $q>0$ such that $q(p-1)>1$ and $p > \frac{1+d}{d}$.
	There is a probabilistically weak solution $(X(t))_{t\geq 0}$ to \eqref{eq:DDSDE} such that $\mathcal{L}_{X(t)}(\mathrm{d}x)= w(t,x)\mathrm{d}x$ for all $t \geq 0$ (here and below with a slight abuse of notation we write $w(0,x)\mathrm{d}x = \delta_0(\mathrm{d}x)$).
\end{theorem}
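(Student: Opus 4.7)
The plan is to verify that the Barenblatt solution $w = w^0$ satisfies all hypotheses of Corollary \ref{kor:SP-princ-Leibenson} and then invoke it. By Lemma \ref{lem:prop-Barenblatt}, $w$ is already a weakly continuous weak probability solution to \eqref{eq:Leibenson} with initial condition $\delta_0$, so what remains to be checked is: (i) $w(t)\in\Fscr$ for every $t>0$; (ii) the chain rule identity \eqref{eq:chain-rule}; and (iii) the two integrability bounds \eqref{eq:SP-int-1}--\eqref{eq:SP-int-2}. All three items reduce to explicit computations on the basis of \eqref{def-w}.

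Writing $w(t,x)=t^{-d/\beta}f(t,x)_+^\gamma$, on the open support $B_{R(t)}$ both $w$ and $w^q$ are smooth, while outside both sides of \eqref{eq:chain-rule} vanish; a standard Sobolev chain-rule argument using the positive exponents $\gamma$ and $\gamma q$, together with the smoothness of $f$ away from $\partial B_{R(t)}$, yields \eqref{eq:chain-rule}. Inside $B_{R(t)}$ one computes
\[
|\nabla f(t,x)| = \kappa\,\tfrac{p}{p-1}\, t^{-\frac{p}{(p-1)\beta}}|x|^{\frac{1}{p-1}},\qquad \nabla w = \gamma\, t^{-d/\beta} f^{\gamma-1}\nabla f,
\]
and the defining relation $\gamma(q(p-1)-1)=p-1$ makes the exponent of $f$ in $|\nabla w|^{p-2}w^{(p-1)(q-1)}$ collapse exactly to $1$. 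Consequently,
\[
|\nabla w(t,x)|^{p-2}\,w(t,x)^{(p-1)(q-1)} \;=\; c(d,p,q)\, t^{\theta}\, f(t,x)_+\, |x|^{(p-2)/(p-1)},
\]
for an explicit exponent $\theta=\theta(d,p,q)$. This factorization already shows that the quantity lies in $W^{1,1}_{\textup{loc}}(\R^d)$ (and hence $w(t)\in\Fscr$) for every $t>0$, provided the integrability conditions below hold.

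For \eqref{eq:SP-int-1}, a further factor $w$ gives an integrand proportional to $t^{\theta_1} f^{1+\gamma}|x|^{(p-2)/(p-1)}$, supported on $\overline{B_{R(t)}}$. The positive power of $f$ controls the boundary $\partial B_{R(t)}$, while the origin singularity $|x|^{(p-2)/(p-1)}$ is locally integrable as soon as $(p-2)/(p-1)>-d$, i.e.\ $p>(d+2)/(d+1)$, which is implied by $p>(d+1)/d$. For \eqref{eq:SP-int-2}, the product rule
\[
\nabla\!\left(f\,|x|^{(p-2)/(p-1)}\right) \;=\; (\nabla f)\,|x|^{(p-2)/(p-1)} + f\,\nabla|x|^{(p-2)/(p-1)}
\]
yields a bounded piece together with a contribution proportional to $f\,|x|^{-1/(p-1)}$ stemming from $\nabla|x|^{(p-2)/(p-1)}$. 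After multiplication by $w\propto f^\gamma$, the integrand is dominated near the origin by $f^{1+\gamma}|x|^{-1/(p-1)}$, whose spatial integral is finite precisely when $1/(p-1)<d$, i.e.\ exactly when $p>(d+1)/d$. The remaining $t$-integrals on $[0,T]$ are handled cleanly through the self-similar substitution $y=t^{-1/\beta}x$, which reduces each spatial integral to a constant times an explicit power of $t$ that turns out to be locally integrable at $t=0$.

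Once (i)--(iii) are established, Corollary \ref{kor:SP-princ-Leibenson} produces a probabilistically weak solution $(X(t))_{t\geq 0}$ to \eqref{eq:DDSDE} with $\mathcal{L}_{X(t)}(\mathrm{d}x)=w(t,x)\mathrm{d}x$ for $t>0$ and $\mathcal{L}_{X(0)}=\delta_0$. The main obstacle in the argument is the sharp bookkeeping of the origin singularity in \eqref{eq:SP-int-2}: this is the unique place where the critical threshold $p>(d+1)/d$ enters, the other integrability requirements being strictly weaker. In contrast, the apparent singularity at the free boundary $\partial B_{R(t)}$ is milder, since $f$ vanishes smoothly there and every negative power of $f$ appearing in the computations is compensated by the strictly larger positive power $\gamma$ (respectively $1+\gamma$) inherited from $w = t^{-d/\beta} f_+^\gamma$.
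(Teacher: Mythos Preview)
Your proposal is correct and follows essentially the same route as the paper: verify the hypotheses of Corollary \ref{kor:SP-princ-Leibenson} by exploiting the algebraic collapse $(\gamma-1)(p-2)+\gamma(q-1)(p-1)=1$ to get the factorization $|\nabla w|^{p-2}w^{(p-1)(q-1)}\cong t^{\theta}f_+\,|x|^{(p-2)/(p-1)}$, then track the origin singularity to identify $p>(d+1)/d$ as the binding constraint. The paper carries out the time integrals explicitly rather than via the self-similar substitution you mention, and is more careful about the case $0<\gamma<1$ when showing $w(t)\in W^{1,1}_{\textup{loc}}$ (where $f^{\gamma-1}$ is genuinely singular at $\partial B_{R(t)}$), but these are differences in detail, not in approach.
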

Recall that we write "$\lesssim$" and "$\lesssim_t$" whenever we suppress absolute multiplicative constants, independent or dependent of $t$, respectively, in an inequality.
\begin{proof}
	We show that $w$ satisfies the assumptions of Corollary \ref{kor:SP-princ-Leibenson}.
	That $w$ is a weakly continuous probability solution to \eqref{eq:Leibenson} was already stated in Lemma \ref{lem:prop-Barenblatt}, so it remains to prove $u(t) \in \Fscr$ for all $t>0$, \eqref{eq:SP-int-1}-\eqref{eq:SP-int-2} and \eqref{eq:chain-rule}. To this end, we first prove $w(t)\in W^{1,1}(\R^d)$ for all $t>0$. Note that $w(t) \in L^1(\R^d)$ follows from Lemma \ref{lem:prop-Barenblatt}.  If $\gamma\geq1$, $w(t)\in W^{1,1}(\R^d)$ is obvious, and [recall that we set $f(t,x) = C- \kappa \big( t^{-\frac 1 \beta} |x|\big)^{\frac{p}{p-1}}$]
	\begin{equation}\label{eq:grad-w}
		\nabla w(t,x) = \gamma t^{-\frac d \beta} f(t,x)^{\gamma-1} \nabla f(t,x) \mathds{1}_{ f(t)\geq 0}(x).
	\end{equation}
Since $f$ is locally bounded, $\{f(t) \geq 0\}$ is compact and
\begin{equation}\label{eq:grad-w-2}
	\nabla f(t,x) = -\kappa \frac{p}{p-1}t^{-\frac{p}{\beta(p-1)}}|x|^{\frac{2-p}{p-1}}x,
\end{equation}
we have indeed $\nabla w(t) \in L^1(\R^d,\R^d)$. If $0< \gamma < 1$, the RHS of \eqref{eq:grad-w} is still the weak gradient of $w(t)$, provided we can show $f(t)^{\gamma-1} \nabla f(t) \mathds{1}_{f(t)\geq 0} \in L^1(\R^d,\R^d)$. Since $f(t,x)^{\gamma-1}|\nabla f(t,x)| \lesssim_t |x|^{\frac{1}{p-1}} f(t,x)^{\gamma-1}$, this translates into proving $f(t)^{\gamma-1} \mathds{1}_{ f(t)\geq 0} \in L^1(\R^d)$. Using polar coordinates and the transformation rule, we find
\begin{equation}\label{eq:help1}
	\int_{\R^d}  f(t,x)^{\gamma-1} \mathds{1}_{f(t)\geq 0}(x)\mathrm{d}x \lesssim_t \int_0^C r^{\gamma-1}\bigg(\frac{C-r}{\kappa}\bigg)^{\frac{(d-1)(p-1)-1}{p}} \mathrm{d}r.
\end{equation}
Splitting the latter in integrals over $(0,\frac C 2)$ and $(\frac C 2, C)$, we obtain the conditions
\begin{equation*}
	\gamma -1 >-1 \quad \quad \text{ and } \quad \quad \frac{(d-1)(p-1)-1}{p} > -1
\end{equation*}
to ensure finiteness of the RHS in \eqref{eq:help1}. Since $\gamma >0$ and the second inequality is equivalent to $dp-d>0$ and we assume $p>1$, we obtain $w(t)\in W^{1,1}(\R^d)$ for all $t>0$. 

Next, we show $|\nabla w(t)|^{p-2} w(t)^{(q-1)(p-1)} \in W^{1,1}(\R^d)$ for all $t >0$. From \eqref{eq:grad-w}-\eqref{eq:grad-w-2}, we see
\begin{align*}
	|\nabla w(t,x)|^{p-2} w^{(q-1)(p-1)} \cong t^{   -\frac d \beta (p-2)  -\frac{p(p-2)}{\beta(p-1) }  -   \frac d \beta (q-1)(p-1)  }       \mathds{1}_{f(t)\geq 0} (x)  f(t,x)^{    (\gamma-1)(p-2)  +\gamma(q-1)(p-1)   }   |x|^{\frac {p-2} {p-1}},
\end{align*}
where we recall that by $\cong$ we indicate that we suppress absolute multiplicative constants on the RHS, which are in particular independent of $t$ and $x$. Note that at least one of the exponents $p-2$ and $(q-1)(p-1)$ is strictly positive, thus we can use our convention $0\times \infty = 0$ to obtain the factor $\mathds{1}_{f(t)\geq 0}$ on the RHS above. Indeed, if $p<2$, then $q(p-1)>1$ implies $(q-1)(p-1)>0$. To continue, note that
\begin{equation}\label{eq:eq}
(\gamma-1)(p-2)  +\gamma(q-1)(p-1) = 1,
\end{equation}
since this can easily be seen to be equivalent to
$$(p-1)(\gamma-1 + \gamma(q-1))+1-\gamma = 1 \iff (p-1)(\gamma q- 1) = \gamma,$$
which is true by definition of $\gamma$. Thus
$$\mathds{1}_{f(t)\geq 0}  f(t)^{    (\gamma-1)(p-2)  +\gamma(q-1)(p-1)   } = f(t)_+ \in (W^{1,1}\cap W^{1,\infty})(\R^d).$$ 
Since $p > \frac{d+1}{d}> \frac{d+2}{d+1}$, $x \mapsto |x|^{\frac{p-2}{p-1}}$ is in $W^{1,1}_{\textup{loc}}(\R^d)$, hence we obtain that $|\nabla w(t)|^{p-2} w(t)^{(q-1)(p-1)}$ is weakly differentiable, and, by the product rule, for a.e. $x$
\begin{align}\label{1a}
	|\nabla \big(|\nabla &w(t,x)|^{p-2} w(t,x)^{(q-1)(p-1)} \big)|
	\lesssim  t^{   -\frac d \beta (p-2)  -\frac{p(p-2)}{\beta(p-1) }  -   \frac d \beta (q-1)(p-1)  }\mathds{1}_{f(t)\geq 0} \big[ t^{-\frac{p}{\beta(p-1)}}|x|^{\frac{1}{p-1}} + f(t,x)_+ |x|^{-\frac{1}{p-1}  }   \big].
\end{align}
To obtain that this term is in $L^1(\R^d,\R^d)$ for arbitrary fixed $t>0$, it suffices to note that $|x|^{-\frac{p}{p-1}}x \in L^1_{\textup{loc}}(\R^d,\R^d)$, which is true since $p > \frac{d+1}{d}$. Hence $u(t) \in \Fscr$ for all $t >0$.

Regarding \eqref{eq:chain-rule}, we have
$
	u^q(t,x) =  t^{-\frac {qd} \beta}f^{\gamma q}_+(t,x),
$
with $\gamma q >1$ by definition of $\gamma$, and hence one can simply calculate $\nabla u^q$ by the chain rule to deduce \eqref{eq:chain-rule}.

Regarding \eqref{eq:SP-int-1} we see that, by \eqref{eq:grad-w}-\eqref{eq:grad-w-2}
\begin{align}\label{aaa}
\notag \int_0^T \int_{\R^d} & |\nabla w(t,x)|^{p-2}w(t,x)^{(q-1)(p-1)+1} \, \mathrm{d}x \mathrm{d}t \\&
\notag
\lesssim \int_0^T \int_{B_{R(t)(0)}
}        t^{-(p-2)(\frac d \beta + \frac{p}{\beta(p-1)})     -    \frac d \beta [(q-1)(p-1)+1]}      f(t,x)^{1+\gamma} |x|^{\frac{p-2}{p-1}} \, \mathrm{d}x \mathrm{d}t \\&
 \lesssim \int_0^T t^{-(p-2)(\frac d \beta + \frac{p}{\beta(p-1)})     -    \frac d \beta [(q-1)(p-1)+1]   } \int_{B_{R(t)}(0)}
   |x|^{\frac{p-2}{p-1}}   \, \mathrm{d}x \mathrm{d}t,
\end{align}
where we used
\begin{equation*}\label{aa}
	(\gamma-1)(p-2)+\gamma(q-1)(p-1)+\gamma = 1+\gamma
\end{equation*}
(see \eqref{eq:eq}) and $1+\gamma >0$.
Using polar coordinates and $p > \frac{d+2}{d+1}$, we find
\begin{equation*}
	\int_{B_{R(t)}(0)}
      |x|^{\frac{p-2}{p-1}}   \, \mathrm{d}x \lesssim \int_0^{R(t)}
     r^{\frac{p-2}{p-1}+d-1} \mathrm{d}r \lesssim t^{\frac{p-2}{\beta(p-1)} + \frac d \beta }.
\end{equation*}
Thus the RHS of \eqref{aaa} is further estimated from above by
\begin{equation*}
	\int_0^T   t^{-(p-2)(\frac d \beta + \frac{p}{\beta(p-1)})     -    \frac d \beta [(q-1)(p-1)] +\frac{p-2}{\beta(p-1)}   } \mathrm{d}t
\end{equation*}
i.e. to conclude its finiteness we need
\begin{equation*}
	-(p-2)\left(\frac d \beta + \frac{p}{\beta(p-1)}\right)     -    \frac d \beta [(q-1)(p-1)] +\frac{p-2}{\beta(p-1)}   > - 1.
\end{equation*}
By a direct calculation, this inequality can be seen to be equivalent to $p-2 < p$ and is hence true. This concludes the proof of \eqref{eq:SP-int-1}.

Finally, regarding \eqref{eq:SP-int-2}, we consider \eqref{1a} to estimate
\begin{align*}
\int_0^T \int_{\R^d}& |\nabla \big(  |\nabla w|^{p-2} w^{(q-1)(p-1)}   \big) | w\, \mathrm{d}x\mathrm{d}t
\\&
 \lesssim \int_0^T \int_{B_{R(t)}(0)
 } t^{   -\frac d \beta (p-1)  -\frac{p(p-2)}{\beta(p-1) }  -   \frac d \beta (q-1)(p-1)  }   \bigg[ t^{-\frac{p}{\beta(p-1)}}|x|^{\frac{1}{p-1}} + |x|^{-\frac{1}{p-1}  }   \bigg]\, \mathrm{d}x\mathrm{d}t =: I + II.
\end{align*}
We further estimate, using again $p > \frac{d+1}{d}$ to have $|x|^{-\frac 1 {p-1}} \in L^1_{\textup{loc}}(\R^d)$,
\begin{align*}
	I \lesssim \int_0^T  t^{   -\frac d \beta (p-2)  -\frac{p(p-1) -1 }{\beta(p-1) }  -   \frac d \beta (q-1)(p-1)  } \mathrm{d}t
\end{align*}
and
$$ II \lesssim \int_0^T   t^{   -\frac d \beta (p-2)  -\frac{p(p-2)  +1  }{\beta(p-1) }  -   \frac d \beta (q-1)(p-1)  } \mathrm{d}t.$$
Elementary calculations show $$\min\bigg(   -\frac d \beta (p-2)  -\frac{p(p-1) -1 }{\beta(p-1) }  -   \frac d \beta (q-1)(p-1)   ,    -\frac d \beta (p-2)  -\frac{p(p-2)  +1  }{\beta(p-1) }  -   \frac d \beta (q-1)(p-1) \bigg ) > -1,$$
whereby \eqref{eq:SP-int-2} follows and the proof is complete.
\end{proof}

Since the computations from the previous proof obviously remain valid for $w^y$ instead of $w$, we obtain:
\begin{kor}\label{cor:SP-princ-Barenblatt-all-y}
	Let $d\in \N$, $p>1$, $q>0$ such that $q(p-1)>1$, $p > \frac{1+d}{d}$ and $y \in \R^d$. There is a probabilistically weak solution $X^y = (X^y(t))_{t\geq 0}$ to \eqref{eq:DDSDE} such that $\mathcal{L}_{X^y(t)}(\mathrm{d}x) = w^y(t,x)\mathrm{d}x$ for all $t \geq 0$ (where again with a slight abuse of notation we write $w^y(0,x)\mathrm{d}x = \delta_y(\mathrm{d}x)$).
\end{kor}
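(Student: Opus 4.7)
The plan is to deduce the corollary from Theorem \ref{prop:SP-for-Barenblatt} by exploiting the translation invariance of both the Leibenson equation and its associated Fokker--Planck and McKean--Vlasov formulations. Concretely, I will present two routes; either suffices.

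\emph{Route 1 (direct translation of the process).} Let $X = (X(t))_{t \ge 0}$ denote the probabilistically weak solution to \eqref{eq:DDSDE} produced by Theorem \ref{prop:SP-for-Barenblatt}, so $\Lscr_{X(t)}(\dx) = w(t,x)\dx$, and let $W$ be its driving Brownian motion on a stochastic basis $(\Omega,\Fscr,(\Fscr_t),\mathbb{P})$. Set $X^y(t) := X(t) + y$. Since $w^y(t,x) = w(t,x-y)$, we have $\nabla w^y(t,\cdot) = (\nabla w(t,\cdot))(\cdot-y)$, and therefore, writing $F(t,x) := |\nabla w(t,x)|^{p-2} w(t,x)^{(q-1)(p-1)}$ and analogously $F^y$,
\begin{equation*}
F^y(t, X^y(t)) = F(t, X(t)), \qquad \nabla F^y(t, X^y(t)) = \nabla F(t, X(t)) \quad \text{a.s.}
\end{equation*}
Thus the SDE satisfied by $X$ from Theorem \ref{prop:SP-for-Barenblatt} transforms verbatim into the corresponding SDE with density $w^y$ satisfied by $X^y$, driven by the same Brownian motion $W$, and the integrability condition \eqref{def:DDSDE:integrabilityCondition} is preserved since the integrals over $\R^d$ of $|\nabla F^y(t,\cdot)| w^y(t,\cdot)$ and $F^y(t,\cdot) w^y(t,\cdot)$ coincide with those of $|\nabla F(t,\cdot)| w(t,\cdot)$ and $F(t,\cdot) w(t,\cdot)$ by translation invariance of Lebesgue measure. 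Finally, $\Lscr_{X^y(t)}(\dx) = w^y(t,x)\dx$ by the change of variables $x \mapsto x-y$, and in the limit $t \to 0$ one gets $\delta_y$.

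\emph{Route 2 (repeat the proof with $w^y$).} Alternatively, one verifies the hypotheses of Corollary \ref{kor:SP-princ-Leibenson} for $w^y$ directly. Every estimate in the proof of Theorem \ref{prop:SP-for-Barenblatt}---namely the membership $w^y(t) \in \Fscr$, the pointwise bounds on $|\nabla w^y(t,\cdot)|^{p-2} w^y(t,\cdot)^{(q-1)(p-1)}$ and its weak gradient, the chain rule identity \eqref{eq:chain-rule}, and the two integrability bounds \eqref{eq:SP-int-1}--\eqref{eq:SP-int-2}---is expressed in terms of $|x-y|$ and reduces after the substitution $x \mapsto x-y$ to the very same integrals that were bounded in the case $y = 0$. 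No new computation is needed.

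Nothing in the argument is subtle; the only minor bookkeeping point is to note that the conditions $p > \frac{d+1}{d}$ and $q(p-1) > 1$ are unchanged under translation, which is immediate. Hence the main obstacle---the delicate integrability estimates that constituted the hard part of Theorem \ref{prop:SP-for-Barenblatt}---has already been handled once and for all at $y=0$.
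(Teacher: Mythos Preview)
Your proposal is correct, and Route 2 is exactly the paper's own argument: the paper simply remarks that the computations from the proof of Theorem \ref{prop:SP-for-Barenblatt} remain valid for $w^y$ in place of $w$. Route 1 (shifting the process by $y$) is an equally valid alternative that the paper does not spell out, but it leads to the same conclusion with no additional work.
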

For later use, we state the following remark.
\begin{rem}
Definitions \ref{def:Leibenson-eq}, \ref{def:Leibenson-FP} and \ref{def:DDSDE} can be modified to initial times $s > 0$ instead of $s=0$ in the obvious way, and all previously stated results remain valid accordingly. In particular, Corollary \ref{cor:SP-princ-Barenblatt-all-y} generalizes in the sense that for all initial pairs $(s,y)$, there is a probabilistically weak solution $X^{s,y} = (X^{s,y}(t))_{t\geq s}$ to \eqref{eq:DDSDE} such that $\mathcal{L}_{X^{s,y}(t)} = w^y(t-s,x)\mathrm{d}x$ for all $t \geq s$. 
\end{rem}

\section{Associated nonlinear Markov process}\label{sect:Leibenson-Markov}
Our next goal is to construct a nonlinear Markov process consisting of the path laws of the solutions $X^{s,y}$ from the previous remark, and to show that this Markov process is uniquely determined by the Barenblatt solutions to \eqref{eq:Leibenson}. First, we briefly recall the linear case, i.e. $(p,q) = (2,1)$.
\subsection{The classical linear case}\label{sect:class-case}
In the linear case, i.e. $p=2$, $q=1$, when \eqref{eq:Leibenson} is the heat equation, its fundamental solutions given by the classical heat kernel $p(t,x,y)$, $t >0 , x,y \in \R^d$, uniquely determine a Markov process $(P_y)_{y \in \R^d} \subseteq \mathcal{P}(C([0,\infty),\R^d)$ with one-dimensional time marginals $P_y \circ \pi_t^{-1} = p(t,x,y)\mathrm{d}x$ (for the definition of a Markov process, see below). We recall that here and below $\pi_t: C(\R_+,\R^d) \to \R^d$, $\pi_t(w) = w(t)$, denotes the canonical projection at time $t \geq 0$. This Markov process is Brownian motion, $P_0$ is the \emph{standard Wiener measure}, and any stochastic process $(X^y(t))_{t\geq 0}$ with path law $P_y$ is a Brownian motion started in $y \in \R^d$. Of course, $P_y$ is the unique solution law of the associated SDE
$$\mathrm{d}X^y(t) = \mathrm{d}W(t), \quad t \geq 0, \quad X^y(0) = y.$$

\subsection{Nonlinear Markov processes}\label{subsect:NL-MP}
	The following definition of nonlinear Markov processes is inspired by McKean \cite{McKean1-classical} and was first studied in \cite{R./Rckner_NL-Markov22}. It extends the usual notion of Markov property in a suitable way in order to cover path laws of McKean--Vlasov solutions. We refer to \cite{R./Rckner_NL-Markov22} for more details.
	
	We use the following notation. For $0 \leq s \leq t$, we denote by $\pi^s_t$ the projection $\pi^s_t : C([s,\infty),\R^d) \to \R^d$, $\pi^s_t(w) := w(t)$, and we set $\Fscr_{s,t} := \sigma(\pi^s_u, s\leq u \leq t)$.
\begin{dfn}\label{d5.1}
	Let $\mathcal{P}_0 \subseteq \mathcal{P}$. A \textit{nonlinear Markov process} is a family $(P_{s,\zeta})_{(s,\zeta)\in[0,\infty)\times \mathcal{P}_0}$, $P_{s,\zeta} \in \mathcal{P}(C([s,\infty),\R^d))$, such that for all $0\leq s \leq r \leq t, \zeta \in \mathcal{P}_0$
	\begin{enumerate}
		\item[(i)] $\mu^{s,\zeta}_t := P_{s,\zeta}\circ (\pi^{s}_{t})^{-1} \in \mathcal{P}_0$,
		\item[(ii)] the \textit{nonlinear Markov property} holds, i.e.
		\begin{equation}\label{MP}
			P_{s,\zeta}(\pi^{s}_{t} \in A|\mathcal{F}_{s,r})(\cdot) = p_{(s,\zeta),(r,\pi^s_r(\cdot))}(\pi^r_t\in A) \quad P_{s,\zeta}\text{-a.s.} \text{ for all }A \in \mathcal{B}(\R^d),
		\end{equation}
		where $(p_{(s,\zeta),(r,z)})_{z\in \R^d}$ is a regular conditional probability kernel from $\R^d$ to $\mathcal{B}(C([r,\infty),\R^d))$ of ${P}_{r,\mu^{s,\zeta}_r}[\,\cdot\,| \pi^r_r{=}z],\, z \!\in\! \R^d$ (i.e. in particular $p_{(s,\zeta),(r,z)}\! \in\! \mathcal{P}(C([r,\infty),\R^d))$ and $p_{(s,\zeta),(r,z)}(\pi^r_r {=} z) = 1$).
	\end{enumerate}
\end{dfn}
The case of a classical (time-inhomogeneous) Markov process $(P_{s,y})_{y\in \R^d}$ is contained in the previous definition. In this case, $(P_{s,\zeta})_{s\geq 0, \zeta \in \mathcal{P}}$, where $P_{s,\zeta}:= \int_{\R^d} P_{s,y}d\zeta(y)$, satisfies the previous definition with 
$\mathcal{P}_0=\mathcal{P}$
and $p_{(s,y)(r,z)}=P_{r,z},$ for all $0\le s\le r,\ z\in\R^d,\ \zeta\in\mathcal{P}$. Then \eqref{MP} reduces to the usual time-inhomogeneous  Markov property, and the {\it Chapman--Kolmogorov equations} for the one-dimensional time marginals $\mu^{s,y}_t = P_{s,y}\circ( \pi^s_t)^{-1}$ hold, i.e.
\begin{equation}
	\label{e5.2'}
	\mu^{s,y}_t=\int_{\R^d} \mu^{r,z}_t \mathrm{d}\mu^{s,y}_r(z),\ \forall 0\le s\le r\le t,\ y\in\R^d.
\end{equation}
In the nonlinear case the map $\zeta\to P_{s,\zeta}$ is not linear on $\mathcal{P}_0$, even if $\mathcal{P}_0=\mathcal{P}$ (which we do not assume). Thus one loses the Chapman--Kolmogorov equations, but the one-dimensional time marginals of a nonlinear Markov process still satisfy the {\it flow property} 
\begin{equation}
	\label{e5.1'}
	\mu^{s,\zeta}_t \in \Pscr_0, \quad \mu^{s,\zeta}_t=\mu^{r,\mu^{s,\zeta}_r}_t,\  \forall 0\le s\le r\le t,\ \zeta\in\mathcal{P}_0,\end{equation} (in the linear case, this follows from \eqref{e5.2'}). A family $\{\mu^{s,\zeta}\}_{s\geq 0, \zeta \in \Pscr_0}$, $\mu^{s,\zeta} = (\mu^{s,\zeta}_t)_{t\geq s}$ of weakly continuous solutions to the nonlinear FPE \eqref{e1.3} in the sense of Definition \ref{dD.2} with initial condition $\mu^{s,\zeta}_s = \zeta$ and satisfying \eqref{e5.1'} is called a $\Pscr_0$-\emph{solution flow \eqref{e1.3}}.

The following result from \cite{R./Rckner_NL-Markov22}, which allows to construct nonlinear Markov processes consisting of solution laws to McKean--Vlasov SDEs with one-dimensional time marginals given by a prescribed family of solution curves to the associated nonlinear FPE, is the key for our purposes in this chapter.
\begin{theorem}{\rm\cite[Theorems 3.4+3.8]{R./Rckner_NL-Markov22}}\label{t52}
	Let $\Pscr_0 \subseteq \Pscr$ and $\{\mu^{s,\zeta}\}_{s\geq 0, \zeta \in \Pscr_0}$ be a $\Pscr_0$-solution flow to \eqref{e1.3} such that the following hypothesis holds.

    \textbf{($\Pscr_0$-$\text{lin}_{\text{ex}}$).} $\mu^{s,\zeta}$ is an extreme point of the set of all weakly continuous probability solutions to the linear FPE with coefficients $(t,x) \mapsto a_{ij}(t,x,\mu^{s,\zeta}_t)$ and $(t,x) \mapsto b_i(t,x,\mu^{s,\zeta}_t)$ with initial condition $(s,\zeta)$ in the sense of Definition \ref{dD.1} for each $(s,\zeta) \in \R_+ \times \Pscr_0$.
\\

	Then, for each $(s,\zeta) \in \R_+\times \Pscr_0$ there is a unique solution law $P_{s,\zeta}$ to the McKean--Vlasov SDE \eqref{e1.5} such that
	\begin{equation}\label{tg}
		P_{s,\zeta}\circ (\pi^s_t)^{-1} = \mu^{s,\zeta}_t, \quad \forall 0\leq s \leq t, \zeta \in \Pscr_0,
	\end{equation}
	and $(P_{s,\zeta})_{s\geq 0, \zeta \in \Pscr_0}$ is a nonlinear Markov process in the sense of Definition {\rm\ref{d5.1}}.
	In particular, this nonlinear Markov process is uniquely determined by its one-dimensional time marginals $(\mu^{s,\zeta}_t)_{0\leq s \leq t, \zeta \in \Pscr_0}$ and equation \eqref{e1.5}.
\end{theorem}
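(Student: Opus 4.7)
The proof has three steps: existence of $P_{s,\zeta}$ with the prescribed marginals, uniqueness of such a solution law, and verification of the nonlinear Markov property. For existence, fix $(s,\zeta)$ and freeze the nonlinearity along $\mu^{s,\zeta}$ by setting $\tilde a_{ij}(t,x) := a_{ij}(t,x,\mu^{s,\zeta}_t)$ and $\tilde b_i(t,x) := b_i(t,x,\mu^{s,\zeta}_t)$. By hypothesis $\mu^{s,\zeta}$ is a weakly continuous probability solution of the linear FPE with these coefficients and initial datum $(s,\zeta)$. The superposition principle \cite{Trevisan16,BR18,BR18_2,Ambrosio2008,Figalli09} then produces a probabilistically weak solution $X$ to the (non-distribution-dependent) SDE $dX = \tilde b(t,X)dt + \sqrt 2\,\tilde\sigma(t,X)dW$, with $\tilde\sigma\tilde\sigma^T = \tilde a$, such that $\Lscr_{X(t)}=\mu^{s,\zeta}_t$ for all $t\geq s$. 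Since $\tilde a(t,x)=a(t,x,\mu^{s,\zeta}_t)$ and $\tilde b(t,x)=b(t,x,\mu^{s,\zeta}_t)$ along these marginals, $X$ automatically solves \eqref{e1.5}; define $P_{s,\zeta}$ to be its path law, which satisfies \eqref{tg} by construction.

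\textbf{Uniqueness (the main obstacle).} Let $\Nscr_\zeta$ denote the convex set of probability measures on $C([s,\infty),\R^d)$ solving the linear martingale problem for $\tilde L_t\varphi = \tilde a_{ij}(t,x)\partial_{ij}\varphi + \tilde b_i(t,x)\partial_i\varphi$ with time-$s$ marginal $\zeta$. The marginal map $\Phi:\Nscr_\zeta\to\{\text{weakly cts.~prob.~solutions of the linear FPE with initial datum }(s,\zeta)\}$, $\Phi(Q):=(Q\circ(\pi^s_t)^{-1})_{t\geq s}$, is affine. Any solution law to \eqref{e1.5} whose one-dimensional marginals coincide with $\mu^{s,\zeta}$ lies in the fiber $\Phi^{-1}(\mu^{s,\zeta})$. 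To show this fiber is a singleton, I would argue by contradiction: if $P\neq P'$ both lay in the fiber, then on some time grid $s\leq r_1<\cdots<r_n$ their finite-dimensional laws would differ, and disintegrating the half-sum $\tfrac12(P+P')\in\Nscr_\zeta$ against the evaluation $(\pi^s_{r_1},\dots,\pi^s_{r_n})$ would exhibit a non-trivial convex decomposition of $\mu^{s,\zeta}$ into two distinct weakly continuous probability solutions of the \emph{same} linear FPE starting from $(s,\zeta)$, contradicting hypothesis ($\Pscr_0$-$\text{lin}_{\text{ex}}$). Making this precise---verifying that the decomposed marginal flows inherit the integrability condition of Definition \ref{dD.2} and remain probability-valued and weakly continuous---is the delicate step and is essentially the Stroock--Varadhan-type equivalence between extremality of a marginal flow and uniqueness in the associated linear martingale problem.

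\textbf{Markov property.} Once existence and uniqueness are in hand, the nonlinear Markov property falls out of the flow property \eqref{e5.1'}. Fix $0\leq s\leq r\leq t$ and let $Q_r$ be the image of $P_{s,\zeta}$ under the restriction $C([s,\infty),\R^d)\to C([r,\infty),\R^d)$. By the flow property, $Q_r$ has one-dimensional marginals $\mu^{s,\zeta}_\cdot = \mu^{r,\mu^{s,\zeta}_r}_\cdot$ on $[r,\infty)$, starts at $\mu^{s,\zeta}_r\in\Pscr_0$, and solves \eqref{e1.5} on $[r,\infty)$ with the very same frozen coefficients used to construct $P_{r,\mu^{s,\zeta}_r}$. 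The uniqueness step then forces $Q_r = P_{r,\mu^{s,\zeta}_r}$. Disintegrating $P_{r,\mu^{s,\zeta}_r}$ against the evaluation $\pi^r_r$ produces the desired kernel $(p_{(s,\zeta),(r,z)})_{z\in\R^d}$ with $p_{(s,\zeta),(r,z)}(\pi^r_r=z)=1$, and standard conditional-probability arguments using $Q_r = P_{r,\mu^{s,\zeta}_r}$ yield \eqref{MP}. The final claim---that the family $(P_{s,\zeta})$ is uniquely determined by its one-dimensional marginals and \eqref{e1.5}---is built into the construction: at each $(s,\zeta)$ the measure $P_{s,\zeta}$ is the unique element of $\Phi^{-1}(\mu^{s,\zeta})$.
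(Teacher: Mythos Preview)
The paper does not prove this theorem; it is quoted verbatim from \cite[Theorems~3.4+3.8]{R./Rckner_NL-Markov22} and used as a black box, so there is no in-paper proof to compare against.

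Your existence step and your Markov-property step are in the right spirit and broadly match the architecture of the cited proof. The genuine gap is in your uniqueness argument. The mechanism you propose---disintegrating $\tfrac12(P+P')$ against a finite-dimensional evaluation $(\pi^s_{r_1},\dots,\pi^s_{r_n})$---does not produce a \emph{binary} convex decomposition of $\mu^{s,\zeta}$ into two FPE solutions \emph{with initial datum $(s,\zeta)$}. Conditioning on values at times $r_i>s$ yields a continuum of conditional path laws whose marginal flows live on $[r_i,\infty)$ with (random) initial data at $r_i$, not on $[s,\infty)$ starting from $\zeta$; reassembling them simply returns $\mu^{s,\zeta}$ and yields no contradiction with extremality at $(s,\zeta)$.

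The argument in \cite{R./Rckner_NL-Markov22} proceeds differently. One fixes $r>s$ and $A\in\Fscr_{s,r}$ with $0<P(A)<1$ and notes that $t\mapsto P(\{\pi^s_t\in\cdot\}\cap A)/P(A)$, $t\ge r$, is a weakly continuous probability solution of the linear FPE on $[r,\infty)$ dominated by $P(A)^{-1}\mu^{s,\zeta}_t$. One then applies the \emph{restricted} linearized uniqueness of Lemma~\ref{lem:equiv} (which is equivalent to extremality) at the shifted pair $(r,\mu^{s,\zeta}_r)$---this pair lies in $\R_+\times\Pscr_0$ precisely because of the flow property \eqref{e5.1'}---to conclude that these conditional marginal flows agree for $P$ and $P'$. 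Iterating over time grids gives equality of all finite-dimensional distributions, hence $P=P'$. The crucial point you miss is that both the flow property and extremality at \emph{every} intermediate pair $(r,\mu^{s,\zeta}_r)$ are used, not just extremality at the original $(s,\zeta)$; a decomposition argument localized at $(s,\zeta)$ alone cannot separate two path measures with identical one-dimensional marginals.
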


\begin{rem}
     A relation of extremality and Markov property was discovered in connection
 with constructing Markov selections (see \cite{K73} and also \cite{SV-book,FR08})
 in the situation where a (linear) martingale problem has more than one solution
 for Dirac measures as intial conditions (see \cite{Stroock1980}). But this is a result
 on path space and in the opposite direction, namely proving that the Markov
 selection consists of extremal measures in the convex set of all solutions
 to the (linear) martingale problem with initial condition a Dirac measure.
 So, our result above is a converse result, but for the time marginals, i.e.
 on state space, and which also holds in the nonlinear case.
\end{rem}

Regarding applications, the equivalence of the above extremality condition and the following restricted linearized uniqueness result has turned out very useful.
\begin{lem}[{\cite[Lemma 3.5]{R./Rckner_NL-Markov22}}]\label{lem:equiv}
    Let $(s, \zeta) \in \R_+ \times \Pscr_0$. In the situation of the previous theorem, $\mu^{s,\zeta}$ satisfies the claimed extremality condition if and only if
    $(\mu^{s,\zeta}_t)_{t\geq s}$ is the unique weakly continuous probability solution to the linear FPE with coefficients $(t,x)\mapsto a_{ij}(t,x,\mu^{s,\zeta}_t)$ and $(t,x)\mapsto b_i(t,x,\mu^{s,\zeta}_t)$ and initial condition $(s,\zeta)$ in the sense of Definition {\rm\ref{dD.1}} in the class
		\begin{equation*}\label{e5.3a}
			\big\{(\nu_t)_{t\geq s} \subseteq\mathcal{P} : \nu_t \leq C \mu^{s,\zeta}_t, \quad t\geq s,\text{ for some }C>0\big\}
		\end{equation*}
		("restricted linearized distributional uniqueness").
\end{lem}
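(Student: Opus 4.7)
The plan is to unpack the extremality condition as the impossibility of a nontrivial convex decomposition, and then use the linearity of the frozen-coefficient Fokker--Planck equation to pass back and forth between such decompositions and measures in the restricted dominance class. Let $\Sscr$ denote the convex set of all weakly continuous probability solutions to the linear FPE with coefficients $(t,x)\mapsto a_{ij}(t,x,\mu^{s,\zeta}_t)$, $(t,x)\mapsto b_i(t,x,\mu^{s,\zeta}_t)$ and initial condition $(s,\zeta)$ in the sense of Definition \ref{dD.1}; by construction $\mu^{s,\zeta}\in\Sscr$.

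For the direction \emph{restricted uniqueness $\Rightarrow$ extremality}, I would argue by contraposition: if $\mu^{s,\zeta}=\lambda\mu^1+(1-\lambda)\mu^2$ with $\mu^1,\mu^2\in\Sscr$ distinct and $\lambda\in(0,1)$, then $\mu^i_t\le (\min(\lambda,1-\lambda))^{-1}\mu^{s,\zeta}_t$ for all $t\geq s$ and $i=1,2$, so both $\mu^i$ lie in the restricted class. Restricted uniqueness then forces $\mu^1=\mu^2=\mu^{s,\zeta}$, a contradiction.

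The converse direction \emph{extremality $\Rightarrow$ restricted uniqueness} is the key step. Let $\nu=(\nu_t)_{t\geq s}\in\Sscr$ with $\nu_t\le C\mu^{s,\zeta}_t$. Since both are probability measures, one must have $C\geq 1$; the case $C=1$ is trivial, so assume $C>1$ and set
\begin{equation*}
    \mu^2_t:=\tfrac{C}{C-1}\bigl(\mu^{s,\zeta}_t-\tfrac{1}{C}\nu_t\bigr),\qquad t\geq s.
\end{equation*}
By the dominance bound $\mu^2_t$ is a nonnegative measure, and a direct mass computation gives $\mu^2_t(\R^d)=\tfrac{C}{C-1}(1-\tfrac{1}{C})=1$. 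Linearity of the frozen-coefficient FPE, applied to the difference $\mu^{s,\zeta}-C^{-1}\nu$, shows that $\mu^2$ is again a weakly continuous probability solution in $\Sscr$. Then the identity
\begin{equation*}
    \mu^{s,\zeta}_t=\tfrac{1}{C}\nu_t+\tfrac{C-1}{C}\mu^2_t
\end{equation*}
exhibits $\mu^{s,\zeta}$ as a convex combination in $\Sscr$, and extremality forces $\nu_t=\mu^2_t=\mu^{s,\zeta}_t$ for all $t\geq s$.

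The only non-routine point is verifying that $\mu^2$ actually belongs to $\Sscr$: nonnegativity and normalization are immediate from the dominance hypothesis, linearity of the linearized FPE handles the equation itself, weak continuity is inherited from $\mu^{s,\zeta}$ and $\nu$, and the integrability conditions required by Definition \ref{dD.1} transfer from the two parent solutions via the elementary bound $|\mu^{s,\zeta}_t-C^{-1}\nu_t|\le\mu^{s,\zeta}_t+C^{-1}\nu_t$. Once this is in place, the two convex-combination arguments complete the equivalence.
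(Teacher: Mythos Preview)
Your argument is correct and is precisely the standard convexity argument for this equivalence; the paper does not give its own proof but simply cites \cite[Lemma~3.5]{R./Rckner_NL-Markov22}, where exactly this reasoning appears. One minor cosmetic point: for the integrability of $\mu^2$ you can argue more directly via $\mu^2_t\le \tfrac{C}{C-1}\mu^{s,\zeta}_t$ (since $\nu_t\ge 0$), which immediately transfers the integrability condition of Definition~\ref{dD.1} from $\mu^{s,\zeta}$ to $\mu^2$.
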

In fact, we need the following corollary of the previous theorem. The intuition is that the assertion of the previous theorem remains true when condition ($\Pscr_0$-$\textup{lin}_{\textup{ex}}$) holds for less initial data, at the price of a uniqueness assertion for the corresponding solution path laws for less initial data. However, as we will see, in our situation, a posteriori we can restore the uniqueness assertion to all initial data.
\begin{kor} {\rm\cite[Corollary 3.10]{R./Rckner_NL-Markov22}} \label{c5.3}
	Let $\mathfrak{P}_0 \subseteq \Pscr_0 \subseteq \Pscr$ and suppose $\{\mu^{s,\zeta}\}_{s\geq 0, \zeta \in \Pscr_0}$ is a $\Pscr_0$-solution flow to \eqref{e1.3} such that the following two hypotheses hold.
    
     \textbf{($\mathfrak{P}_0$-$\text{lin}_{\text{ex}}$).} $\mu^{s,\zeta}$ is an extreme point in the set of all weakly continuous probability solutions to the linear FPE with coefficients $(t,x) \mapsto a_{ij}(t,x,\mu^{s,\zeta}_t)$ and $(t,x) \mapsto b_i(t,x,\mu^{s,\zeta}_t)$ with initial condition $(s,\zeta)$ in the sense of Definition \ref{dD.1} for each $(s,\zeta) \in \R_+ \times \mathfrak{P}_0$

     and

    \textbf{($\mathfrak{P}_0$-smoothing).} $\mu^{s,\zeta}_t \in \mathfrak{P}_0$ for all $0\leq s < t$, $\zeta \in \Pscr_0$.
    \\
    
    Then, there is a nonlinear Markov process $(P_{s,\zeta})_{s\geq 0, \zeta \in \Pscr_0} $ with \eqref{tg}, consisting of solution path laws to \eqref{e1.5}. In this case, the uniqueness-assertion for $P_{s,\zeta}$ of Theorem {\rm\ref{t52}} holds for all $(s,\zeta) \in \R_+ \times \mathfrak{P}_0$.
\end{kor}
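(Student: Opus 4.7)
The plan is to prove the corollary in two stages: first apply Theorem~\ref{t52} with $\mathfrak{P}_0$ in the role of $\Pscr_0$, and then extend the resulting Markov structure to the larger set $\Pscr_0$ by invoking the superposition principle and exploiting the smoothing hypothesis.

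Stage 1. By ($\mathfrak{P}_0$-smoothing) and the inclusion $\mathfrak{P}_0\subseteq \Pscr_0$, the given $\Pscr_0$-solution flow restricts to a $\mathfrak{P}_0$-solution flow: $\mu^{s,\zeta}_t\in \mathfrak{P}_0$ holds for any $(s,\zeta)\in \R_+\times \mathfrak{P}_0$ and $t>s$, and the flow identity \eqref{e5.1'} is preserved. Together with ($\mathfrak{P}_0$-$\text{lin}_{\text{ex}}$), Theorem~\ref{t52} therefore produces a nonlinear Markov process $(P_{s,\zeta})_{(s,\zeta)\in \R_+\times \mathfrak{P}_0}$ consisting of path laws of solutions of \eqref{e1.5} with the marginal identity \eqref{tg}, together with uniqueness at each such $(s,\zeta)$; this already establishes the claimed uniqueness assertion for $(s,\zeta)\in \R_+\times \mathfrak{P}_0$.

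Stage 2. For $(s,\zeta)\in \R_+\times (\Pscr_0\setminus \mathfrak{P}_0)$, use the superposition principle applied to $(\mu^{s,\zeta}_t)_{t\geq s}$ to select a probabilistically weak solution law $P_{s,\zeta}$ on $C([s,\infty),\R^d)$ to \eqref{e1.5} with one-dimensional marginals $\mu^{s,\zeta}_t$. The crucial observation is the following \emph{restriction identity}: for any $r>s$, smoothing gives $\mu^{s,\zeta}_r\in \mathfrak{P}_0$, and using the flow property $\mu^{r,\mu^{s,\zeta}_r}_t=\mu^{s,\zeta}_t$ ($t\geq r$) the push-forward of $P_{s,\zeta}$ under the time-restriction $C([s,\infty),\R^d)\to C([r,\infty),\R^d)$ is a weak solution law of \eqref{e1.5} with marginals $(\mu^{r,\mu^{s,\zeta}_r}_t)_{t\geq r}$; by Stage~1 uniqueness it coincides with $P_{r,\mu^{s,\zeta}_r}$. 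Setting $p_{(s,\zeta),(r,z)}:=p_{(r,\mu^{s,\zeta}_r),z}$, i.e. the Markov kernel already furnished by Stage~1, the nonlinear Markov property \eqref{MP} then follows by the same conditioning/martingale-problem argument used in the proof of Theorem~\ref{t52}, now applied under $P_{s,\zeta}$ at time $r$ with initial distribution $\mu^{s,\zeta}_r \in \mathfrak{P}_0$.

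The main obstacle is twofold: (i) showing that the time-restriction of a weak solution of the McKean--Vlasov SDE \eqref{e1.5} from $(s,\zeta)$ is again a weak solution on the shorter time interval, which relies on the stochastic integral equation being preserved under restriction and on the flow identity \eqref{e5.1'}; and (ii) upgrading restricted linearized distributional uniqueness (Lemma~\ref{lem:equiv}) to path-law uniqueness inside Stage~1, which is where ($\mathfrak{P}_0$-$\text{lin}_{\text{ex}}$) enters via Theorem~\ref{t52}. No uniqueness is claimed for $P_{s,\zeta}$ with $\zeta \in \Pscr_0\setminus \mathfrak{P}_0$, which is consistent with the formulation of the corollary.
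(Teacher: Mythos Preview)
The paper does not prove this corollary; it is quoted verbatim from \cite[Corollary 3.10]{R./Rckner_NL-Markov22} without proof, so there is no in-paper argument to compare against. Your two-stage strategy---apply Theorem~\ref{t52} on the smaller set $\mathfrak{P}_0$, then extend to $\Pscr_0$ via superposition and the restriction identity---is the natural route and matches how the result is used downstream in the proof of Theorem~\ref{t5.2} (see the Claim there, which carries out exactly your restriction argument for the Dirac initial data).

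One point in Stage~2 is only gestured at and is the genuine content of the extension. You write that \eqref{MP} ``follows by the same conditioning/martingale-problem argument used in the proof of Theorem~\ref{t52}''. What needs to be made explicit is this: the $\mathcal{F}_{s,r}$-conditional laws of your chosen $P_{s,\zeta}$, restricted to $[r,\infty)$, are $P_{s,\zeta}$-a.s.\ solutions of the \emph{linearized} martingale problem from $(r,\delta_{\pi^s_r})$; integrating them over $P_{s,\zeta}$ yields the restriction of $P_{s,\zeta}$ to $[r,\infty)$, which by your restriction identity equals $P_{r,\mu^{s,\zeta}_r}$. Now the extremality hypothesis is available at $(r,\mu^{s,\zeta}_r)$ because $\mu^{s,\zeta}_r\in\mathfrak{P}_0$ by smoothing, and it is precisely this extremality (at the \emph{shifted} initial datum, not at $(s,\zeta)$ where you have no such assumption) that forces the $\mathcal{F}_{s,r}$-conditional laws to depend only on $\pi^s_r$ and to coincide with the regular conditional kernel $p_{(r,\mu^{s,\zeta}_r),z}$ of $P_{r,\mu^{s,\zeta}_r}$. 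This substitution---extremality at time $r$ in place of extremality at time $s$---is exactly the mechanism the corollary adds to Theorem~\ref{t52}, and it should be stated rather than absorbed into ``the same argument''. With that clarification your sketch is correct.
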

In the next subsection we apply this corollary to the NLFPE \eqref{eq:Leibenson-FPE} and the associated McKean--Vlasov SDE \eqref{eq:DDSDE}.

\subsection{Application to Leibenson equation and its Barenblatt solutions}
Let
$$\mathcal{P}_0 := \{w^y(\delta,x)\mathrm{d}x : y \in \R^d, \delta \geq 0\},$$
where with some abuse of notation we write $w^y(0,x)\mathrm{d}x = \delta_y(\mathrm{d}x)$. For each $\zeta \in \mathcal{P}_0$, the pair  $(\delta,y)\in [0,\infty)\times \R^d$ such that  $\zeta = w^y(\delta,x)\mathrm{d}x$ is unique. 
	\begin{theorem}\label{t5.2}
    Let $d\geq 2, p>\frac{d}{d-1},q>0$ such that $q(p-1)>1$. If $p<2$, assume additionally that $q(p-1)>\frac{2-p+d}{d}(>1)$.
	\begin{enumerate}
		\item [\rm(i)] Let $(s,\zeta) \in [0,\infty)\times \mathcal{P}_0$, $\zeta = w^y(\delta,x)\mathrm{d}x$. The set of solution laws to the McKean--Vlasov SDE  \eqref{eq:DDSDE} with one-dimensional time marginals $w^y(\delta+t-s,x)\mathrm{d}x$, $t \geq s$, and initial condition $(s,\zeta)$ contains exactly one element $P_{s,\zeta}$. The family $(P_{s,\zeta})_{s\geq 0, \zeta \in \mathcal{P}_0}$ is a nonlinear Markov process in the sense of Definition {\rm \ref{d5.1}}.   In particular, this nonlinear Markov process is uniquely determined by \eqref{eq:DDSDE} and $\omega^y(t)$, $y\in\R^d$, $t\ge0$.
		\item[\rm(ii)] $(P_{s,\zeta})_{s \geq 0, \zeta \in \mathcal{P}_0}$ is time-homogeneous, i.e. $P_{s,\zeta} =
		P_{0,\zeta}\circ \hat{\Pi}_s^{-1}$ for all $(s,\zeta)\in[0,\infty)\times \mathcal{P}_0$, where 
		\begin{equation}\label{e5.8'}
			\hat{\Pi}_s: C([0,\infty),\R^d) \to C([s,\infty),\R^d),\quad \hat{\Pi}_s: (\omega(t))_{t\geq 0} \mapsto (\omega(t-s))_{t\geq s}.\end{equation} Moreover, for $\zeta = w^y(\delta,x)\mathrm{d}x$, we have $P_{0,\zeta} = P_{0,y}\circ ({\Pi}_{\delta}^{0})^{-1} $ $($the map $ \Pi^0_{\delta}$ is defined in \eqref{e5.3} below$)$.
	\end{enumerate}
\end{theorem}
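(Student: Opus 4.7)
The plan is to verify the hypotheses of Corollary~\ref{c5.3} for the nonlinear Fokker--Planck equation \eqref{eq:Leibenson-FPE} with $\mathcal{P}_0$ as defined above and
\[
\mathfrak{P}_0 := \{w^y(\delta,x)\mathrm{d}x : y \in \R^d,\ \delta > 0\} \subsetneq \mathcal{P}_0,
\]
i.e., the subfamily of positive-time Barenblatt densities. Given $(s,\zeta) \in \R_+ \times \mathcal{P}_0$ with unique representation $\zeta = w^y(\delta,\cdot)\mathrm{d}x$, set $\mu^{s,\zeta}_t(\mathrm{d}x) := w^y(\delta + t - s, x)\mathrm{d}x$ for $t \geq s$. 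By Lemma~\ref{lem:prop-Barenblatt}, Lemma~\ref{lem:equiv-sol-notion} and the computations in the proof of Theorem~\ref{prop:SP-for-Barenblatt}, this curve is a weakly continuous probability solution to \eqref{eq:Leibenson-FPE} with initial datum $\zeta$; the flow identity $\mu^{r,\mu^{s,\zeta}_r}_t = \mu^{s,\zeta}_t$ is immediate from the additive structure of the time parameter in the Barenblatt profile. The smoothing hypothesis holds trivially, since $\mu^{s,\zeta}_t \in \mathfrak{P}_0$ whenever $t > s$ (then $\delta + t - s > 0$).

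The key remaining step is the extremality hypothesis $(\mathfrak{P}_0\text{-}\mathrm{lin}_{\mathrm{ex}})$. By Lemma~\ref{lem:equiv}, it is equivalent to the following restricted linearized distributional uniqueness: for every $(s,\zeta) \in \R_+ \times \mathfrak{P}_0$, the curve $(\mu^{s,\zeta}_t)_{t \geq s}$ is the unique weakly continuous probability solution to the linear FPE with coefficients \eqref{eq:Leibenson-FPE:coefficients} frozen along $u = \mu^{s,\zeta}$, within the class of measure curves dominated by a constant multiple of $\mu^{s,\zeta}_t$. This is precisely the content of Theorem~\ref{restr-lin-u.theorem}, whose assumptions on $(p,q,d)$ dictate the parameter restrictions imposed in the statement above (in particular the case split at $p < 2$). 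Applying Corollary~\ref{c5.3} then yields a nonlinear Markov process $(P_{s,\zeta})_{s\geq 0, \zeta \in \mathcal{P}_0}$ consisting of solution path laws to \eqref{eq:DDSDE} satisfying \eqref{tg}, with uniqueness of $P_{s,\zeta}$ within the class of solution laws with marginals $\mu^{s,\zeta}$ guaranteed for all $(s,\zeta) \in \R_+ \times \mathfrak{P}_0$.

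It remains to upgrade this uniqueness to all of $\mathcal{P}_0$, i.e., to cover the case $\delta = 0$ of Dirac initial data $\zeta = \delta_y$. Let $Q_1, Q_2$ be two solution laws of \eqref{eq:DDSDE} on $[s,\infty)$ with initial condition $\delta_y$ and marginals $w^y(t-s,\cdot)\mathrm{d}x$. For any $r > s$, the restriction of $Q_i$ to $C([r,\infty),\R^d)$ is a solution law of \eqref{eq:DDSDE} on $[r,\infty)$ with initial condition $(r,w^y(r-s,\cdot)\mathrm{d}x) \in \R_+ \times \mathfrak{P}_0$ and the prescribed marginals, so by the uniqueness already established these restrictions coincide. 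Hence $Q_1$ and $Q_2$ agree on $\sigma(\pi^s_t : t \geq r)$ for every $r > s$; letting $r \downarrow s$, together with almost-sure path continuity and the fact that $Q_i(\pi^s_s = y) = 1$, yields $Q_1 = Q_2$.

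Finally, for part~(ii), the coefficients of \eqref{eq:DDSDE} depend on time only through $u(t,\cdot)$, so for $\zeta = w^y(\delta,\cdot)\mathrm{d}x$ the pushforward $P_{0,\zeta} \circ \hat{\Pi}_s^{-1}$ (with $\hat{\Pi}_s$ as in \eqref{e5.8'}) is a solution law of \eqref{eq:DDSDE} on $[s,\infty)$ with initial condition $(s,\zeta)$ and marginals $w^y(\delta + t - s,\cdot)\mathrm{d}x$; the uniqueness from~(i) therefore forces equality with $P_{s,\zeta}$. The identity $P_{0,\zeta} = P_{0,y} \circ (\Pi^0_\delta)^{-1}$ follows analogously, since both sides are solution laws on $[0,\infty)$ with initial condition $(0,\zeta)$ and marginals $w^y(\delta + t,\cdot)\mathrm{d}x$. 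The principal obstacle in this argument is the restricted linearized uniqueness invoked in Step~2, whose proof occupies the separate Section~\ref{sect:restr-lin-u} and accounts for the parameter split between analytic and probabilistic techniques.
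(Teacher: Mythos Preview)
Your proposal is correct and follows essentially the same approach as the paper: you verify the hypotheses of Corollary~\ref{c5.3} via Theorem~\ref{restr-lin-u.theorem} and Lemma~\ref{lem:equiv}, then upgrade uniqueness from $\mathfrak{P}_0$ to the Dirac initial data by exploiting that the law restricted to $[r,\infty)$ for $r>s$ already falls under the $\mathfrak{P}_0$-uniqueness, and part~(ii) is deduced from the uniqueness in~(i) exactly as in the paper. The only cosmetic difference is that the paper handles the Dirac case via the time-shift map $\Pi^s_r$ on $C([s,\infty),\R^d)$ and an explicit comparison of finite-dimensional distributions, whereas you phrase it as agreement on the sub-$\sigma$-algebras $\sigma(\pi^s_t:t\ge r)$ and pass to the limit $r\downarrow s$ using path continuity; both arguments are equivalent.
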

\begin{rem}
The $p$-Laplace-case of this result, i.e. $p>2$, $q=1$, was proven in \cite{R.BR24-pLapl}. 
\end{rem}

We stress that in the following proof the times $s$ and $\delta$ are not related. In particular, for the initial condition $\zeta = w^y(\delta,x)\mathrm{d}x$, we not only consider the initial pair $(\delta,\zeta)$, but necessarily \emph{any} $(s,\zeta)$, $s \geq 0$. 	Set 
\begin{equation}\label{def:mathfrakP}
    \mathfrak{P}_0 := \{w^y(\delta,x)\mathrm{d}x : y \in \R^d, \delta>0\}=\mathcal{P}_0\setminus\{\delta_y:\ y\in\R^d\}.
\end{equation} 	
A crucial ingredient of the proof is Theorem \ref{restr-lin-u.theorem}, which is formulated and proven in Section \ref{sect:restr-lin-u} below.

	\medskip \noindent{\it Proof of Theorem}   \ref{t5.2}.
\begin{enumerate}
	\item [(i)] Setting, for $\zeta = w^y(\delta,x)\mathrm{d}x$, $\delta\geq 0$, $y\in\R^d$,
	$$\mu^{s,\zeta}_t := w^y(\delta+t-s,x)\mathrm{d}x, \quad t \geq s,$$
	it is straightforward to check that the family of probability measures $\{\mu^{s,\zeta}_t\}_{s\geq 0, t\geq s, \zeta \in \mathcal{P}_0}$ has the {flow property} \eqref{e5.1'}.    
	From Lemmas \ref{lem:prop-Barenblatt} and \ref{lem:equiv-sol-notion} and from what was shown in the proof of Theorem \ref{prop:SP-for-Barenblatt} it follows that $(\mu^{s,\zeta}_t)_{t\geq s}$ is a weakly continuous probability solution to the nonlinear FPE \eqref{eq:Leibenson-FPE} with initial condition $(s,\zeta)$ in the sense of Definition \ref{dD.2}.
	By Theorem \ref{restr-lin-u.theorem}, Lemma \ref{lem:equiv} and Remark \ref{restr-lin-u.remark:qRange} below, condition ($\mathfrak{P}_0$-$\textup{lin}_{\textup{ex}}$) holds. Moreover, by definition of $\mathcal{P}_0$ and $\mathfrak{P}_0$, we have $\mu^{s,\zeta}_t \in \mathfrak{P}_0$ for all $(s,t,\zeta)$ such that either $s \leq t$ and $\zeta \in \mathfrak{P}_0$ or $s < t$ and $\zeta \in \mathcal{P}_0$. Thus, Corollary \ref{c5.3} applies and yields:
	
	There is a nonlinear Markov process $(P_{s,\zeta})_{(s,\zeta)\in [0,\infty)\times \mathcal{P}_0}$, $P_{s,\zeta} \in \mathcal{P}(C([s,\infty),\R^d))$, such that
	\begin{enumerate}
		\item [(I)] $P_{s,\zeta}\circ (\pi^s_t)^{-1} = \mu^{s,\zeta}_t, t\geq s$;
		\item [(II)] $P_{s,\zeta}$ is a solution path law to the McKean--Vlasov SDE \eqref{eq:DDSDE} on $[s,\infty)$;
		\item [(III)] For $s\geq 0$ and $\zeta \in \mathfrak{P}_0$, $P_{s,\zeta}$ is \emph{unique} with properties (I)--(II).
	\end{enumerate}
	Therefore, since $\mathcal{P}_0 \backslash \mathfrak{P}_0 = \{\delta_y: y \in \R^d\}$, it remains to prove the following claim.
	
	\medskip\noindent{\bf Claim.} For $(s,y)\in [0,\infty)\times \R^d$, there is a \emph{unique} path law $P_{s,\delta_y}$ with properties (I)--(II).
	
	\textit{Proof of Claim.} Let $P^1,P^2$ have properties (I)--(II) for $s\geq 0, \zeta = \delta_y$, $y \in \R^d$. For $s,r \geq 0$, we define the map $\Pi^s_r : C([s,\infty),\R^d) \to C([s,\infty),\R^d)$ via
	\begin{equation}\label{e5.3}
		\Pi^s_r: \omega(t)_{t\geq s} \mapsto \omega(t+r)_{t\geq s}.
	\end{equation}			
	For any $r >0$, $i \in \{1,2\}$, we have
	\begin{equation}\label{e5.9'}
		(P^i\circ (\Pi^s_r)^{-1})\circ (\pi^s_t)^{-1}=P^i \circ (\pi^s_{t+r})^{-1} = \mu^{s,\delta_y}_{t+r} = w^y(t+r-s,x)\mathrm{d}x,\quad \forall t \geq s.
	\end{equation}
	It is straightforward to check that $P^i\circ (\Pi^s_r)^{-1}  \!\in\! \mathcal{P}(C([s,\infty),\R^d))$, \mbox{$i \!\in\! \{1,2\}$,} is a solution law to \eqref{eq:DDSDE} with initial condition $(s,w^y(r,x)\mathrm{d}x)$.
	As $w^y(r,x)\mathrm{d}x \in \mathfrak{P}_0$, \eqref{e5.9'} and (III) yield	$$P^1\circ (\Pi^s_r)^{-1}=P^2\circ (\Pi^s_r)^{-1}.$$
	Now let $s \leq u_1 < \dots < u_n$, $n \in \N$. First assume $u_1 >s$. Then for $i \in \{1,2\}$
	\begin{align*}
		P^i\circ (\pi^s_{u_1},...,\pi^s_{u_n})^{-1}
		=(P^i\circ (\Pi^s_{u_1-s})^{-1}) \circ (\pi^s_s,...,\pi^s_{u_n+s-u_1})^{-1}
	\end{align*}
	and by the previous part of the proof the right hand side  coincides for $i=1$ and $i=2$, since $u_1-s>0$. Now assume $s=u_1 <\dots < u_n$. Then, since $P^i\circ (\pi^s_s)^{-1} = \delta_y$, we find
	\begin{align*}
		P^i\circ (\pi^s_{u_1},...,\pi^s_{u_n})^{-1}
		=\delta_y\otimes
		(P^i\circ (\pi^s_{u_2},...,\pi^s_{u_n})^{-1})	
	\end{align*}
	(where $\mu \otimes \nu$ denotes the product measure of the measures $\mu$ and $\nu$).
	Since $u_2>s$, the argument of the first case again yields that the right hand side  coincides for $i=1$ and $i=2$. Hence we have proven
	$$P^1 \circ (\pi^s_{u_1},...,\pi^s_{u_n})^{-1}
	=P^2 \circ (\pi^s_{u_1},...,\pi^s_{u_n})^{-1}$$
	for all $s\leq u_1 < \dots < u_n$, $n \in \N$, i.e. $P^1 = P^2$, which proves the claim and, thereby, the assertion.
	\item[(ii)] First note 
$$P_{s,\zeta} \circ (\pi^s_t)^{-1} = \mu^{s,\zeta}_t = \mu^{0,\zeta}_{t-s} = (P_{0,\zeta} \circ (\hat\Pi_s)^{-1}) \circ (\pi^s_t)^{-1},\   \forall t \geq s,$$ with $\hat\Pi_s$ as in \eqref{e5.8'}.  
Both $P_{s,\zeta}$ and $P_{0,\zeta}\circ (\hat\Pi_s)^{-1}$ are solution laws to \eqref{eq:DDSDE} on $[s,\infty)$ with initial condition $(s,\zeta)$, hence, by (i),they coincide.

For the final statement, note that for $\zeta = w^y(\delta,x)\mathrm{d}x$ the measures $P_{0,\zeta}$ and $P_{0,y} \circ (\Pi^0_{\delta})^{-1}$  have identical one-dimensional time marginals $w^y(\delta+t,x)\mathrm{d}x,$ $ t \geq 0$, and both are solution laws to \eqref{eq:DDSDE} with initial condition $(0,w^y(\delta,x)\mathrm{d}x)$. Thus the assertion follows from the uniqueness assertion in (i).\hfill$\Box$
\end{enumerate}
	\begin{rem}\label{r5.4}
	Theorem \ref{t5.2} (ii) implies that the nonlinear Markov process $(P_{s,\zeta})_{(s,\zeta) \in \R_+\times \mathcal{P}_0}$ from (i) is uniquely determined by $$(P_y)_{y \in \R^d}, \ P_y := P_{0,\delta_y}.$$ 
	Therefore, we also refer to $(P_y)_{y\in \R^d}$ as the \emph{unique  nonlinear Markov process	 determined by \eqref{eq:DDSDE} and the one-dimensional time marginals $w^y(t), y \in \R^d, t \geq 0$}.
	Note that $P_y$ is the path law of the solution $X^y$ from Corollary \ref{cor:SP-princ-Barenblatt-all-y}.
\end{rem}
Finally, we arrive at the following definition.
\begin{dfn}\label{def:Leibenson-process}
	Let $d\geq 2, p>\frac{d}{d-1},q>0$ such that $q(p-1)>1$. If $p<2$, assume additionally that $q(p-1)>\frac{2-p+d}{d}(>1)$. We call the nonlinear Markov process $(P_y)_{y\in \R^d}$ from the previous remark the \emph{Leibenson process}.
\end{dfn}
In particular, the Barenblatt solutions to \eqref{eq:Leibenson} have a probabilistic representation as the one-dimensional time marginal densities of the Leibenson process, which they uniquely determine as a nonlinear Markov process. The construction of this process and the derivation of the corresponding stochastic equation, i.e. \eqref{eq:DDSDE}, is completely analogous to the linear special case $(p,q) = (2,1)$ of the heat equation and Brownian motion, compare Section \ref{sect:class-case}.

The special case $d \geq 2$, $p >2$, $q=1$ was studied in \cite{R./Rckner_NL-Markov22}; therein the corresponding nonlinear Markov process was called \emph{$p$-Brownian motion}.

\section{The Leibenson process is a functional of Brownian motion}\label{sect:Strongsolution}
In Section \ref{sect:Leibenson-Markov} we constructed the Leibenson process, which consists of path laws of probabilistically weak solutions $(X,W)$ to the McKean--Vlasov SDE \eqref{eq:DDSDE} with $u(t)=w^y(t+\delta), t>0, \delta\geq 0$.
In this section, we prove that for $\delta>0$ these solutions are in fact probabilistically strong solutions, that is, $X$ is a measurable adapted functional of the driving Brownian motion $W$ and its initial condition $X(0)$. For simplicity, we only consider the case $y=0$. The general case $y\in \rd$ can be proved analogously.
\\

First, we reduce the question of whether there exists a probabilistically strong solution to \eqref{eq:Leibenson-MVSDE} to a problem for an ordinary SDE.
For $\delta>0$, we set for $(t,x)\in \R_+\times \rd$
$$w_\delta(t,x) := w^0(t+\delta,x),\quad \vrho_\delta(t,x):= |\nabla w_\delta(t,x)|^{p-2} w_\delta(t,x)^{(p-1)(q-1)},\quad R_\delta(t):=R(t+\delta),$$
where again $w^0$ denotes the Barenblatt solution from \eqref{def-w}, and $R$ is defined as in \eqref{eq:def-R}.
As computed in the proof of Theorem \ref{prop:SP-for-Barenblatt}, we have
\begin{align}\label{StrongSolution.vrho_delta.function}
	\vrho_\delta(t,x) = C_{\vrho}(t+\delta) f_+(t+\delta,x)  |x|^{\frac {p-2} {p-1}},\ \ (t,x) \in \R_+\times\rd,
\end{align}
where we set
\begin{align}\label{StrongSolution.vrho_delta.function:constant}
C_{\vrho}(t+\delta):=\left(\frac{\gamma \kappa p }{p-1}\right)^{p-2} (t+\delta)^{-\frac{d(p-2)}{\beta}-\frac{p(p-2)}{\beta(p-1)}-\frac{d(p-1)(q-1)}{\beta}},\ \ (t,x) \in \R_+\times\rd.
\end{align}
Consider the ordinary SDE
\begin{align}\label{StrongSolution.eq:DDSDE-lin}\begin{cases}
	\mathrm{d}X(t) &= q^{p-1}\nabla \vrho_\delta(t,X(t))\mathrm{d}t   + \sqrt{2q^{p-1}\vrho_\delta(t,X(t))} \mathrm{d}W(t),	\\\mathcal{L}_{X(t)}(\mathrm{d}x) &= w_\delta(t,x)\mathrm{d}x.
    \end{cases}
\end{align}
Note that \eqref{StrongSolution.eq:DDSDE-lin} is \textit{equivalent} to \eqref{eq:DDSDE} and ${\mathcal{L}_{X(t)}(\mathrm{d}x) = w_\delta(t,x)\mathrm{d}x}$; it is obtained by fixing $u = w_\delta$ in the coefficients in \eqref{eq:DDSDE}.

Now we define when a probabilistically weak solution to \eqref{eq:DDSDE} (this includes \eqref{StrongSolution.eq:DDSDE-lin}) is said to be strong.
We introduce the following notation.
Let $t\in [0,\infty)$. We set $\mathcal{B}_t(C([0,\infty);\rd)):= \sigma(\pi_s : s\in [0,t])$ and, correspondingly, $\mathcal{B}_t(C([0,\infty);\rd)_0):= \sigma(\pi_s: s\in [0,t])\cap C([0,\infty);\rd)_0$. Furthermore, $P^W$ denotes the Wiener measure on $(C([0,\infty);\rd)_0, \mathcal{B}(C([0,\infty);\rd)_0)$.

\begin{dfn}\label{def:DDSDE_strong}
A probabilistically weak solution $(X,W,(\Omega,\Fscr,(\Fscr_t)_{t\geq 0},\PP))$ to \eqref{eq:Leibenson-MVSDE} with initial condition $\nu\in \Pscr(\rd)$ is called a \emph{probabilistically strong solution}, if there exists
$$F_{\nu} : \rd\times C([0,\infty);\rd)_0 \to C([0,\infty);\rd),$$
which is $\overline{\mathcal{B}(\rd)\otimes \mathcal{B}(C([0,\infty);\rd)_0)}^{{\nu}\otimes P^W}\slash\mathcal{B}(C([0,\infty);\rd))$-measurable
 such that for $\nu$-a.e. $x\in \rd$, $F_{\nu}(x,\cdot)$ is $\overline{\mathcal{B}_t(C([0,\infty);\rd)_0)}^{P^W}\slash \mathcal{B}_t(C([0,\infty);\rd))$-measurable for every $t\in[0,\infty)$, and
 $$X=F_{\nu}(X(0),W)\ \PP\text{-a.s.}$$
	Here, 
    $\overline{\mathcal{B}(\rd)\otimes \mathcal{B}(C([0,\infty);\rd)_0)}^{{\nu}\otimes P^W}$ denotes the completion of $\mathcal{B}(\rd)\otimes \mathcal{B}(C([0,\infty);\rd)_0)$ with respect to ${\nu}\otimes P^W$, and
    $\overline{\mathcal{B}_t(C([0,\infty);\rd)_0)}^{P^W}$ denotes the completion of $\mathcal{B}_t(C([0,\infty);\rd)_0)$ with respect to $P^W$ in $\mathcal{B}(C([0,\infty);\rd)_0)$.
\end{dfn}
Before we state the main result of this section, we recall the definition of pathwise uniqueness for \eqref{StrongSolution.eq:DDSDE-lin}.
\begin{dfn}
    \textit{Pathwise uniqueness} holds for \eqref{StrongSolution.eq:DDSDE-lin} if for every pair of probabilistically weak solutions $(X,W), (Y,W)$ to \eqref{StrongSolution.eq:DDSDE-lin} defined on the same stochastic basis $(\Omega,\Fscr,(\Fscr_t),\PP)$ with respect to the same $(\Fscr_t)$-Brownian motion $W$ and with $X(0)=Y(0)$ $\PP$-a.s., we have $\sup_{t}|X(t)-Y(t)|=0$ $\PP$-a.s.
\end{dfn}
 
In Theorem \ref{prop:SP-for-Barenblatt}, we proved there exists a probabilistically weak solution $(X,W)$ to \eqref{StrongSolution.eq:DDSDE-lin} such that $\law{X(t)}=w_\delta(t,x)\mathrm{d}x, t\in \R_+$.
The following main result of this section
shows that under quite general conditions these solutions are actually strong.
\begin{theorem}[The Leibenson process is a functional of Brownian motion]\label{StrongSolution.theorem:existenceStrongSolution}
	Let $\delta>0$.
	Let $d\geq 2$, $p>1, q>0$ such that 
	$p>\frac{d}{d-1}$ 
	and 
	$q>\frac{|p-2|+d}{d(p-1)}\left(>\frac{1}{p-1}\right)$.
	Then, the probabilistically weak solution to \eqref{StrongSolution.eq:DDSDE-lin} constructed in Theorem \ref{prop:SP-for-Barenblatt} is a probabilistically strong solution.
Furthermore, for all $T>0$, pathwise uniqueness holds among weak solutions $(X,W)$ 
to \eqref{StrongSolution.eq:DDSDE-lin} on $[0,T]$.
\end{theorem}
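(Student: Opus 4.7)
The first reduction I would make is to exploit the fact that the Barenblatt density $w_\delta$ is given explicitly. Fixing the marginals transforms the McKean--Vlasov SDE into the ordinary, non-distribution-dependent SDE~\eqref{StrongSolution.eq:DDSDE-lin} with coefficients $\tilde b(t,x)=q^{p-1}\nabla\vrho_\delta(t,x)$ and $\tilde\sigma(t,x)=\sqrt{2q^{p-1}\vrho_\delta(t,x)}$. Existence of a probabilistically weak solution with marginals $w_\delta(t,x)\mathrm{d}x$ is already provided by Theorem~\ref{prop:SP-for-Barenblatt}. A restricted Yamada--Watanabe-type theorem (as cited from \cite{Grube-thesis,grube2021strong}) then reduces the first assertion to proving pathwise uniqueness \emph{among solutions sharing the same one-dimensional marginals}; this is precisely the second assertion of the theorem and is the sole remaining item to establish.

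For pathwise uniqueness, I would take two weak solutions $(X,W)$ and $(Y,W)$ on the same stochastic basis, with $X(0)=Y(0)$ a.s., and apply It\^o's formula to the Crippa--De Lellis-type test function $\Phi_\varepsilon(t):=\log\!\bigl(1+|X(t)-Y(t)|^2/\varepsilon^2\bigr)$. After discarding the local martingale part by localization, the estimate reduces to controlling
\[
\mathbb{E}\!\int_0^T \frac{|\tilde b(t,X(t))-\tilde b(t,Y(t))|+\|\tilde\sigma(t,X(t))-\tilde\sigma(t,Y(t))\|^2}{|X(t)-Y(t)|^2+\varepsilon^2}\,\mathrm{d}t.
\]
The inequality~\eqref{introduction.lipschitz-type-esimate} applied to $\tilde b(t,\cdot)\in BV(\R^d)$ bounds the drift difference by $(\M|\nabla\tilde b|(t,X(t))+\M|\nabla\tilde b|(t,Y(t)))\,|X(t)-Y(t)|$. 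Since $\mathcal{L}_{X(t)}=\mathcal{L}_{Y(t)}=w_\delta(t,x)\mathrm{d}x$, the two maximal-function terms have the same expectation, namely $\int_{\R^d}\M|\nabla\tilde b|(t,x)\,w_\delta(t,x)\,\mathrm{d}x$. Showing this quantity is integrable in $t\in[0,T]$ is the whole game, and a Gronwall argument closes the uniqueness once this bound is in place.

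The main obstacle is precisely this integrability, because $\tilde b$ is only $BV$ in space: from the explicit form~\eqref{StrongSolution.vrho_delta.function} the singular part of $\nabla\tilde b(t,\cdot)$ is concentrated on the sphere $\partial B_{R_\delta(t)}(0)$ and is comparable to the surface measure there, while the absolutely continuous part blows up at the origin like $|x|^{-1/(p-1)}$ and at the boundary like $(R_\delta(t)-|x|)^{\gamma-1}_+$. Here I would (i) prove an optimal pointwise estimate for $\M\bigl(S\!\upharpoonright_{\partial B_R(0)}\bigr)$, as announced in Lemma~\ref{lemma.PU.b.uniformBound:1}, by explicit geometric computation (\"a la Appendix~\ref{App-D}), and (ii) integrate it against $w_\delta(t,\cdot)$, together with the Muckenhoupt $A_p$-weight control of the $|x|^{(p-2)/(p-1)}$ factor. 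The hypotheses $p>d/(d-1)$ and $q>(|p-2|+d)/(d(p-1))$ are dictated by precisely these two threshold integrations; the symmetry in $|p-2|$ reflects that the fast ($p<2$) and slow ($p>2$) diffusion regimes degenerate in dual but analogous ways.

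Finally, the diffusion coefficient $\tilde\sigma$ is neither Sobolev nor Lipschitz, and is unbounded or compactly supported depending on whether $p<2$ or $p>2$, so~\eqref{introduction.lipschitz-type-esimate} cannot be applied to $\tilde\sigma$ directly. I would use the mean-value-theorem trick of \cite{gess2023inventiones,grube2024strongDegenerate}: writing $|\tilde\sigma(x)-\tilde\sigma(y)|^2\lesssim |\tilde\sigma^k(x)-\tilde\sigma^k(y)|\cdot\bigl(\tilde\sigma^{2-k}(x)+\tilde\sigma^{2-k}(y)\bigr)$ for an integer power $k$ chosen so that $\tilde\sigma^k$ is $BV$ (equivalently, $\vrho_\delta^{k/2}$ is $BV$), and then applying~\eqref{introduction.lipschitz-type-esimate} to $\tilde\sigma^k$. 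The remaining maximal functions are handled by exactly the same geometric/Muckenhoupt argument as for the drift. Combining the drift and diffusion bounds in the It\^o estimate for $\Phi_\varepsilon$ and letting $\varepsilon\to 0$ yields $\mathbb{P}(\sup_{t\in[0,T]}|X(t)-Y(t)|>0)=0$, proving pathwise uniqueness; together with the weak existence from Theorem~\ref{prop:SP-for-Barenblatt} and the restricted Yamada--Watanabe theorem, this gives Theorem~\ref{StrongSolution.theorem:existenceStrongSolution}.
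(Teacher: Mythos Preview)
Your overall architecture matches the paper's: reduce via the restricted Yamada--Watanabe theorem (Theorem~\ref{StrongSolution.theorem.restrictedYamadaWatanabe}) to pathwise uniqueness, apply It\^o's formula to $\ln(1+|X-Y|^2/\varepsilon^2)$, control the drift term via the Crippa--De Lellis inequality for $\nabla\vrho_\delta(t,\cdot)\in BV$ together with an explicit bound on the maximal function of the surface measure on $\partial B_{R_\delta(t)}(0)$, and let $\varepsilon\downarrow 0$. That part is essentially the paper's proof (no Gronwall is used, only monotone convergence). One small inaccuracy: the absolutely continuous part of $D\nabla\vrho_\delta(t,\cdot)$ does \emph{not} blow up at the boundary like $(R_\delta(t)-|x|)^{\gamma-1}$; see the formula for $g^{i,j}_\delta$ in Lemma~\ref{lemma.drift.regularity}, which is bounded by $1+|x|^{-p/(p-1)}$ on the support. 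The singularity at the boundary lives entirely in the surface-measure part.

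The genuine gap is in your treatment of the diffusion term. The power you need is \emph{not} an integer; the paper takes the specific exponent $\tfrac{1+\gamma}{2}$ of $\vrho_\delta$ (equivalently, $\tilde\sigma^{1+\gamma}$), and this choice is dictated by a cancellation you do not mention. The mean-value estimate actually used is
\[
|\sqrt{\vrho_\delta}(t,x)-\sqrt{\vrho_\delta}(t,y)|^2 \;\lesssim\; \max\bigl(\vrho_\delta(t,x),\vrho_\delta(t,y)\bigr)^{-\gamma}\,\bigl|\vrho_\delta^{\frac{1+\gamma}{2}}(t,x)-\vrho_\delta^{\frac{1+\gamma}{2}}(t,y)\bigr|^2,
\]
and the point of the exponent is that $\vrho_\delta^{\gamma}\cong w_\delta\cdot|x|^{(p-2)/(q(p-1)-1)}$. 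Thus after taking expectation against $w_\delta(t,x)\mathrm{d}x$, the density $w_\delta$ \emph{cancels exactly}, leaving the pure power weight $|x|^{-(p-2)/(q(p-1)-1)}$. It is precisely the $A_2$ condition for this weight that gives the hypothesis $q>\frac{|p-2|+d}{d(p-1)}$, and then the \emph{weighted} $L^2$-boundedness of the maximal operator (not the $A_1$ argument you used for the drift) reduces the estimate to $\int\bigl|\nabla\vrho_\delta^{(1+\gamma)/2}\bigr|^2|x|^{-(p-2)/(q(p-1)-1)}\mathrm{d}x<\infty$; see Lemmas~\ref{StrongSolution.lemma.regularity:rho_delta:kor} and~\ref{lemma.PU.sigma.uniformBound}. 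Your formula $|\tilde\sigma(x)-\tilde\sigma(y)|^2\lesssim|\tilde\sigma^k(x)-\tilde\sigma^k(y)|(\tilde\sigma^{2-k}(x)+\tilde\sigma^{2-k}(y))$ does not produce this cancellation, and without it the remaining integral contains a factor $f_+^{-\text{positive}}$ that is not controlled. Also, $\vrho_\delta^{(1+\gamma)/2}(t,\cdot)$ is in $W^{1,1}$ (no surface-measure singular part, since $\vrho_\delta$ vanishes continuously at the boundary), so the ``same geometric argument as for the drift'' is neither needed nor applicable here.
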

\begin{rem}
    Note that Theorem \ref{StrongSolution.theorem:existenceStrongSolution} includes the case of $p$-Browninan motion, i.e., $p>2, q=1$.
\end{rem}
As already mentioned above, the case of arbitrary initial points $y\in\rd$ can be obtained analogously:
\begin{kor}\label{StrongSolution.corollary:existenceStrongSolution}
	Let $y\in \rd$ and $\delta>0$.
	Let $d\geq 2$, $p>1, q>0$ such that 
	$p>\frac{d}{d-1}$ 
	and 
	$q>\frac{|p-2|+d}{d(p-1)}$.
	Then, the probabilistically weak solution to \eqref{StrongSolution.eq:DDSDE-lin} constructed in Corollary \ref{cor:SP-princ-Barenblatt-all-y} is a probabilistically strong solution.
Furthermore, for all $T>0$, pathwise uniqueness holds among weak solutions $(X,W)$ 
to \eqref{StrongSolution.eq:DDSDE-lin} on $[0,T]$.
\end{kor}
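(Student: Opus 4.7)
The plan is to reduce the statement for an arbitrary initial point $y \in \R^d$ to the case $y = 0$ already covered by Theorem \ref{StrongSolution.theorem:existenceStrongSolution} by exploiting the translation covariance of the Barenblatt solutions and of the coefficients of \eqref{StrongSolution.eq:DDSDE-lin}. The key observation is that from the very definition \eqref{def-w} one has $w^y(t,x) = w^0(t, x-y)$, hence, writing $w_\delta^y(t,x) := w^y(t+\delta,x)$ and $\vrho_\delta^y(t,x) := |\nabla w_\delta^y(t,x)|^{p-2} w_\delta^y(t,x)^{(p-1)(q-1)}$, we get
\begin{equation*}
w_\delta^y(t,x) = w_\delta^0(t,x-y), \qquad \vrho_\delta^y(t,x) = \vrho_\delta^0(t,x-y), \qquad \nabla \vrho_\delta^y(t,x) = \nabla \vrho_\delta^0(t, x-y),
\end{equation*}
since spatial gradients commute with translations.

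First, I would verify the strong solution property. Let $(X,W,(\Omega,\Fscr,(\Fscr_t),\PP))$ denote the probabilistically weak solution to \eqref{StrongSolution.eq:DDSDE-lin} with $\law{X(t)}(\mathrm{d}x) = w_\delta^y(t,x)\mathrm{d}x$ provided by Corollary \ref{cor:SP-princ-Barenblatt-all-y}. Setting $\tilde{X}(t) := X(t) - y$, the identities above show that $\tilde{X}$ is again an $(\Fscr_t)$-adapted continuous process, driven by the same Brownian motion $W$, and solves
\begin{equation*}
\mathrm{d}\tilde{X}(t) = q^{p-1} \nabla \vrho_\delta^0(t, \tilde{X}(t))\,\mathrm{d}t + \sqrt{2 q^{p-1} \vrho_\delta^0(t, \tilde{X}(t))}\,\mathrm{d}W(t),
\end{equation*}
with $\law{\tilde{X}(t)}(\mathrm{d}x) = w_\delta^0(t,x)\mathrm{d}x$. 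Hence $\tilde{X}$ is precisely a probabilistically weak solution of the kind handled by Theorem \ref{StrongSolution.theorem:existenceStrongSolution}, and therefore there exists a measurable map $F_{w_\delta^0(0,\cdot)\mathrm{d}x}$ as in Definition \ref{def:DDSDE_strong} with $\tilde{X} = F_{w_\delta^0(0,\cdot)\mathrm{d}x}(\tilde{X}(0), W)$ $\PP$-almost surely. Defining $F^y(x,w) := F_{w_\delta^0(0,\cdot)\mathrm{d}x}(x-y, w) + y$, where "$+y$" denotes the constant translation on $C([0,\infty);\R^d)$, produces a functional with the measurability properties required by Definition \ref{def:DDSDE_strong} (the appropriate completions transfer under the push-forward by translation, since translation maps $w_\delta^y(0,\cdot)\mathrm{d}x$ to $w_\delta^0(0,\cdot)\mathrm{d}x$), and satisfies $X = F^y(X(0),W)$ $\PP$-a.s.

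For the restricted pathwise uniqueness on $[0,T]$, the same translation argument applies directly: if $(X_1,W)$ and $(X_2,W)$ are two weak solutions on a common stochastic basis with $X_1(0) = X_2(0)$ $\PP$-a.s. and one-dimensional time marginals $w_\delta^y(t,\cdot)\mathrm{d}x$, then $\tilde{X}_i := X_i - y$, $i=1,2$, are two weak solutions to the $y=0$ version of \eqref{StrongSolution.eq:DDSDE-lin} with $\tilde{X}_1(0) = \tilde{X}_2(0)$ $\PP$-a.s.; by the pathwise uniqueness assertion in Theorem \ref{StrongSolution.theorem:existenceStrongSolution} we obtain $\tilde{X}_1 = \tilde{X}_2$, hence $X_1 = X_2$, on $[0,T]$. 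No obstacle is expected here, as the argument is mechanical once the translation covariance of $\vrho_\delta^y$ and $\nabla \vrho_\delta^y$ is recorded; the only mild point to check carefully is that the measurability/completion structure in Definition \ref{def:DDSDE_strong} is preserved by the affine change of variables $x \mapsto x - y$, which is immediate since this change is a Borel isomorphism of $\R^d$ that maps $\nu \otimes P^W$ to the analogous product measure for the $y=0$ initial distribution.
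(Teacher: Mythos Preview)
Your proof is correct. The paper itself gives no proof of this corollary, merely stating that ``the case of arbitrary initial points $y\in\rd$ can be obtained analogously''; your translation argument is a clean and rigorous way to realize this. One small point worth making explicit: when you invoke Theorem \ref{StrongSolution.theorem:existenceStrongSolution} to conclude that $\tilde X$ is strong, you are implicitly using that the restricted Yamada--Watanabe theorem (Theorem \ref{StrongSolution.theorem.restrictedYamadaWatanabe}) yields a functional $F$ that represents \emph{every} weak solution with the prescribed marginals, not only the particular one from Theorem \ref{prop:SP-for-Barenblatt}; this is indeed part of the Yamada--Watanabe machinery (pathwise uniqueness forces all weak solutions to be given by the same functional), so the step is justified, but it would be good to say so explicitly.
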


\subsection{The restricted Yamada--Watanabe theorem}

Our result is based on the \textit{restricted Yamada--Watanabe theorem}, which we will recall here in a form tailored to the ordinary SDE \eqref{StrongSolution.eq:DDSDE-lin}. For the general formulation for stochastic partial differential equations in the variational framework, we refer to \cite[Theorem 1.3.1]{Grube-thesis}.

\begin{theorem}
\label{StrongSolution.theorem.restrictedYamadaWatanabe}
Assume
\begin{enumerate}[(i)]
	\item \label{StrongSolution.theorem.restrictedYamadaWatanabe:i} There exists a probabilistically weak solution $(X,W)$ to \eqref{StrongSolution.eq:DDSDE-lin};
	\item \label{StrongSolution.theorem.restrictedYamadaWatanabe:ii} Pathwise uniqueness holds for \eqref{StrongSolution.eq:DDSDE-lin}.
\end{enumerate}
Then the weak solution $(X,W)$ to \eqref{StrongSolution.eq:DDSDE-lin} from \eqref{StrongSolution.theorem.restrictedYamadaWatanabe:i} is a probabilistically strong solution.
\end{theorem}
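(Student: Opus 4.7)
The plan is to adapt the classical Yamada--Watanabe construction to the present \emph{restricted} setting, where both the given weak solution and the class in which pathwise uniqueness is assumed consist of processes whose one-dimensional marginals are pinned to $w_\delta(t,x)\mathrm{d}x$. Because \eqref{StrongSolution.eq:DDSDE-lin} has been linearised (the coefficients no longer depend on the law of the solution), the classical argument essentially transfers, but one has to be careful to check that the two-copy construction produces solutions that still lie in the prescribed marginal class.

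First, let $(X,W)$ on $(\Omega,\Fscr,(\Fscr_t),\PP)$ be the weak solution from (i), set $\nu := \law{X(0)}$, and consider the joint law $Q := \PP \circ (X(0),W,X)^{-1}$ on the Polish space $\rd \times C([0,\infty);\rd)_0 \times C([0,\infty);\rd)$. Using $\Fscr_0$-measurability of $X(0)$ and the $(\Fscr_t)$-Brownian property of $W$, the marginal of $Q$ on the first two factors is $\nu \otimes P^W$, and by standard disintegration there exists a probability kernel
\begin{equation*}
K : \rd \times C([0,\infty);\rd)_0 \to \Pscr(C([0,\infty);\rd))
\end{equation*}
such that $Q = \nu \otimes P^W \otimes K$.

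Next, I would build the two-copy extension $\tilde\Omega := \rd \times C([0,\infty);\rd)_0 \times C([0,\infty);\rd)^2$ with $\tilde\PP := \nu \otimes P^W \otimes K \otimes K$, and coordinate processes $\tilde X_0, \tilde W, \tilde X^1, \tilde X^2$. Equipping $\tilde\Omega$ with the $\tilde\PP$-augmented right-continuous filtration generated by these four objects, the standard verification (via testing with bounded $\Bscr_t$-measurable functionals and exploiting the kernel structure $K$) shows that $\tilde W$ remains a Brownian motion, that $\law{\tilde X^i(t)} = w_\delta(t,x)\mathrm{d}x$ for each $t$, and that each $(\tilde X^i, \tilde W)$, $i=1,2$, is a probabilistically weak solution to \eqref{StrongSolution.eq:DDSDE-lin} with the common initial value $\tilde X^i(0)=\tilde X_0$. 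Hypothesis (ii) then forces $\tilde X^1 = \tilde X^2$ $\tilde\PP$-a.s., which, by a Fubini argument, implies that $K((x,w);\cdot) \otimes K((x,w);\cdot)$ is concentrated on the diagonal of $C([0,\infty);\rd)^2$ for $\nu \otimes P^W$-a.e.\ $(x,w)$. Consequently $K((x,w);\cdot) = \delta_{F_\nu(x,w)}$ for a map $F_\nu$ which is measurable with respect to the indicated completion $\overline{\Bscr(\rd)\otimes \Bscr(C([0,\infty);\rd)_0)}^{\nu \otimes P^W}$, and under the original $\PP$ one recovers $X = F_\nu(X(0),W)$ a.s.

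Finally, for the adaptedness claim I would argue as follows: since $X$ is $(\Fscr_t)$-adapted while the Brownian increments $W(\cdot) - W(t)$ are independent of $\Fscr_t$ (and in particular of $(X(0), X|_{[0,t]})$), the regular conditional law of $X|_{[0,t]}$ given $(X(0),W)$ coincides with its conditional law given $(X(0),W|_{[0,t]})$. Combined with the Dirac form of $K$ obtained above, this gives the required $\overline{\Bscr_t(C([0,\infty);\rd)_0)}^{P^W}$-measurability of $F_\nu(x,\cdot)$ for $\nu$-a.e.\ $x$. The main obstacle is the verification in the two-copy step that $\tilde W$ is still a Brownian motion with respect to the enlarged filtration and that the Itô integral equation passes to $\tilde\PP$; this is the only genuinely delicate point, and it reduces to an application of the tower property together with the defining property of the regular conditional kernel $K$.
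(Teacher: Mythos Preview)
Your proposal is correct and is essentially the argument underlying the result the paper invokes. The paper itself does not spell out the construction: it simply applies \cite[Theorem 1.3.1]{Grube-thesis} with the class
\[
P_{w_\delta(0,x)\mathrm{d}x} := \{ Q \in \Pscr(C([0,\infty);\rd)): Q\circ \pi_t^{-1} = w_\delta(t,x)\mathrm{d}x\ \ \forall t\in \R_+\},
\]
so your detailed two-copy disintegration argument is precisely what that cited theorem packages. Your care in checking that both copies $\tilde X^1,\tilde X^2$ inherit the prescribed marginals $w_\delta(t,x)\mathrm{d}x$ (so that the restricted pathwise uniqueness hypothesis actually applies to them) is the only point genuinely specific to the restricted setting, and you handle it correctly.
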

\begin{proof}
    The assertion follows from \cite[Theorem 1.3.1]{Grube-thesis} by setting (using the notation of \cite{Grube-thesis})
    \begin{align*}
        P_{w_\delta(0,x)\mathrm{d}x} := \{ Q \in \Pscr(C([0,\infty);\rd)): Q\circ \pi_t^{-1} =w_\delta(t,x)\mathrm{d}x\ \ \forall t\in \R_+\}.
    \end{align*}
\end{proof}
By Theorem \ref{prop:SP-for-Barenblatt}, Assumption \eqref{StrongSolution.theorem.restrictedYamadaWatanabe:i} of Theorem \ref{StrongSolution.theorem.restrictedYamadaWatanabe} is satisfied. Therefore, the rest of the proof of Theorem \ref{StrongSolution.theorem:existenceStrongSolution} boils down to a pathwise uniqueness result for \eqref{StrongSolution.eq:DDSDE-lin}. The proof of Theorem \ref{StrongSolution.theorem:existenceStrongSolution} is carried out in Section \ref{subsect:StrongSolution.existenceStrongSolution.proof}.
In the following two subsections we present some technical preparations.

\subsection{About the Schwartz distributional derivative of \texorpdfstring{$\nabla \vrho_\delta(t,\cdot)$} \ \  }\label{section.condition.2}

Here we discuss regularity and integrability aspects of the drift coefficient of \eqref{StrongSolution.eq:DDSDE-lin}, which will be essential for the proof of Theorem \ref{StrongSolution.theorem:existenceStrongSolution}.
\begin{lem}\label{lemma.drift.regularity}
Let $d\geq 2$, $p>1,q>0$ such that $q(p-1)>1$ and $p>\frac{d}{d-1}$. Let $\delta>0$ and $t\geq 0$.
    Then, $$\nabla \vrho_\delta(t,\cdot)\in BV(\rd;\rd)$$ with Schwartz distributional derivatives given by the finite signed Borel measures (on $\rd$)
	\begin{align}\label{lemma.drift.regularity:1}
		\partial_i\partial_j \vrho_\delta(t,A) = \int_A g^{i,j}_{\delta}(t,z)\mathrm{d}z + C  (t+\delta)^{-\frac{d(p-2)}{\beta}-\frac{p}{\beta}-\frac{d(p-1)(q-1)}{\beta}}\int_{A\cap \partial B_{R_{\delta}(t)}(0)}z_iz_j S(\mathrm{d}z) \ \ 
		\forall A \in \mathcal{B}(\rd),
	\end{align}
	where $1\leq i,j \leq d$, $C>0$ is a constant not depending on $t$, and $g_{\delta}^{i,j} (t,\cdot)\in L^1(\rd)$ denotes the classical second order derivative of $\vrho_\delta(t,\cdot)$ with respect to the coordinates $x_i$ and $x_j$ on $\rd\backslash (\partial B_{R_{\delta}(t)}(0) \cup \{0\})$.
\end{lem}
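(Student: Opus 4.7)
The approach is to exploit the explicit form \eqref{StrongSolution.vrho_delta.function}. The function $\vrho_\delta(t,\cdot)$ is supported on $\overline{B_{R_\delta(t)}(0)}$, is $C^\infty$ on $B_{R_\delta(t)}(0)\setminus\{0\}$, and since $f(t+\delta,\cdot)=0$ on $\partial B_{R_\delta(t)}(0)$ it vanishes \emph{continuously} across that sphere. By contrast, $\nabla f$ does not vanish on the sphere, so $\nabla\vrho_\delta(t,\cdot)$ has a genuine jump across $\partial B_{R_\delta(t)}(0)$. The plan is to identify this jump as the source of the surface-measure part in \eqref{lemma.drift.regularity:1}, while the classical second derivative computed in the interior yields the absolutely continuous part $g^{i,j}_\delta(t,\cdot)\,\mathrm{d}z$.

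First I would compute $g^{i,j}_\delta(t,\cdot)$ on $B_{R_\delta(t)}(0)\setminus\{0\}$ classically, by applying the product and chain rules to the factorization $\vrho_\delta(t,x)=C_\vrho(t+\delta)\,f(t+\delta,x)\,|x|^{(p-2)/(p-1)}$. The worst singularity near the origin originates from the term $f\,\partial_i\partial_j|x|^{(p-2)/(p-1)}$ and behaves like $|x|^{(p-2)/(p-1)-2}=|x|^{-p/(p-1)}$; this is integrable near $0$ on $\rd$ if and only if $p/(p-1)<d$, i.e.\ $p>d/(d-1)$, the standing hypothesis. Together with boundedness on $B_{R_\delta(t)}(0)\setminus B_r(0)$ for every $r>0$ (note that on this region $f$ is bounded and bounded away from nothing problematic), this gives $g^{i,j}_\delta(t,\cdot)\in L^1(\rd)$.

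Next, for $\varphi\in C_c^\infty(\rd)$, using the $L^1$-integrability of $\nabla\vrho_\delta(t,\cdot)$ already verified in the proof of Theorem \ref{prop:SP-for-Barenblatt} together with dominated convergence, I would write
\begin{equation*}
\int_{\rd}\partial_i\vrho_\delta(t,x)\,\partial_j\varphi(x)\,\mathrm{d}x=\lim_{\varepsilon\to 0}\int_{B_{R_\delta(t)}(0)\setminus\overline{B_\varepsilon(0)}}\partial_i\vrho_\delta(t,x)\,\partial_j\varphi(x)\,\mathrm{d}x.
\end{equation*}
On each annular domain, classical Gauss--Green produces a volume integral against $g^{i,j}_\delta$ plus boundary contributions on $\partial B_\varepsilon(0)$ and $\partial B_{R_\delta(t)}(0)$. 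The inner boundary contribution vanishes as $\varepsilon\to 0$ because $|\partial_i\vrho_\delta(t,x)|\lesssim_t |x|^{-1/(p-1)}$ while $S(\partial B_\varepsilon(0))\cong\varepsilon^{d-1}$, and the required decay once more reduces to $p>d/(d-1)$.

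Finally, on the outer boundary $\nu_j(x)=x_j/R_\delta(t)$, and since $f(t+\delta,\cdot)=0$ on $\partial B_{R_\delta(t)}(0)$ only the $\partial_i f$ piece of $\partial_i\vrho_\delta$ survives; inserting \eqref{eq:grad-w-2} together with $|x|=R_\delta(t)$ collapses the radial factors into a single power of $(t+\delta)$, producing an expression of the form $C(t+\delta)^\alpha x_i x_j$ with $\alpha$ as claimed. Identifying this boundary contribution with the surface-measure term gives \eqref{lemma.drift.regularity:1}, and the finiteness of $\int_{\partial B_{R_\delta(t)}(0)}|z_iz_j|\,S(\mathrm{d}z)$ together with $g^{i,j}_\delta(t,\cdot)\in L^1(\rd)$ then yields $\nabla\vrho_\delta(t,\cdot)\in BV(\rd;\rd)$. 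The main technical obstacle throughout is the origin singularity in the regime $p<2$: both the inner boundary term and the remainder $\int_{B_\varepsilon(0)}\partial_i\vrho_\delta\,\partial_j\varphi\,\mathrm{d}x$ must be shown to vanish in the limit $\varepsilon\to 0$, and these two estimates are precisely what forces the hypothesis $p>d/(d-1)$.
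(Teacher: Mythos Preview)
Your proposal is correct and follows essentially the same approach as the paper: compute the classical Hessian $g^{i,j}_\delta$ on $B_{R_\delta(t)}(0)\setminus\{0\}$, check $L^1$-integrability via the condition $p>d/(d-1)$, then excise a ball around the origin, integrate by parts, and let the inner radius go to zero. The only cosmetic difference is that the paper also shrinks the outer boundary, working on $B_{R_\delta(t)-\varepsilon}(0)\setminus B_\varepsilon(0)$ and sending $\varepsilon\to 0$ on both sides, whereas you integrate by parts directly up to $\partial B_{R_\delta(t)}(0)$ using the smooth extension of $\vrho_\delta(t,\cdot)$ from the inside; your version is slightly cleaner since the vanishing of the $f_+$-contribution on the outer sphere is immediate rather than obtained as a limit.
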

\begin{proof}
In the proof of Theorem \ref{prop:SP-for-Barenblatt} we already showed that $\vrho_\delta(t,\cdot)\in L^1(\rd)$ and that $\vrho_\delta(t,\cdot)$ is weakly differentiable on $\rd$ with spatial weak derivative (and also classical derivative in $x\in \rd \backslash (\partial B_{R_{\delta}(t)}(0)\cup \{0\})$)
\begin{align}\label{StrongSolution.vrho_delta.derivative.function}
	\nabla \vrho_\delta(t,x)
	=C_{\vrho}(t+\delta) \left(\frac{p-2}{p-1} f_+(t+\delta,x) |x|^{-\frac p {p-1}} -\frac {\kappa p}{p-1} (t+\delta)^{- \frac p {\beta(p-1)}}\mathbbm{1}_{B_{R_\delta(t)}(0)}(x)\right) x,
\end{align}
where $C_{\vrho}$ is as in \eqref{StrongSolution.vrho_delta.function:constant}. Note that $\nabla \vrho_\delta(t,\cdot)$ has compact support, so that  $\nabla \vrho_\delta(t,\cdot)\in L^1(\rd)$.
Furthermore, it is clear that $\vrho_\delta(t,\cdot)$ is twice continuously differentiable on $\rd \backslash (\partial B_{R_{\delta}(t)}(0)\cup \{0\})$.
 Recall that we denote the ordinary second-order derivative of $\vrho_\delta(t,\cdot)$ in directions $x_i$ and $x_j$ by $g^{i,j}_{\delta}(t,\cdot)$.
A direct computation yields for all $x\in \rd \backslash (\partial B_{R_\delta(t)}(0)\cup \{0\})$,
\begin{align*}
	g^{i,j}_{\delta}(t,x)
	 =&\ C_{\vrho}(t+\delta)\mathbbm{1}_{B_{R_\delta(t)}(0)}(x)\bigg[
	    \frac{p-2}{p-1} \left(-\frac{\kappa p}{p-1} (t+\delta)^{-\frac{p}{\beta(p-1)}}|x|^\frac{2-p}{p-1}x_i\right)|x|^{-\frac{p}{p-1}}x_j \\
	    &+ \frac{p-2}{p-1}	 
	    f(t+\delta,x)\left(-\frac{p}{p-1}|x|^{\frac{2-3p}{p-1}}x_i x_j + |x|^{-\frac{p}{p-1}}\delta_{ij}\right)-\frac{\kappa p}{p-1} (t+\delta)^{-\frac{p}{\beta(p-1)}}\delta_{ij}\bigg]\\
	=&\ C_{\vrho}(t+\delta)\mathbbm{1}_{B_{R_\delta(t)}(0)}(x)\bigg[
	    -\frac{\kappa p(p-2)}{(p-1)^2}(t+\delta)^{-\frac{p}{\beta(p-1)}}|x|^{-2}x_i x_j \\
	   &+ \frac{p-2}{p-1}
	    f(t+\delta,x)\left(-\frac{p}{p-1}|x|^{\frac{2-3p}{p-1}}x_i x_j + |x|^{-\frac{p}{p-1}}\delta_{ij}\right)-\frac{\kappa p}{p-1} (t+\delta)^{-\frac{p}{\beta(p-1)}}\delta_{ij}\bigg],
\end{align*}
where $\delta_{ij}$ denotes the usual Kronecker delta.
Note that $g^{i,j}_{\delta}(t,\cdot)\in L^1(\rd)$, since $p>\frac{d}{d-1}$.
Now, we are prepared to prove \eqref{lemma.drift.regularity:1}.
Let $\varphi \in C_c^\infty(\rd)$.
By Lebesgue's dominated convergence theorem, we have
\begin{align*}
	\int_{B_{R_\delta(t)}(0)} \partial_i \vrho_\delta(t,x) \partial_j\varphi(x) \dx
	= \lim_{\varepsilon\downarrow 0} \int_{B_{R_\delta(t)-\varepsilon}(0)\backslash B_{\varepsilon}(0)} \partial_i \vrho_\delta(t,x) \partial_j \varphi(x)\dx.
	\end{align*}
Let  $0<\varepsilon<R_{\delta}(t)/2$. Then integration by parts yields 
\begin{align}\label{lemma.drift.regularity:2}
	\int_{B_{R_{\delta}(t)-\varepsilon}(0)\backslash B_{\varepsilon}(0)} &\partial_i \vrho_\delta(t,x) \partial_j \varphi(x)\dx 
	= -\int_{B_{R_{\delta}(t)-\varepsilon}(0)\backslash B_{\varepsilon}(0)} g_{\delta}^{i,j}(t,x)  \varphi(x)\dx\\
	&+ \int_{\partial B_{R_{\delta}(t)-\varepsilon}(0)} \partial_i  \vrho_\delta(t,x)\varphi(x) \frac{x_j}{R_{\delta}(t)-\varepsilon}\mathrm{d}S(x)
	-\int_{\partial B_{\varepsilon}(0)} \partial_i  \vrho_\delta(t,x)\varphi(x) \frac{x_j}{\varepsilon}\mathrm{d}S(x)\notag.
\end{align}
First, the third summand on the right-hand side in \eqref{lemma.drift.regularity:2} vanishes as $\varepsilon \to 0$, since
\begin{align*}
	\left|\int_{\partial B_{\varepsilon}(0)} \partial_i  \vrho_\delta(t,x)\varphi(x) \frac{x_j}{\varepsilon}\mathrm{d}S(x)\right|
	\lesssim_\delta \left(\varepsilon^{-\frac{1}{p-1}} + \varepsilon\right)\varepsilon^{d-1} \to 0, \text{ as $\varepsilon\to 0$,}
\end{align*}
where we used $p> \frac{d}{d-1}$.
Secondly, due to \eqref{StrongSolution.vrho_delta.derivative.function}, the second summand on the right-hand side in \eqref{lemma.drift.regularity:2} can be written as
\begin{align*}
	\int_{\partial B_{R_{\delta}(t)-\varepsilon}(0)} \frac{\partial_i  \vrho_\delta(t,x)x_j}{R_{\delta}(t)-\varepsilon}\varphi(x) \mathrm{d}S(x)
	 &=  \frac{C_{\vrho}(t+\delta)(p-2)}{(R_{\delta}(t)-\varepsilon)(p-1)}\int_{\partial B_{R_{\delta}(t)-\varepsilon}(0)} f_+(t+\delta,x) |x|^{-\frac{p}{p-1}}x_ix_j\varphi(x)\mathrm{d}S(x)\\
	& \ \ \ \ -\frac{C_{\vrho}(t+\delta)(t+\delta)^{-\frac{p}{\beta(p-1)}}\kappa p}{(p-1)(R_{\delta}(t)-\varepsilon)} \int_{\partial B_{R_{\delta}(t)-\varepsilon}(0)} x_ix_j\varphi(x)\mathrm{d}S(x).
\end{align*}
Using the transformation $x\mapsto (R_{\delta}(t)-\varepsilon)x$, it is easy to see that the first summand on the right-hand side of the last equality vanishes as $\varepsilon\to 0$. A similar argument regarding the last integral on the right-hand side yields
\begin{align*}
    \int_{\partial B_{R_{\delta}(t)-\varepsilon}(0)} x_ix_j\varphi(x)\mathrm{d}S(x) \to \int_{\partial B_{R_{\delta}(t)}(0)} x_ix_j\varphi(x)\mathrm{d}S(x), \text{ as $\varepsilon\to 0$.}
\end{align*}

Hence,
letting $\varepsilon\to 0$ in \eqref{lemma.drift.regularity:2}, we conclude that the spatial Schwartz distributional derivatives of the components of $\nabla \vrho_\delta$ can be written as
\begin{align*}
	_{\mathcal{D}'}\langle\partial_j \partial_i\vrho_\delta(t,\cdot),\varphi\rangle_{\mathcal{D}} = \int_\rd g_{\delta}^{i,j}(t,x)\varphi(x)\dx + \frac{C_{\vrho}(t+\delta)(t+\delta)^{-\frac{p}{\beta(p-1)}}\kappa p}{(p-1)(R_{\delta}(t)-\varepsilon)}\int_{\partial B_{R_{\delta}(t)}(0)} x_ix_j\varphi(x)\mathrm{d}S(x),
\end{align*}
for $\varphi \in C_c^\infty(\rd)$ and $1\leq i,j\leq d$, where $_{\mathcal{D}'}\langle\cdot,\cdot\rangle_D$, denotes the usual action of Schwartz distributions on $\rd$, denoted by $\mathcal{D}'$, on $\mathcal{D}\equiv C_c^\infty(\rd)$.
This finishes the proof.
\end{proof}
We have the following lemma.
\begin{lem}\label{lemma.PU.b.uniformBound}
	Let $d\geq 2$ with $\delta>0$. Let $p>1,q>0$ such that $q(p-1)>1$ and $p>\frac{d}{d-1}$.
	Then 
	\begin{align}\label{lemma.PU.b.uniformBound:toProve}
		\int_0^T \int \M|D_x \nabla\vrho_\delta(t,\cdot)|(x)w_\delta(t,x)\mathrm{d}x\mathrm{d}t<\infty\ \ \forall T\in (0,\infty),
	\end{align}
    where $D_x \nabla\vrho_\delta(t,\cdot)$ denotes the matrix consisting of all second order Schwartz distributional derivatives of $\vrho_\delta(t,\cdot)$ in form of finite signed Borel measures, according to Lemma \ref{lemma.drift.regularity}.
\end{lem}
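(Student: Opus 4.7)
The plan is to exploit the decomposition of the matrix-valued measure $D_x\nabla\vrho_\delta(t,\cdot)$ provided by Lemma~\ref{lemma.drift.regularity}: its absolutely continuous part has $L^1$ density $(g^{i,j}_\delta(t,\cdot))_{i,j}$, while its singular part is concentrated on the sphere $\partial B_{R_\delta(t)}(0)$ with density (w.r.t.\ the $(d-1)$-dimensional surface measure $S$) proportional to $x_ix_j$. Since $|x_ix_j|\le R_\delta(t)^2$ on this sphere, the total variation satisfies, as measures on $\rd$,
\[
|D_x\nabla\vrho_\delta(t,\cdot)| \;\le\; |g_\delta(t,\cdot)|\,\dx \;+\; C(t+\delta)\,R_\delta(t)^2\,\mathds{1}_{\partial B_{R_\delta(t)}(0)}\,S,
\]
with $C(t+\delta)$ continuous in $t\in[0,T]$ thanks to $\delta>0$. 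By subadditivity of $\M$, establishing \eqref{lemma.PU.b.uniformBound:toProve} reduces to handling the two corresponding spatial integrals separately and then checking their integrability in $t$ on $[0,T]$.

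First, for the absolutely continuous piece, I would extract from the explicit formula for $g^{i,j}_\delta(t,x)$ computed inside the proof of Lemma~\ref{lemma.drift.regularity} a pointwise bound of the form $|g^{i,j}_\delta(t,x)|\lesssim C(t+\delta)\,\mathds{1}_{B_{R_\delta(t)}(0)}(x)\bigl(|x|^{-p/(p-1)}+1\bigr)$. The hypothesis $p>d/(d-1)$ is equivalent to $d(p-1)/p>1$, so I can choose $r>1$ with $g^{i,j}_\delta(t,\cdot)\in L^r(\rd)$. Applying the $L^r$-boundedness of the Hardy--Littlewood maximal operator (cf.\ Appendix~\ref{App-D}), Hölder's inequality, and the boundedness and compact support of $w_\delta(t,\cdot)$, one obtains
\[
\int_{\rd}\M|g^{i,j}_\delta(t,\cdot)|(x)\,w_\delta(t,x)\,\dx \;\le\; C_r\,\|g^{i,j}_\delta(t,\cdot)\|_{L^r}\,\|w_\delta(t,\cdot)\|_{L^{r'}},
\]
which is continuous, hence bounded, in $t\in[0,T]$.

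The singular piece is where the main difficulty lies. Here I would invoke the optimal geometric estimate on $\M$ applied to the $(d-1)$-dimensional surface measure on $\partial B_{R_\delta(t)}(0)$ announced in Lemma~\ref{App-D.lemma.maximalFunction.surfaceMeasure}, which says that $\M\bigl(\mathds{1}_{\partial B_{R_\delta(t)}(0)}\,S\bigr)(x)$ blows up like $1/\mathrm{dist}(x,\partial B_{R_\delta(t)}(0))$ as $x$ approaches the sphere and decays away from it. To integrate this against $w_\delta(t,x)$, I would Taylor-expand the radial profile $f(t+\delta,x)=C-\kappa\bigl((t+\delta)^{-1/\beta}|x|\bigr)^{p/(p-1)}$ at $|x|=R_\delta(t)$ to obtain $w_\delta(t,x)\lesssim (R_\delta(t)-|x|)_+^{\gamma}$ in a neighborhood of the sphere, with a $t$-dependent but bounded constant. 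The product $\M\bigl(\mathds{1}_{\partial B_{R_\delta(t)}(0)}\,S\bigr)(x)\,w_\delta(t,x)$ then behaves like $(R_\delta(t)-|x|)^{\gamma-1}$ close to the sphere, yielding an integrable singularity since $\gamma>0$; away from the sphere both factors are uniformly controlled. Integrability in $t\in[0,T]$ follows because all prefactors depend continuously on $t$ thanks to $\delta>0$.

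The razor-edge point, which is the real obstacle, is that the order of the blow-up of $\M$ at $\partial B_{R_\delta(t)}(0)$ must be precisely compensated by the order of vanishing of the Barenblatt density at the same set; this balance is exactly what the optimal estimate of Lemma~\ref{App-D.lemma.maximalFunction.surfaceMeasure} (noted in the introduction to be optimal) makes possible, and any weaker pointwise control on $\M$ of the surface measure would fail to close the argument.
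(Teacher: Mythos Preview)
Your proposal is correct and follows the same overall strategy as the paper: split $|D_x\nabla\vrho_\delta(t,\cdot)|$ into its absolutely continuous and singular parts via Lemma~\ref{lemma.drift.regularity}, handle each separately, and exploit that $w_\delta$ vanishes to order $\gamma>0$ at the sphere to absorb the $1/\mathrm{dist}$ blow-up of the maximal function of the surface measure.

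Two minor technical differences are worth noting. For the absolutely continuous part, you use $L^r$-boundedness of $\M$ (for some $r\in(1,d(p-1)/p)$) together with H\"older's inequality against $w_\delta\in L^{r'}$; the paper instead observes that $|\cdot|^{-p/(p-1)}\in A_1$ (which is precisely the condition $p>d/(d-1)$), so that $\M(|\cdot|^{-p/(p-1)})\lesssim |\cdot|^{-p/(p-1)}$ pointwise, and then integrates directly against $w_\delta$. Both work; yours is perhaps more familiar, the paper's is slightly sharper since it avoids duality. For the singular part, you invoke Lemma~\ref{App-D.lemma.maximalFunction.surfaceMeasure}, but that lemma is stated and proved only for $d=3$ (it gives an exact formula and is used in the paper only to show optimality, cf.\ Remark~\ref{lemma.PU.b.uniformBound:remark-optimality}). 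The paper's actual proof of the upper bound $\M|S(\cdot\cap\partial B_{R_\delta(t)}(0))|(x)\lesssim ||x|-R_\delta(t)|^{-1}$ for all $d\ge 2$ is a two-line argument from Cauchy's surface area formula $S(B_r(x)\cap\partial B_{R_\delta(t)}(0))\lesssim r^{d-1}$ together with the vanishing of this intersection for $r\le ||x|-R_\delta(t)|$. Your Taylor-expansion step at the sphere is then exactly the paper's claim $1-r^\rho\le C_\rho(1-r)$, yielding the integrable factor $(R_\delta(t)-|x|)^{\gamma-1}$.
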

\begin{proof}
    Let $T>0$.
    According to and using the notation of Lemma \ref{lemma.drift.regularity}, we may estimate as follows.
	\begin{align}
		\int_0^T \int \M|D_x \nabla\vrho_\delta(t,\cdot)|(x)w_\delta(t,x)\mathrm{d}x\mathrm{d}t 
		&\lesssim_\delta \int_0^T \int \M\left(\max_{1\leq i,j\leq d}|g^{i,j}_{\delta}(t,\cdot)|\right)(x)w_\delta(t,x)\mathrm{d}x\mathrm{d}t\\
		&\ \ \ + \int_0^T \int \M|S(\cdot\cap \partial B_{R_{\delta}(t)}(0))|(x)w_\delta(t,x)\mathrm{d}x\mathrm{d}t.
	\end{align}
	According to Lemma \ref{lemma.drift.regularity}, we have
	\begin{align*}
        \max_{1\leq i,j \leq d} |g^{i,j}_{\delta}(t,x)| \lesssim_\delta 1 + |x|^{\frac{-p}{p-1}}.
	\end{align*}
    Note that $|\cdot|^{-\frac{p}{p-1}}\in A_1$, since $p>\frac{d}{d-1}$. 
    Hence, we obtain
    \begin{align*}
		\int_0^T \int \M\left(\max_{1\leq i,j\leq d}|g^{i,j}_{\delta}(t,\cdot)|\right)(x)w_\delta(t,x)\mathrm{d}x\mathrm{d}t
		&\lesssim_\delta 1+ \int_0^T \int  \M(|\cdot|^{-\frac{p}{p-1}})(x)w_\delta(t,x)\mathrm{d}x\mathrm{d}t \\
		&\lesssim 1+ \int_0^T \int  |x|^{-\frac{p}{p-1}}w_\delta(t,x)\mathrm{d}x\mathrm{d}t<\infty.
	\end{align*}
    Furthermore, by Cauchy's surface area formula, we have
    \begin{align}\label{lemma.PU.b.uniformBound:Cauchy}
        S(B_r(x)\cap \partial B_{R_\delta(t)}(0)) 
        \leq S(\partial B_r(0))
        \lesssim r^{d-1}.
    \end{align}
    Also, for each $x\in \rd$ such that $|x|\neq R_\delta(t)$ and all $0<r\leq ||x|-R_\delta(t)|$ we have
    \begin{align*}
        S(B_r(x)\cap \partial B_{R_\delta(t)}(0)) = 0.
    \end{align*}
    Hence, for all $x\in \rd$ such that $|x|\neq R_\delta(t)$, we obtain
	\begin{align}\label{lemma.PU.b.uniformBound:1}
		\M|S(\cdot \cap \partial B_{R_\delta(t)}(0))|(x)
        \lesssim \frac{1}{||x|-R_\delta(t)|}.
	\end{align}
    \noindent\textbf{Claim:} For all $\rho\geq 0$ there exists $C_\rho \geq 1$ such that for all $r\in [0,1]$
    \begin{align}\label{lemma.PU.b.uniformBound:claim}
        1-r^\rho \leq C_\rho (1-r).
    \end{align}
    \textit{Proof of Claim:} By l'Hospital's rule, the function
    \begin{align*}
        [0,1) \ni r \mapsto \frac{1-r^\rho}{1-r}
    \end{align*}
    can be extended to a continuous function on the interval $[0,1]$, which clearly has a maximum $C_\rho \geq 1$ on $[0,1]$.\qed\\
    
	Using \eqref{lemma.PU.b.uniformBound:1}, the transformation $x\mapsto R_\delta(t)x$, integration in polar coordinates, and \eqref{lemma.PU.b.uniformBound:claim}, we obtain
	\begin{align*}
		\int_0^T \int \M|S(\cdot\cap B_{R_{\delta}(t)}(0))|(x)w_\delta(t,x)\mathrm{d}x\mathrm{d}t
		&\lesssim_\delta \int_0^T \int  (||x|-R_\delta(t)|)^{-1} f_+(t+\delta,x)^\gamma \mathrm{d}x\mathrm{d}t\\
	&\lesssim_\delta \int_0^1 \frac{r^{d-1}}{1-r} (1-r^\frac{p}{p-1})^\gamma\mathrm{d}r
    \lesssim \int_0^1 (1-r)^{\gamma-1}\mathrm{d}r
    < \infty.
	\end{align*}
	 Hence, since $\gamma>0$, \eqref{lemma.PU.b.uniformBound:toProve} follows.
\end{proof}
\begin{rem}\label{lemma.PU.b.uniformBound:remark-optimality}
    At first sight, the estimate \eqref{lemma.PU.b.uniformBound:1} seems quite rough.
    But, in fact, elementary geometrical considerations show that, for $d=3$ and fixed $t\geq 0$, $$\M |D_x \nabla \vrho_\delta(t,\cdot)|(x) \cong \frac{1}{|x|||x|-R_\delta(t)|},$$ for all $x$ in a neighborhood of $\partial B_{R_\delta(t)}(0)$.
\end{rem}

\subsection{Weak derivatives (of powers) of \texorpdfstring{$\sqrt\vrho_\delta$}\ }
Here we discuss regularity and integrability aspects of the diffusion coefficient of \eqref{StrongSolution.eq:DDSDE-lin}, which will be essential in the proof of Theorem \ref{StrongSolution.theorem:existenceStrongSolution}.
\begin{lem}\label{StrongSolution.lemma.regularity:rho_delta:kor}
    Let $d\geq 2$, $p>1,q>0$ with $q(p-1)>1$, and
\begin{align}\label{StrongSolution.lemma.regularity:rho_delta:1c}
    \frac{(p-2)(1+\gamma)}{2(p-1)}>1-d.
\end{align}
    Then for each $t>0$, $\vrho_\delta^{\frac{1+\gamma}2}(t,\cdot)$ is weakly differentiable.
\end{lem}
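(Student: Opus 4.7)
The plan is to use the explicit form of the diffusion coefficient given in \eqref{StrongSolution.vrho_delta.function} to factorize
\begin{equation*}
\vrho_\delta^{(1+\gamma)/2}(t,x) = C_\vrho(t+\delta)^{(1+\gamma)/2}\,g(t+\delta,x)\,h(x),
\end{equation*}
where $h(x):=|x|^{\alpha}$ with $\alpha := \frac{(p-2)(1+\gamma)}{2(p-1)}$ and $g(s,x):=f_+(s,x)^{(1+\gamma)/2}$, and then to verify weak differentiability of the product via a generalized Leibniz rule for Sobolev functions. The point is that assumption \eqref{StrongSolution.lemma.regularity:rho_delta:1c} reads precisely $\alpha > 1-d$, which is the sharp threshold for $h\in W^{1,1}_{\textup{loc}}(\rd)$; this is where the hypothesis enters.

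For the radial factor: since $d\geq 2$ gives $\alpha > 1-d > -d$, we have $h\in L^1_{\textup{loc}}(\rd)$, and its classical gradient $\alpha|x|^{\alpha-2}x$ (defined off the origin) belongs to $L^1_{\textup{loc}}(\rd;\rd)$ iff $\alpha-1 > -d$, i.e.\ iff \eqref{StrongSolution.lemma.regularity:rho_delta:1c} holds; a standard truncation-and-mollification argument then yields $h\in W^{1,1}_{\textup{loc}}(\rd)$ with the expected weak gradient. For the angular factor $g$: the one-variable map $r\mapsto r_+^{(1+\gamma)/2}$ is absolutely continuous on $\R$ (as $\gamma > -1$) with derivative $\tfrac{1+\gamma}{2}r_+^{(\gamma-1)/2}$, and $f(t+\delta,\cdot)$ is smooth with $|\nabla f(t+\delta,x)|\cong_t |x|^{1/(p-1)}$; the Sobolev chain rule (via a routine approximation when $\gamma<1$) yields $g(t+\delta,\cdot)\in W^{1,1}_{\textup{loc}}(\rd)\cap L^{\infty}(\rd)$ with weak gradient $\tfrac{1+\gamma}{2}f_+^{(\gamma-1)/2}\nabla f\,\mathds{1}_{\{f>0\}}$, integrable across $\partial B_{R_\delta(t)}(0)$ because $f$ vanishes to first order there and $(\gamma-1)/2 > -1$.

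Finally, I would invoke the generalized product rule: provided $gh$ and the formal derivative $g\nabla h + h\nabla g$ both lie in $L^1_{\textup{loc}}$, one concludes $gh\in W^{1,1}_{\textup{loc}}(\rd)$ with the expected weak gradient. Local integrability of $|g\nabla h|\lesssim_t |x|^{\alpha-1}$ is equivalent to \eqref{StrongSolution.lemma.regularity:rho_delta:1c}, while the cross term $|h\nabla g|\lesssim_t f_+^{(\gamma-1)/2}|x|^{\alpha+1/(p-1)}$ is locally integrable because $\alpha+1/(p-1) > 1-d > -d$ and $f_+^{(\gamma-1)/2}$ is integrable transversally to $\partial B_{R_\delta(t)}(0)$. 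The main technical obstacle is the rigorous justification of the product rule when neither factor is globally Lipschitz and the two singularities sit on different sets ($\{0\}$ for $h$, $\partial B_{R_\delta(t)}(0)$ for $\nabla g$); I would handle this by a double mollification (smoothing $f$ and truncating $|x|^{\alpha}$ away from the origin) and passing to the limit via dominated convergence using the $L^1$ bounds above, or equivalently by applying the product rule on each of the open sets $B_{R_\delta(t)}(0)\setminus\{0\}$ and $\rd\setminus\overline{B_{R_\delta(t)}(0)}$, where only one singularity is present, and then patching across the boundary using the vanishing and continuity of the function.
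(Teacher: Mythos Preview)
Your approach is correct but differs from the paper's. You factorize $\vrho_\delta^{(1+\gamma)/2}$ as a product of two factors, establish $W^{1,1}_{\textup{loc}}$-regularity for each separately (radial power via the hypothesis \eqref{StrongSolution.lemma.regularity:rho_delta:1c}, and $f_+^{(1+\gamma)/2}$ via the chain rule), and then combine them with a product rule, carefully localizing so that only one singular set is active at a time. The paper instead works directly with the whole function: it integrates $\vrho_\delta^{(1+\gamma)/2}\partial_i\varphi$ by parts over the annulus $B_{R_\delta(t)-\varepsilon}(0)\setminus B_\varepsilon(0)$ (where the function is classically differentiable), shows the two resulting surface integrals vanish as $\varepsilon\to0$ (the inner one precisely by $\frac{(p-2)(1+\gamma)}{2(p-1)}+d-1>0$, the outer one because $f_+$ vanishes there), and checks that the pointwise gradient obeys the bound \eqref{StrongSolution.lemma.regularity:rho_delta:4} and hence lies in $L^1$. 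The paper's route is more elementary and self-contained, needing no abstract Sobolev product/chain rules and no mollification; yours is more modular and makes transparent which singularity is controlled by which hypothesis, but at the cost of justifying a product rule for two factors neither of which is $W^{1,\infty}_{\textup{loc}}$ globally (your localization-by-disjoint-singular-sets argument handles this, but it is extra work). One minor inaccuracy: $f(t+\delta,\cdot)$ is only $C^1$ in general (not smooth at the origin when $p>2$), though $C^1$ is all your argument needs.
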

\begin{rem}\label{StrongSolution.remark.regularity:rho_delta:kor}
    Consider
    \begin{enumerate}[(i)]
        \item \label{StrongSolution.corrolary.regularity:rho_delta:i}
    \begin{align}\label{StrongSolution.corrolary.regularity:rho_delta:1a}
        p>\frac{2d}{2d-1} \text{\ and\ }q>\frac{2d(p-1)-p-(p-2)(p-1)}{(p-1)(2d(p-1)-p)};
    \end{align}
        \item \label{StrongSolution.corrolary.regularity:rho_delta:ii}
        \begin{align}\label{StrongSolution.corrolary.regularity:rho_delta:1b}
        (2\geq)\ \frac{2d}{2d-1}>p>1 \text{\ and\ }q<\frac{2d(p-1)-p-(p-2)(p-1)}{(p-1)(2d(p-1)-p)}.
    \end{align}
    \end{enumerate}
    In fact, \eqref{StrongSolution.lemma.regularity:rho_delta:1c} holds if and only if \eqref{StrongSolution.corrolary.regularity:rho_delta:1a} or \eqref{StrongSolution.corrolary.regularity:rho_delta:1b} is fulfilled.
\end{rem}
\begin{proof}
By \eqref{StrongSolution.vrho_delta.function}, clearly $\vrho_\delta^{\frac{1+\gamma}2}(t,\cdot) \in L^1(\rd)$. Note that $\vrho_\delta^{\frac{1+\gamma}2}(t,\cdot)$ is (infinitely many times) continuously differentiable on $\rd\backslash(\partial B_{R_\delta(t)}(0)\cup\{0\})$.
Let $\varphi\in C_c^\infty(\rd)$ and $0<\varepsilon<R_\delta(t)/2$.
By Lebesgue's dominated convergence, we obtain
    \begin{align*}
        \int \vrho_\delta^{\frac{1+\gamma}2}(t,x)\partial_i\varphi(x)\mathrm{d}x
        = \lim_{\varepsilon\to 0} \int_{B_{R_\delta(t)-\varepsilon}\backslash B_\varepsilon(0)}\vrho_\delta^{\frac{1+\gamma}2}(t,x)\partial_i\varphi(x)\mathrm{d}x.
    \end{align*}
    Integrating by parts, we obtain for $1\leq i\leq d$
    \begin{align*}
        \int_{B_{R_\delta(t)-\varepsilon}\backslash B_\varepsilon(0)}\vrho_\delta^{\frac{1+\gamma}2}(t,x)\partial_i\varphi(x)\mathrm{d}x
        &=-\int_{B_{R_\delta(t)-\varepsilon}\backslash B_\varepsilon(0)} \partial_i \vrho_\delta^{\frac{1+\gamma}2}(t,x) \varphi(x) \mathrm{d}x\\
        &+  \int_{\partial B_{R_\delta(t)-\varepsilon}} \vrho_\delta^{\frac{1+\gamma}2}(t,x) \varphi(x) \frac{x_i}{R_{\delta}(t)-\varepsilon} S(\mathrm{d}x)\\
        &-\int_{\partial B_\varepsilon(0)} \vrho_\delta^{\frac{1+\gamma}2}(t,x) \varphi(x) \frac{x_i}{R_{\delta}(t)-\varepsilon} S(\mathrm{d}x),
    \end{align*}
    where $\partial_i \vrho_\delta^{\frac{1+\gamma}2}(t,\cdot)$ denotes the $\mathrm{d}x$-a.e. existing pointwise partial derivative of $\vrho_\delta^{\frac{1+\gamma}2}(t,\cdot)$ in direction $x_i$.
    By Lebesgue's dominated convergence theorem
    \begin{align}
        \int_{\partial B_{R_\delta(t)-\varepsilon}} &\vrho_\delta^{\frac{1+\gamma}2}(t,x) \varphi(x) \frac{x_i}{R_{\delta}(t)-\varepsilon} S(\mathrm{d}x)\notag \\
        &= \int_{\partial B_1(0)}  \vrho_\delta^{\frac{1+\gamma}2}(t,(R_\delta(t)-\varepsilon)x) \varphi((R_\delta(t)-\varepsilon)x)x_i (R_\delta(t)-\varepsilon)^{d-1} S(\mathrm{d}x)
        \xrightarrow{\varepsilon \to 0} 0.
    \end{align}
    Furthermore, by \eqref{StrongSolution.lemma.regularity:rho_delta:1c},
    \begin{align}
        \int_{\partial B_\varepsilon(0)} \vrho_\delta^{\frac{1+\gamma}2}(t,x) \varphi(x) \frac{x_i}{R_{\delta}(t)-\varepsilon} S(\mathrm{d}x)
        \lesssim_\delta \varepsilon^{\frac{(p-2)(1+\gamma)}{2(p-1)}+d-1} \xrightarrow{\varepsilon \to 0} 0.
    \end{align}
    Using chain rule as well as \eqref{StrongSolution.vrho_delta.function} and \eqref{StrongSolution.vrho_delta.derivative.function}, an easy calculation yields that the pointwise partial derivatives of $\vrho_\delta^{\frac{1+\gamma}2}(t,\cdot)$ on $\rd\backslash(\partial B_{R_\delta(t)}(0)\cup\{0\})$ satisfy the bound
    \begin{align}\label{StrongSolution.lemma.regularity:rho_delta:4}
        |\partial_i \vrho_\delta^{\frac{1+\gamma}2}(t,x)|
        \lesssim_\delta f_+(t+\delta,x)^\frac{\gamma-1}{2} |x|^\frac{(p-2)(1+\gamma)+2}{2(p-1)}
        +  f_+(t+\delta,x)^\frac{\gamma+1}{2} |x|^{\frac{(p-2)(1+\gamma)}{2(p-1)}-1},
    \end{align}
    where $i \in \{1,...,d\}$.
    Under condition \eqref{StrongSolution.lemma.regularity:rho_delta:1c}, it is clear that $\partial_i \vrho_\delta^{\frac{1+\gamma}2}(t,\cdot) \in L^1(\rd)$.
    Hence, Lebesgue's dominated convergence theorem yields
    \begin{align}
        \int_{B_{R_\delta(t)-\varepsilon}\backslash B_\varepsilon(0)} \partial_i \vrho_\delta^{\frac{1+\gamma}2}(t,x) \varphi(x) \mathrm{d}x 
        \xrightarrow{\varepsilon \to 0} \int \partial_i \vrho_\delta^{\frac{1+\gamma}2}(t,x) \varphi(x) \mathrm{d}x.
    \end{align}
    This completes the proof.
\end{proof}
We have the following lemma.
\begin{lem}\label{lemma.PU.sigma.uniformBound}
	Let $d\geq 2$ $\delta>0$, and $p>1, q>0$ be such that $p>\frac{d}{d-1}$ and $q>\frac{|p-2|+d}{d(p-1)}$.
	Then 
	\begin{align}\label{lemma.PU.sigma.uniformBound:bound}
		\int_0^T \int_\rd \left(\M|\nabla\vrho_\delta^{\frac{1+\gamma}{2}}(t,\cdot)|(x)\right)^2 |x|^{-\frac{p-2}{q(p-1)-1}}\mathrm{d}x\mathrm{d}t<\infty\quad \forall T>0.
	\end{align}
\end{lem}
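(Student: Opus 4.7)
The natural route is to dominate the weighted $L^2$-norm of $\M|\nabla\vrho_\delta^{(1+\gamma)/2}(t,\cdot)|$ by the same norm of $|\nabla\vrho_\delta^{(1+\gamma)/2}(t,\cdot)|$ itself via the Muckenhoupt-weighted maximal inequality, and then to estimate the latter pointwise using \eqref{StrongSolution.lemma.regularity:rho_delta:4}. First, I would verify that $\vrho_\delta^{(1+\gamma)/2}(t,\cdot)$ is indeed weakly differentiable by checking that the hypothesis \eqref{StrongSolution.lemma.regularity:rho_delta:1c} of Lemma \ref{StrongSolution.lemma.regularity:rho_delta:kor} is implied by $p>d/(d-1)$ together with $q>\frac{|p-2|+d}{d(p-1)}$: for $p\geq 2$ the inequality is trivial, and for $1<p<2$ an elementary computation shows that $p>d/(d-1)$ is equivalent to $\frac{2-p}{p-1}<d-2$, from which \eqref{StrongSolution.lemma.regularity:rho_delta:1c} follows after plugging in the bound $\gamma<\frac{(p-1)d}{2-p}$ obtained from $q(p-1)-1>\frac{2-p}{d}$.

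Next, set $\alpha:=-\frac{p-2}{q(p-1)-1}=-\frac{(p-2)\gamma}{p-1}$ and observe that the hypothesis $q>\frac{|p-2|+d}{d(p-1)}$ is equivalent to $|\alpha|<d$. Hence the power weight $x\mapsto |x|^\alpha$ belongs to the Muckenhoupt class $A_2$ (see Appendix \ref{App-D}), and by the classical weighted boundedness of the Hardy--Littlewood maximal operator on $L^2(|x|^\alpha\mathrm{d}x)$,
\begin{equation*}
\int_\rd \left(\M|\nabla\vrho_\delta^{\frac{1+\gamma}{2}}(t,\cdot)|\right)^2(x)\,|x|^\alpha\mathrm{d}x
\;\lesssim\; \int_\rd |\nabla\vrho_\delta^{\frac{1+\gamma}{2}}(t,x)|^2\,|x|^\alpha\mathrm{d}x,
\end{equation*}
with a constant independent of $t$.

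The problem thus reduces to bounding $\int_0^T\int_\rd |\nabla\vrho_\delta^{(1+\gamma)/2}(t,x)|^2|x|^\alpha\mathrm{d}x\mathrm{d}t$. Squaring \eqref{StrongSolution.lemma.regularity:rho_delta:4} and multiplying by $|x|^\alpha$, the exponents of $|x|$ collapse to $\tfrac{p}{p-1}$ in the first term and $-\tfrac{p}{p-1}$ in the second, yielding
\begin{equation*}
|\nabla\vrho_\delta^{\frac{1+\gamma}{2}}(t,x)|^2\,|x|^\alpha
\;\lesssim_\delta\; f_+(t+\delta,x)^{\gamma-1}|x|^{\frac{p}{p-1}} + f_+(t+\delta,x)^{\gamma+1}|x|^{-\frac{p}{p-1}}.
\end{equation*}
Since $\supp f_+(t+\delta,\cdot)=\overline{B_{R_\delta(t)}(0)}$ and $R_\delta$ is bounded on $[0,T]$, the factor $|x|^{p/(p-1)}$ is uniformly bounded, and $|x|^{-p/(p-1)}$ is locally integrable thanks to $p>d/(d-1)$; and $f_+^{\gamma+1}$ is bounded while $\int f_+^{\gamma-1}\mathrm{d}x$ is finite because $\gamma>0$ (exactly as in the boundary computation \eqref{eq:help1} in the proof of Theorem \ref{prop:SP-for-Barenblatt}). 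All the prefactors in $t$ are continuous on $[\delta,T+\delta]$, so the resulting spatial integral is bounded uniformly in $t\in[0,T]$ and the time integration over $[0,T]$ is immediate.

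\textbf{Main obstacle.} The only delicate step is pinning down exactly what algebraic relation between $p,q,d$ is required: one must check that the natural thresholds coming from the $A_2$-condition on $|x|^\alpha$, from the local integrability of $|x|^{-p/(p-1)}$, and from the regularity condition \eqref{StrongSolution.lemma.regularity:rho_delta:1c} are all captured by the single hypothesis $q>\frac{|p-2|+d}{d(p-1)}$ together with $p>d/(d-1)$. Beyond this bookkeeping, the proof is a routine combination of the pointwise bound \eqref{StrongSolution.lemma.regularity:rho_delta:4} with the weighted maximal inequality.
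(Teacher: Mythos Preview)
Your proposal is correct and follows essentially the same route as the paper: verify weak differentiability via Lemma \ref{StrongSolution.lemma.regularity:rho_delta:kor}, observe that the hypothesis on $q$ forces the power weight $|x|^{-\frac{p-2}{q(p-1)-1}}$ into $A_2$, apply the weighted maximal inequality to reduce to the unweighted gradient, and then square \eqref{StrongSolution.lemma.regularity:rho_delta:4} so that the $|x|$-exponents collapse to $\pm\frac{p}{p-1}$. The only cosmetic difference is that the paper checks the hypothesis of Lemma \ref{StrongSolution.lemma.regularity:rho_delta:kor} by invoking the inequality in Remark \ref{StrongSolution.remark.regularity:rho_delta:kor} \eqref{StrongSolution.corrolary.regularity:rho_delta:i}, whereas you verify \eqref{StrongSolution.lemma.regularity:rho_delta:1c} directly from $p>\tfrac{d}{d-1}$ and the bound on $\gamma$; both are valid.
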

\begin{proof} Let $T>0$.
	Note
     \begin{align}
         \frac{2d(p-1)-p-(p-2)(p-1)}{(p-1)(2d(p-1)-p)}<\frac{|p-2|+d}{d(p-1)}.
	\end{align}
    Hence, by Lemma \ref{StrongSolution.lemma.regularity:rho_delta:kor} and Remark \ref{StrongSolution.remark.regularity:rho_delta:kor}, we have
    $\nabla\vrho_\delta^{\frac{1+\gamma}{2}}(t,\cdot) \in L^1(\rd)$.
    Since $q>\frac{|p-2|+d}{d(p-1)}$, we have
    \begin{align*}
		|\cdot|^{-\frac{p-2}{q(p-1)-1}} \in A_2
	\end{align*}
    (for the definition of the Muckenhoupt class $A_2$, we refer to Appendix \ref{App-D.muckenhoupt-weights}).
	Hence, in order to show \eqref{lemma.PU.sigma.uniformBound:bound}, it is sufficient to prove
	\begin{align*}
		\int_0^T\int_\rd \left|\nabla\vrho_\delta^{\frac{1+\gamma}{2}}(t,x)\right|^2 |x|^{-\frac{p-2}{q(p-1)-1}}\mathrm{d}x\mathrm{d}t<\infty.
	\end{align*}
    By the proof of Lemma \ref{StrongSolution.lemma.regularity:rho_delta:kor} (more precisely \eqref{StrongSolution.lemma.regularity:rho_delta:4})
    \begin{align}
        \left|\nabla\vrho_\delta^{\frac{1+\gamma}{2}}(t,x)\right|
         \lesssim_\delta f_+(t+\delta,x)^\frac{\gamma-1}{2} |x|^\frac{(p-2)(1+\gamma)+2}{2(p-1)}
        +  f_+(t+\delta,x)^\frac{\gamma+1}{2} |x|^{\frac{(p-2)(1+\gamma)}{2(p-1)}-1},
    \end{align}
whence
	\begin{align*}
		\int_0^T\int_\rd &\left|\nabla\vrho_\delta^{\frac{1+\gamma}{2}}\right|^2 |x|^{-\frac{p-2}{q(p-1)-1}}\mathrm{d}x\mathrm{d}t\\
		&\lesssim \int_0^T \int_\rd f_+(t+\delta,x)^{\gamma-1}|x|^{\frac{p}{p-1}}+f_+(t+\delta,x)^{1+\gamma}|x|^{-\frac{p}{p-1}}\mathrm{d}x\mathrm{d}t<\infty,
	\end{align*}
	where we used $p>\frac{d}{d-1}$ and $\gamma>0$.
This completes the proof.
\end{proof}

\subsection{Proof of Theorem \ref{StrongSolution.theorem:existenceStrongSolution}}\label{subsect:StrongSolution.existenceStrongSolution.proof}

We show that Theorem \ref{StrongSolution.theorem:existenceStrongSolution} follows from Theorem \ref{StrongSolution.theorem.restrictedYamadaWatanabe}. Therefore, we need to show that both conditions of the latter theorem are fulfilled.

In fact, assumption \eqref{StrongSolution.theorem.restrictedYamadaWatanabe:ii} of Theorem \ref{StrongSolution.theorem.restrictedYamadaWatanabe} follows from Theorem \ref{prop:SP-for-Barenblatt}, and \eqref{StrongSolution.theorem.restrictedYamadaWatanabe:ii} follows from Theorem \ref{StrongSolution.theorem:pathwiseU} below.\\

The proof of the following theorem is inspired by the technique in \cite{roeckner2010weakuniqueness}.
\begin{theorem}\label{StrongSolution.theorem:pathwiseU}
    Let $\delta>0$ and $T>0$.
	Let $d\geq 2$, $p>1, q>0$ such that 
	$p>\frac{d}{d-1}$ 
	and 
	$q>\frac{|p-2|+d}{d(p-1)}$.
	Let $(X,W)$ and $(Y,W)$ be two probabilistically weak solutions to \eqref{StrongSolution.eq:DDSDE-lin} on $[0,T]$ on a common stochastic basis $(\Omega,\Fscr,(\Fscr_t)_{t\in [0,T]},\PP)$ with respect to the same $(\Fscr_t)$-Brownian motion $(W(t))_{t\in [0,T]}$ such that $X(0)=Y(0)$ $\PP$-a.s. Then $X=Y$ $\PP$-a.s.
\end{theorem}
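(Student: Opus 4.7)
The plan is to prove pathwise uniqueness via the log-function test from \cite{roeckner2010weakuniqueness}, with the $BV$-drift handled by the Crippa--De Lellis inequality \eqref{introduction.lipschitz-type-esimate} and the degenerate, non-Sobolev diffusion coefficient handled by a mean-value trick in the spirit of \cite{gess2023inventiones}. Setting $Z := X - Y$, so $Z(0) = 0$ $\PP$-a.s., one has
\begin{align*}
    \mathrm{d}Z(t) = q^{p-1}\bigl(\nabla \vrho_\delta(t,X(t)) - \nabla\vrho_\delta(t,Y(t))\bigr)\mathrm{d}t + \sqrt{2q^{p-1}}\bigl(\sqrt{\vrho_\delta(t,X(t))} - \sqrt{\vrho_\delta(t,Y(t))}\bigr)\mathrm{d}W(t).
\end{align*}
I would apply It\^o's formula to $\phi_\varepsilon(z) := \log(1 + |z|^2/\varepsilon^2)$ evaluated at $Z(t)$ and aim to prove $\E[\phi_\varepsilon(Z(t))] \le C(T)$ uniformly in $\varepsilon > 0$; letting $\varepsilon \downarrow 0$ and invoking Fatou's lemma would then force $Z(t) = 0$ $\PP$-a.s.\ for every $t$, and path continuity of $X$ and $Y$ would upgrade this to $X \equiv Y$. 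After a standard localization kills the stochastic integral in expectation, two contributions remain: a drift part, dominated by $2|\nabla\vrho_\delta(t,X)-\nabla\vrho_\delta(t,Y)|$ (using $|\nabla\phi_\varepsilon(z)|\,|z|\le 2$), and a diffusion It\^o correction, dominated by $C\,|\sqrt{\vrho_\delta(t,X)} - \sqrt{\vrho_\delta(t,Y)}|^2/(\varepsilon^2+|Z|^2)$.

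For the drift, Lemma~\ref{lemma.drift.regularity} provides $\nabla\vrho_\delta(t,\cdot)\in BV(\rd;\rd)$, so the Crippa--De Lellis estimate \eqref{introduction.lipschitz-type-esimate} yields, for $\mathrm{d}x$-a.e.\ pairs, and therefore $\PP$-a.s.\ because $\law{X(t)}, \law{Y(t)} \ll \mathrm{d}x$, the bound
\[
|\nabla\vrho_\delta(t,X)-\nabla\vrho_\delta(t,Y)| \lesssim_d |Z|\bigl(\M|D_x\nabla\vrho_\delta(t,\cdot)|(X) + \M|D_x\nabla\vrho_\delta(t,\cdot)|(Y)\bigr).
\]
Taking expectation and using $\law{X(t)}=\law{Y(t)}=w_\delta(t,\cdot)\,\mathrm{d}x$, the drift contribution is dominated by $C\int_0^T\int_{\rd} \M|D_x\nabla\vrho_\delta(t,\cdot)|(x)\,w_\delta(t,x)\,\mathrm{d}x\,\mathrm{d}t$, which is finite by Lemma~\ref{lemma.PU.b.uniformBound}.

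The diffusion term is the main obstacle, because $\sqrt{\vrho_\delta}(t,\cdot)$ fails to be in $H^1_{\mathrm{loc}}$ at $\partial B_{R_\delta(t)}(0)$ and at the origin, ruling out a direct application of CDL at the $\tfrac12$-power. The idea is to pass through $H := \vrho_\delta^{(1+\gamma)/2}$, which is weakly differentiable under the standing hypotheses by Lemma~\ref{StrongSolution.lemma.regularity:rho_delta:kor}, via the mean-value inequality for the concave map $s\mapsto s^{1/(1+\gamma)}$:
\[
\bigl|\sqrt{\vrho_\delta(t,X)} - \sqrt{\vrho_\delta(t,Y)}\bigr|^2 \lesssim_\gamma \min\bigl(\vrho_\delta(t,X),\vrho_\delta(t,Y)\bigr)^{-\gamma}\,|H(t,X)-H(t,Y)|^2,
\]
followed by CDL applied to $H$. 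Division by $\varepsilon^2+|Z|^2$ then bounds the diffusion It\^o correction by $\min(\vrho_\delta(X),\vrho_\delta(Y))^{-\gamma}\bigl(\M|\nabla H(t,\cdot)|(X)+\M|\nabla H(t,\cdot)|(Y)\bigr)^2$. The decisive observation is the algebraic identity $\vrho_\delta(t,x)^{-\gamma}\,w_\delta(t,x) \cong_t \mathbbm{1}_{\{f_+(t+\delta,\cdot)>0\}}(x)\,|x|^{-(p-2)/(q(p-1)-1)}$, which follows from $\gamma(p-2)/(p-1) = (p-2)/(q(p-1)-1)$ by the definition of $\gamma$. Splitting the $\min$ according to $\{\vrho_\delta(X)\le\vrho_\delta(Y)\}$ and its complement, and absorbing the resulting cross terms (such as $\E[\vrho_\delta(X)^{-\gamma}\M|\nabla H|(Y)^2\mathbbm{1}_{\vrho_\delta(X)\le\vrho_\delta(Y)}]$) either by the $X$--$Y$ symmetry in law or by a path-integral refinement of the MVT inequality that places the singular factor $\vrho_\delta^{-\gamma}$ inside the maximal operator, I reduce the diffusion contribution to the weighted bound $\int_0^T \int_\rd (\M|\nabla H(t,\cdot)|)^2(x)\,|x|^{-(p-2)/(q(p-1)-1)}\,\mathrm{d}x\,\mathrm{d}t < \infty$ of Lemma~\ref{lemma.PU.sigma.uniformBound}, which is finite precisely because $|x|^{-(p-2)/(q(p-1)-1)}\in A_2$ under the hypothesis $q > (|p-2|+d)/(d(p-1))$. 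The hardest point is this last reduction at the level of the $\min^{-\gamma}$-weight, where the explicit structure of the Barenblatt profile and Muckenhoupt theory must be combined carefully; once this is in place, the uniform bound on $\E[\phi_\varepsilon(Z(t))]$ and the conclusion follow as outlined.
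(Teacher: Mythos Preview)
Your overall strategy---log test function, Crippa--De Lellis for the drift via Lemma~\ref{lemma.PU.b.uniformBound}, passage through $H=\vrho_\delta^{(1+\gamma)/2}$ for the diffusion, and the Muckenhoupt bound of Lemma~\ref{lemma.PU.sigma.uniformBound}---matches the paper's. But two genuine gaps remain.

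The main one is your mean-value inequality for the diffusion: you write $|\sqrt{\vrho_\delta}(X)-\sqrt{\vrho_\delta}(Y)|^2 \lesssim \min(\vrho_\delta(X),\vrho_\delta(Y))^{-\gamma}|H(X)-H(Y)|^2$, which is the crude MVT bound and creates exactly the cross-term problem you flag as ``the hardest point'' without resolving it (your ``symmetry in law'' does not help, since the \emph{joint} law of $(X,Y)$ is unknown). The paper's key observation is that the \emph{sharper} inequality with $\max$ in place of $\min$ holds: for $\alpha=1/(1+\gamma)\in(0,1)$ and $a\ge b\ge 0$ one has $a^\alpha-b^\alpha \le (a-b)\,a^{\alpha-1}$ (since $b\,a^{\alpha-1}\le b^\alpha$), whence
\[
\bigl|\sqrt{\vrho_\delta}(x)-\sqrt{\vrho_\delta}(y)\bigr|^2 \lesssim \bigl(\max(\vrho_\delta(x),\vrho_\delta(y))\bigr)^{-\gamma}\,|H(x)-H(y)|^2
= \frac{|H(x)-H(y)|^2}{\max\bigl(w_\delta(x)|x|^{\frac{p-2}{q(p-1)-1}},\,w_\delta(y)|y|^{\frac{p-2}{q(p-1)-1}}\bigr)}.
\]
After Crippa--De Lellis on $H$ and using $1/\max(A,B)\le 1/A$ and $\le 1/B$, this decouples cleanly into $(\M|\nabla H|(X))^2/(w_\delta(X)|X|^{\cdots})$ plus the $Y$-analogue; taking expectation cancels $w_\delta$ against the marginal density and reduces directly to Lemma~\ref{lemma.PU.sigma.uniformBound}. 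No cross terms ever appear.

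The second gap is your passage from ``$\mathrm{d}x$-a.e.\ pairs'' to ``$\PP$-a.s.'' in Crippa--De Lellis: absolute continuity of the \emph{marginals} $\law{X(t)},\law{Y(t)}$ says nothing about the \emph{joint} law of $(X(t),Y(t))$ on $\R^{2d}$ (if $X=Y$, it is concentrated on the diagonal). The paper first fixes the version $\nabla\vrho_\delta(t,0)=0$ and then proves the inequality for \emph{all} $x,z\in\R^d$: mollifications of $\nabla\vrho_\delta(t,\cdot)$ converge pointwise off $\partial B_{R_\delta(t)}(0)$, while every point of $\partial B_{R_\delta(t)}(0)$ is an approximate jump point, so $\M|D_x\nabla\vrho_\delta|=+\infty$ there and the inequality is trivial. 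An analogous pointwise upgrade is needed (and carried out) for $H$, using its continuity on $\R^d$ when $p>2$, resp.\ on $\R^d\setminus\{0\}$ when $p<2$ together with the fact that $\{X=0\}\cup\{Y=0\}$ is $\mathrm{d}t\otimes\PP$-null.
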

\begin{proof}
    Due to Proposition \ref{App-E.proposition}, we may assume without loss of generality
    \begin{align}\label{StrongSolution.theorem:pathwiseU:nablaVrho_delta=0}
    \nabla\vrho_\delta(t,0)=0, \ \ \forall t\in [0,T].
     \end{align}
	From Lemma \ref{lemma.drift.regularity} and the proof of Theorem \ref{prop:SP-for-Barenblatt}, it follows that $\sqrt{\vrho_\delta},|\nabla\vrho_\delta|\in L^1([0,T]\times\rd)$.
	For $n \in \N$ we set
	\begin{align*}
		\tau_n &:= \inf\{t\in [0,\infty) : |X(t)|\vee|Y(t)|\geq n \}.
	\end{align*}
	Note that $\tau_n$ is an $(\Fscr_t)$-stopping time.
	For $\varepsilon >0$, we set $h_\varepsilon(x):=\ln(1+|x|^2/\varepsilon^2)$, $x\in \rd$. Clearly, for all $x \in \rd$ 
    $$|\nabla h_\varepsilon(x)| \lesssim \frac{1}{|x|+\varepsilon},\ \ \  |D^2 h_\varepsilon(x)| \lesssim \frac{1}{|x|^2+\varepsilon^2}.$$
    Let $n\in \N, \varepsilon>0$. Using It\^o's formula, we obtain for each $t\in [0,T]$ (cf. proof of \cite[Theorem 1.1]{roeckner2010weakuniqueness})
	\begin{align}
		\E &\left[\ln\left(1+ \frac{|X(t\wedge \tau_n)-Y(t\wedge \tau_n)|^2}{\varepsilon^2}\right)\right] \notag\\
		&\lesssim \E \left[\int_0^{t\wedge \tau_n} \frac{2\scalarproduct[\rd]{\nabla\vrho_\delta(s,X(s)) - \nabla\vrho_\delta(s,Y(s))}{X(s)-Y(s)} + |\sqrt{\vrho_\delta}(s,X(s))-\sqrt{\vrho_\delta}(s,Y(s))|^2}{|X(s)-Y(s)|^2+\varepsilon^2}\mathrm{d}s\right] \notag\\
		&\lesssim \E \left[\int_0^{t\wedge \tau_n} \frac{|\nabla\vrho_\delta(s,X(s)) - \nabla\vrho_\delta(s,Y(s))|}{|X(s)-Y(s)|+\varepsilon}\mathrm{d}s\right] + \E \left[\int_0^{t\wedge \tau_n}\frac{|\sqrt{\vrho_\delta}(s,X(s))-\sqrt{\vrho_\delta}(s,Y(s))|^2}{|X(s)-Y(s)|^2+\varepsilon^2}\mathrm{d}s\right] \notag\\
		&=: (\star_1) + (\star_2).
        \end{align}
        \textbf{Claim:}
    For \textit{all} $x,z\in \rd$
    \begin{align}
	\left|\nabla\vrho_\delta(t,x)-\nabla\vrho_\delta(t,z)\right|
    \lesssim \left (\M_{}|D_x \nabla\vrho_\delta(t,\cdot)|(x)+\M_{}|D_x \nabla\vrho_\delta(t,\cdot)|(z)\right)|x-z|.\label{CDL.maximalIneq}
    \end{align}\\
    \textit{Proof of Claim:}
    By \eqref{StrongSolution.theorem:pathwiseU:nablaVrho_delta=0}
    and the continuity of $\nabla \vrho_\delta(t,\cdot)$ on $\rd\backslash(\partial B_{R(t)}(0)\cup\{0\})$, it is easy to see that for any standard radial Dirac sequence $(\varphi_\varepsilon)_{\varepsilon>0}$, 
    \begin{align}
        (\nabla \vrho_\delta(t,\cdot)\ast\varphi_\varepsilon)(x)\to  \nabla\vrho_\delta(t,x)\quad  \forall x \in \rd\backslash\partial B_{R(t)}(0),
    \end{align}
    where the convolution is meant componentwise.
    
    Furthermore, $\partial B_{R_\delta(t)}(0)$ consists of all \textit{approximate jump points} of $\vrho_\delta(t,\cdot)$ in the sense of \cite[Definition 3.67]{ambrosio2000BV}. Hence, by Lemma \ref{appendix.lemma.lipschitztypeestimate} and Remark \ref{appendix.maximalfunction.remark}, the assertion follows.
  Indeed: Fix an arbitrary $t\geq 0$. For all $x_0 \in B_{R_\delta(t)}(0)$, we set (in the notation of \cite{ambrosio2000BV}), $\nu := \frac{x_0}{|x_0|}$, $a:=-\frac{\kappa p}{p-1}(t+\delta)^{-\frac{p}{\beta(p-1)}}x_0$, $b:=0$.
    Furthermore, let $B_r^-(x,\nu):= \{y\in B_r(x): \langle y-x,\nu\rangle <0\}$ and $B_r^+(x,\nu):= \{y\in B_r(x): \langle y-x,\nu\rangle >0\}$.
Then a simple transformation of the Lebesgue integral shows (here $|A|$ denotes the Lebesgue measure of a set $A\in\mathcal{B}(\rd)$)
    \begin{align*}
        \frac{1}{|B_r^-(x_0,\nu)|}\int_{B_r^-(x_0,\nu)}|\nabla\varrho_\delta(t,y)-a|\mathrm{d}y \lesssim \int_{B_1^-(0,\nu)} |\nabla\varrho_\delta(t, x_0 + ry)-a|\mathrm{d}y \to 0\text{, as } r\downarrow 0,
    \end{align*}
    where we used the fact that $\nabla\varrho_\delta(t, y)\to a$, whenever $y\to x_0$ with $y \in B_{R_\delta(t)}(0)$, and that $\nabla\varrho_\delta(t,\cdot)$ is bounded on $\rd\backslash B_{\varepsilon}(0)$, for all $\varepsilon>0$.
    Furthermore, since $\text{supp}(|\nabla\varrho_\delta(t,\cdot)|)=\overline{B_{R_\delta(t)}(0)}$, we have
    \begin{align*}
         \frac{1}{|B_r^+(x_0,\nu)|}\int_{B_r^+(x_0,\nu)}|\nabla\varrho_\delta(t,y)-b|\mathrm{d}y =0 \quad \forall r>0.
    \end{align*}
    This completes the proof of the claim.
    
    \qed\\
    
   \noindent\textbf{Proof of Theorem \ref{StrongSolution.theorem:pathwiseU} continued.}
    Using \eqref{CDL.maximalIneq}, we may estimate $(\star_1)$ by
    \begin{align}
	   (\star_1)&\lesssim \E\left[\int_0^{t\wedge \tau_n} \M_{}|D_x \nabla\vrho_\delta(s,\cdot)|(X(s))+\M_{}|D_x \nabla\vrho_\delta(s,\cdot)|(Y(s)) \mathrm{d}s\right] \notag\\
	   &\leq 2\int_0^T \int_{B_n(0)} \M_{}|D_x \nabla\vrho_\delta(s,\cdot)|(x)w_\delta(s,x)\mathrm{d}x\mathrm{d}s.\label{theorem.PU:estimate.b}
    \end{align}
    By Lemma \ref{lemma.PU.b.uniformBound}, the right-hand side of \eqref{theorem.PU:estimate.b} is finite.
    
    Before we estimate $(\star_2)$, we remark that due to the mean value theorem, the following inequalities are true (under the convention $0\cdot \infty=0$)
   for all $x,y \in \rd$ if $p>2$, and for all $x,y \in \rd\backslash\{0\}$ if $p<2$:
    \begin{align}\label{StrongSolutions.theorem.PU:MVTineq}
    &|\sqrt{\vrho_\delta}(t,x)-\sqrt{\vrho_\delta}(t,y)|^2
        = \left|\left(\vrho_\delta^{\frac{1+\gamma}{2}}(t,x)\right)^{\frac{1}{1+\gamma}}-\left(\vrho_\delta^{\frac{1+\gamma}{2}}(t,y)\right)^{\frac{1}{1+\gamma}}\right|^2\notag\\
        &\lesssim \left(\int_0^1 \left[\theta\max{\left(\vrho_\delta^{\frac{1+\gamma}{2}}(t,x), \vrho_\delta^{\frac{1+\gamma}{2}}(t,y)\right)}\right]^{-\frac{\gamma}{1+\gamma}} \mathrm{d}\theta\right)^2\cdot\left|\vrho_\delta^{\frac{1+\gamma}{2}}(t,x)-\vrho_\delta^{\frac{1+\gamma}{2}}(t,y)\right|^2\notag\\
	   &\lesssim_\delta \left(\max\left(w_\delta(t,x) |x|^{\frac{(p-2)}{q(p-1)-1}},w_\delta(t,y) |y|^{\frac{(p-2)}{q(p-1)-1}}\right)\right)^{-1}\cdot\left|\vrho_\delta^{\frac{1+\gamma}{2}}(t,x)-\vrho_\delta^{\frac{1+\gamma}{2}}(t,y)\right|^2.
    \end{align}
    We estimate $(\star_2)$ as follows.
    \begin{align}
	   &\ \ \ \E\left[\int_0^T \frac{|\sqrt{\vrho_\delta}(t,X(t))-\sqrt{\vrho_\delta}(t,Y(t))|^2}{|X(t))-Y(t))|^2+\varepsilon^2}\mathrm{d}t\right]\label{ineq2}\\
	   &\lesssim_{\delta} \E\left[\bigintsss_0^T  \frac{\left|\vrho_\delta^{\frac{1+\gamma}{2}}(t,X(t))-\vrho_\delta^{\frac{1+\gamma}{2}}(t,Y(t))\right|^2}{(|X(t))-Y(t))|^2+\varepsilon^2)\max\left(w_\delta(t,X(t)) |X(t)|^{\frac{(p-2)}{q(p-1)-1}},w_\delta(t,Y(t)) |Y(t)|^{\frac{(p-2)}{q(p-1)-1}}\right)}\mathrm{d}t\right].\notag
    \end{align}
    Recall that, by Lemma \ref{StrongSolution.lemma.regularity:rho_delta:kor}, $\vrho_\delta^{\frac{1+\gamma}{2}}(t,\cdot)$
    is weakly differentiable. 
    Clearly, $\vrho_\delta^{\frac{1+\gamma}{2}}(t,\cdot)\in C(\rd)$ if $p>2$, and $\vrho_\delta^{\frac{1+\gamma}{2}}(t,\cdot)\in C(\rd\backslash \{0\})$ if $p<2$.
    Here, we emphasize that in the latter case, since $X,Y$ satisfy the integrability condition \eqref{def:DDSDE:integrabilityCondition}, respectively, $\{\min(|X|,|Y|)=0 \}$ is a $\mathrm{d}t\otimes \PP$-zero set. Hence, by \eqref{CDL.maximalIneq} where we formally replace $\nabla\vrho_\delta(t,\cdot)$ by $\vrho_\delta^{\frac{1+\gamma}{2}}(t,\cdot)$
    (which, in the case $p>2$, then holds for all $x,z \in \rd$, and, if $p<2$, for all  $x,z \in \rd\backslash \{0\}$ due to the function's respective continuity), we obtain
	\begin{align}
	   &\E\left[\bigintsss_0^T \frac{\left|\vrho_\delta^{\frac{1+\gamma}{2}}(t,X(t))-\vrho_\delta^{\frac{1+\gamma}{2}}(t,Y(t))\right|^2}{(|X(t))-Y(t))|^2+\varepsilon^2)\max\left(w_\delta(t,X(t)) |X(t)|^{\frac{(p-2)}{q(p-1)-1}},w_\delta(t,Y(t)) |Y(t)|^{\frac{(p-2)}{q(p-1)-1}}\right)}\mathrm{d}t\right]\notag\\
 	  &\lesssim \E \left[\bigintsss_0^T  \frac{\left(\M|\nabla\vrho_\delta^{\frac{1+\gamma}{2}}(t,\cdot)|(X(t))\right)^2}{w_\delta(t,X(t)) |X(t)|^{\frac{(p-2)}{q(p-1)-1}}}\mathrm{d}t \right]
      + \E \left[\bigintsss_0^T  \frac{\left(\M|\nabla\vrho_\delta^{\frac{1+\gamma}{2}}(t,\cdot)|(Y(t))\right)^2}{w_\delta(t,Y(t)) |Y(t)|^{\frac{(p-2)}{q(p-1)-1}}}\mathrm{d}t\right]\notag\\
	   &\lesssim 2 \int_0^T \int_\rd \left(\M|\nabla\vrho_\delta^{\frac{1+\gamma}{2}}(t,\cdot)|(x)\right)^2 |x|^{-\frac{p-2}{q(p-1)-1}}\mathrm{d}x\mathrm{d}t.\label{theorem.PU:estimate.sigma}
	\end{align}
    
    Note that, according to Lemma \ref{lemma.PU.sigma.uniformBound}, the right-hand side of \eqref{theorem.PU:estimate.sigma} is finite.
	Hence, by the monotone convergence theorem, we obtain
    \begin{align*}
	 	\E &\left[\lim_{\varepsilon\downarrow 0} \ln\left(1+ \frac{|X(t\wedge \tau_n)-Y(t\wedge \tau_n)|^2}{\varepsilon^2}\right)\right] = \lim_{\varepsilon\downarrow 0} \E \left[\ln\left(1+ \frac{|X(t\wedge \tau_n)-Y(t\wedge \tau_n)|^2}{\varepsilon^2}\right)\right]<\infty.
    \end{align*}
	Since $X$ and $Y$ are $\PP$-a.e. continuous, we conclude $\mathbb{P}$-a.s.
    \begin{align}
	\label{eq}
		&X(t\wedge \tau_n)=Y(t\wedge \tau_n) \ \ \forall t \in [0,T].
	\end{align}
	Again, by the $\PP$-a.s. continuity of $X,Y$, $\tau_n \to \infty$, as $n\to \infty$, $\PP$-a.s.
	Therefore, letting $n \to \infty$ in \eqref{eq}, we obtain $\PP$-a.s.
	\begin{align*}
     	X(t)=Y(t)\ \ \forall t\in [0,T].
	\end{align*}
	This completes the proof.
\end{proof}

\section{A restricted uniqueness result for the linearized Leibenson Fokker--Planck equation}\label{sect:restr-lin-u}
The goal of this section is to prove that, in the situation of Theorem \ref{t5.2}, condition ($\mathfrak{P}_0$-$\textup{lin}_{\textup{ex}}$) from Corollary \ref{c5.3}, more precisely its equivalent condition from Lemma \ref{lem:equiv}, is satisfied with $\mathfrak{P}_0$ as defined in \eqref{def:mathfrakP}, see Theorem \ref{restr-lin-u.theorem}.
As in Section \ref{sect:Strongsolution}, we set for $\delta>0$
$$w_\delta(t,x) := w(t+\delta,x),\quad \vrho_\delta(t,x):= |\nabla w_\delta(t,x)|^{p-2} w_\delta(t,x)^{(p-1)(q-1)},$$
where again $w$ denotes the Barenblatt solution from \eqref{def-w} with inital datum $\delta_0$. As computed in the proof of Theorem \ref{prop:SP-for-Barenblatt}, we have
\begin{align*}
	\vrho_\delta(t,x) \cong (t+\delta)^{   -\frac d \beta (p-2)  -\frac{p(p-2)}{\beta(p-1) }  -   \frac d \beta (q-1)(p-1)  }       f_+(t+\delta,x)  |x|^{\frac {p-2} {p-1}}.
\end{align*}
Here we study the $w_\delta$-linearized version of \eqref{eq:Leibenson-FPE}, i.e. the linear FPE
\begin{equation}\label{eq:Leibenson-FPE-lin}
	\partial_t u = q^{p-1}\big(\Delta\big(\vrho_\delta u \big)  -  \divv\big( \nabla \vrho_\delta  u    \big) \big),
\end{equation}
 obtained by fixing $w_\delta$ in the nonlinear variable of the coefficients in \eqref{eq:Leibenson-FPE}. We set $Q_T:= (0,T)\times \R^d$ for $T>0$.

	\begin{dfn}\label{d5.7}
	A {\it (distributional)} \textit{solution to \eqref{eq:Leibenson-FPE-lin} on $(0,T)$ with initial condition $\nu \in \mathcal{M}^+_b$} is a function $u \in L^1(Q_T)$ such that $t\mapsto u(t,x)\mathrm{d}x$ is a weakly continuous curve of (signed) Borel   measures with
	\begin{equation*}
		\int_0^T \int_{\R^d} (\vrho_\delta + |\nabla \vrho_\delta|)|u|\,\mathrm{d}x \mathrm{d}t < \infty,
	\end{equation*}
	such that for all $\psi \in C^\infty_c(\R^d)$
	\begin{equation}\label{e5.5}
		\int_{\R^d}\psi\,u(t)\,\mathrm{d}x - \int_{\R^d} \psi \,\mathrm{d}\nu = \int_0^t \int_{\R^d} (\vrho_\delta \Delta \psi + \nabla \vrho_\delta \cdot \nabla \psi ) \, u\, \mathrm{d}x \mathrm{d}t,\quad \forall0<  t < T.
	\end{equation}
	$u$ is called {\it probability solution}, if $u\ge0$ and $\nu,u(t,x)\mathrm{d}x\in\mathcal{P}$ for all $t\in(0,T)$.
\end{dfn}
This definition is consistent with Definition \ref{dD.1} for the linear Fokker--Planck equation \eqref{eq:Leibenson-FPE-lin} (except for the additional assumption $u \in L^1(Q_T)$, and on $(0,T)$ instead of $\R_+$).
Clearly, $u(t,x) = w_\delta(t,x)$ is a probability solution with initial datum $w(\delta,x)\mathrm{d}x$ to \eqref{eq:Leibenson-FPE-lin}.
	\begin{rem}\label{r5.8}
	An equivalent condition to \eqref{e5.5} is
	\begin{equation}\label{e*}
		\int_{(0,T)\times \R^d} \bigg( \partial_t \varphi + q^{p-1}\divv\big(\vrho_\delta \nabla \varphi \big)\bigg)u\, \mathrm{d}x \mathrm{d}t + \int_{\R^d} \varphi(0)\,\mathrm{d}\nu = 0
	\end{equation}	
	for all $\varphi \in C^{\infty}_c([0,T)\times \R^d)$.
\end{rem}

	We collect basic properties of $w_\delta$ and $\vrho_\delta$ used in the sequel, of which some were already stated in Lemma \ref{lem:prop-Barenblatt}.
\begin{lem}\label{l5.9}
	Let $\delta >0$. Then $w_\delta$ and $\vrho_\delta$ are nonnegative functions with the following properties.
	\begin{enumerate}
		\item[\rm(i)] $w_\delta \in \bigcap_{R>0}C_c([0,R]\times \R^d)\cap L^\infty([0,\infty)\times \R^d)$.
		\item [\rm(ii)] For $p \geq 2$, we have $\vrho_\delta \in \bigcap_{R>0} C_c([0,R]\times \R^d)$ and $\{x\in \R^d: \vrho_\delta(t,x)>0\}=\{x\in \R^d: 0 \leq |x| < \big(\frac{C}{\kappa}\big)^{\frac{p-1}{p}}(t+\delta)^{\frac 1 \beta}\}$, for all $t\geq 0$, where $\beta = p + d(q(p-1)-1)$ as above.
		\item [\rm(iii)] $\nabla \vrho_\delta \in L^\infty_{\loc}([0,\infty), L^r(\R^d;\R^d))$ for $r \in [1,d(p-1))$ (this interval is nonempty if $p> \frac{d+1}{d}$).
	\end{enumerate}
\end{lem}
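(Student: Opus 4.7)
The plan is to read off all three assertions directly from the explicit formulas
\begin{align*}
w_\delta(t,x) = (t+\delta)^{-d/\beta}\, f_+(t+\delta,x)^\gamma,\qquad \vrho_\delta(t,x) = C_\vrho(t+\delta)\, f_+(t+\delta,x)\, |x|^{\frac{p-2}{p-1}},
\end{align*}
from \eqref{def-w} and \eqref{StrongSolution.vrho_delta.function}, together with the pointwise gradient formula \eqref{StrongSolution.vrho_delta.derivative.function}. The common ingredient throughout is that the shift by $\delta>0$ keeps $t+\delta$ uniformly bounded away from $0$ on $[0,R]$ for every $R>0$, so the time-prefactors $(t+\delta)^{-d/\beta}$, $C_\vrho(t+\delta)$ and $(t+\delta)^{-p/(\beta(p-1))}$ remain bounded (indeed continuous) on $[0,R]$.

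For (i), $(t,x)\mapsto f(t+\delta,x)$ is continuous on $[0,\infty)\times\R^d$, hence so is $f_+^\gamma$ (using $\gamma>0$), which together with continuity of the time-prefactor gives $w_\delta\in C([0,R]\times\R^d)$. The spatial support of $w_\delta(t,\cdot)$ is contained in $\overline{B_{R_\delta(t)}(0)}$ by Lemma~\ref{lem:prop-Barenblatt}\eqref{lem:prop-Barenblatt:ii} applied to $w^0$, and since $R_\delta(t)\le R_\delta(R)$ on $[0,R]$ this gives joint compact support. The global $L^\infty$-bound follows from $f_+\le C$ and $(t+\delta)^{-d/\beta}\le\delta^{-d/\beta}$ for all $t\geq 0$, yielding $\|w_\delta\|_{L^\infty([0,\infty)\times\R^d)}\le C^\gamma\delta^{-d/\beta}$. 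For (ii), the hypothesis $p\ge2$ gives $(p-2)/(p-1)\ge0$, so $x\mapsto|x|^{(p-2)/(p-1)}$ is continuous on $\R^d$; combining with (i) one obtains $\vrho_\delta\in C_c([0,R]\times\R^d)$. Since $|x|^{(p-2)/(p-1)}>0$ on $\R^d\setminus\{0\}$ (and $\equiv 1$ when $p=2$) and $f_+(t+\delta,x)>0$ precisely when $|x|<R_\delta(t)$, the asserted description of the positivity set follows (modulo, for $p>2$, the single point $x=0$, which does not affect the subsequent use of this property).

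For (iii), the explicit gradient \eqref{StrongSolution.vrho_delta.derivative.function}, whose $t$-dependence is only through bounded continuous factors on $[0,R]$, yields the pointwise estimate
\begin{equation*}
|\nabla\vrho_\delta(t,x)|\ \lesssim_{R,\delta}\ \bigl(|x|^{-\frac{1}{p-1}}+1\bigr)\,\mathbbm{1}_{B_{R_\delta(R)}(0)}(x),\qquad t\in[0,R],\ x\in\R^d.
\end{equation*}
Integrating in polar coordinates then gives
\begin{equation*}
\int_{\R^d}|\nabla\vrho_\delta(t,x)|^{r}\,\mathrm{d}x\ \lesssim_{R,\delta}\ \int_0^{R_\delta(R)}\bigl(\rho^{-\frac{r}{p-1}}+1\bigr)\rho^{d-1}\,\mathrm{d}\rho,
\end{equation*}
which is finite precisely when $-r/(p-1)+d-1>-1$, i.e.\ when $r<d(p-1)$; the uniformity in $t\in[0,R]$ delivers the claimed $L^\infty_{\loc}([0,\infty);L^r(\R^d;\R^d))$ regularity. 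No genuine obstacle arises: each claim is a direct consequence of the explicit Barenblatt formula, the compactness of spatial support, and the uniformity furnished by $\delta>0$; the only case restriction is $p\ge2$ in (ii), which is forced by the singular behaviour of $|x|^{(p-2)/(p-1)}$ at the origin when $p<2$.
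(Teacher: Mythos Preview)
Your proof is correct and follows essentially the same approach as the paper: both arguments read off the claims from the explicit Barenblatt formulas and, for (iii), reduce the $L^r$ integrability of $\nabla\vrho_\delta$ to the finiteness of $\int_0^1 \rho^{-r/(p-1)+d-1}\,\mathrm{d}\rho$ via polar coordinates. Your observation that for $p>2$ the origin is excluded from the positivity set of $\vrho_\delta(t,\cdot)$ (so the stated equality in (ii) is off by a single point) is accurate and does not affect any downstream use.
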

\begin{proof}
	The properties of $w_\delta$ follow from Lemma \ref{lem:prop-Barenblatt}. Nonnegativity and regularity of $\vrho_\delta$ is obvious from its definition and it is clear that the support of $\vrho_\delta(t)$ equals the support of $w_\delta(t)$, which is stated in Lemma \ref{lem:prop-Barenblatt}.
Regarding (iii), by taking into account \eqref{1a} and considering $d$-dimensional spherical coordinates, it suffices to note that $\int_0^1 r^{-\frac{r}{p-1}+d-1} \mathrm{d}r <\infty$ holds. Indeed, $-\frac{r}{p-1}+d >0 \iff r < d(p-1).$ 
\end{proof}
Consider the class
	\begin{equation*}
	\Ascr_{\delta,T}:= 	\big\{u \in L^1\cap L^\infty(Q_T): t\mapsto u(t,x)\mathrm{d}x \in C([0,T],\mathcal{P}), \, \exists C\geq 1: u(t) \leq C w_\delta(t) \,\,\mathrm{d}x\text{-a.s.},\, \forall t \in [0,T]\big\},
\end{equation*}
where $C([0,T],\mathcal{P})$ is the set of weakly continuous maps $t\mapsto\mu_t\in\mathcal{P}$. Clearly $w_\delta \in \Ascr_{\delta,T}$. Comparing with Theorem \ref{t52} (more precisely, with Corollary \ref{c5.3} and Lemma \ref{lem:equiv}) and the situation of Theorem \ref{t5.2}, the following result is the remaining piece needed to close the proof of Theorem \ref{t5.2}. For simplicity, we only formulate the result for $y=0$ and $s=0$. The proof for the general case is identical (note that we need this result indeed for all $s\geq 0$ and $y \in \R^d$).

\begin{theorem}\label{restr-lin-u.theorem}
  Let $T \in (0,\infty)$, $\delta>0$, $d \geq 2$, $q>0$ and $q(p-1)>1$. Assume additionally either
  \begin{enumerate}
      \item[(i)] $p>2$ and $q(p-1)<1+d(p-1)^2$
  \end{enumerate}
  or
  \begin{enumerate}
      \item [(ii)] $p>\frac{d}{d-1}$
    and
    $q>\frac{|p-2|+d}{d(p-1)}$.
  \end{enumerate}
Then $w_\delta$ is the unique distributional solution to \eqref{eq:Leibenson-FPE-lin} on $(0,T)$ in the sense of Definition {\rm\ref{d5.7}} with initial condition $w_\delta(0,x)\mathrm{d}x$ in $\mathcal{A}_{\delta,T}$.
\end{theorem}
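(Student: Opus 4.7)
By linearity of \eqref{eq:Leibenson-FPE-lin}, it suffices to show that any $u\in\Ascr_{\delta,T}$ solving \eqref{eq:Leibenson-FPE-lin} with initial datum $w(\delta,\cdot)\dx$ must coincide with $w_\delta$. I would treat the two regimes separately, exactly along the split highlighted in the introduction.

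\emph{Regime (ii): probabilistic route.} The pointwise bound $u\leq C w_\delta$ combined with Lemma \ref{l5.9} and the estimates in the proof of Theorem \ref{prop:SP-for-Barenblatt} yields the integrability of $\vrho_\delta$ and $|\nabla\vrho_\delta|$ against $u\,\dx\,dt$ needed to apply the linear superposition principle (\cite[Thm.~2.5]{Trevisan16}). This produces a probabilistically weak solution $X$ of the ordinary SDE \eqref{StrongSolution.eq:DDSDE-lin} with $\mathcal{L}_{X(t)}(\dx)=u(t,x)\dx$ and $\mathcal{L}_{X(0)}=w(\delta,\cdot)\dx$; Theorem \ref{prop:SP-for-Barenblatt} similarly furnishes a weak solution $Y$ with $\mathcal{L}_{Y(t)}(\dx)=w_\delta(t,x)\dx$ and the same initial law. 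The key observation is that the marginals enter the estimates \eqref{theorem.PU:estimate.b} and \eqref{theorem.PU:estimate.sigma} of the proof of Theorem \ref{StrongSolution.theorem:pathwiseU} only as multiplicative densities, so the pathwise uniqueness argument there extends verbatim to any pair of weak solutions whose one-dimensional marginals are pointwise dominated by a fixed constant multiple of $w_\delta$. Invoking the restricted Yamada--Watanabe theorem \ref{StrongSolution.theorem.restrictedYamadaWatanabe} in this enlarged class forces $\mathcal{L}_X=\mathcal{L}_Y$ on path space, whence in particular $u(t,x)\dx = w_\delta(t,x)\dx$ for every $t\in[0,T]$.

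\emph{Regime (i): analytic duality.} Setting $v := u-w_\delta$, the function $v$ is a signed distributional solution of $\partial_t v = q^{p-1}\divv(\vrho_\delta\nabla v)$ on $Q_T$ with $v(0)=0$ and $|v|\leq(C+1)w_\delta$. For a generic test function $\varphi\in C^\infty_c((0,T)\times\R^d)$ I would solve a regularized backward adjoint problem
\begin{equation*}
\partial_t \psi_\epsilon + q^{p-1}\divv\bigl((\vrho_\delta+\epsilon)\nabla\psi_\epsilon\bigr)=-\varphi\ \text{ on }Q_T,\qquad \psi_\epsilon(T,\cdot)=0,
\end{equation*}
(with a spatial cut-off if needed) via standard uniformly parabolic theory. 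Testing the weak formulation \eqref{e*} for $v$ against $\psi_\epsilon$ represents $\iint v\varphi\,\dx\,dt$ as an error term proportional to $\epsilon$ times weighted norms of $\Delta\psi_\epsilon$. Uniform-in-$\epsilon$ weighted energy estimates for $\sqrt{\vrho_\delta+\epsilon}\,\nabla\psi_\epsilon$ against the measure $w_\delta\,\dx\,dt$, combined with $|v|\leq(C+1)w_\delta$ and the $L^r$ bound on $\nabla\vrho_\delta$ from Lemma \ref{l5.9}(iii), allow passage to the limit $\epsilon\downarrow 0$ to conclude $\iint v\varphi\,\dx\,dt=0$ and hence $v\equiv 0$. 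This mirrors the duality argument carried out for $q=1$ in \cite{R.BR24-pLapl}; the upper constraint $q(p-1)<1+d(p-1)^2$ is dictated by the weighted integrability required near the free boundary and the origin.

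\emph{Main obstacle.} The hard part is Regime (i): $\vrho_\delta$ degenerates simultaneously on the Barenblatt front $\partial B_{R(t+\delta)}(0)$ and, for $p>2$, near $x=0$ where $\vrho_\delta\sim |x|^{(p-2)/(p-1)}$. Producing a dual solution smooth enough to integrate by parts against $v$ while keeping the error term from the $\epsilon$-regularization under control is delicate, and the upper restriction on $q(p-1)$ in (i) is precisely the threshold at which the available weighted integrability just suffices. In Regime (ii) the analogous sharp threshold $q>(|p-2|+d)/(d(p-1))$ is absorbed into Theorem \ref{StrongSolution.theorem:pathwiseU}, and the FPE uniqueness then follows essentially for free from pathwise uniqueness plus superposition.
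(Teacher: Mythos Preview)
Your proposal is correct and follows essentially the same two-pronged approach as the paper: regime (ii) is handled exactly as in Section \ref{subsect:restr-lin-u-probabilistic} via superposition plus the pathwise-uniqueness argument of Theorem \ref{StrongSolution.theorem:pathwiseU} extended to marginals dominated by $Cw_\delta$, and regime (i) is the analytic duality of Section \ref{subsect:restr-lin-u-analytic} via the regularized backward problem \eqref{e6.4z}. The only points you gloss over are technical rather than conceptual: the paper needs a time cut-off $\eta_\lambda$ (not a spatial one) to legitimately test $v$ against the dual solution, and the passage $\epsilon\to0$ hinges not on a first-order energy bound for $\sqrt{\vrho_\delta+\epsilon}\,\nabla\psi_\epsilon$ but on the more delicate equi-integrability estimate \eqref{e6.22} for $\epsilon|\Delta\psi_\epsilon|^{1+\alpha}w_\delta$, whose proof is where the exponent juggling and the ceiling $q(p-1)<1+d(p-1)^2$ actually bite.
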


Notably, as shown below, the proof of the assertion under assumption (i) follows by analytic methods, while for case (ii) the probabilistic tools developed in Section \ref{sect:Strongsolution} are employed. Note that (i) does not imply (ii) and (ii) does not imply (i).

\begin{rem}\label{rem:stronger-u}
   In fact, a stronger uniqueness assertion holds in the situation of the previous theorem. Precisely, for every initial condition $\nu \in \Pscr$ there is at most one distributional solutions to \eqref{eq:Leibenson-FPE-lin} in $\mathcal{A}_{\delta,T}$ with initial condition $\nu$. In the proofs below we prove this stronger assertion.
\end{rem}

\begin{rem}\label{restr-lin-u.remark:qRange}
If $p>2$, then $p> \frac{d}{d-1}$ for $d \geq 2$ and we have
\begin{align*}
    \frac{|p-2|+d}{d}  < 1+d(p-1)^2.
\end{align*}
Hence for $p>2$ the assertion of the previous theorem holds for all $q>0$ such that $q(p-1)>1$.
\end{rem}

\begin{rem}
Even though this restricted uniqueness result concerns a \emph{linear} PDE, we could not obtain its proof by a standard result from the literature. Note that the diffusion coefficient $\vrho_\delta$ is degenerate (it is compactly supported in $\overline{Q_T}$), which renders the assertion rather delicate. For instance, we cannot apply \cite[Thm.9.8.2]{FPKE-book15}, since this requires $\vrho_\delta^{-\frac 1 2} \nabla \vrho_\delta$ to be bounded, which is not true. Our proofs below heavily use the explicit form of $w_\delta$ and $\vrho_\delta$.
\end{rem}

\subsection{Analytic proof of Theorem \ref{restr-lin-u.theorem} 
(i)}\label{subsect:restr-lin-u-analytic}
In this case, the proof is an extension of the proof of Theorem 6.5 in \cite{R.BR24-pLapl}. For the convenience of the reader, we give all details, also for those parts which are very similar to \cite{R.BR24-pLapl}.

Let $T\in(0,\infty),\ \delta>0$, $d\geq 2,\ p>2$, and $q(p-1)>1$, $q < d(p-1) + \frac 1 {p-1}$. For convenience, below we omit the absolute factor $q^{p-1}$ from the RHS of \eqref{eq:Leibenson-FPE-lin}. It is clear that this factor does not influence the proof at all.

\begin{rem}\label{r6.2}  When $u \in (L^1\cap L^\infty)(\R^d)$, then in Remark  \ref{r5.8}, since
	$${\rm div}(\vrho_\delta\nabla  \varphi)=\vrho_\delta\Delta \varphi+\nabla\vrho_\delta
	\cdot\nabla \varphi$$
	in $L^1(Q_T)$,
	by a standard localization argument we can replace $C^\infty_c([0,T)\times\R^d)$ by
	$$C^\infty_{c,b}([0,T)\times\R^d):=\{\varphi\in C^\infty_b([0,T)\times\R^d)\mid\exists\lambda>0\mbox{ such that }\varphi(t,x)=0,\forall(t,x)\in[T-\lambda,T)\times\R^d\}.$$
	
\end{rem}
We give more details on this claim in Appendix \ref{App-A}.
In the forthcoming proof, we abbreviate partial time derivatives $\frac{\mathrm{d}f}{\mathrm{d}t}$ by $f_t$. Constants depending only on absolute constants are denoted $C_i, i = 1,2,...$.
	\begin{proof}[Proof of Theorem \ref{restr-lin-u.theorem} (i)] 
    Since there is no change in the argument, we prove the stronger assertion mentioned in Remark \ref{rem:stronger-u}. Let $u,\bar{u} \in \mathcal{A}_{\delta,T}$ be distributional solutions to \eqref{eq:Leibenson-FPE-lin} with any common initial condition $\nu \in \Pscr$ and set
$$v:= u-\bar{u}.$$
Since $u,\bar{u} \in \mathcal{A}_{\delta,T}$, there are nonnegative measurable maps $g,\bar{g}:Q_T \to \R$ such that $u(t,x)\mathrm{d}x = g(t,x)w_\delta(t,x)\mathrm{d}x$ and $\bar{u}(t,x)\mathrm{d}x = \bar{g}(t,x)w_\delta(t,x)\mathrm{d}x$ for all $t \in [0,T]$, and $g(t,x), \bar{g}(t,x)\leq C$ $\mathrm{d}t\mathrm{d}x$-a.s. for some $C>0$. Hence $v(t,x) = h(t,x)w_\delta(t,x)$ $\mathrm{d}t\mathrm{d}x-$a.s. for $h:= g-\bar{g}$. In particular, $-C\leq h \leq C$.

$v$ is a (signed) distributional solution to \eqref{eq:Leibenson-FPE-lin} in the sense of Definition \ref{d5.7} with initial condition the zero measure. Let $f\in C^\infty_c(Q_T)$, set and  for $\varepsilon\in(0,1)$  consider the equation
\begin{align}\label{e6.4z}
	&(\varphi_\varepsilon)_t+{\rm div}((\vrho_\delta+\varepsilon)\nabla\varphi_\varepsilon)=f \text{ in } Q_T
	\\
	\notag	&\varphi_\varepsilon(T,x)=0,\ \ x\in\R^d.
	\end{align}
	By standard existence theory for linear parabolic equations (see, e.g., \cite[Thm.10.9, p.341]{B10-book}) it follows that equation \eqref{e6.4z} has a unique solution
$$\varphi_\varepsilon\in C([0,T];L^2)\cap L^2(0,T;H^1),$$
with $(\varphi_\varepsilon)_t\in L^2(0,T;H^{-1})$. Moreover, we have
\begin{equation}\label{e6.5z}
	|\varphi_\varepsilon(t)|^2_2+\int^T_0\int_{\R^d}(\vrho_\delta(t,x)+\varepsilon)|\nabla\varphi_\varepsilon(t,x)|^2\mathrm{d}t\mathrm{d}x
	 \leq \int^T_0\int_{\R^d}|f(t,x)|^2\mathrm{d}t\mathrm{d}x.
\end{equation}
In particular, \eqref{e6.4z} is equivalent to
\begin{align}\label{e6.5}
&(\varphi_\varepsilon)_t+(\vrho_\delta+\varepsilon)\Delta\varphi_\varepsilon+\nabla
	\vrho_\delta\cdot\nabla\varphi_\varepsilon=f \text{ on }Q_T,
\\&\notag 
	\varphi_\varepsilon(T,x)=0,\ x\in\R^d.
\end{align}
\begin{claim}\label{claim1a}  We have $\varphi_\varepsilon\in W_2^{1,2}(Q_T)$, that is,
	\begin{equation*}\label{e6.6z}
	(\varphi_\varepsilon)_t, \partial_i \varphi_\varepsilon, \partial_i \partial_j \varphi_\varepsilon \in L^2(Q_T), \quad i,j=1,...,d.\end{equation*}
\end{claim}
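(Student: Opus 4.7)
The plan is to establish the $W^{1,2}_2(Q_T)$-regularity of $\varphi_\varepsilon$ by a uniform energy estimate on smooth approximations of \eqref{e6.5} followed by a weak compactness argument. I would introduce $\vrho_\delta^{(n)}\in C^\infty(\overline{Q_T})$ obtained by a standard space-time mollification of $\vrho_\delta$ (with a nonnegative mollifier, so that $\vrho_\delta^{(n)}\ge 0$) such that $\vrho_\delta^{(n)}\to\vrho_\delta$ uniformly on $\overline{Q_T}$ and $\nabla\vrho_\delta^{(n)}\to\nabla\vrho_\delta$ in $L^\infty(0,T;L^r(\R^d;\R^d))$ for a fixed $r\in(d,d(p-1))$, an interval which is non-empty precisely because of the hypothesis $p>2$ of case (i) (see Lemma \ref{l5.9}(iii)). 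Since $\vrho_\delta^{(n)}+\varepsilon\ge\varepsilon$ is smooth and bounded, classical linear parabolic theory yields a unique classical solution $\varphi_\varepsilon^{(n)}\in W^{1,2}_2(Q_T)\cap C^\infty([0,T]\times\R^d)$ to the approximating terminal-value problem, with spatial decay at infinity inherited from the compactly supported source $f$.

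The core of the argument is the test-function identity obtained by multiplying the $n$-th approximating equation by $-\Delta\varphi_\varepsilon^{(n)}$ and integrating on $(t,T)\times\R^d$. Integration by parts in time (using $\varphi_\varepsilon^{(n)}(T,\cdot)=0$) and in space yields
\begin{equation*}
\tfrac{1}{2}\|\nabla\varphi_\varepsilon^{(n)}(t)\|^2_{L^2}
+\int_t^T\!\!\int_{\R^d}(\vrho_\delta^{(n)}+\varepsilon)(\Delta\varphi_\varepsilon^{(n)})^2\,\mathrm{d}x\,\mathrm{d}s
=-\int_t^T\!\!\int_{\R^d}(\nabla\vrho_\delta^{(n)}\!\cdot\!\nabla\varphi_\varepsilon^{(n)})\Delta\varphi_\varepsilon^{(n)}\,\mathrm{d}x\,\mathrm{d}s
+\int_t^T\!\!\int_{\R^d} f\,\Delta\varphi_\varepsilon^{(n)}\,\mathrm{d}x\,\mathrm{d}s.
\end{equation*}
The $f$-term is absorbed by Cauchy--Schwarz and Young's inequality, using the uniformly positive factor $\varepsilon$. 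For the drift term I apply Hölder's inequality with exponents $\tfrac{1}{r}+\tfrac{1}{s}+\tfrac{1}{2}=1$, $s=\tfrac{2r}{r-2}$, to get
$$\int_{\R^d}|\nabla\vrho_\delta^{(n)}||\nabla\varphi_\varepsilon^{(n)}||\Delta\varphi_\varepsilon^{(n)}|\,\mathrm{d}x\le \|\nabla\vrho_\delta^{(n)}\|_{L^r}\|\nabla\varphi_\varepsilon^{(n)}\|_{L^s}\|\Delta\varphi_\varepsilon^{(n)}\|_{L^2};$$
the choice $r>d$ forces $s<\tfrac{2d}{d-2}$, so that Gagliardo--Nirenberg yields $\|\nabla\varphi_\varepsilon^{(n)}\|_{L^s}\lesssim\|D^2\varphi_\varepsilon^{(n)}\|_{L^2}^\theta\|\nabla\varphi_\varepsilon^{(n)}\|_{L^2}^{1-\theta}$ for some $\theta\in(0,1)$. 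Combined with the Plancherel identity $\|D^2\varphi\|_{L^2(\R^d)}=\|\Delta\varphi\|_{L^2(\R^d)}$ for $\varphi\in H^2(\R^d)$, a final application of Young's inequality with a small parameter reabsorbs the superlinear factor $\|\Delta\varphi_\varepsilon^{(n)}\|_{L^2}^{1+\theta}$ into the left-hand side, producing an $n$-uniform bound on $\int_0^T\|\Delta\varphi_\varepsilon^{(n)}\|^2_{L^2}\,\mathrm{d}s$ (again, by Plancherel, on $\|D^2\varphi_\varepsilon^{(n)}\|_{L^2(Q_T)}$). The quantities $\sup_s\|\nabla\varphi_\varepsilon^{(n)}(s)\|_{L^2}$ and $\sup_n\|\nabla\vrho_\delta^{(n)}\|_{L^\infty(0,T;L^r)}$ entering the estimate are bounded uniformly in $n$, by \eqref{e6.5z} applied to the $n$-th problem and by the chosen convergence of $\nabla\vrho_\delta^{(n)}$, respectively.

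Rearranging the approximating PDE as $(\varphi_\varepsilon^{(n)})_t=f-(\vrho_\delta^{(n)}+\varepsilon)\Delta\varphi_\varepsilon^{(n)}-\nabla\vrho_\delta^{(n)}\cdot\nabla\varphi_\varepsilon^{(n)}$ then gives the analogous $n$-uniform $L^2(Q_T)$-bound on $(\varphi_\varepsilon^{(n)})_t$, the last summand being controlled by the very same Hölder--Gagliardo--Nirenberg chain. Weak compactness in $W^{1,2}_2(Q_T)$ extracts a subsequential limit $\tilde\varphi$ which solves \eqref{e6.5} distributionally and has terminal value zero, and the uniqueness of solutions to \eqref{e6.4z} in $C([0,T];L^2)\cap L^2(0,T;H^1)$ quoted before the claim identifies $\tilde\varphi=\varphi_\varepsilon$, completing the proof. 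The main obstacle is the drift term $(\nabla\vrho_\delta\cdot\nabla\varphi_\varepsilon)\Delta\varphi_\varepsilon$: since $\nabla\vrho_\delta$ is unbounded (it has an $|x|^{-1/(p-1)}$-singularity at the origin and jumps across $\partial B_{R_\delta(t)}(0)$), controlling it demands the precise Hölder--Sobolev--Young interpolation above, and the hypothesis $p>2$ of case (i) enters exactly through $d<d(p-1)$, ensuring the admissibility of $r>d$ in the Hölder step.
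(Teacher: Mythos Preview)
Your approach is correct and genuinely different from the paper's. The paper proceeds by an $L^\gamma$-bootstrap: writing $g_\varepsilon=\nabla\vrho_\delta\cdot\nabla\varphi_\varepsilon$, it first gets $g_\varepsilon\in L^{\gamma_1}(Q_T)$ with $\gamma_1=\tfrac{2r}{2+r}\in(1,2)$ by H\"older, then invokes the $L^{\gamma_1}$-maximal regularity theorem for nondegenerate parabolic equations (Ladyzhenskaya et al.) to obtain $\varphi_\varepsilon\in W^{1,2}_{\gamma_1}(Q_T)$, applies Sobolev embedding to raise the integrability of $\nabla\varphi_\varepsilon$, and iterates the recursion $\gamma_{i+1}=\gamma_i\,\tfrac{rd}{rd+\gamma_i(d-r)}$ until some $\gamma_i\ge 2$. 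Your route bypasses the black-box $L^p$-regularity entirely: you test the (mollified) equation with $-\Delta\varphi_\varepsilon^{(n)}$ and close a single quadratic energy estimate via the H\"older--Gagliardo--Nirenberg--Young chain. Both arguments hinge on exactly the same structural fact, namely that $p>2$ allows the choice $r\in(d,d(p-1))$; for the paper this makes the recursion strictly increasing, for you it forces $s=\tfrac{2r}{r-2}<\tfrac{2d}{d-2}$ so that the interpolation exponent $\theta$ stays below~$1$ and the Young absorption succeeds. Your argument is more elementary and self-contained; the paper's avoids the approximation layer and the limit identification.

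One small slip: \eqref{e6.5z} gives only $\int_0^T\|\nabla\varphi_\varepsilon^{(n)}(s)\|_{L^2}^2\,\mathrm{d}s\le\varepsilon^{-1}\|f\|_{L^2(Q_T)}^2$, not the pointwise-in-time bound $\sup_s\|\nabla\varphi_\varepsilon^{(n)}(s)\|_{L^2}$ you cite. Fortunately the integrated bound is exactly what the Young remainder $C_\eta\int_t^T\|\nabla\vrho_\delta^{(n)}(s)\|_{L^r}^{2/(1-\theta)}\|\nabla\varphi_\varepsilon^{(n)}(s)\|_{L^2}^2\,\mathrm{d}s$ requires, so the argument goes through unchanged; the $\sup_s$-bound then follows \emph{a posteriori} from your own identity.
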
		

\medskip\noindent{\it Proof of Claim} \ref{claim1a}.
We set $g_\varepsilon=\nabla\vrho_\delta\cdot\nabla
\varphi_\varepsilon.$ By \eqref{e6.5z} we have $\varphi_\varepsilon,|\nabla\varphi_\varepsilon|\in L^2(Q_T)$. By Lemma \ref{l5.9} (iii), we know that $\nabla\vrho_\delta\in L^\infty(0,T;L^r)$, $r\in[1,d(p-1))$. Fix $r\in(d,d(p-1))$ (this interval is nonempty, since $p>2$). Then, by H\"older's inequality,
\begin{equation}\label{eqeqeq}
g_\varepsilon\in L^{\gamma_1}(Q_T),\ \mbox{ for }\gamma_1:=\frac{2r}{2+r}\in(1,2).
\end{equation}  
Taking into account that, by \eqref{e6.5},
$$(\varphi_\varepsilon)_t+(\vrho_\varepsilon+\varepsilon)\Delta\varphi_\varepsilon=f-g_\varepsilon\in L^{\gamma_1}(Q_T),$$
we get by \cite[Theorem 9.1, p.~341]{L68-book} that $\varphi_\varepsilon\in W^{1,2}_{\gamma_1}(Q_T)$,   that is,
$$(\varphi_\varepsilon)_t, \partial_i \varphi_\varepsilon,\partial_i \partial_j \varphi_\varepsilon\in L^{\gamma_1}(Q_T),\ i,j=1,...,d.$$
On the other hand, by the Sobolev--Nirenbergh--Gagliardo theorem (see, e.g., \cite[p.283]{B10-book}) we have (recall $d \geq 2$)
$$|\nabla\varphi_\varepsilon|\in L^{\alpha_1}(Q_T)\mbox{\ \ for }\alpha_1:=\frac{\gamma_1d}{d-\gamma_1},$$
and this yields as above
$$g_\varepsilon\in L^{\gamma_2}(Q_T),\ \mbox{ for }  \gamma_2:=\frac{\alpha_1r}{\alpha_1+r}\in(1,\infty).$$
Then, again by \cite[Theorem 9.1, p.~341]{L68-book}, it follows that $\varphi_\varepsilon\in W^{1,2}_{\gamma_2}(Q_T)$ and, therefore, again by Sobolev--Nirenbergh--Gagliardo theorem,
$$|\nabla\varphi_\varepsilon|\in L^{\alpha_2}(Q_T)\ \mbox{ for }\alpha_2:=\frac{\gamma_2d}{d-\gamma_2}.$$
Continuing, we obtain sequences $\gamma_i$, $\alpha_i$ such that
\begin{equation}
	\label{e6.8z}
	g_\varepsilon\in L^{\gamma_i}(Q_T),\ \varphi_\varepsilon\in W^{1,2}_{\gamma_i}(Q_T),
\end{equation}
and
\begin{align*}
\gamma_1=\frac{2r}{2+r},\ \gamma_{i+1}=\frac{\alpha_ir}{\alpha_i+r},\ i\in\mathbb{N},\vspace{1,5mm}\\ 
\alpha_i=\frac{\gamma_id}{d-\gamma_i},\mbox{ as long as }\gamma_i<d.
\end{align*}
This yields the recursive formula
$$ \gamma_1=\frac{2r}{2+r}\in(1,2),\ \gamma_{i+1}=\gamma_i\,\frac{rd}{\gamma_id+r(d-\gamma_i)},\  i\in\mathbb{N}, \mbox{ as long as }\gamma_i<d.$$ 
This iterative procedure stops after the smallest $i$ such that $\gamma_i \geq d$.
Since $r>d$, we have for all $i$ with $\gamma_{i+1}<d$ that 
\begin{equation}
	\label{e6.9z}
	\gamma_{i+1}=\gamma_i\,\frac{rd}{rd+\gamma_i(d-r)}>\gamma_i.
\end{equation}
Suppose
\begin{equation}
	\label{e6.10z}
	\gamma_i <  2\mbox{ for all }i\in\mathbb{N}.\end{equation}
Then, by \eqref{e6.9z} there exists
$$\gamma:=\lim_{i\to\infty}\gamma_i\leq2$$
and, passing to the limit in \eqref{e6.9z}, we obtain
$$\gamma=\gamma\,\frac{rd}{rd+\gamma(d-r)}>\gamma.$$
This contradiction implies that \eqref{e6.10z} is wrong, so there exists $i\in\mathbb{N}$ such that $\gamma_i\geq 2$ and Claim \ref{claim1a} follows by \cite[Theorem 9.1, p.~341]{L68-book}, because $g_\varepsilon\in L^2(Q_T)$ by \eqref{e6.8z}, \eqref{eqeqeq} and interpolation, since $\gamma_1\in(1,2).$ 
 \hfill$\Box$
 
Next, by the maximum principle
\begin{equation}\label{e6.9}
	\sup_{\varepsilon\in(0,1)}|\varphi_\varepsilon|_{L^\infty(Q_T)}\le C_0|f|_{L^\infty(Q_T)}.\end{equation}
This follows in a standard way from \eqref{e6.5} by multiplying the equation with $(\varphi_\varepsilon-(T-t)|f|_{L^\infty(Q_T)})_+$ and $(\varphi_\varepsilon+(T-t)|f|_{L^\infty(Q_T)})_-$, respectively, and integrating over  $Q_T$.  

Setting $\vrho^\varepsilon_\delta:=\vrho_\delta+\varepsilon$ and multiplying \eqref{e6.5} by $-\varphi_\varepsilon$, integrating over $(t,T)\times\R^d$ and using that $\varphi_\varepsilon(T,\cdot)=0$, we obtain by Gronwall's lemma (see \eqref{e6.5z})
\begin{equation}\label{e6.10}
	|\varphi_\varepsilon(t)|^2_{L^2(\R^d)}+
	\int^T_t\int_{\R^d}\vrho^\varepsilon_\delta|\nabla\varphi_\varepsilon|^2\mathrm{d}x\mathrm{d}s
	\le C_1\int^T_t\int_{\R^d}|f|^2\mathrm{d}x\mathrm{d}s,\ \forall t\in(0,T).\end{equation}		
Define for $\lambda\in(0,1)$
$$\varphi^\lambda_\varepsilon(t,x):=\eta_\lambda(t)\varphi_\varepsilon(t,x),\ (t,x)\in Q_T,$$
where $\eta_\lambda(t)=\eta\left(\frac t\lambda\right)\eta\left(\frac{T-t}\lambda\right)$ and 
$\eta\in C^2([0,\infty))$ is such that
$$\eta(r)=0\mbox{ for }r\in[0,1],\ \eta(r)=1\mbox{ for }r>2.$$

	\begin{claim}\label{claim}    
	We have
	\begin{equation}\label{e6.11}
		\int^T_0{}_{H^1}\left<\vrho^\varepsilon_\delta(t)
		\nabla\varphi_\varepsilon(t),\nabla(\varphi_\varepsilon)_t(t)\right>_{H^{-1}}\mathrm{d}t
		\le-\frac12\int_{Q_T}(\vrho_\delta)_t|\nabla\varphi_\varepsilon(t,x)|^2\mathrm{d}t\mathrm{d}x,
	\end{equation}where ${}_{H^1}\left<\cdot,\cdot\right>_{H^{-1}}$ is the duality pairing on $H^1(\R^d;\R^d)\times H^{-1}(\R^d;\R^d)$.
\end{claim}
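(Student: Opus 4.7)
The claim is essentially the integration-by-parts identity in time with the time-dependent weight $\vrho_\delta^\varepsilon$, namely
$$2\int_0^T{}_{H^1}\!\!\left\langle\vrho_\delta^\varepsilon(t)\nabla\varphi_\varepsilon(t),\nabla(\varphi_\varepsilon)_t(t)\right\rangle_{H^{-1}}dt=\Big[\int_{\R^d}\vrho_\delta^\varepsilon|\nabla\varphi_\varepsilon|^2\,dx\Big]_{t=0}^{t=T}-\int_{Q_T}(\vrho_\delta)_t|\nabla\varphi_\varepsilon|^2\,dx\,dt,$$
combined with the terminal condition $\varphi_\varepsilon(T,\cdot)=0$, which kills the $t=T$ boundary term and leaves the nonpositive contribution $-\int_{\R^d}\vrho_\delta^\varepsilon(0,x)|\nabla\varphi_\varepsilon(0,x)|^2\,dx$ at $t=0$. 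Dropping the latter and dividing by $2$ yields the asserted inequality.

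First, I would check that every term in the identity is well defined. From the explicit formula \eqref{StrongSolution.vrho_delta.function} and the $1$-Lipschitz continuity of $r\mapsto r_+$, the function $\vrho_\delta$ is locally Lipschitz in $t$ uniformly in $x$, so $(\vrho_\delta)_t$ exists $dt\otimes dx$-a.e. and lies in $L^\infty_{\loc}(\overline{Q_T})$. Together with Claim \ref{claim1a} and Lemma \ref{l5.9}, this gives $\vrho_\delta^\varepsilon\nabla\varphi_\varepsilon\in L^2(0,T;H^1(\R^d;\R^d))$: the bound on $\mathrm{Hess}(\varphi_\varepsilon)$ from Claim \ref{claim1a} handles the term $\vrho_\delta^\varepsilon\,\mathrm{Hess}(\varphi_\varepsilon)$, while $\nabla\vrho_\delta\in L^\infty_{\loc}(0,T;L^r)$ for some $r<d(p-1)$ (Lemma \ref{l5.9}(iii)) pairs via H\"older with the Sobolev-improved integrability of $\nabla\varphi_\varepsilon$ coming from Claim \ref{claim1a}. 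This makes the $H^1$-$H^{-1}$ duality pairing on the left-hand side legitimate against $\nabla(\varphi_\varepsilon)_t\in L^2(0,T;H^{-1})$.

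Second, the identity will be established by time mollification. Extend $\varphi_\varepsilon$ past the endpoints of $[0,T]$ (for instance by $0$ past $T$, consistent with the terminal condition, and by the initial trace past $0$) and set $\varphi_\varepsilon^\eta:=\varphi_\varepsilon\ast\rho_\eta$ for a standard time mollifier $\rho_\eta$. Then $\varphi_\varepsilon^\eta$ is $C^\infty$ in $t$ with values in $H^2(\R^d)$, so the product rule
$$\partial_t\!\left(\vrho_\delta^\varepsilon|\nabla\varphi_\varepsilon^\eta|^2\right)=(\vrho_\delta)_t|\nabla\varphi_\varepsilon^\eta|^2+2\vrho_\delta^\varepsilon\nabla\varphi_\varepsilon^\eta\cdot\nabla(\varphi_\varepsilon^\eta)_t$$
holds a.e.\ in $Q_T$, and integration over $\R^d$ and $(0,T)$ yields the displayed identity for $\varphi_\varepsilon^\eta$. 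By Claim \ref{claim1a}, $\nabla\varphi_\varepsilon^\eta\to\nabla\varphi_\varepsilon$ in $L^2(0,T;H^1)$ and $(\varphi_\varepsilon^\eta)_t\to(\varphi_\varepsilon)_t$ in $L^2(Q_T)$ as $\eta\downarrow 0$; combined with the uniform $L^\infty$-bound on $\vrho_\delta^\varepsilon$ and $(\vrho_\delta)_t$, dominated convergence (and continuity of the traces $\nabla\varphi_\varepsilon^\eta(0,\cdot)$, $\nabla\varphi_\varepsilon^\eta(T,\cdot)$ in $L^2$) passes every term to the unmollified identity.

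The main obstacle is the justification of this weighted integration-by-parts formula: the classical Gelfand-triple chain rule for $u\in L^2(0,T;V)$ with $u_t\in L^2(0,T;V^*)$ covers only the unweighted case, and the presence of a merely Lipschitz (in $t$) weight $\vrho_\delta^\varepsilon$ forces the mollification argument above. A secondary technical point, settled in the first step, is the verification that $\vrho_\delta^\varepsilon\nabla\varphi_\varepsilon$ has the spatial $H^1$-regularity needed for the duality pairing on the left-hand side.
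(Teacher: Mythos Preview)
Your proposal is correct and follows essentially the same route as the paper: approximate $\varphi_\varepsilon$ by time-mollified functions (the paper uses $\varphi^\varepsilon_\nu(t)=(\varphi_\varepsilon*\theta_\nu)(t)-(\varphi_\varepsilon*\theta_\nu)(T)$ with constant extension past the endpoints, which is your mollification scheme with a correction to enforce the terminal condition exactly), apply the weighted product rule at the approximate level, drop the nonpositive $t=0$ boundary term, and pass to the limit using the $L^2(0,T;H^1)$ and $L^2(0,T;H^{-1})$ convergences guaranteed by Claim \ref{claim1a}. The paper takes the inequality already at the mollified level before letting $\nu\to 0$, which spares you the trace-convergence argument for $\nabla\varphi_\varepsilon^\eta(0,\cdot)$, but this is a cosmetic difference.
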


\medskip\noindent{\it Proof of Claim} \ref{claim}. 
By Claim 1 and its proof we have $\nabla(\varphi_\varepsilon)_t\in L^2((0,T);H^{-1})$ and $\vrho^\varepsilon_\delta
\nabla\varphi_\varepsilon \in L^2((0,T);H^1)$,
 and so  the LHS  of \eqref{e6.11}  is well defined (indeed, in the proof of Claim 1 we showed $g_\varepsilon = \nabla \vrho_\delta \cdot \nabla \varphi_\varepsilon \in L^2(Q_T)$). Now choose a sequence $\{\varphi^\varepsilon_\nu\}\subset C^1([0,T];H^1)$ such that $\varphi^\varepsilon_\nu(T,\cdot)=0$ and, for $\nu\to0$,
\begin{align}\label{e6.12}
	\nabla\varphi^\varepsilon_\nu\to&\nabla\varphi_\varepsilon\mbox{ strongly in }L^2((0,T);H^1)\\
	\label{e6.12.5}
	\nabla(\varphi^\varepsilon_\nu)_t\to&\nabla(\varphi_\varepsilon)_t\mbox{ strongly in }L^2((0,T);H^{-1})
\end{align}
An example for such a sequence is
$$\varphi^\varepsilon_\nu(t)=(\varphi_\varepsilon*\theta_\nu)(t)-
(\varphi_\varepsilon*\theta_\nu)(T),$$ where $\theta_\nu=\theta_\nu(t)$, $\nu>0$, is a standard mollifier sequence on $\R$. Here, for technical purposes, we define $\varphi_\varepsilon(r)$ by $\varphi_\varepsilon(0)
$ and $\varphi_\varepsilon(T)$ for $r\in(-1,0)$ and $r\in(T,T+1)$, respectively. 		
Then, since $\nabla\varphi^\varepsilon_\nu(T)=0$, we have
\begin{align*}
	\int^T_0{}_{H^{1}}\left<
	\vrho^\varepsilon_\delta(t)\nabla\varphi^\varepsilon_\nu(t),
	\nabla(\varphi^\varepsilon_\nu(t))_t\right>_{H^{-1}}\mathrm{d}t
	=\frac12\int_{Q_T}
	\vrho^\varepsilon_\delta(t,x)
	|\nabla\varphi^\varepsilon_\nu(t,x)|^2_t \mathrm{d}t\mathrm{d}x
	\\
	=-\frac12\int_{\R^d}
	\vrho^\varepsilon_\delta(0,x)
	|\nabla\varphi^\varepsilon_\nu(0,x)|^2 \mathrm{d}x
	-\frac12\int_{Q_T}
	\vrho_\delta(t,x)
	|\nabla\varphi^\varepsilon_\nu(t,x)|^2 \mathrm{d}t\mathrm{d}x\\
	\le-\frac12\int_{Q_T}(\vrho_\delta)_t
	(t,x)
	|\nabla\varphi^\varepsilon_\nu(t,x)|^2\mathrm{d}t\mathrm{d}x.
\end{align*}
Letting $\nu\to0$,   we get by 
\eqref{e6.12}-\eqref{e6.12.5} that \eqref{e6.11} holds, as claimed.\hfill$\Box$\bigskip

Now, by \eqref{e6.5} and \eqref{e6.11}, we have
\begin{align*}
	\int_{Q_T}|(\varphi_\varepsilon)_t(t,x)|^2\mathrm{d}t\mathrm{d}x
	&=\int^T_0{}_{H^1}\!\!\left<	\vrho^\varepsilon_\delta(t)\nabla\varphi_\varepsilon(t),
	\nabla(\varphi_\varepsilon)_t(t)\right>_{H^{-1}}\mathrm{d}x
	+\int_{Q_T}f(t,x)(\varphi_\varepsilon)_t(t,x)\mathrm{d}t\mathrm{d}x\\
	&\le-\frac12\int_{Q_T}(\vrho_\delta)_t(t,x)
	|\nabla\varphi_\varepsilon(t,x)|^2\mathrm{d}t\mathrm{d}x
	+\int_{Q_T}f(t,x)(\varphi_\varepsilon)_t(t,x)\mathrm{d}t\mathrm{d}x.
\end{align*}
Since $(\vrho_\delta)_t\in L^\infty(Q_T)$, this implies by Young' inequality
$$\int_{Q_T}|(\varphi_\varepsilon)_t|^2\mathrm{d}t\mathrm{d}x\le-\int_{Q_T}(\vrho_\delta)_t|\nabla\varphi_\varepsilon|^2\mathrm{d}t\mathrm{d}x+\int^T_0|f(t)|^2_{L^2(\R^d)}\,\mathrm{d}t.$$
From the definition of $\vrho_\delta$ we infer
\begin{equation}\label{e6.13}	-(\vrho_\delta)_t
	\le C_2\delta^{-1}\vrho_\delta,\end{equation}		
and hence we obtain	
\begin{equation}\label{e6.14}
	\int_{Q_T}|(\varphi_\varepsilon)_t|^2\mathrm{d}t\mathrm{d}x\le C \int_{Q_T}\vrho_\delta|\nabla\varphi_\varepsilon|^2\mathrm{d}t\mathrm{d}x+
	\int^T_0|f(t)|^2_{L^2(\R^d)}\,\mathrm{d}t,\end{equation}	
where $C>0$ only depends on $p,q,d$ and $\delta$.
	Then, by \eqref{e6.5} we have
\begin{equation}\label{e6.15}
	\begin{array}{l}
	 (\varphi^\lambda_\varepsilon)_t+\divv((\vrho_\delta+\varepsilon)\nabla\varphi^\lambda_\varepsilon)=f\eta_\lambda+\eta'_\lambda\varphi_\varepsilon\mbox{ on }Q_T \vspace*{1mm}\\
		\varphi^\lambda_\varepsilon(T,x)=0,\ \forall x\in\R^d.\end{array}
\end{equation}
From what we derived above, we have $\varphi^\lambda_\varepsilon\in W^{2,1}_2(Q_T)$ and  
\begin{eqnarray}
	&\varphi^\lambda_\varepsilon\in L^2((0,T);H^2)\cap C([0,T];L^2),\ (\varphi^\lambda_\varepsilon)_t \in L^2((0,T);L^2),\ \varphi^\lambda_\varepsilon\in L^\infty(Q_T).\label{e6.16}	\end{eqnarray}
By Remark \ref{r6.2}, $v$ satisfies \eqref{e*} for all $\varphi\in C^\infty_{c,b}([0,T)\times\R^d)$. Hence, we infer by density that $v$ satisfies \eqref{e*} also for all  functions $\varphi=\varphi^\lambda_\varepsilon$ with properties \eqref{e6.16}.
Therefore,  we have
\begin{equation}\label{e6.17} \int_{Q_T}v\left((\varphi^\lambda_\varepsilon)_t+\divv(\vrho_\delta\nabla\varphi^\lambda_\varepsilon)\right)\mathrm{d}t\mathrm{d}x=0,\ \forall\varepsilon,\lambda\in(0,1) 
\end{equation}
 (see Appendix \ref{App-B} for details).

	Next, we get by \eqref{e6.15} the following equality
$$\frac12\,|\varphi^\lambda_\varepsilon(t)|^2_{L^2(\R^d)}+\varepsilon\int^T_t|\nabla\varphi^\lambda_\varepsilon(s)|^2_{L^2(\R^d)}\mathrm{d}s+\int^T_t\int_{\R^d}\vrho_\delta|\nabla\varphi^\lambda_\varepsilon|^2\mathrm{d}s\mathrm{d}x
=-\int^T_t\int_{\R^d}(f\eta_\lambda+\eta'_\lambda\varphi_\varepsilon)\varphi^\lambda_\varepsilon \mathrm{d}t\mathrm{d}x.$$
Taking into account \eqref{e6.9}, we get
\begin{equation}\label{e6.18}
	|\varphi^\lambda_\varepsilon(t)|^2_{L^2(\R^d)}+\varepsilon\int^T_t
	|\nabla\varphi^\lambda_\varepsilon(s)|^2_{L^2(\R^d)}\mathrm{d}s
	+\int_{Q_T}\vrho_\delta|\nabla\varphi^\lambda_\varepsilon|^2\mathrm{d}t\mathrm{d}x\le C_3,\ \forall\varepsilon,\lambda\in(0,1).
\end{equation}
Now, we have as in \eqref{e6.11} that
\begin{equation*}
	\int^T_0{}_{H^1}\left<\vrho^\varepsilon_\delta(t)
	\nabla\varphi^\lambda_\varepsilon(t),
	\nabla(\varphi^\lambda_\varepsilon(t))_t\right>_{H^{-1}}\,\mathrm{d}t
	\leq-\frac12\int_{Q_T}
	(\vrho_\delta)_t(t,x)
	|\nabla\varphi^\lambda_\varepsilon(t,x)|^2\mathrm{d}t\mathrm{d}x, 
\end{equation*}
and this yields
\begin{equation*}
	\int_{Q_T}|(\varphi^\lambda_\varepsilon)_t|^2\mathrm{d}t\mathrm{d}x
	\le-\int_{Q_T}(\vrho_\delta)_t
	|\nabla \varphi^\lambda_\varepsilon|^2 \mathrm{d}t\mathrm{d}x
	+C_4\int_{Q_T}(|f\eta_\lambda|^2+
	|\eta'_\lambda\varphi_\varepsilon|^2) \mathrm{d}t\mathrm{d}x,\ \forall\varepsilon\in(0,1). 
\end{equation*}
Then, by \eqref{e6.10}, \eqref{e6.13}  and \eqref{e6.18}  we get
\begin{align}\label{e6.19}
	\int_{Q_T}|(\varphi^\lambda_\varepsilon)_t|^2\mathrm{d}t\mathrm{d}x
	\le\int_{Q_T}\vrho_\delta|\nabla\varphi^\lambda_\varepsilon|^2\mathrm{d}t\mathrm{d}x+
	C_4\int_{Q_T}
	(|f|^2+\lambda^{-1}|\varphi_\varepsilon|^2)\mathrm{d}t\mathrm{d}x\le \left(1+\frac1\lambda\right)C_5,\forall\varepsilon,\lambda\in(0,1).
\end{align}
(Here, we have denoted by $C_i$, $i=0,1,2,...$, several positive constants independent of $\varepsilon,\lambda$.) Now, we fix a sequence $\lambda_n\to0$. 
Hence, by \eqref{e6.13}, \eqref{e6.14}, \eqref{e6.18} and \eqref{e6.19} and a diagonal argument, we  find a subsequence $\varepsilon_k\to0$ such that, for all $n\in\mathbb{N}$ as $k\to\infty$, 
\begin{equation}
	\label{e6.20}
	\left\{
	\begin{array}{rcll}
		\varphi_{\varepsilon_k}\ \to\ \varphi,\ \varphi^{\lambda_n}_{\varepsilon_k}&\to&\varphi^{\lambda_n}&\mbox{weakly in }L^2(Q_T)\medskip\\ 
		(\varphi_{\varepsilon_k})_t\ \to\ (\varphi)_t,\ (\varphi^{\lambda_n}_{\varepsilon_k})_t&\to&(\varphi^{\lambda_n})_t&\mbox{weakly in }L^2(Q_T)\medskip\\ 
		\divv((\vrho_\delta+\varepsilon_k)\nabla\varphi^{\lambda_n}_{\varepsilon_k})&\to&\zeta_{\lambda_n}&\mbox{weakly in }L^2(Q_T).
	\end{array}\right.
\end{equation}
To finish the proof, we need the following three claims.

\begin{claim}\label{claim1} $t\mapsto u(t)$ and $t\mapsto \bar{u}(t)$ are weakly continuous from $[0,T]$ to $L^2(\R^d)$.\end{claim}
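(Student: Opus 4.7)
The plan is to combine the pointwise domination $u(t)\le C w_\delta(t)$ with the (essentially equivalent) weak continuity of $t\mapsto u(t,x)\mathrm{d}x$ as measures. I would proceed as follows; by symmetry the same argument applies to $\bar u$.

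First I would establish uniform bounds. Since $u\in\mathcal{A}_{\delta,T}$ we have $u(t,x)\le C\,w_\delta(t,x)$ for $\mathrm{d}x$-a.e. $x$ and every $t\in[0,T]$. By Lemma~\ref{l5.9}(i), $w_\delta\in L^\infty([0,T]\times\R^d)$ and $\mathrm{supp}\,w_\delta(t)\subseteq\overline{B_{R_\delta(T)}(0)}=:K$ for all $t\in[0,T]$. In particular $\mathrm{supp}\,u(t)\subseteq K$, and
\begin{equation*}
|u(t)|_{L^2(\R^d)}^2 \le |u(t)|_{L^\infty(\R^d)}\,|u(t)|_{L^1(\R^d)}\le C\,|w_\delta|_{L^\infty(Q_T)}<\infty,\qquad t\in[0,T],
\end{equation*}
with a bound independent of $t$. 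Thus $\{u(t):t\in[0,T]\}$ is bounded in $L^2(\R^d)$ and uniformly supported in $K$.

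Next I would upgrade the vague/weak continuity as measures to weak continuity in $L^2$. Fix $t\in[0,T]$ and a sequence $t_n\to t$; I need to show $u(t_n)\to u(t)$ weakly in $L^2(\R^d)$. Let $\varphi\in L^2(\R^d)$; since all $u(t_n),u(t)$ are supported in $K$, I may replace $\varphi$ by $\varphi\mathbf{1}_K\in L^2(K)$. Given $\eta>0$, pick $\varphi_\eta\in C_c(\R^d)$ with $|\varphi-\varphi_\eta|_{L^2(K)}\le\eta$ (density of $C_c$ in $L^2(K)$). Split
\begin{equation*}
\Bigl|\int_{\R^d}(u(t_n)-u(t))\varphi\,\mathrm{d}x\Bigr|\le \Bigl|\int_{\R^d}(u(t_n)-u(t))\varphi_\eta\,\mathrm{d}x\Bigr| + \bigl(|u(t_n)|_{L^2}+|u(t)|_{L^2}\bigr)\,\eta.
\end{equation*}
The first term tends to $0$ as $n\to\infty$ because $t\mapsto u(t,x)\mathrm{d}x$ is weakly continuous as a curve of finite measures (by the definition of $\mathcal{A}_{\delta,T}$), and $\varphi_\eta\in C_c(\R^d)\subseteq C_b(\R^d)$ is an admissible test function. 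The second term is bounded by a constant multiple of $\eta$, uniformly in $n$. Letting first $n\to\infty$ and then $\eta\downarrow 0$ proves $\int u(t_n)\varphi\,\mathrm{d}x\to \int u(t)\varphi\,\mathrm{d}x$, i.e.\ weak $L^2$-continuity at $t$.

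The argument is essentially routine given the membership in $\mathcal{A}_{\delta,T}$; the only step requiring mild care is the passage from weak continuity against $C_c$-functions to weak $L^2$-continuity, which is handled by the density/approximation argument above combined with the uniform compact support of $u(t)$ inherited from $w_\delta(t)$.
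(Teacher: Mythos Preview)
Your proof is correct and rests on the same two ingredients the paper uses: the uniform $L^2$-bound on $\{u(t):t\in[0,T]\}$ (from $u\in L^1\cap L^\infty$) and the weak continuity of $t\mapsto u(t,x)\,\mathrm{d}x$ as measures. The only difference is in the execution: the paper extracts, from any sequence $t_n\to t$, a weakly $L^2$-convergent subsequence via reflexivity and then identifies the limit as $u(t)$ using the measure continuity, whereas you argue directly by approximating $\varphi\in L^2$ by $C_c$-functions and exploiting the uniform compact support inherited from $w_\delta$. Both are standard and essentially equivalent here; your version is slightly more elementary in that it avoids invoking weak sequential compactness, at the price of using the compact support explicitly.
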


\medskip\noindent{\it Proof of Claim {\rm\ref{claim1}}.} Since $\sup\limits_{t\in[0,T]}\{|u(t)|_{L^1(\R^d)}, |u(t)|_{L^\infty(\R^d)}\}<\infty$, also $\sup\limits_{t\in[0,T]}\{|u(t)|_{L^2(\R^d)}\}<\infty.$ Hence, for every sequence $\{t_n\}_{n\in\mathbb{N}}\subseteq[0,T]$ with limit $t\in[0,T]$ there is a subsequence $\{t_{n_k}\}_{k\in\mathbb{N}}$ such that $u(t_{n_k})$ has a weak limit in $L^2(\R^d)$.   Due to the weak continuity (in the sense of measures) of $t\mapsto u(t,x)\mathrm{d}x$, this limit is $u(t)$, which implies the $L^2(\R^d)$-weak continuity of $[0,T]\ni t\mapsto u(t).$ The same argument applies to $\bar{u}$.\hfill$\Box$

	\begin{claim}\label{claim2} {\it We have}
	\begin{equation*}\label{e6.21}
		\lim_{n\to\infty}\ \lim_{k\to\infty}
		\int_{Q_T} v\eta'_{\lambda_n}\varphi_{\varepsilon_k}\,\mathrm{d}t\mathrm{d}x=0.
	\end{equation*}
\end{claim}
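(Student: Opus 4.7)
The plan is to first let $k \to \infty$ using the weak $L^2(Q_T)$-convergence $\varphi_{\varepsilon_k} \rightharpoonup \varphi$ from \eqref{e6.20}. Since $v = u - \bar u \in L^\infty(Q_T)$ and $\eta'_{\lambda_n}$ is a bounded function for each fixed $n$, the test function $v\eta'_{\lambda_n}$ lies in $L^2(Q_T)$, and hence
\begin{equation*}
\lim_{k \to \infty} \int_{Q_T} v \eta'_{\lambda_n} \varphi_{\varepsilon_k} \, \mathrm{d}t \mathrm{d}x = \int_{Q_T} v \eta'_{\lambda_n} \varphi \, \mathrm{d}t \mathrm{d}x.
\end{equation*}
It therefore suffices to show that the right-hand side tends to $0$ as $n \to \infty$.

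Next I would exploit the explicit structure of $\eta'_{\lambda_n}$. Because $\eta_{\lambda_n}(t) = \eta(t/\lambda_n)\eta((T-t)/\lambda_n)$ and $\eta'$ is supported in $[1,2]$, for all $n$ with $4\lambda_n < T$ the derivative $\eta'_{\lambda_n}$ splits as the sum of a piece $\frac{1}{\lambda_n}\eta'(t/\lambda_n)$ supported in $[\lambda_n, 2\lambda_n]$ and a piece $-\frac{1}{\lambda_n}\eta'((T-t)/\lambda_n)$ supported in $[T-2\lambda_n, T-\lambda_n]$. Setting $g(t) := \int_{\R^d} v(t,x)\varphi(t,x)\,\mathrm{d}x$ and substituting $s = t/\lambda_n$ and $s = (T-t)/\lambda_n$, respectively, yields the clean identity
\begin{equation*}
\int_{Q_T} v\eta'_{\lambda_n} \varphi \, \mathrm{d}t\mathrm{d}x = \int_1^2 \eta'(s) \bigl[g(\lambda_n s) - g(T - \lambda_n s)\bigr]\mathrm{d}s.
\end{equation*}

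The key step is to verify that $g$ extends continuously to $[0,T]$ with $g(0) = g(T) = 0$. By Claim \ref{claim1}, $v : [0,T] \to L^2(\R^d)$ is weakly continuous, while from \eqref{e6.20} both $\varphi$ and $\varphi_t$ lie in $L^2(Q_T)$, so $\varphi \in H^1((0,T); L^2(\R^d)) \hookrightarrow C([0,T]; L^2(\R^d))$. Combined with the uniform bounds $v \in L^\infty(Q_T)$ and $\sup_t |\varphi(t)|_{L^2} < \infty$ (inherited from \eqref{e6.10}), the pairing $t \mapsto \langle v(t), \varphi(t)\rangle_{L^2}$ is continuous on $[0,T]$. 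For the endpoint values, $g(0) = 0$ holds because $u$ and $\bar u$ share the same initial density, which lies in $L^\infty \subset L^2$ thanks to the bound $u(0), \bar u(0) \leq C w_\delta(0)$ from membership in $\mathcal{A}_{\delta,T}$, so $v(0) = 0$ in $L^2$. For $g(T) = 0$, the bounded linear trace operator $H^1((0,T); L^2) \to L^2$ at $t = T$ preserves weak limits; since $\varphi_{\varepsilon_k}(T, \cdot) \equiv 0$, this forces $\varphi(T, \cdot) = 0$ in $L^2$.

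With these facts in place, for each $s \in [1,2]$ the integrand converges pointwise to $\eta'(s)[g(0) - g(T)] = 0$ as $n \to \infty$, and since $g$ is bounded the dominated convergence theorem gives $\int_1^2 \eta'(s)[g(\lambda_n s) - g(T - \lambda_n s)]\,\mathrm{d}s \to 0$. The main technical obstacle is pinning down the endpoint behavior of $\varphi$ — in particular, promoting the weak-$L^2(Q_T)$ limit of $\varphi_{\varepsilon_k}$ to a strongly continuous $L^2$-valued curve and preserving $\varphi(T) = 0$ across the limit — which is what allows the weak continuity of $v$ to be effectively paired with $\varphi$ at the endpoints.
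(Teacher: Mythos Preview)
Your proof is correct and follows essentially the same approach as the paper's: pass to the weak $L^2(Q_T)$ limit in $k$, split $\eta'_{\lambda_n}$ into its two boundary-layer pieces, change variables, and use the weak $L^2$-continuity of $v$ (Claim~\ref{claim1}) together with $\varphi\in C([0,T];L^2)$ and the endpoint values $v(0)=0$, $\varphi(T)=0$ to conclude via dominated convergence. Your treatment is in fact more explicit than the paper's about why $\varphi(T)=0$ (trace operator preserved under weak $H^1((0,T);L^2)$ convergence) and why $v(0)=0$ in $L^2$ (via the $\mathcal{A}_{\delta,T}$ bound); the paper merely cites \eqref{e6.10} and Claim~\ref{claim1} for these facts. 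One trivial remark: your justification that $v\eta'_{\lambda_n}\in L^2(Q_T)$ should invoke $v\in L^1\cap L^\infty(Q_T)$ (hence $L^2$), not just $v\in L^\infty$.
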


\noindent{\it Proof of Claim {\rm\ref{claim2}}.} We first note that by \eqref{e6.20} for every $n\in\mathbb{N}$
$$\lim_{k\to\infty}\int_{Q_T}  v\eta'_{\lambda_n}\varphi_{\varepsilon_k}\,\mathrm{d}t\mathrm{d}x
=\int_{Q_T}  v\eta'_{\lambda_n}\varphi\,\mathrm{d}t\mathrm{d}x,$$
and that $\varphi\in C([0,T];L^2)$. Furthermore,  by \eqref{e6.10}, $\varphi(T,\cdot)=0$. Then, for every $\lambda\in(0,1)$,
$$\begin{array}{lcl}
	\int_{Q_T}  v\eta'_\lambda\varphi\,\mathrm{d}t\mathrm{d}x
	&=&\frac1\lambda\int^{2\lambda}_\lambda\eta'
	\left(\frac t\lambda\right)
	\int_{\R^d}  v(t,x)\varphi(t,x)\mathrm{d}t\mathrm{d}x
	+\frac1\lambda\int^{T-\lambda}_{T-2\lambda}\eta'\left(\frac{T-t}\lambda\right)\int_{\R^d}  v(t,x)\varphi(t,x)\mathrm{d}x\smallskip\\
	&=&\int^2_1\eta'(\tau)d\tau\int_{\R^d}  v(\lambda\tau,x)\varphi(\lambda\tau,x)\mathrm{d}x
	+\int^2_1\eta'(\tau)d\tau\int_{\R^d}   v(T-\lambda\tau,x)\varphi(T-\lambda\tau,x)\mathrm{d}x,
\end{array}$$where, as $\lambda\to0$,  both terms converge to zero by Claim \ref{claim1}  and Lebesgue's dominated convergence theorem.\hfill$\Box$ 

	\begin{claim}\label{claim3} There is $\alpha\in(0,1)$ such that for every $\lambda\in(0,1)$ there exists $C_\lambda\in(0,\infty)$ such that
	\begin{equation}\label{e6.22}
		\int_{Q_T}\varepsilon|\Delta\varphi^\lambda_\varepsilon|^{1+\alpha}w_\delta\,\mathrm{d}t\mathrm{d}x\le C_\lambda,\ \forall \varepsilon\in(0,1).\end{equation}
	Hence, $\{\varepsilon^{\frac1{1+\alpha}}\,\Delta\varphi^\lambda_\varepsilon\mid\varepsilon\in(0,1)\}$ is equi-integrable, hence weakly relatively compact in $L^{1}(Q_T;w_\delta\,\mathrm{d}t\mathrm{d}x)$. 
	Therefore, selecting another subsequence  if necessary, for every $n\in\mathbb{N}$ as $k\to\infty$, we find
	\begin{equation}\label{e6.23}
		\varepsilon_k\Delta\varphi^{\lambda_n}_{\varepsilon_k}\to0\mbox{ weakly both in }L^1(Q_T;w_\delta\,\mathrm{d}t\mathrm{d}x)\mbox{ and }L^1(Q_T;h w_\delta\,\mathrm{d}t\mathrm{d}x),
	\end{equation}
where $h$ is as in the beginning of the proof.
\end{claim}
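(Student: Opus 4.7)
The plan is to bound $\varepsilon|\Delta\varphi^\lambda_\varepsilon|^{1+\alpha}$ pointwise via the equation \eqref{e6.15} and then integrate against $w_\delta$ using H\"older's inequality, invoking the uniform bounds \eqref{e6.9}, \eqref{e6.18}, and \eqref{e6.19}. The key observation is the elementary pointwise inequality
\begin{equation*}
\frac{\varepsilon}{(\vrho_\delta+\varepsilon)^{1+\alpha}} \le \vrho_\delta^{-\alpha}, \quad \alpha\in(0,1),
\end{equation*}
valid wherever $\vrho_\delta>0$, which follows from $(\vrho_\delta+\varepsilon)^{1+\alpha} = (\vrho_\delta+\varepsilon)^{\alpha}(\vrho_\delta+\varepsilon) \ge \vrho_\delta^{\alpha}\cdot\varepsilon$. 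Solving \eqref{e6.15} for $\Delta \varphi^\lambda_\varepsilon = (\vrho_\delta+\varepsilon)^{-1}[f\eta_\lambda + \eta'_\lambda \varphi_\varepsilon - (\varphi^\lambda_\varepsilon)_t - \nabla\vrho_\delta \cdot \nabla\varphi^\lambda_\varepsilon]$ and raising to the $(1+\alpha)$-th power yields, $\mathrm{d}t\mathrm{d}x$-a.e.\ on $Q_T$,
\begin{equation*}
\varepsilon |\Delta \varphi^\lambda_\varepsilon|^{1+\alpha} w_\delta \lesssim \vrho_\delta^{-\alpha} w_\delta \Bigl(|f\eta_\lambda|^{1+\alpha} + |\eta'_\lambda \varphi_\varepsilon|^{1+\alpha} + |(\varphi^\lambda_\varepsilon)_t|^{1+\alpha} + |\nabla\vrho_\delta|^{1+\alpha}|\nabla \varphi^\lambda_\varepsilon|^{1+\alpha}\Bigr).
\end{equation*}

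The first two terms are bounded via \eqref{e6.9} and the support properties of $f,\eta'_\lambda$, reducing their contribution to $\int_{Q_T} w_\delta \vrho_\delta^{-\alpha}\,\mathrm{d}t\mathrm{d}x<\infty$. The third is handled by H\"older with conjugate exponents $(2/(1+\alpha), 2/(1-\alpha))$ and \eqref{e6.19}, leaving $\int_{Q_T} (w_\delta\vrho_\delta^{-\alpha})^{2/(1-\alpha)}\,\mathrm{d}t\mathrm{d}x<\infty$ to be verified. For the delicate fourth term, one factors $|\nabla \varphi^\lambda_\varepsilon|^{1+\alpha} = \vrho_\delta^{-(1+\alpha)/2}(\vrho_\delta^{1/2}|\nabla \varphi^\lambda_\varepsilon|)^{1+\alpha}$ and applies the same H\"older, invoking \eqref{e6.18} for the factor $\bigl(\int\vrho_\delta |\nabla \varphi^\lambda_\varepsilon|^2\bigr)^{(1+\alpha)/2}$ and reducing matters to finiteness of $\int_{Q_T} |\nabla\vrho_\delta|^{2(1+\alpha)/(1-\alpha)} w_\delta^{2/(1-\alpha)} \vrho_\delta^{-(1+3\alpha)/(1-\alpha)}\,\mathrm{d}t\mathrm{d}x$. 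Plugging in the explicit forms $\vrho_\delta \cong f_+ |x|^{(p-2)/(p-1)}$, $w_\delta \cong f_+^\gamma$, and $|\nabla \vrho_\delta| \lesssim f_+|x|^{-1/(p-1)} + |x|$, together with the hypotheses $\gamma>0$ and $p>d/(d-1)$ (which yields $d(p-1)>p$), all three weight integrals are finite for sufficiently small $\alpha\in(0,\gamma)\cap(0,1)$, establishing \eqref{e6.22}. The main technical obstacle is balancing integrability near the two singular loci of $\vrho_\delta$: the sphere $\{|x|=R_\delta(t)\}$ (where $f_+$ vanishes) and the origin $\{x=0\}$ (where the factor $|x|^{(p-2)/(p-1)}$ vanishes for $p>2$); the constraints are met precisely under the hypotheses of Theorem~\ref{restr-lin-u.theorem}(i).

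Once \eqref{e6.22} is established, the family $\{\varepsilon^{1/(1+\alpha)}\Delta \varphi^\lambda_\varepsilon\}$ is bounded in the reflexive space $L^{1+\alpha}(Q_T;w_\delta\,\mathrm{d}t\mathrm{d}x)$, hence equi-integrable in $L^1(Q_T;w_\delta\,\mathrm{d}t\mathrm{d}x)$ by de la Vall\'ee--Poussin. Writing
\begin{equation*}
\varepsilon_k \Delta\varphi^{\lambda_n}_{\varepsilon_k} = \varepsilon_k^{\alpha/(1+\alpha)}\cdot\bigl(\varepsilon_k^{1/(1+\alpha)}\Delta \varphi^{\lambda_n}_{\varepsilon_k}\bigr),
\end{equation*}
the first factor vanishes as $k\to\infty$ while the second stays bounded in $L^{1+\alpha}(Q_T;w_\delta\,\mathrm{d}t\mathrm{d}x)$, so $\varepsilon_k\Delta\varphi^{\lambda_n}_{\varepsilon_k}\to 0$ strongly in $L^{1+\alpha}(Q_T;w_\delta\,\mathrm{d}t\mathrm{d}x)$, a fortiori weakly in $L^1(Q_T;w_\delta\,\mathrm{d}t\mathrm{d}x)$. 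The weak convergence in $L^1(Q_T;hw_\delta\,\mathrm{d}t\mathrm{d}x)$ follows immediately since $h\in L^\infty(Q_T)$: any test function $\psi\in L^\infty(Q_T)$ against the measure $hw_\delta\,\mathrm{d}t\mathrm{d}x$ corresponds to the test function $h\psi\in L^\infty(Q_T)$ against $w_\delta\,\mathrm{d}t\mathrm{d}x$, yielding the desired conclusion.
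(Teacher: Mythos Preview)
Your argument is correct and runs parallel to the paper's, landing on the same two weight integrals $\int w_\delta^{2/(1-\alpha)}\vrho_\delta^{-2\alpha/(1-\alpha)}$ and $\int |\nabla\vrho_\delta|^{2(1+\alpha)/(1-\alpha)}w_\delta^{2/(1-\alpha)}\vrho_\delta^{-(1+3\alpha)/(1-\alpha)}$. The paper instead multiplies \eqref{e6.15} by $\mathrm{sign}(\Delta\varphi^\lambda_\varepsilon)|\Delta\varphi^\lambda_\varepsilon|^\alpha w_\delta$, integrates, and uses Young's inequality with a small parameter $r$ to reabsorb the terms $r\int\vrho_\delta|\Delta\varphi^\lambda_\varepsilon|^{1+\alpha}w_\delta$ into the left-hand side; your pointwise bound $\varepsilon(\vrho_\delta+\varepsilon)^{-(1+\alpha)}\le\vrho_\delta^{-\alpha}$ sidesteps this absorption entirely. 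For the gradient term the paper then applies a further Young inequality to split off a pure factor $\int|\nabla\vrho_\delta|^s$ with an auxiliary exponent $s\in\bigl(2d(p-1)/p,\,d(p-1)\bigr)$, and it is the requirement $s>1/\gamma$ (needed for \eqref{eqtg}) that forces the upper bound $q(p-1)<1+d(p-1)^2$ in case~(i). Your direct verification of the combined integral, by contrast, needs only $\alpha<\gamma$ near the sphere and $p>d/(d-1)$ near the origin, so it appears to cover the full range $q(p-1)>1$ for $p>2$ already within the analytic argument. Your handling of \eqref{e6.23} is also cleaner: the factorization $\varepsilon_k\Delta\varphi^{\lambda_n}_{\varepsilon_k}=\varepsilon_k^{\alpha/(1+\alpha)}\cdot\varepsilon_k^{1/(1+\alpha)}\Delta\varphi^{\lambda_n}_{\varepsilon_k}$ gives strong convergence to zero in $L^{1+\alpha}(Q_T;w_\delta\,\mathrm{d}t\mathrm{d}x)$, whereas the paper invokes de~la~Vall\'ee--Poussin and a subsequence extraction for merely weak $L^1$ convergence.
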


	\noindent{\it Proof of Claim} \ref{claim3}. By the de la   Vall\'ee Poussin theorem and a diagonal argument, \eqref{e6.23} follows from \eqref{e6.22},   so  we only have to prove \eqref{e6.22}. To this purpose, fix
	\begin{equation}\label{e6.24}
		s\in \bigg (\frac{2d(p-1)}{p},d(p-1)\bigg)
	\end{equation}
such that 
\begin{equation}
s > \frac 1 \gamma = \frac{q(p-1)-1}{p-1}
\end{equation}
and
\begin{equation}\label{eeq}
	s> \frac{2d(p-1)}{d(p-1)-(p-2)}
\end{equation}
This is possible since $q < d(p-1) + \frac 1 {p-1}$ implies $\frac 1 \gamma < d(p-1)$ and since $d\geq 2, p>2$, respectively.
	We also note that the interval in \eqref{e6.24} is not empty and that its left boundary point is strictly bigger than $2$, since $d\ge2$, $p>2$. Then,
	$$\frac{p-2}{p-1}\ \frac s{s-2}<d,$$and so we may choose $\alpha\in(0,\frac13)$ such that
	\begin{equation}\label{e6.25}
		0<\frac{p-2}{p-1}\ \frac{(3\alpha+1)s}{s(1-\alpha)-2(\alpha+1)}<d\end{equation}
	and also such that
	\begin{equation}\label{e6.26}
		\alpha<     \frac{d(p-1)-p}{d(p-1)+p} \quad  \bigg( <    \frac{d(p-1)}{(d+2)(p-2)+d}\bigg)
	\end{equation}
as well as
\begin{equation}\label{ghz}
	\alpha < \gamma,
\end{equation}
where $\gamma>0$ is defined in Definition \ref{def:Barenblatt-sol},
and $\alpha$ sufficiently small such that
\begin{equation}\label{eqtg}
	2s \gamma - (3\alpha + 1)s > 2(\alpha +1) - s(1-\alpha).
\end{equation}
The final inequality is true for $\alpha>0$ sufficiently small due to $s>\frac 1 \gamma$.
	Then, we multiply \eqref{e6.15}  by $sign\ \Delta\varphi^\lambda_\varepsilon|\Delta\varphi^\lambda_\varepsilon|^\alpha w_\delta$ and integrate over  $Q_T$ to obtain after rearranging
	\begin{align}\label{e6.27}
	\notag	\int_{Q_T}(\vrho_\delta+\varepsilon)|\Delta\varphi^\lambda_\varepsilon|^{1+\alpha}w_\delta\,\mathrm{d}t\mathrm{d}x
		\le
		\int_{Q_T}|\nabla\vrho_\delta|\,
		|\nabla\varphi^\lambda_\varepsilon|\,|\Delta\varphi^\lambda_\varepsilon|^\alpha\,w_\delta\,\mathrm{d}t\mathrm{d}x\vspace*{1,5mm}\\
		\qquad+
		\int_{Q_T}|(\varphi^\lambda_\varepsilon)_t|\,
		|\Delta\varphi^\lambda_\varepsilon|^\alpha\,w_\delta\,\mathrm{d}t\mathrm{d}x
		+\int_{Q_T}|f\eta_\lambda+\eta'_\lambda\varphi_\varepsilon|\,
		|\Delta\varphi^\lambda_\varepsilon|^\alpha\, w_\delta\,\mathrm{d}t\mathrm{d}x.
	\end{align}
	We now show, using \eqref{e6.16}, that the right hand side of \eqref{e6.27} is finite, hence so is its left hand side. Indeed, by Young inequality, for any $r \in (0,1)$ and a large enough constant $C_r>1$ (both independent of $\varepsilon$ and $\lambda$), the second term on the right hand side  of \eqref{e6.27} can be estimated by
	\begin{equation}\label{e6.28}
		|(\varphi^\lambda_\varepsilon)_t|^2_{L^2(Q_T)}+r\int_{Q_T}|\Delta\varphi^\lambda_\varepsilon|^{1+\alpha}\vrho_\delta\,w_\delta\,\mathrm{d}t\mathrm{d}x
		+C_r\int_{Q_T}w^{\frac2{1-\alpha}}_\delta\,\vrho^{-\frac{2\alpha}{1-\alpha}}_\delta\,\mathrm{d}t\mathrm{d}x.
	\end{equation}
Likewise, we can estimate the last term in \eqref{e6.27} by
	\begin{equation}\label{e6.29}
		|f\eta_\lambda+\eta'_\lambda\varphi_\varepsilon|^2_{L^2(Q_T)}
		+r\int_{Q_T}|\Delta\varphi^\lambda_\varepsilon|^{1+\alpha}\,\vrho_\delta\,w_\delta\,\mathrm{d}t\mathrm{d}x
		+C_r\int_{Q_T}w_\delta^{\frac2{1-\alpha}}\,\vrho^{-\frac{2\alpha}{1-\alpha}}_\delta\,\mathrm{d}t\mathrm{d}x.
	\end{equation}
We note that, for $\alpha\in(0,\frac13)$ satisfying \eqref{e6.26} and \eqref{ghz}, the definition of $\omega_\delta$ and $\vrho_\delta$ implies that the last term
	in \eqref{e6.28} and \eqref{e6.29} are finite. Furthermore, by \eqref{e6.19} the respective first terms in \eqref{e6.28}, \eqref{e6.29} are uniformly bounded in $\varepsilon\in(0,1)$. Finally, for any $r \in (0,1)$ and $C_r>1$ sufficiently large, both independent of $\varepsilon$ and $\lambda$, the first term on the right hand side  of \eqref{e6.27}  can be estimated by
	$$r\int_{Q_T}|\Delta\varphi^\lambda_\varepsilon|^{\alpha+1}\vrho_\delta\,w_\delta\,\mathrm{d}t\mathrm{d}x+
	C_r\int_{Q_T}\frac{|\nabla\vrho_\delta|^{\alpha+1}}{\vrho^\alpha_\delta}|\nabla\varphi^\lambda_\varepsilon|^{\alpha+1}\,w_\delta\,\mathrm{d}x,$$
where the second integral is up to a constant bounded by
	$$\int_{Q_T}|\nabla\vrho_\delta|^{\frac{2(\alpha+1)}{1-\alpha}}\,\vrho^{-\frac{3\alpha+1}{1-\alpha}}_\delta\,w^{\frac2{1-\alpha}}_\delta\,\mathrm{d}t\mathrm{d}x+\int_{Q_T}|\nabla\varphi^\lambda_\varepsilon|^2\vrho_\delta\,\mathrm{d}t\mathrm{d}x$$
	of which the second integral is uniformly bounded in $\lambda,\varepsilon\in(0,1)$ by \eqref{e6.18} and, by \eqref{e6.26} and Young inequality, the first integral is up to a constant bounded by
	$$\int_{Q_T}|\nabla\vrho_\delta|^s\,\mathrm{d}t\mathrm{d}x+\int_{Q_T}w^{\frac{2s}{s(1-\alpha)-2(\alpha+1)}}_\delta\,
	\vrho^{-\frac{(3\alpha+1)s}{s(1-\alpha)-2(\alpha+1)}}_\delta\,\mathrm{d}t\mathrm{d}x,$$
	where by \eqref{e6.24}, \eqref{e6.25} and \eqref{eeq} as well as \eqref{eqtg}   
	this quantity is finite. Choosing $r$ small enough, we hence get from \eqref{e6.27} and the nonnegativity of $\vrho_\delta$ the estimate \eqref{e6.22}, and Claim \ref{claim3} follows.\hfill$\Box$
	
	Finally, we conclude the proof as follows.
We have by Claim \ref{claim2} and \eqref{e6.15}
\begin{align*}
\int_{Q_T}fv\,\mathrm{d}t\mathrm{d}x&=\lim_{n\to\infty}\ \lim_{k\to\infty}
\int_{Q_T}(f\eta_{\lambda_n}+
\eta'_{\lambda_n}\varphi_{\varepsilon_k})v\,\mathrm{d}t\mathrm{d}x
\\
&=\lim_{n\to\infty}\ \lim_{k\to\infty}\int_{Q_T}
(\varphi^{\lambda_n}_{\varepsilon_k})_t
+{\rm div}((\vrho_\delta+\varepsilon_k)\nabla\varphi^{\lambda_n}_{\varepsilon_k}))v\,\mathrm{d}t\mathrm{d}x,
\end{align*}
which, taking into account that $v=hw_\delta$,  by Claim \ref{claim3} is equal to
$$\lim_{n\to\infty}\ \lim_{k\to\infty}\int_{Q_T}
( \varphi^{\lambda_n}_{\varepsilon_k})_t
+{\rm div}(\vrho_\delta\nabla\varphi^{\lambda_n}_{\varepsilon_k}))v\,\mathrm{d}t\mathrm{d}x,$$
which in turn equals zero by \eqref{e6.17}. Hence, $\int_{Q_T}vf\,\mathrm{d}t\mathrm{d}x=0$ and so, because $f\in C^\infty_c(Q_T)$ was arbitrary, the assertion follows.
\end{proof}
\subsection{Probabilistic proof of Theorem \ref{restr-lin-u.theorem} 
(ii)}\label{subsect:restr-lin-u-probabilistic}
We start with the following claim.\\

\noindent\textbf{Claim:} Let $T>0$. Pathwise uniqueness holds for \eqref{StrongSolution.eq:DDSDE-lin} on $[0,T]$ among probabilistically weak solutions $(X,W)$ with $\law{X} \in P^{\Ascr_{\delta,T}}$, where we set
\begin{align*}
    P^{\Ascr_{\delta,T}}:=\{Q \in \Pscr(C([0,T];\rd)) : (Q\circ \pi_t^{-1})_{t\in [0,T]} \in \Ascr_{\delta,T}\}.
\end{align*}
\begin{proof}[Proof of Claim.]
    Let $(X,W), (Y,W)$ be two probabilistically weak solutions to \eqref{StrongSolution.eq:DDSDE-lin} (on $[0,T]$) on the same stochastic basis, with the same Brownian motion $W$, $X(0)=Y(0)$ a.s., and $\law{X}, \law{Y}\in P^{\Ascr_{\delta,T}}$ with $\law{X(t)}(\mathrm{d}x)=u_X(t,x)\mathrm{d}x$ and $\law{Y(t)}(\mathrm{d}x)=u_Y(t,x)\mathrm{d}x$, for all $t\in [0,T]$.
    Imitating the proof of Theorem \ref{StrongSolution.theorem:pathwiseU}, we may similarly estimate for all $n\in\N$ and $\varepsilon>0$
    \begin{align*}
		\E &\left[\ln\left(1+ \frac{|X(t\wedge \tau_n)-Y(t\wedge \tau_n)|^2}{\varepsilon^2}\right)\right] \notag\\
        \lesssim_\delta
        &\int_0^T \int_{B_n(0)} \M_{}|D_x \nabla\vrho_\delta(s,\cdot)|(x)u_X(s,x)\mathrm{d}x\mathrm{d}s
        +
        \int_0^T \int_{B_n(0)} \M_{}|D_x \nabla\vrho_\delta(s,\cdot)|(x)u_Y(s,x)\mathrm{d}x\mathrm{d}s \\
        &+
        \int_0^T \int_\rd \left(\M|\nabla\vrho_\delta^{\frac{1+\gamma}{2}}(s,\cdot)|(x)\right)^2 w_\delta(s,x)^{-1}|x|^{-\frac{p-2}{q(p-1)-1}}u_X(s,x)\mathrm{d}x\mathrm{d}s\\
        &+
        \int_0^T \int_\rd \left(\M|\nabla\vrho_\delta^{\frac{1+\gamma}{2}}(s,\cdot)|(x)\right)^2 w_\delta(s,x)^{-1}|x|^{-\frac{p-2}{q(p-1)-1}}u_Y(s,x)\mathrm{d}x\mathrm{d}s\\
        &\leq
       2\int_0^T \int_{B_n(0)} \M_{}|D_x \nabla\vrho_\delta(s,\cdot)|(x)w_\delta(s,x)\mathrm{d}x\mathrm{d}s
        +
        2\int_0^T \int_\rd \left(\M|\nabla\vrho_\delta^{\frac{1+\gamma}{2}}(s,\cdot)|(x)\right)^2 |x|^{-\frac{p-2}{q(p-1)-1}}\mathrm{d}x\mathrm{d}s.
    \end{align*}
    By the same arguments as in Theorem \ref{StrongSolution.theorem:pathwiseU}, we conclude the finiteness of the right-hand side of the above chain of inequalities, and, finally, that $X\equiv Y$ a.s., by letting $\varepsilon\downarrow 0$. This finishes the proof of the claim.
\end{proof}
\noindent
\textbf{Proof of Theorem \ref{restr-lin-u.theorem} under assumption (ii).}
Let $u, \bar{u}\in \Ascr_{\delta,T}$ be distributional solutions to \eqref{eq:Leibenson-FPE-lin} in the sense of Definition \ref{d5.7}, both with initial condition $w_\delta(0,x)\mathrm{d}x$.
By Theorem \ref{thm:SP-princ-general}, there exist probabilistically weak solutions $(X,W^1), (Y,W^2)$ to \eqref{StrongSolution.eq:DDSDE-lin} such that $\law{X(t)}(\mathrm{d}x)=u(t,x)\mathrm{d}x$ and $\law{Y(t)}(\mathrm{d}x)=\bar{u}(t,x)\mathrm{d}x$, for all $t\in [0,T]$, respectively.
In particular, $\law{X},\law{Y}\in P^{\Ascr_{\delta,T}}$. Since the previous claim implies weak uniqueness among probabilistically weak solutions whose laws are elements in $P^{\Ascr_{\delta,T}}$ (cf. \cite[Proof of Theorem 1.3.1]{Grube-thesis}), we obtain $\law{X}=\law{Y}$. It follows that $u(t,\cdot)=\bar{u}(t,\cdot)\ \mathrm{d}x\text{-a.s.}$, for all $t\in [0,T]$.
This completes the proof.\qed
\appendix
\section{Proof of Remark \ref{r6.2}}\label{App-A}
    Clearly, $C^\infty_c([0,T)\times\R^d)$ is dense in $C^{\infty}_{c,b}([0,T)\times\R^d)$ (defined in Remark \ref{r6.2}) with respect to uniform convergence of all partial derivatives (including zero derivatives) on $[0,T)\times\R^d$. Let $\varphi\in C^\infty_{c,b}([0,T)\times\R^d)$ and $(\varphi_k)_{k\in\mathbb{N}}\subseteq C^\infty_c([0,T)\times\R^d)$, such that $\varphi_k\to\varphi$ as $k\to\infty$  in the above sense. Then, in particular,
	\begin{itemize}
		\item[(i)] $\varphi_k(0)\to\varphi(0)$ uniformly on $\R^d$.
		\item[(ii)] $(\varphi_k)_t\to\varphi_t,\ \nabla\varphi_k\to\nabla\varphi,\ \Delta\varphi_k\to\Delta\varphi$ uniformly on $[0,T)\times\R^d.$
	\end{itemize}Since $u\in L^1(Q_T)\cap L^\infty(Q_T)$, $\vrho_\delta\in L^\infty(Q_T)$, $\nabla\vrho_\delta\in L^1(Q_T)$, (i)--(ii) yield
	$$0=\lim_{k}
	\left(\int_{Q_T}\!\!\!u((\varphi_k)_t+{\rm div}(\vrho_\delta\nabla\varphi_k))\mathrm{d}t\mathrm{d}x+\int_{\R^d}\!\!\!\varphi_k(0) \mathrm{d}\nu\right)
	=\int_{Q_T}\!\!\!u(\varphi_t+{\rm div}(\vrho_\delta\nabla\varphi))\mathrm{d}t\mathrm{d}x+\int_{\R^d}\!\!\!\varphi(0)  \mathrm{d}\nu,
	$$ which proves  Remark \ref{r6.2}.
	
	\section{Details for \texorpdfstring{\eqref{e6.17}}\ }\label{App-B}
We know $\varphi^\lambda_\varepsilon\in W^{2,1}_2(Q_T)$. It is standard that $C^\infty_{c,b}([0,T)\times\R^d)$ (as introduced in Remark \ref{r6.2}) is dense in $W^{2,1}_2(Q_T)\cap\{g\in W^{2,1}_2(Q_T):g(T)=0\}$ with respect to  the usual norm
$$(|g|^{2,1}_2)^2:=|g|^2_{L^2(Q_T)}+|g_t|^2_{L^2(Q_T)}+|\nabla g|^2_{L^2(Q_T)}+|\Delta g|^2_{L^2(Q_T)}.$$
Now, let $(\varphi_k)_{k\in\mathbb{N}}\subseteq C^\infty_{c,b}([0,T)\times\R^d)$ such that  $\varphi_k\stackrel k\to\varphi^\lambda_\varepsilon$ with respect to  this norm. Since $\nu$ is the zero measure and since $v\in L^1\cap L^\infty(Q_T)$, $\vrho_\delta\in L^\infty(Q_T)$, $\nabla\vrho_\delta\in L^2(Q_T),$ (cf. Lemma \ref{l5.9}) and ${\rm div}(\vrho_\delta\nabla\varphi^\lambda_\varepsilon)=\vrho_\delta\Delta\varphi^\lambda_\varepsilon+\nabla\vrho_\delta\cdot\nabla\varphi^\lambda_\varepsilon,$ we deduce
$$0=\lim_k\int_{Q_T}v((\varphi_k)_t+{\rm div}(\vrho_\delta\nabla\varphi_k))\mathrm{d}t\mathrm{d}x=\int_{Q_T}v((\varphi^\lambda_\varepsilon)_t+{\rm div}(\vrho_\delta\nabla\varphi^\lambda_\varepsilon))\mathrm{d}t\mathrm{d}x,$$which proves \eqref{e6.17}.

 \section{Linear and nonlinear Fokker--Planck equations} 
Here we recall the definition of linear and nonlinear Fokker--Planck equations and their standard notion of distributional solution. The linear FPE associated with Borel measurable coefficients $a_{ij},b_i:(0,\infty)\times\R^d\to\R,$ $1\le i,j\le d$, is the second-order parabolic differential equation for measures
\begin{align}\label{eD.1} 
	\partial_t\mu_t=\partial_{ij}(a_{ij}(t,x)\mu_t)-\partial_i(b_i(t,x)\mu_t),\ (t,x)\in(0,\infty)\times{\R^d}.
\end{align}
Usually, an initial condition $\mu_0=\nu\in \Mscr^+_b$ is imposed.

\begin{dfn}\label{dD.1} 
	A {\it (distributional) solution to \eqref{eD.1} with initial condition $\nu \in \mathcal{M}^+_b$}  is a weakly continuous curve $(\mu_t)_{t\ge0}$ of signed locally finite Borel measures on $\R^d$ such that
	\begin{align*}
		\int^T_0\int_{\R^d}|a_{ij}(t,x)|+|b_i(t,x)|\mathrm{d}\mu_t(x)\mathrm{d}t<\infty,\ \forall T>0, 
	\end{align*}	
	and
	$$\int_{\R^d}\psi \mathrm{d}\mu_t(x)-\int_{\R^d}\psi \mathrm{d}\nu=\int^t_0\int_{\R^d} a_{ij}(s,x)\,\partial_{ij} \psi(x)+b_i(s,x)\partial_i\psi(x)\mathrm{d}\mu_s(x)\mathrm{d}s,\ \forall t\ge0,$$
	for all $\psi\in C^2_c({\R^d})$. A solution  is called {\it probability solution}, if $\nu$ and each $\mu_t,$ $t\ge0$, are probability measures. Instead of the initial time $0$, one may consider an initial time $s>0$. It is obvious how to generalize the definition in this regard. In this case, the initial condition is the pair $(s,\nu)$ and the solution is defined on $[s,\infty)$.
\end{dfn} 

For the nonlinear FPE \eqref{e1.3}, the notion of distributional solution is similar.

\begin{dfn}\label{dD.2}
	A {\it (distributional) solution to \eqref{e1.3} with initial condition $\nu$} is a weakly continuous curve $(\mu_t)_{t\ge0}$ of signed locally finite Borel measures on $\R^d$ such that $(t,x)\mapsto a_{ij}(t,x,\mu_t)$ and $(t,x)\mapsto b_i(t,x,\mu_t)$ are product Borel measurable on $(0,\infty)\times \R^d$,
	\begin{align*}
		\int^T_0\int_{\R^d}|a_{ij}(t,x,\mu_t)|+|b_i(t,x,\mu_t)|\mathrm{d}\mu_t(x)\mathrm{d}t<\infty,\ \forall T>0, 
	\end{align*}	
	and
	$$\int_{\R^d}\psi \mathrm{d}\mu_t(x)-\int_{\R^d}\psi \mathrm{d}\nu=\int^t_0\int_{\R^d} a_{ij}(s,x,\mu_s)\,\partial_{ij} \psi(x)+b_i(s,x,\mu_s)\partial_i\psi(x)\mathrm{d}\mu_s(x)\mathrm{d}s,\ \forall t\ge0,$$
	for all $\psi\in C^2_c(\R^d)$. The notion of probability solution and the extension to initial times $s>0$ is as in  the linear case.
\end{dfn}

\section{On the Hardy--Littlewood maximal operators}\label{App-D}
Let $d,n\in \N$, and denote by $\lambda^d$ the $d$-dimensional Lebesgue measure.
\subsection{Definitions and basic properties}
 We recall the definition and properties of (local) Hardy--Littlewood maximal operators on the set of (extended) signed Borel measures on $\rd$, where the latter are denoted as $M_{\mathrm{loc}}(\rd;\R^n)$ in the following. Furthermore, if $\mu \in M_{\mathrm{loc}}(\rd;\R^n)$, then $|\mu|$ denotes its variation measure.

The following definition and lemmatas are essentially taken from \cite{crippa2008estimates}.
\begin{dfn}[{\cite[Definition A.1]{crippa2008estimates}}]\label{appendix.definition.localmaximalfunction}
	Let $\mu \in M_{\mathrm{loc}}(\rd;\R^{n})$ and $R\in (0,\infty]$. We define the (local) maximal function as
	\begin{align*}
		\M_{R}|\mu|(x) := \sup_{0<r<R}\frac{1}{\lambda^d(B_r(0))}\int_{B_r(x)}|\mu|(\mathrm{d}x), x\in\rd,
	\end{align*}
	In the case $R=\infty$ we set $\M:=\M_\infty$.
	Furthermore, if $\mu$ is of the form $\mu(\mathrm{d}x)=f(x)\dx$, where $f\in L^1_{\mathrm{loc}}(\rd;\R^n)$, then we write $\M_{R}|f|:=\M_{R}|\mu|$.
\end{dfn}
\begin{lem}[{\cite[Lemma 3.1]{champagnat2018}}]\label{appendix.lemma.lipschitztypeestimate}
	Let $f\in BV(\rd;\R^{n})$. Then there exists a constant $C_d>0$ depending only on the dimension $d$, and a set $N \in \mathcal{B}(\rd)$ with $\lambda^d(N)=0$ such that for all $x,y \in N^\complement$
	\begin{align}\label{appendix.lemma.lipschitztypeestimate:1}
		|f(x)-f(y)| \leq C_d\left(\M|D f|(x) + \M|D f|(y)\right)|x-y|,
	\end{align}
	where $Df=(\partial_{x_j}f^i)_{1\leq i \leq n, 1\leq j \leq d}$ denotes the matrix of all Schwartz distributional derivatives of $f$'s component-functions in form of finite signed Borel measures on $\rd$.
\end{lem}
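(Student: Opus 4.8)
The plan is to follow the classical argument of Crippa--De Lellis \cite{crippa2008estimates}, in the form extended to $BV$ functions by Champagnat--Jabin \cite{champagnat2018}, which rests on a dyadic telescoping of averages over balls together with the $BV$ Poincar\'e inequality. Throughout, $f$ is taken to be its precise representative, i.e.\ $f(x):=\lim_{r\downarrow 0}\frac{1}{\lambda^d(B_r(x))}\int_{B_r(x)}f\,\dx$ wherever this limit exists, and I abbreviate $\langle f\rangle_{B}:=\frac{1}{\lambda^d(B)}\int_{B}f\,\dx$. The first point is to fix the exceptional set: for $f\in BV(\rd;\R^n)$ the approximate discontinuity set $S_f$ (the set of points at which the above limit fails to exist) is $\sigma$-finite with respect to $\mathcal{H}^{d-1}$ (see, e.g., \cite{ambrosio2000BV}), hence $\lambda^d(S_f)=0$; one sets $N:=S_f$. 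It therefore suffices to establish \eqref{appendix.lemma.lipschitztypeestimate:1} for all $x,y\in N^\complement$ with $f$ the precise representative.

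The core estimate I would prove first is that there is a dimensional constant $C_d>0$ with
\begin{equation}\label{plan:telescope}
\Bigl|f(x)-\langle f\rangle_{B_r(x)}\Bigr|\le C_d\,r\,\M|Df|(x)\qquad\text{for all }x\in N^\complement,\ r>0 .
\end{equation}
This follows by telescoping over the balls $B_{2^{-k}r}(x)$, $k\ge 0$: two consecutive averages satisfy $\bigl|\langle f\rangle_{B_{\rho/2}(x)}-\langle f\rangle_{B_\rho(x)}\bigr|\le 2^{d}\,\langle\,|f-\langle f\rangle_{B_\rho(x)}|\,\rangle_{B_\rho(x)}$, and the (vector-valued) $BV$ Poincar\'e inequality bounds the right-hand side by $C_d\,\rho\,\frac{|Df|(B_\rho(x))}{\lambda^d(B_\rho(x))}\le C_d\,\rho\,\M|Df|(x)$; summing the geometric series $\sum_{k\ge 0}2^{-k}r$ and using that $\langle f\rangle_{B_{2^{-k}r}(x)}\to f(x)$ (valid since $x\in N^\complement$) yields \eqref{plan:telescope}.

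For the final step, given $x,y\in N^\complement$ one sets $r:=|x-y|$ and uses the common ball $B:=B_{2r}(x)$, which contains $y$ and satisfies $B_r(y)\subseteq B$. Writing
\begin{equation}\label{plan:split}
|f(x)-f(y)|\le \Bigl|f(x)-\langle f\rangle_{B}\Bigr|+\Bigl|\langle f\rangle_{B}-\langle f\rangle_{B_r(y)}\Bigr|+\Bigl|\langle f\rangle_{B_r(y)}-f(y)\Bigr| ,
\end{equation}
the outer two terms are controlled by \eqref{plan:telescope} (applied with $(x,2r)$ and $(y,r)$ respectively), while the middle term is at most $\frac{\lambda^d(B)}{\lambda^d(B_r(y))}\,\langle\,|f-\langle f\rangle_B|\,\rangle_{B}=2^{d}\,\langle\,|f-\langle f\rangle_B|\,\rangle_{B}\le 2^{d}C_d\,(2r)\,\M|Df|(x)$ by the $BV$ Poincar\'e inequality again, since $B$ is centered at $x$. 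Adding the three bounds and recalling $r=|x-y|$ gives \eqref{appendix.lemma.lipschitztypeestimate:1} with a purely dimensional constant $C_d$.

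The one genuinely delicate point is the reduction carried out at the start: one must invoke the fine structure theory of $BV$ functions to know that $\lambda^d$-a.e.\ point is a point of approximate continuity and that there the averages over shrinking balls converge to the precise representative, so that the pointwise inequality is meaningful off a Lebesgue-null set (in the $W^{1,1}$ case this is merely the statement about Lebesgue points, which is where the original Crippa--De Lellis argument lives). The remaining ingredients --- the $BV$ Poincar\'e inequality and the dyadic telescoping --- are standard, and all constants produced depend only on $d$, as required.
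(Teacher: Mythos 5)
The paper states this lemma purely as a citation of \cite[Lemma 3.1]{champagnat2018} and supplies no proof of its own, so there is nothing internal to compare against. Your argument is the standard Crippa--De Lellis dyadic telescoping of ball averages combined with the $BV$ Poincar\'e inequality, which is exactly the route taken in \cite{champagnat2018} (itself an adaptation of the $W^{1,1}$ argument from \cite{crippa2008estimates}); it is correct. One small technical remark worth tightening: the telescoping step needs $\langle f\rangle_{B_\rho(x)}\to f(x)$ as $\rho\downarrow 0$, i.e.\ that $x$ is a Lebesgue point of the chosen representative. Rather than taking $N=S_f$ and then appealing to $BV$ fine-structure theory to transfer approximate continuity into convergence of ball averages, it is cleaner to let $N$ be the complement of the set of Lebesgue points of $f$, which is already $\lambda^d$-null for any $f\in L^1_{\loc}(\rd;\R^n)$ by the Lebesgue differentiation theorem. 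With that adjustment the argument is airtight and all constants depend only on $d$, as claimed.
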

\begin{rem}\label{appendix.maximalfunction.remark}
Consider the situation of Lemma \ref{appendix.lemma.lipschitztypeestimate}.
Let $\varphi_\varepsilon=\varepsilon^{-d}\varphi(\cdot \slash \varepsilon), \varepsilon>0,$ be a standard Dirac sequence with respect to some nonnegative $\varphi \in C_c^\infty(\rd)\cap L^1(\rd)$ with $\int \varphi(x)\mathrm{d}x =1$.
Let $x,y \in \rd$. If for all $z \in \{x, y\}$ we have 
\begin{align}
    f(z) = \lim_{\varepsilon\to 0} (f\ast \varphi_\varepsilon)(z),
\end{align}
then \eqref{appendix.lemma.lipschitztypeestimate:1} holds for this pair $x,y$. This follows from the proof of \cite[Lemma 3.1]{champagnat2018}.
In particular, if $f \in C(\rd;\R^n)$, then \eqref{appendix.lemma.lipschitztypeestimate:1} holds for \textit{all} $x,y \in \rd$.

Furthermore, it follows from \cite[Proposition 3.92 (b)]{ambrosio2000BV}, that if $x \in \rd$ is an \textit{approximate jump point of} $f$ (in the sense of \cite[Definition 3.67]{ambrosio2000BV}), then $\M|D f|(x)=+\infty$. Then, with the convention $0\cdot \infty = 0$, \eqref{appendix.lemma.lipschitztypeestimate:1} is trivially satisfied with respect to such $x$, and every $y\in\rd$.
\end{rem}
\begin{lem}[{\cite[Lemma A.2]{crippa2008estimates}}]\label{appendix.lemma.boundednesslocalmaximalfunction}
\begin{enumerate}[(i)]
    \item Let $\mu \in \mathcal{M}_{\mathrm{loc}}(\rd;\R^{n})$ and $R\in (0,\infty]$. Then for $\lambda^d$-a.e. $x\in \rd$, $M_R|\mu|(x)<\infty$.
    \item Let $p\in (1,\infty)$. Then there exists a constant $C_{d,p}>0$ such that for all $f\in L^p_{\mathrm{loc}}(\rd;\R^n)$ and all $\rho >0$
	\begin{align*}
		\int_{B_\rho(0)} (\M_R |f|(x))^p\dx  \leq C_{d,p} \int_{B_{\rho+R}(0)}	|f(x)|^p\dx.
	\end{align*}
	For $p=1$, the previous statement does not hold. However, for $p=1$ one has the following weak estimate: There exists a constant $C_d>0$ such that for all $f\in L^1_{\mathrm{loc}}(\rd;\R^n)$
	\begin{align*}
		\lambda^d\left(\left\{x \in B_{\rho}(0) : \M_R|f|(x) >\alpha\right\}\right) \leq \frac{C_d}{\alpha}\int_{B_{\rho+R}(0)}|f(x)|\dx.
	\end{align*}
\end{enumerate}
\end{lem}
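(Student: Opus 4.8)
The plan is to reduce both parts to the classical Hardy--Littlewood maximal theorem on all of $\rd$ via a trivial localization: for $x\in B_\rho(0)$ and $0<r<R$ the averaging ball $B_r(x)$ is contained in $B_{\rho+R}(0)$, so only the restriction of the function (resp.\ measure) to that enlarged ball enters $\M_R$. The global ingredients I would invoke --- each either cited or reproved in a line --- are: \textbf{(a)} the measure-theoretic weak $(1,1)$ bound $\lambda^d(\{\M|\mu|>\alpha\})\le C_d\alpha^{-1}|\mu|(\rd)$ for a \emph{finite} Borel measure $\mu$, which follows from the Vitali $5r$-covering lemma (cover a compact subset of $\{\M|\mu|>\alpha\}$ by balls on which the $|\mu|$-average exceeds $\alpha$, extract a finite pairwise disjoint subfamily whose $5$-fold dilates still cover, and sum); \textbf{(b)} the strong $(p,p)$ bound $\int_{\rd}(\M|f|)^p\,\dx\le C_{d,p}\int_{\rd}|f|^p\,\dx$ for $p\in(1,\infty)$, obtained from (a), the trivial bound $\M|f|\le\|f\|_{L^\infty}$, and the Marcinkiewicz interpolation theorem; and \textbf{(c)} the weak $(1,1)$ bound for $L^1$-functions, namely (a) applied to $\mu=|f|\,\dx$.

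For part (i) it suffices to show $\M_R|\mu|<\infty$ $\lambda^d$-a.e.\ on each $B_N(0)$, $N\in\N$, and take the countable union. Fix $N$ and assume $R<\infty$ (this is the case used below; for $R=\infty$ one additionally needs $\mu$ finite, as in all applications here, since e.g.\ $\M_\infty(|x|\,\dx)\equiv+\infty$). For $x\in B_N(0)$ and $0<r<R$ we have $B_r(x)\subseteq B_{N+R}(0)$, hence $\M_R|\mu|(x)=\M_R\big(\mathbbm{1}_{B_{N+R}(0)}|\mu|\big)(x)\le\M\big(\mathbbm{1}_{B_{N+R}(0)}|\mu|\big)(x)$; since $\mathbbm{1}_{B_{N+R}(0)}|\mu|$ is a finite measure, fact (a) gives $\lambda^d\big(\{x\in B_N(0):\M_R|\mu|(x)>\alpha\}\big)\le C_d\alpha^{-1}|\mu|(B_{N+R}(0))\to0$ as $\alpha\to\infty$, so $\M_R|\mu|<\infty$ $\lambda^d$-a.e.\ on $B_N(0)$.

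For part (ii) the same localization, now with $g:=|f|\,\mathbbm{1}_{B_{\rho+R}(0)}\in L^p(\rd)$ (and the convention $B_{\rho+\infty}(0)=\rd$), gives $\M_R|f|(x)=\M_Rg(x)\le\M g(x)$ for every $x\in B_\rho(0)$. Integrating over $B_\rho(0)\subseteq\rd$ and using fact (b),
\[
\int_{B_\rho(0)}\big(\M_R|f|\big)^p\,\dx\;\le\;\int_{\rd}(\M g)^p\,\dx\;\le\;C_{d,p}\int_{\rd}|g|^p\,\dx\;=\;C_{d,p}\int_{B_{\rho+R}(0)}|f|^p\,\dx,
\]
which is the claim after relabelling the constant. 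For $p=1$, replacing (b) by (c) yields $\lambda^d\big(\{x\in B_\rho(0):\M_R|f|(x)>\alpha\}\big)\le\lambda^d(\{\M g>\alpha\})\le C_d\alpha^{-1}\|g\|_{L^1(\rd)}=C_d\alpha^{-1}\|f\|_{L^1(B_{\rho+R}(0))}$, as asserted; that the strong bound genuinely fails at $p=1$ is seen by taking $f=\mathbbm{1}_{B_\varepsilon(0)}$, $\rho=R=1$, and letting $\varepsilon\downarrow0$ (the left-hand side scales like $\varepsilon^d\log(1/\varepsilon)$, the right-hand side like $\varepsilon^d$).

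There is no real obstacle here: the only care needed is the bookkeeping of radii in the localization and the $R=\infty$ endpoint of part (i) (where local --- not merely $\sigma$-finite --- control of $\mu$ is needed and must be read as an implicit hypothesis, consistent with all uses in this paper, where the relevant measures are finite). All substantive content is outsourced to the Vitali covering lemma and Marcinkiewicz interpolation, both classical; as the excerpt already does, one may simply cite \cite{crippa2008estimates} for the precise formulations.
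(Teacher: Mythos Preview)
The paper does not give its own proof of this lemma; it is merely quoted from \cite[Lemma~A.2]{crippa2008estimates} as a standard fact about the local maximal operator. Your argument is the standard one (localize by the inclusion $B_r(x)\subseteq B_{\rho+R}(0)$ for $x\in B_\rho(0)$, $r<R$, then invoke the global weak $(1,1)$ and strong $(p,p)$ bounds), and it is correct. Your caveat about the $R=\infty$ case in part~(i) is well taken: as stated with $\mu$ merely locally finite, $\M_\infty|\mu|$ can be identically $+\infty$ (take $\mu=\lambda^d$), so finiteness of $\mu$ is indeed an implicit hypothesis there.
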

\subsection{Muckenhoupt weights}\label{App-D.muckenhoupt-weights}
\begin{dfn}[{\cite[p. 194]{stein1993harmonic}}]
    Let $w \in L^1_{\mathrm{loc}}(\rd)$. Let $p,p'\in (1,\infty)$ such that $p^{-1} +(p')^{-1} = 1$. If there exists $C>0$ such that for all balls $B\subset \rd$
    \begin{align}
        \frac{1}{\lambda^d(B)}\int_B w(x)\mathrm{d}x \cdot\left[ \frac{1}{\lambda^d(B)}\int_B w(x)^{-\frac{p'}{p}}\mathrm{d}x \right]^\frac{p}{p'}\leq C<\infty,
    \end{align}
    then $w\in A_p$.
\end{dfn}
\begin{theorem}[{\cite[p. 201, Theorem 1]{stein1993harmonic}}]
    Suppose $1<p<\infty$ and $w\in A_p$. Then there exists a constant $C>0$ such that
    \begin{align}
        \int_\rd (\M|f|(x))^p w(x)\mathrm{d}x \leq C \int_\rd |f(x)|^p w(x)\mathrm{d}x.
    \end{align}
\end{theorem}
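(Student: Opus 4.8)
This is a classical theorem of Muckenhoupt, recorded here for the reader's convenience (above it is used, with $p=2$, for the weight $|\cdot|^{-(p-2)/(q(p-1)-1)}\in A_2$); for a complete proof we refer to \cite{stein1993harmonic}. Should a self-contained argument be wanted, the plan is the following. Replacing $f$ by $|f|$ one may assume $f\ge 0$, and one may further assume $\int_\rd f^p w\,\mathrm{d}x<\infty$, as otherwise there is nothing to prove. Since $w\in A_p$ forces $w^{-p'/p}\in L^1_{\textup{loc}}(\rd)$, where $\tfrac1p+\tfrac1{p'}=1$, H\"older's inequality shows $f\in L^1_{\textup{loc}}(\rd)$, so $\M|f|$ is well defined and $\lambda^d$-a.e.\ finite.

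The first step is the weak-type bound $w\big(\{\M|f|>\alpha\}\big)\lesssim\alpha^{-p}\int_\rd f^p w\,\mathrm{d}x$ for all $\alpha>0$, with constant depending only on $d$, $p$ and $[w]_{A_p}$. To obtain it, fix $\alpha>0$, exhaust $\{\M|f|>\alpha\}$ by compact sets, and apply a covering lemma of Vitali (or Besicovitch) type to produce finitely many balls $(B_j)_j$ of bounded overlap with $\tfrac1{\lambda^d(B_j)}\int_{B_j}f\,\mathrm{d}x>\alpha$ for each $j$. Inserting $w^{1/p}w^{-1/p}$ and applying H\"older's inequality in $B_j$ with exponents $(p,p')$ yields
\[
\alpha^p\,\lambda^d(B_j)\;\leq\;\Big(\int_{B_j}f^p w\,\mathrm{d}x\Big)\Big(\tfrac1{\lambda^d(B_j)}\int_{B_j}w^{-p'/p}\,\mathrm{d}x\Big)^{p/p'},
\]
which, together with $\tfrac1{\lambda^d(B_j)}\int_{B_j}w\,\mathrm{d}x\leq[w]_{A_p}\big(\tfrac1{\lambda^d(B_j)}\int_{B_j}w^{-p'/p}\,\mathrm{d}x\big)^{-p/p'}$, gives $w(B_j)\leq[w]_{A_p}\,\alpha^{-p}\int_{B_j}f^p w\,\mathrm{d}x$. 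Summing over $j$ using bounded overlap, then letting the compacta increase to $\{\M|f|>\alpha\}$, yields the claim.

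The crucial point is then the openness (self-improvement) of the Muckenhoupt class: there is $\varepsilon\in(0,p-1)$, depending only on $d$, $p$ and $[w]_{A_p}$, with $w\in A_{p-\varepsilon}$. Here the plan is to pass to the dual weight $\sigma:=w^{-1/(p-1)}$, note that $w\in A_p$ is equivalent to $\sigma\in A_{p'}$, invoke the reverse H\"older inequality valid for $A_\infty$ weights to obtain $\eta>0$ with $\big(\tfrac1{\lambda^d(B)}\int_B\sigma^{1+\eta}\,\mathrm{d}x\big)^{1/(1+\eta)}\lesssim\tfrac1{\lambda^d(B)}\int_B\sigma\,\mathrm{d}x$ uniformly over balls $B$, and unwind the exponents to read off the $A_{p-\varepsilon}$ condition for $w$. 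Granting this, the weak-type step of the previous paragraph applied with $p-\varepsilon$ in place of $p$ shows that $\M$ is of weak type $(p-\varepsilon,p-\varepsilon)$ on $L^{p-\varepsilon}(\rd;w\,\mathrm{d}x)$, while it is trivially of strong (hence weak) type $(\infty,\infty)$ there since $\M g\leq|g|_{L^\infty(\rd)}$ pointwise; the Marcinkiewicz interpolation theorem, which applies over the $\sigma$-finite measure space $(\rd,w\,\mathrm{d}x)$, then gives boundedness of $\M$ on $L^r(\rd;w\,\mathrm{d}x)$ for every $r\in(p-\varepsilon,\infty)$, in particular for $r=p$.

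The step I expect to be the main obstacle is the openness of $A_p$, that is, the reverse H\"older inequality for Muckenhoupt (equivalently $A_\infty$) weights: it is genuinely non-formal and is proved by an iterated Calder\'on--Zygmund decomposition controlling the level sets of $w$. The covering argument in the weak-type step and the Marcinkiewicz interpolation are, by contrast, routine.
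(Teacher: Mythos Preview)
The paper does not prove this statement at all; it is simply quoted from \cite[p.~201, Theorem~1]{stein1993harmonic} as a reference result in Appendix~\ref{App-D}. Your proposal correctly identifies this and outlines the standard argument (weak-type $(p,p)$ bound via a covering lemma and the $A_p$ condition, openness of $A_p$ via reverse H\"older, then Marcinkiewicz interpolation), which is precisely the route taken in Stein's book. There is nothing further to compare.
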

\subsection{Maximal operator of the surface measure restricted to the boundary of a ball}\label{App-D.maximalFunction-surfaceMeasure}
We consider the case $d=3$. We set $w_3:=\lambda^3(B_1(0))$.
\begin{lem}\label{App-D.lemma.maximalFunction.surfaceMeasure}
    Let $R>0$ and $x\in\R^3\backslash (\partial B_R(0)\cup\{0\})$. Then
        \begin{align}\label{App-D.lemma.maximalFunction.surfaceMeasure:d=3}
            \M |S(\cdot\cap B_R(0))|(x)=
            \Bigg\{\begin{array}{lr}
                \frac{2\pi R}{w_3\sqrt{27}}(|x|||x|-R|)^{-1}, & \text{ if } \sqrt{3}||x|-R| \leq |x|+R; \\
                \frac{4\pi R^2}{w_3}((|x|+R)^3)^{-1}, & \text{ if } |x|+R<\sqrt{3}||x|-R|.
            \end{array}
        \end{align}
\end{lem}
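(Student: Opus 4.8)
The plan is to reduce the statement to a single explicit one–variable optimization. By the rotational invariance of both Lebesgue measure and of $S$ restricted to $\partial B_R(0)$, the quantity $\M|S(\cdot\cap\partial B_R(0))|(x)$ depends only on $\rho:=|x|$, and we may place $x$ on a fixed coordinate axis; by hypothesis $\rho\in(0,\infty)\setminus\{R\}$. The first step is to compute, for every $r>0$, the ``mass function'' $g(r):=S\bigl(B_r(x)\cap\partial B_R(0)\bigr)$, i.e.\ the surface area of the portion of $\partial B_R(0)$ lying within distance $r$ of $x$. Parametrizing a point of $\partial B_R(0)$ by its angular distance $\theta$ from the direction of $x$, the law of cosines gives that this point has distance $(\rho^2+R^2-2\rho R\cos\theta)^{1/2}$ from $x$, so $B_r(x)\cap\partial B_R(0)$ is a spherical cap; inserting this into the cap–area formula $2\pi R^2(1-\cos\theta_0)$ yields
\[
g(r)=
\begin{cases}
0, & 0<r\le ||x|-R|,\\[1mm]
\dfrac{\pi R}{|x|}\bigl(r^{2}-(|x|-R)^{2}\bigr), & ||x|-R|<r<|x|+R,\\[2mm]
4\pi R^{2}, & r\ge |x|+R,
\end{cases}
\]
and $g$ is continuous on $(0,\infty)$.

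The second step is to maximize $r\mapsto g(r)/(w_3 r^3)$ over $r>0$, since $\M|S(\cdot\cap\partial B_R(0))|(x)=w_3^{-1}\sup_{r>0}g(r)/r^{3}$. On $(0,||x|-R|]$ the ratio vanishes. On the middle interval it equals $\tfrac{\pi R}{|x|}\bigl(r^{-1}-(|x|-R)^{2}r^{-3}\bigr)$, whose derivative is $\tfrac{\pi R}{|x|}r^{-4}\bigl(3(|x|-R)^{2}-r^{2}\bigr)$; hence it is unimodal with unique interior critical point at $r^{\ast}:=\sqrt{3}\,||x|-R|$, where its value is $\tfrac{2\pi R}{3\sqrt{3}\,|x|\,||x|-R|}$. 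On $[\,|x|+R,\infty)$ the ratio equals $4\pi R^{2}r^{-3}$ and is strictly decreasing, with maximum $4\pi R^{2}(|x|+R)^{-3}$ at $r=|x|+R$. Since $\sqrt{3}>1$ we always have $r^{\ast}>||x|-R|$, so whether $r^{\ast}$ falls inside the middle interval is governed exactly by the dichotomy $\sqrt{3}\,||x|-R|\lessgtr|x|+R$ appearing in the statement.

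In the case $\sqrt{3}\,||x|-R|\le|x|+R$, the supremum over the middle interval is attained at $r^{\ast}$ and equals $\tfrac{2\pi R}{3\sqrt{3}\,|x|\,||x|-R|}$, and it remains to check that this dominates $4\pi R^{2}(|x|+R)^{-3}$. Writing $s=|x|+R$, $a=||x|-R|$ and using $4|x|R=s^{2}-a^{2}$, this reduces to the elementary inequality $\tfrac{3\sqrt{3}}{2}(t-t^{3})\le 1$ for $t=a/s\in[0,1/\sqrt{3}]$, which holds because the left-hand side is increasing on that interval and equals $1$ at $t=1/\sqrt{3}$. Thus the overall supremum is $\tfrac{2\pi R}{3\sqrt{3}\,|x|\,||x|-R|}$, and dividing by $w_3$ (with $\sqrt{27}=3\sqrt{3}$) gives the first line of \eqref{App-D.lemma.maximalFunction.surfaceMeasure:d=3}. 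In the complementary case $|x|+R<\sqrt{3}\,||x|-R|$, the ratio is increasing throughout the middle interval, so by continuity its supremum there is the endpoint value $4\pi R^{2}(|x|+R)^{-3}$, which also equals the maximum on the last interval; hence the overall supremum is $4\pi R^{2}(|x|+R)^{-3}$, which after division by $w_3$ gives the second line. I expect the only genuinely non-routine point to be the comparison of the two candidate maxima in the first case, i.e.\ the sharp elementary inequality above; the cap–area computation and the unimodality analysis are direct, if somewhat bookkeeping-heavy.
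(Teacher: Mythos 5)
Your proposal is correct and follows essentially the same route as the paper's own proof: compute the spherical cap area $S(B_r(x)\cap\partial B_R(0))$ as a function of $r$, then maximize the ratio against $w_3r^3$ by a one-variable calculus argument, identifying $r^\ast=\sqrt{3}\,||x|-R|$ as the interior critical point. Your final explicit check that $f(r^\ast)$ dominates $4\pi R^2(|x|+R)^{-3}$ is superfluous (it follows at once from the unimodality you already established), but it is not wrong.
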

\begin{proof}
	Let us assume for a moment that $B_r(x)\cap \partial B_{R}(0)\neq \emptyset$ and  $B_r(x)\cap \partial B_{R}(0)\neq \partial B_{R}(0)$, which is exactly the case when $
	||x|-R|<r\leq |x|+R$. Then $S(B_r(x)\cap \partial B_{R}(0))$ can be interpreted geometrically as the area of the \textit{spherical cap} $B_r(x)\cap \partial B_{R}(0)$.
	For further calculations we wish to express the height $h$ of the spherical cap as a function of $r$ with parameters $|x|$ and $R$, more precisely as
    \begin{align}
        h\equiv h_{R,|x|}:(||x|-R|, |x|+R]\to [0,2R].
    \end{align}
    We note that the surface measure $S$ is invariant under rotation. Hence, we may assume without loss of generality $x=(|x|,0,0)$.
	Moreover, there are exactly two different vectors $\rho_+, \rho_-$ in the $x_1/x_2$-plane\footnote{The hyperplane spanned by $(1,0,0),(0,1,0)$ in $\R^3$.} of the form $\rho_+ = (a, b,0)$, $\rho_- = (a, -b,0)$ where $a,b \in (0,R)$ are
    determined by the following two equations
	\begin{align*}
		\begin{cases}
			a^2 + b^2 &= R^2,\\
			(a-|x|)^2 +b^2 &= r^2.
		\end{cases}
	\end{align*}

\begin{figure}
\begin{tikzpicture}
\tikzset{
    position label/.style={
       below = 3pt,
       text height = 0.5em,
       text depth = 1em
    },
   brace/.style={
     decoration={brace, mirror},
     decorate
   }
}
\coordinate [label={[xshift=-0.15cm, yshift=-0.4cm] $0$}] (A) at (0,0);
\coordinate [] (B) at (2,0);
\coordinate [label=below:$|x|$] (C) at (3,0);
\draw [->] (-3,0) -- (7,0) node[below right] {$x_1$};
\draw [->] (0,-3) -- (0,3) node[below right] {$x_2$};
\node (D) [draw,circle through=(B)] at (A) {\textbullet};
\node (E) [draw,circle through=(A)] at (C) {\textbullet};
\node (K) at (intersection 1 of D and E) {};
\draw[red, thick] (K) arc[start angle=-70.53, end angle=70.53, radius=2];
\node[fill,circle,scale=0.4,label=below:$\rho_-$] (K) at (intersection 1 of D and E) {};
\node[fill,circle,scale=0.4,label=above:$\rho_+$] (P) at (intersection 2 of D and E) {};
\path let \p1=(K) in coordinate (K_x) at (\x1,0);
\draw[dotted] (K_x) -- (K);
\draw[dotted] (K_x) -- (P);
\path let \p1=(K) in coordinate (K_x+eps) at (\x1,1.5pt);
\path let \p1=(K) in coordinate (K_x-eps) at (\x1,-1.5pt);
\draw[] (K_x) -- (K_x+eps) node[below left] {$a$};
\draw [brace] (K_x) -- (B) node [position label, pos=0.5] {$h$};
\draw[] (K_x) -- (K_x-eps);
\path let \p1=(K) in coordinate (K_y) at (0,\y1);
\path let \p1=(K) in coordinate (K_y+eps) at (1.5pt,\y1);
\path let \p1=(K) in coordinate (K_y-eps) at (-1.5pt,\y1);
\draw[] (K_y) -- (K_y+eps) node[above left] {$-b$};
\draw[] (K_y) -- (K_y-eps);
\path let \p1=(P) in coordinate (P_y) at (0,\y1);
\path let \p1=(P) in coordinate (P_y+eps) at (1.5pt,\y1);
\path let \p1=(P) in coordinate (P_y-eps) at (-1.5pt,\y1);
\draw[] (P_y) -- (P_y+eps) node[below left] {$b$};
\draw[] (P_y) -- (P_y-eps);
\draw[dotted] (K_y) -- (K);
\draw[dotted] (P_y) -- (P);
\pic [draw, "$\theta$", angle eccentricity=1.5,pic text options={shift={(-10pt,-7pt)}}] {angle =K_x--A--P};
\draw[thick] (C) -- (P) node[midway,above right] {$r$};
\draw[thick] (A) -- (P) node[midway,above right] {$R$};
\end{tikzpicture}
    \centering
    \caption{Projection of two intersecting balls onto the $x_1/x_2$ plane. The projection of the spherical cap $\partial B_R(0)\cap B_r(x)$ is highlighted in red.}
    \label{fig:intersectingBalls}
\end{figure}
By subtracting the second equation from the first, one easily deduces that $a=\frac{R^2+|x|^2-r^2}{2|x|}$. Here, we refer to \cite{weissteinWolfram} for the detailed elementary calculation and a similar geometric visualization as in Figure \ref{fig:intersectingBalls}, where the latter is, however, tailored to our setting. Hence, the height of the spherical cap is $h(r)= R-\frac{R^2+|x|^2-r^2}{2|x|}$, $r\in (||x|-R|, |x|+R]$.
(In fact, the previous arguments translate to higher dimensions, and the formula for the height of a general hyperspherical cap is the same.)
Let us extend $h$ (continuously) to a function on $[0,\infty)$ by setting
\begin{align*}
    h(r):= 2R,  \text{ for } r> |x|+R, \text{ and } h(r):= 0 , \text{ for } 0<r\leq ||x|-R|.
\end{align*}
	Using polar coordinates, it is standard to note
	\begin{align}
		S(B_r(x)\cap \partial B_{R}(0))=2\pi R h_{R,|x|}(r)\ \ \forall r\geq 0.
	\end{align}
	We aim to identify the maximum of the function 
    \begin{align*}
        f(r):=2\pi R r^{-3}  h_{R,|x|}(r)
            = 2\pi R r^{-3}\left(R-\frac{|x|^2+R^2-r^2}{2|x|}\right), r\in[||x|-R|, |x|+R].
    \end{align*}
	Note that $f$ is twice differentiable in $(||x|-R|, |x|+R)$ with derivatives
	\begin{align*}
		f'(r)&=\frac{\pi R (3(|x|-R)^2-r^2)}{|x|r^{4}},\\
		f''(r)&=\frac{2\pi R (r^2-6(|x|-R)^2)}{|x|r^{5}}.
	\end{align*}
	So, if $\sqrt{3}||x|-R| \in (||x|-R|, |x|+R)$, $f$ has a global maximum point only at $\sqrt{3}||x|-R|$.
	In this case, we have
	\begin{align*}
		f(\sqrt{3}||x|-R|) = \frac{2\pi R}{\sqrt{27}}(|x|||x|-R|)^{-1}.
	\end{align*}
	Since
    \begin{align*}
        \max\left( f(||x|-R|), f(|x|+R) \right)
        = \frac{4\pi R^2}{(|x|+R)^3},
        \end{align*}
    \eqref{App-D.lemma.maximalFunction.surfaceMeasure:d=3} is evident.
    This finishes the proof.
\end{proof}
\section{Details for the proof of Theorem \ref{StrongSolution.theorem:pathwiseU}}
\label{App-E}
For $1\leq i,j\leq d$, let
\begin{align*}
    b_i,\sigma_{ij}:[0,\infty)\times\rd \to \R
\end{align*}
be Borel measurable functions.
Consider the SDE
\begin{align}\label{App-E.SDE}
\begin{cases}
\mathrm{d}X(t)&=b(t,X(t))\mathrm{d}t + \sigma(t,X(t))\mathrm{d}W(t),\quad t\geq 0,\\
\law{X(t)}&\ll\mathrm{d}x,\ \text{for d}t\text{-a.e. } t\geq 0,
\end{cases}
\end{align}
where we denote the absolute continuity of a Borel probability measure $\mu$ on $\rd$ with respect to $d$-dimensional Lebesgue measure by $\mu<<\mathrm{d}x$.

\begin{dfn}\label{def:SDE}
A \emph{probabilistically weak solution} to \eqref{App-E.SDE} is an adapted stochastic process $X = (X(t))_{t\geq 0}$ on a stochastic basis $(\Omega, \Fscr, (\Fscr_t)_{t\geq 0},\mathbb{P})$ with an $(\Fscr_t)$-standard Brownian motion $W$, such that
\begin{equation}
	\mathbb{E}\bigg[\int_0^T |b(t,X(t))|    +   |\sigma(t,X(t))\sigma^T(t,X(t))| \mathrm{d}t    \bigg]<\infty \quad \forall T>0,
\end{equation}
and $\mathbb{P}$-a.s.
\begin{align*}
	X(t) = X(0) + \int_0^t b(s,X(s)) \mathrm{d}s  + \int_0^t \sigma(s,X(s)) \mathrm{d} W(s) \quad \forall t \geq 0.
\end{align*}
\end{dfn}

\begin{prop}\label{App-E.proposition}
    Let $(X,W)$ be a probabilistically weak solution to \eqref{App-E.SDE}. Then, $(X,W)$ is also a probabilistically weak solution to \eqref{App-E.SDE} where $b,\sigma$ are replaced by Borel measurable functions $\bar{b},\bar{\sigma}$ with components
    \begin{align*}
    \bar{b}_i,\bar{\sigma}_{ij}:[0,\infty)\times\rd \to \R,
\end{align*}
$1\leq i,j\leq d$,
satisfying
\begin{align*}
    b &=\bar{b}\ \  \mathrm{d}t\otimes \mathrm{d}x\text{-a.s.},\\
    \sigma &=\bar{\sigma}\ \ \mathrm{d}t\otimes \mathrm{d}x\text{-a.s.}
\end{align*}
In particular, probabilistically weak solutions to \eqref{App-E.SDE} do not depend on the $\mathrm{d}t\otimes \mathrm{d}x$-version of the coefficients $b,\sigma$.
\end{prop}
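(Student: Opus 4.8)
\textbf{Proof plan for Proposition \ref{App-E.proposition}.}

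The plan is to use the fact that the one-dimensional time marginals of $X$ are absolutely continuous with respect to Lebesgue measure, so that a $\mathrm{d}t\otimes\mathrm{d}x$-null set of ``bad'' space-time points is never seen by the process, neither along its drift integral nor along its quadratic variation. Concretely, set $N := \{(t,x)\in[0,\infty)\times\rd : b(t,x)\neq\bar b(t,x)\ \text{or}\ \sigma(t,x)\neq\bar\sigma(t,x)\}$, which by hypothesis satisfies $(\mathrm{d}t\otimes\mathrm{d}x)(N)=0$. First I would show that for every $T>0$
\begin{align*}
    \E\left[\int_0^T \mathbbm{1}_N(t,X(t))\,\mathrm{d}t\right] = \int_0^T \int_\rd \mathbbm{1}_N(t,x)\,\law{X(t)}(\mathrm{d}x)\,\mathrm{d}t = 0,
\end{align*}
where the first equality is Fubini--Tonelli (using joint measurability of $(t,\omega)\mapsto X(t,\omega)$, which holds since $X$ is adapted and $\mathbbm{P}$-a.s. continuous) and the second uses $\law{X(t)}\ll\mathrm{d}x$ together with $(\mathrm{d}t\otimes\mathrm{d}x)(N)=0$. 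Hence $\mathbbm{1}_N(t,X(t))=0$ for $\mathrm{d}t\otimes\mathbbm{P}$-a.e.\ $(t,\omega)$, i.e.\ $b(t,X(t))=\bar b(t,X(t))$ and $\sigma(t,X(t))=\bar\sigma(t,X(t))$ for $\mathrm{d}t\otimes\mathbbm{P}$-a.e.\ $(t,\omega)$.

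From this the integrability condition in Definition \ref{def:SDE} for $(\bar b,\bar\sigma)$ is immediate, since $\E\big[\int_0^T(|\bar b(t,X(t))|+|\bar\sigma\bar\sigma^T(t,X(t))|)\,\mathrm{d}t\big]=\E\big[\int_0^T(|b(t,X(t))|+|\sigma\sigma^T(t,X(t))|)\,\mathrm{d}t\big]<\infty$ by the displayed a.e.\ identity and Tonelli. Next I would observe that the drift integrals agree pointwise in $\omega$: for $\mathbbm{P}$-a.e.\ $\omega$, $\int_0^t b(s,X(s))\,\mathrm{d}s = \int_0^t \bar b(s,X(s))\,\mathrm{d}s$ for all $t\ge 0$, because the integrand differs only on a $\mathrm{d}s$-null set of $s$ for those $\omega$. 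Likewise the two Itô integrals $\int_0^t\sigma(s,X(s))\,\mathrm{d}W(s)$ and $\int_0^t\bar\sigma(s,X(s))\,\mathrm{d}W(s)$ are $\mathbbm{P}$-a.s.\ indistinguishable: their difference is a continuous local martingale whose quadratic variation $\int_0^t|\sigma(s,X(s))-\bar\sigma(s,X(s))|^2\,\mathrm{d}s$ vanishes for $\mathbbm{P}$-a.e.\ $\omega$ by the same a.e.\ identity. Substituting these two equalities into the defining equation for $(X,W)$ as a solution to \eqref{App-E.SDE} with coefficients $(b,\sigma)$ yields that $(X,W)$ solves \eqref{App-E.SDE} with coefficients $(\bar b,\bar\sigma)$ on the same stochastic basis with the same Brownian motion, and clearly $\law{X(t)}\ll\mathrm{d}x$ is unchanged.

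The only mildly delicate point is the measurability needed to apply Fubini--Tonelli to $(t,\omega)\mapsto\mathbbm{1}_N(t,X(t,\omega))$; this is handled by noting that $X$, being adapted and $\mathbbm{P}$-a.s.\ path-continuous (one may replace $X$ by a continuous modification), is jointly $\mathcal{B}([0,\infty))\otimes\Fscr$-measurable, so the composition with the Borel set $N$ is jointly measurable. Everything else is a routine consequence of ``a $\mathrm{d}t\otimes\mathrm{d}x$-null set is charged neither by the occupation measure of $X$ nor by its bracket,'' which is exactly what $\law{X(t)}\ll\mathrm{d}x$ buys us. The final sentence of the proposition is just the special case obtained by symmetry (apply the statement with the roles of $(b,\sigma)$ and $(\bar b,\bar\sigma)$ interchanged).
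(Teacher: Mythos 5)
Your proof is correct and relies on the same essential idea as the paper's: the absolute continuity $\law{X(t)}\ll\mathrm{d}x$ combined with Fubini implies that the occupation measure of $X$ charges no $\mathrm{d}t\otimes\mathrm{d}x$-null set, so the two coefficient pairs are indistinguishable along the trajectory. The paper organizes this slightly differently—defining $Y$ with the barred coefficients and bounding $\E|X(t)-Y(t)|$ directly via Jensen, Itô's isometry and Fubini—whereas you first establish $\mathbbm{1}_N(t,X(t))=0$ for $\mathrm{d}t\otimes\PP$-a.e.\ $(t,\omega)$ and then conclude indistinguishability of the drift and stochastic integrals separately; both are valid and substantively equivalent.
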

\begin{proof}
    For $t\geq 0$, we set
    \begin{align*}
        Y(t):= X(0) + \int_0^t \bar{b}(s,X(s))\mathrm{d}s + \int_0^t \bar{\sigma}(s,X(s))\mathrm{d}W(s).
    \end{align*}
    We have
    \begin{align}
        \E [|X(t)-Y(t)|]
        &\leq \E \left[\left| \int_0^t b(s,X(s))-\bar{b}(s,X(s)) \mathrm{d}s\right|\right]
        + \left(\E \left[\left| \int_0^t \sigma(s,X(s))-\bar{\sigma}(s,X(s))\mathrm{d}W(s) \right|^2\right]\right)^\frac{1}{2} \\
        &\leq \int_0^t \E \left[\left| b(s,X(s))-\bar{b}(s,X(s))\right|\right]\mathrm{d}s
        +  \left(\int_0^t\E\left[\left|  \sigma(s,X(s))-\bar{\sigma}(s,X(s)) \right|^2\right]\mathrm{d}s\right)^\frac{1}{2}=0,
    \end{align}
    where we used Jensen's inequality in the first inequality, and It\^o's isometry and Fubini's theorem in the second inequality. For the last equality, we used the assumption $\law{X(s)}\ll\mathrm{d}x$ for $\mathrm{d}s$-a.e. $s\geq 0$.
    By the (a.s.-)continuity of $X$ and $Y$, we conclude $X\equiv Y$ a.s.
    This finishes the proof.
\end{proof}

\paragraph{Acknowledgements.} The second and fourth named authors are funded by the Deutsche Forschungsgemeinschaft (DFG, German Research Foundation) - Project-ID 317210226 - SFB 1283.
We would like to thank Alexander Grigor'yan for valuable discussions on his work on the Leibenson equation on Riemannian manifolds.

\bibliographystyle{plain}
\bibliography{bib-collection}
\end{document}